\def\@secnumfont{\scshape}
\def\section{\@startsection{section}{1}%
  \z@{.7\linespacing\@plus\linespacing}{.5\linespacing}%
  {\Large\center\scshape}}
\def\subsection{\@startsection{subsection}{2}%
  \z@{.7\linespacing\@plus\linespacing}{.5\linespacing}%
  {\large\center\scshape}}
\g@addto@macro\normalsize{
  \setlength\abovedisplayskip{8pt}
  \setlength\belowdisplayskip{8pt}
  \setlength\abovedisplayshortskip{8pt}
  \setlength\belowdisplayshortskip{8pt}
  }
\setlist{nolistsep}
\newtheoremstyle{plain}{3mm}{3mm}{\slshape}{}{\bfseries}{.}{.5em}{}
\newtheoremstyle{definition}{2mm}{2mm}{}{}{\bfseries}{.}{.5em}{}
\theoremstyle{plain}
\newtheorem{Theorem}{Theorem}
\newtheorem{Lemma}[Theorem]{Lemma}
\newtheorem{Proposition}[Theorem]{Proposition}
\newtheorem{Corollary}[Theorem]{Corollary}
\theoremstyle{definition}
\newtheorem{Definition}[Theorem]{Definition}
\newtheorem{Remark}[Theorem]{Remark}
\newtheorem{Example}[Theorem]{Example}
\theoremstyle{plain}
\newcounter{MainTheoremCounter}
\newtheorem{Maintheorem}[MainTheoremCounter]{Theorem}
\newtheorem{Maincorollary}[MainTheoremCounter]{Corollary}
\theoremstyle{plain}
\newtheorem*{namedthm}{\namedthmname}
\newcounter{namedthm}
	\newenvironment{named}[2]
	{\def\namedthmname{#1}
	\refstepcounter{namedthm}
	\namedthm[#2]\def\@currentlabel{#1}}
	{\endnamedthm}
\numberwithin{equation}{section}
\definecolor{Scarlet}{rgb}{0.85, 0.16 ,0.0}
\definecolor{Blue}{rgb}{0.0, 0.2, 0.6}
\definecolor{BlueC}{rgb}{0.0, 0.2, 0.6}
\definecolor{Green}{rgb}{0.0, 0.6 ,0.2}
\newcommand{\N}{\mathbb{N}}
\newcommand{\Z}{\mathbb{Z}}
\newcommand{\R}{\mathbb{R}}
\newcommand{\C}{\mathbb{C}}
\newcommand{\Q}{\mathbb{Q}}
\newcommand{\Hilb}{\mathscr{H}}
\newcommand{\Oh}{{\rm O}}
\newcommand{\oh}{{\rm o}}
\newcommand{\otherlim}[2]{\underset{#2}{{#1}\text{-}\lim}~}
\newcommand{\otherlimsup}[2]{\underset{#2}{{#1}\overline{\text{-}\lim}}~}
\newcommand{\textlim}[1]{{${#1}\text{-}\lim$}}
\newcommand{\define}[1]{\emph{#1}}
\newcommand{\lhk}{\lvert\!|\!|}
\newcommand{\rhk}{|\!|\!\rvert}
\renewcommand{\epsilon}{\varepsilon}
\renewcommand{\leq}{\leqslant}
\renewcommand{\geq}{\geqslant}
\renewcommand{\vec}[1]{{\bf #1}}
\renewcommand{\setminus}{\backslash}
\newcommand{\E}{\mathbb{E}}
\newcommand{\B}{\mathcal{B}}
\newcommand{\1}{1}
\newcommand{\xbm}{(X,\mathcal{B},\mu)}
\newcommand{\xbmt}{(X,\mathcal{B},\mu,T)}
\renewcommand{\d}{~\mathsf{d}}
\newcommand{\F}{\mathcal{F}}
\newcommand{\T}{\mathbb{T}}
\begin{document}

\title{Single and Multiple recurrence along non-polynomial sequences}
\author{Vitaly Bergelson}
\thanks{The first author gratefully acknowledges the support of the NSF under grant DMS-1500575. The second author gratefully acknowledges the support of the NSF under grant DMS-1700147. The third author is supported by the National Science Foundation under grant number DMS~1901453}
\author{Joel Moreira}
\author{Florian K.\ Richter}

\date{\small \today}

\begin{abstract}
We establish new recurrence and multiple recurrence results for a rather large family $\F$ of non-polynomial functions which contains tempered functions and (non-polynomial) functions from a Hardy field with polynomial growth.

In particular, we show that, somewhat surprisingly (and in the contrast to the multiple recurrence along polynomials), the sets of return times along functions from $\F$ are \define{thick}, i.e., contain arbitrarily long intervals.
A major component of our paper is a new result about equidistribution of sparse sequences on nilmanifolds, whose proof borrows ideas from the work of Green and Tao \cite{Green_Tao12a}.

Among other things, we show that for any $f\in\F$, any invertible probability measure preserving system $\xbmt$, any $A\in\B$ with $\mu(A)>0$, and any $\epsilon>0$, the sets of returns
$$\big\{n\in\N:\mu(A\cap T^{-\lfloor f(n)\rfloor}A)>\mu^2(A)-\epsilon\big\}$$
$$\big\{ n\in\N:
\mu\big(A\cap T^{-\lfloor f(n)\rfloor}A\cap T^{-\lfloor f(n+1)\rfloor}A\cap\cdots\cap T^{-\lfloor f(n+k)\rfloor}A\big)>0\big\}$$
are thick.

Our  recurrence theorems  imply, via Furstenberg's correspondence principle,  some new combinatorial results. For example, we show that given a set $E\subset\N$ with positive upper density, for every $k\in\N$ there are $a,n\in\N$ such that
$$\big\{a,a+\lfloor f(n)\rfloor,\cdots,a+\lfloor f(n+k)\rfloor\big\}\subset E.$$
When $f(n)=n^c$, with $c>0$ non-integer, this result provides a positive answer to a question posed by Frantzikinakis \cite[Problem 23]{Frantzikinakis11}.
\end{abstract}

\maketitle
\small
\tableofcontents
\thispagestyle{empty}
\normalsize


\section{Introduction}
\label{sec_intro}

Let $\xbmt$ be a probability measure preserving system. The classical Poincar{\'e} recurrence theorem states that for any $A\in \B$ with $\mu(A)>0$ there exists $n\in\N=\{1,2,\ldots\}$ such that $\mu(A\cap T^{-n}A)>0$.
Over the years it was revealed that the set of return times
$$
R_A\coloneqq \{n\in\N: \mu(A\cap T^{-n}A)>0\}
$$
has quite intricate combinatorial and number-theoretical properties. For example, $R_A$ contains a perfect square \cite[Proposition 1.3]{Furstenberg77}.
This fact, in turn, implies a theorem of S{\'a}rk{\"o}zy \cite{Sarkozy78} which says that for any set $E\subset\N$ with positive \define{upper Banach density}\footnote{The \define{upper Banach density} of a set $E\subset\N$ is defined as $d^*(E)\coloneqq \limsup_{N-M\to\infty} {|E\cap [M,N]|}/(N-M)$.} there exist $n\in\N$ and $a,b\in E$ such that $a-b=n^2$ (see \cite[Theorem 1.2]{Furstenberg77}).

A more general version of this result, proved by Furstenberg in \cite{Furstenberg81a,Furstenberg81}, asserts that for any $g\in\Q[x]$ with $g(\Z)\subset \Z$ and $g(0)=0$,
$$
\lim_{N\to\infty}\frac{1}{N}\sum_{n=1}^N \mu(A\cap T^{-g(n)}A) >0,
$$
which implies that the \define{upper density}\footnote{The \define{upper density} of a set $E\subset\N$ is defined as $\overline{d}(E)\coloneqq \limsup_{N\to\infty} {|E\cap [1,N]|}/{N}$.} of the set $\{n\in\N: \mu(A\cap T^{-g(n)}A)>0\}$ is positive.
One can actually show that for any $\epsilon>0$ there exists $k\in\N$ such that
$$
\lim_{N-M\to\infty}\frac{1}{N-M}\sum_{n=M}^{N-1} \mu(A\cap T^{-g(kn)}A) >\mu(A)^2-\epsilon,
$$
which gives the following polynomial version of the classical Khintchine recurrence theorem \cite{Khintchine34}. (This polynomial Khintchine recurrence theorem also follows from a stronger result obtained in \cite{Bergelson_Furstenberg_McCutcheon96}.)

\begin{Theorem}[Polynomial Khintchine recurrence theorem]
\label{thm_poly_khin}
For any invertible probability measure preserving system $\xbmt$, any $A\in\B$, any $g\in\Q[x]$ with $g(\Z)\subset \Z$ and $g(0)=0$, and any $\epsilon>0$, the set of optimal return times
$
R_{\epsilon, A}= \big\{n\in\N:\mu(A\cap T^{-g(n)}A)>\mu^2(A)-\epsilon\big\}
$
is syndetic, i.e., there exists $l\in\N$ such that $R_{\epsilon, A}$ has non-empty intersection with every interval of length bigger or equal to $l$.
\end{Theorem}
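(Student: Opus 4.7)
The plan is to combine spectral theory with a Weyl-type equidistribution estimate to obtain a uniform Ces\`aro lower bound, from which syndeticity follows via a direct averaging argument.

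Set $f = 1_A - \mu(A)$ and let $\sigma$ be the spectral measure of $f$ on $\T$, so that
\begin{equation*}
\mu(A \cap T^{-m} A) = \mu(A)^2 + \widehat{\sigma}(m), \qquad \widehat{\sigma}(m) = \int_\T e^{2\pi i m \alpha}\, \d\sigma(\alpha).
\end{equation*}
Note $\sigma(\{0\}) = \|\E[f \mid \mathcal{I}]\|_2^2 \geq 0$, with $\mathcal{I}$ the $\sigma$-algebra of $T$-invariant sets, so the atom at $0$ (if present in the non-ergodic case) only contributes positively. The first goal is to show that for every $\epsilon > 0$ there exists an integer $k = k(\epsilon)$ with
\begin{equation*}
\liminf_{N-M \to \infty} \frac{1}{N-M} \sum_{n=M}^{N-1} \mu\bigl(A \cap T^{-g(kn)} A\bigr) \;\geq\; \mu(A)^2 - \frac{\epsilon}{2}.
\end{equation*}
To this end, split $\sigma = \sigma_\Q + \sigma_{\Q^c}$ into its restrictions to rationals and irrationals, and approximate $\sigma_\Q$ in total variation by a finite atomic measure $\sigma_\Q^{(K)} = \sum_{j=1}^K c_j \delta_{p_j/q_j}$ with $\|\sigma_\Q - \sigma_\Q^{(K)}\|_{TV} < \epsilon/4$; set $Q = \mathrm{lcm}(q_1, \ldots, q_K)$. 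Writing $g = \sum_{j=1}^d a_j \binom{x}{j}$ in the integer binomial basis (permitted since $g(\Z)\subset\Z$ and $g(0)=0$) and choosing $k = Q \cdot d!$ ensures $g(kn) \in Q\Z$ for every $n \in \N$, so that $\widehat{\sigma_\Q^{(K)}}(g(kn)) = \sigma_\Q^{(K)}(\T) \geq 0$ identically in $n$. The residual rational tail contributes an error of at most $\epsilon/4$ uniformly in $n$, and for the irrational part the well-distributed form of Weyl's equidistribution theorem, combined with dominated convergence, yields
\begin{equation*}
\lim_{N\to\infty} \sup_{M \in \N} \bigg|\, \frac{1}{N}\sum_{n=M+1}^{M+N} \widehat{\sigma_{\Q^c}}(g(kn)) \,\bigg| = 0,
\end{equation*}
completing the uniform Ces\`aro bound.

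From this bound, syndeticity follows at once: for $N$ large enough (independent of $M$) the set $S = \{n \in \N : \mu(A \cap T^{-g(kn)} A) > \mu(A)^2 - \epsilon\}$ must meet every interval of length $N$, for otherwise the average over such an interval would be at most $\mu(A)^2 - \epsilon$, contradicting the previous display. Hence $S$ is syndetic, and because $\{kn : n \in S\} \subset R_{\epsilon, A}$ and the image of a syndetic set under multiplication by a fixed integer remains syndetic, $R_{\epsilon, A}$ is itself syndetic.

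The principal technical obstacle is verifying the \emph{well-distribution} of the polynomial sequence $(g(kn)\alpha)_{n\in\N}$ modulo $1$ for each irrational $\alpha$, i.e.\ the uniformity in the starting point $M$ of the Weyl bound on $\frac{1}{N}\sum_{n=M+1}^{M+N} e^{2\pi i g(kn)\alpha}$. This is classical and stems from the observation that the leading coefficient of $g(kn)$, viewed as a polynomial in $n$, is unchanged by the shift $n \mapsto n+M$; combined with van der Corput-type estimates and an induction on the degree of $g$, it promotes ordinary equidistribution to precisely the uniform-in-interval estimate required for syndeticity.
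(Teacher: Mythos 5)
Your proof is correct and implements exactly the route the paper sketches in the paragraph preceding the theorem: choose $k$ (here $k=Q\cdot d!$) so that $g(kn)$ falls in $Q\Z$, establish the uniform Ces\`aro lower bound $\liminf_{N-M\to\infty}\frac{1}{N-M}\sum_{n=M}^{N-1}\mu(A\cap T^{-g(kn)}A)\geq \mu(A)^2-\epsilon/2$ via the spectral measure split into rational and irrational parts together with well-distribution of polynomial sequences with irrational leading coefficient, and read off syndeticity. The paper does not actually write out a proof of this theorem---it asserts the uniform Ces\`aro inequality and refers to Bergelson--Furstenberg--McCutcheon (1996) for a stronger result---so your spectral-measure argument is the standard way to make that one-line sketch precise and matches it in spirit and substance.
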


In \cite[Theorem 7.1]{Bergelson_Knutson09} a mean ergodic theorem along \define{tempered sequences}\footnote{\label{ftn_tempered}
A real-valued function $f$ defined on a half-line $[a,\infty)$ is called a \define{tempered} function if there exist $\ell\in\N$ such that $f$ is $\ell$ times continuously differentiable, $f^{(\ell)}(x)$ tends monotonically to zero as $x\to\infty$, and $\lim_{x\to\infty} x |f^{(\ell)}(x)|=\infty$. (This notion was introduced in the context of the theory of uniform distribution by Cigler in \cite{Cigler68}.)} was proved, which implies that for any tempered function $f$, any invertible probability measure preserving system $\xbmt$ and any $A\in\B$,
$$
\lim_{N\to\infty}\frac{1}{N}\sum_{n=1}^N \mu(A\cap T^{-\lfloor f(n)\rfloor}A) \geq \mu(A)^2,
$$
where $\lfloor x\rfloor$ denotes the integer part of a real number $x$ (see \cite[Corollary 7.2]{Bergelson_Knutson09}).
This gives a large class of sequences $f(n)$ for which the set of optimal return times
\begin{equation}\label{eq_intro_iii}
R_{\epsilon, A}\coloneqq \big\{n\in\N:\mu(A\cap T^{-\lfloor f(n) \rfloor}A)>\mu^2(A)-\epsilon\big\}
\end{equation}
has positive upper density.
Examples include $f(n)= b n^c \log^r(n)$, where $b\in\R\setminus\{0\}$, $c>0$ with $c\notin \N$ and $r\geq0$, and $f(n)= n^c(\cos(\log^r (n)) + 2)$, where $c > 0$ and $0 < r < 1$.

The goal of this paper is to establish new results about the set of optimal returns $R_{\epsilon, A}$ defined in \eqref{eq_intro_iii} and about the set of multiple returns
\begin{equation}
\label{eq_intro_b}
R^{(k)}_{A}\coloneqq
\big\{n\in\N:\mu(A\cap T^{-\lfloor f(n) \rfloor}A\cap T^{-\lfloor f(n+1)\rfloor}A\cap \ldots\cap T^{-\lfloor f(n+k) \rfloor}A)>0\big\}, \quad k\in\N,
\end{equation}
for a rather large class of sequences $f(n)$. As is shown in \cref{lem_nal}, this class includes:
\begin{enumerate}	
[label={\small$\circ$}
]
\item
all sequences $f(n)$, where $f\colon[1,\infty
)\to\R$ is a tempered function;
\item
all sequences $f(n)$, where $f\colon[1,\infty)\to\R$ is a function from a \define{Hardy field}\footnote{A \define{germ
at $\infty$} of a given function $f\colon[a,\infty)\to\R$ is any equivalence class of functions $g\colon[b,\infty)\to\R$ under the equivalence relationship $(f\sim g) \Leftrightarrow \big(\exists c\geq \max\{a,b\}
~\text{such that}~f(x)=g(x)~\text{for all}~x\in [c,\infty)\big)$.
Let $G$ denote the set of all germs at $\infty$ of real valued functions defined on a half-line $[a,\infty)$.
Any subfield of the ring $(G,+,\cdot)$, where $+$ and $\cdot$ denote pointwise addition and multiplication, that is closed under differentiation is called a
\define{Hardy field} \cite{Hardy12, Hardy71}. By abuse of language, we say that a function $f\colon[1,\infty)\to\R$ belongs to some Hardy field if its germ belongs to that Hardy field.} with the property that for some $\ell\in \N\cup\{0\}$ one has $\lim_{x\to\infty }f^{(\ell)}(x)=\pm\infty$ and $\lim_{x\to\infty }f^{(\ell+1)}(x)=0$.
\end{enumerate}

Before formulating our results, let us define the class of functions that we will be dealing with.

\begin{Definition}
\label{def:class_F}
Given a function $f\colon\N\to \R$ let $\Delta f$ denote its \define{first order difference} (or \define{discrete derivative}),
$
\Delta f(n)\coloneqq  f(n+1)-f(n).
$
Define
\begin{eqnarray*}
\F_0&\coloneqq &\left\{f\colon\N\to \R: \lim_{n\to\infty}f(n)=0,~f(n)~\text{eventually monotone},~\text{and}~\left|\sum_{n=1}^\infty f(n)\right|=\infty\right\};\\
\F_{\ell+1}&\coloneqq &\big\{f\colon\N\to \R: \Delta f\in\F_{\ell}\big\},\quad \text{for}~\ell\in\N\cup\{ 0\},
\end{eqnarray*}
and let $\F\coloneqq\bigcup_{\ell=1}^\infty \F_\ell$.
\end{Definition}

Observe that the function $f(n)=n^\alpha$ belongs to $\F$ if and only if $\alpha>0$ and is not an integer (in fact, in this case we have $n^\alpha\in \F_{\lfloor \alpha\rfloor+1}$).
While our class $\F$ is vastly more general than the family of functions $\{n\mapsto n^\alpha:\alpha\in(0,\infty)\setminus\N\}$, it may be convenient for the reader to keep this example in mind, as our results are already new and interesting even for this family.

We will show that for any $f\in\F$ the sets $R_{\epsilon, A}$ and $R^{(k)}_{A}$ defined in \eqref{eq_intro_iii} and \eqref{eq_intro_b} possess a somewhat unexpected combinatorial property that stands in contrast to the syndeticity featured in Khintchine's recurrence theorem and in \cref{thm_poly_khin}.
The notion in question is dual to the notion of a syndetic set:

\begin{Definition}
$R\subset\N$ is called \define{thick} if it contains arbitrarily long intervals.
\end{Definition}
Notice that a set is thick if and only if its complement is not syndetic.

While for sequences $f$ belonging to the class $\F$ the sets $R_{\epsilon, A}$ are, in general, not syndetic (cf. \cite[Remark after Theorem 2.3]{Frantzikinakis10}), they do possess a combinatorial property that we will presently introduce and which can be viewed as a generalized version of syndeticity.

\begin{Definition}\label{def_Wsyndetic}
Let $W\in \F_1\cup\{\operatorname{Id}:n\mapsto n\}$ be positive.
A set $R\subset \N$ is called \define{W-syndetic} if there exists $l \in\N$ such that for any interval $[M,N]\subset \N$ with $W(N)-W(M)\geq l$ one has
$$\sum_{n\in R\cap [M,N]} \Delta W(n)\geq 1.$$
\end{Definition}

Any $W$-syndetic set $R$ has positive \define{lower} and \define{upper $W$-density}:
$$
\underline{d}_W(R)\coloneqq \liminf_{N\to\infty}\frac{1}{W(N)}\sum_{n\in R\cap[1,N]} \Delta W(n) >0.
$$
$$
\overline{d}_W(R)\coloneqq \limsup_{N\to\infty}\frac{1}{W(N)}\sum_{n\in R\cap[1,N]} \Delta W(n) >0.
$$
Note that when $W=\operatorname{Id}$ we recover from \cref{def_Wsyndetic} the usual notion of syndeticity, and $\underline{d}_W(R)$ and $\overline{d}_W(R)$ become the usual upper and lower density, denoted simply as $\underline{d}(R)$ and $\overline{d}(R)$.
Also, for any $W\in\F_1$ the inequality $\underline{d}(R)\leq\underline{d}_W(R)\leq\overline{d}_W(R)\leq \overline{d}(R)$ holds\footnote{The middle inequality is trivial, and the left and right inequalities can be established by a method similar to that used in the proof of \cite[Lemma 1.7.1]{Kuipers_Niederreiter74}. See also \cite[Theorem 3.5]{Giuliano_Grekos07} for a uniform version.}, which implies that any $W$-syndetic set has positive upper density.

Our first main result pertains to the sets $R_{\epsilon, A}$.
\begin{Maintheorem}
\label{thm_mainsingle}
Let $\ell\in\N$, $f\in\F_{\ell}$ and define $W\coloneqq \Delta^{\ell-1} f$. Then for any invertible probability measure preserving system $(X,\B,\mu,T)$, any $A\in \B$ and any $\epsilon>0$ the set
$$
R_{\epsilon, A}= \big\{n\in\N:\mu(A\cap T^{-\lfloor f(n)\rfloor}A)>\mu^2(A)-\epsilon\big\}
$$
is thick and $W$-syndetic.
\end{Maintheorem}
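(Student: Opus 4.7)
The plan is to combine a spectral reduction with two separate arguments: a weighted mean-ergodic theorem for $W$-syndeticity, and a discrete-Taylor construction for thickness. Let $\sigma$ be the spectral measure of $\mathbf 1_A$ on $\T$, so that $\mu(A\cap T^{-m}A)=\int_\T e^{-2\pi imt}\d\sigma(t)$, with $\sigma(\{0\})=\|\E(\mathbf 1_A\mid\mathcal I)\|_2^2\geq\mu(A)^2$ (by Jensen applied to the projection onto $T$-invariants). Approximating $\sigma$ restricted to $\T\setminus\{0\}$ in total variation by a finitely supported measure $\sum_{k=1}^K a_k\delta_{\alpha_k}$ with error less than $\epsilon/2$, the theorem reduces to showing that, for any $\delta>0$ and the accompanying finite family $\{\alpha_k\}$, the set
$$
B(\delta)\;=\;\{n\in\N:\|\lfloor f(n)\rfloor\alpha_k\|<\delta\text{ for every }k=1,\ldots,K\}
$$
is thick and $W$-syndetic (with $\delta$ chosen small in terms of $\epsilon$ and $\sum_k a_k$).

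\textbf{$W$-syndeticity.} I would first prove the weighted Weyl estimate
$$
\frac{1}{W(N)-W(M)}\sum_{n=M}^{N-1}\Delta W(n)\,e^{-2\pi i\lfloor f(n)\rfloor\alpha}\;\longrightarrow\;0\qquad\text{as }W(N)-W(M)\to\infty,
$$
valid for every $\alpha\in\T\setminus\{0\}$. This is the technical workhorse and should be proved by induction on $\ell$: at each step, Abel summation transfers the weight $\Delta W$ onto an exponential sum against $\Delta^{\ell-1}f$ plus boundary terms, and a van der Corput differencing step reduces the index by one; the base case exploits monotonicity of $W$ and divergence of $\sum W$. Inserted into the spectral representation together with $\sigma(\{0\})\geq\mu(A)^2$, this yields
$$
\liminf_{W(N)-W(M)\to\infty}\frac{1}{W(N)-W(M)}\sum_{n=M}^{N-1}\Delta W(n)\,\mu(A\cap T^{-\lfloor f(n)\rfloor}A)\;\geq\;\mu(A)^2.
$$
A Chebyshev-type splitting of the sum between $R_{\epsilon,A}$ and its complement then forces $\sum_{n\in R_{\epsilon,A}\cap[M,N]}\Delta W(n)$ to be at least a positive multiple of $W(N)-W(M)$, hence to exceed $1$ once $W(N)-W(M)\geq l(\mu(A),\epsilon)$; this is exactly $W$-syndeticity.

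\textbf{Thickness and the principal obstacle.} Given $L$, the aim is to produce arbitrarily large $N$ with $[N,N+L]\subset B(\delta)$. Using the discrete Taylor expansion
$$
f(N+j)\;=\;\sum_{i=0}^{\ell-1}\binom{j}{i}\Delta^i f(N)+\mathrm{Rem}_j(N),\qquad 0\leq j\leq L,
$$
whose remainder is bounded by a constant $C_{L,\ell}\cdot\max_{0\leq m\leq L}|W(N+m)|\to 0$ as $N\to\infty$, one expresses $\lfloor f(N+j)\rfloor\alpha_k$ as a $\Z$-linear combination of the quantities $\Delta^i f(N)\alpha_k$ plus a controlled error (the extra term arising from the fractional part $\{f(N+j)\}\alpha_k$ is absorbed via an equidistribution argument for $f(n)\bmod 1$, running in parallel). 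Thickness thereby reduces to the simultaneous recurrence near $0\in\T^{\ell K}$ of the vector $\big(\Delta^i f(n)\alpha_k\big)_{0\leq i<\ell,\,1\leq k\leq K}$, and this joint recurrence is the main obstacle of the proof. I plan to establish it by induction on $\ell$: for $\ell=1$, because $\Delta f\to 0$ the orbit $(f(n)\alpha_k)_k$ is slowly varying, while the unboundedness of $f$ forces the orbit modulo $1$ to dwell in any prescribed small neighborhood of $0$ for arbitrarily long consecutive stretches of $n$; for the inductive step, recurrence for $\Delta f\in\F_{\ell-1}$ on an enlarged frequency set (capturing the extra Taylor data) lifts to recurrence for $f$, by a Bohr-set stability that reflects closure of the class $\F$ under discrete differentiation.
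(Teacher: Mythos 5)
Your treatment of $W$-syndeticity is essentially the same as the paper's: establish the uniform weighted Weyl estimate $\otherlim{UW}{n\to\infty}e\big(\alpha\lfloor f(n)\rfloor\big)=0$ for $\alpha\notin\Z$, plug it into the spectral representation (together with $\sigma(\{0\})\geq\mu(A)^2$), and run a Chebyshev splitting. That part is sound, modulo the technical work of actually proving the Weyl estimate with the floor function present (the paper does this through a fairly delicate analysis of the joint distribution of $(\alpha\Delta^i\lfloor f\rfloor, \Delta^i f)$ on a product of subgroups of the torus and a case distinction rational/irrational, which your sketch does not engage with, but the plan is correct in spirit).

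For thickness, however, there is a genuine gap at the very first step. You propose to approximate $\sigma\restriction_{\T\setminus\{0\}}$ in total variation by a finitely supported measure $\sum_{k=1}^K a_k\delta_{\alpha_k}$ with error $<\epsilon/2$. This is not possible in general: if $T$ has a weakly mixing component, the spectral measure of $\mathbf 1_A$ restricted to $\T\setminus\{0\}$ has a continuous part, and the total-variation distance from any finitely supported measure to that continuous part equals the sum of the two masses, so the error cannot be made small. Consequently, the reduction of thickness to thickness of the Bohr set $B(\delta)$ is invalid, and your entire thickness argument only ever touches what corresponds to the Kronecker (compact) component. What the paper does instead is decompose $\mathbf 1_A = h_c + h_{wm} + h_\epsilon$ via Jacobs--de Leeuw--Glicksberg, where $h_c$ is a finite linear combination of eigenfunctions, $h_{wm}$ is weakly mixing, and $\|h_\epsilon\|\leq\epsilon/2$; the compact part is handled by a Bohr-recurrence argument close to what you propose, but the weakly mixing part requires a separate and nontrivial estimate (the paper's Theorem 3.3): for large $N$ there is a set $D$ of lower Banach density $1$ so that for every polynomial $p\in\Z[x]$ of degree $\ell$ whose leading datum $\Delta^\ell p$ lies in $D$, the averages $\frac1N\sum_{n=1}^N|\langle T^{p(n)}h_{wm},h_{wm}\rangle|$ are small. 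Crucially, the Bohr-recurrence construction is then refined so that along the produced intervals $[a,a+N]$ the sequence $g(a+n)=\lfloor f(a+n)\rfloor$ agrees with such a polynomial $p$ with $\Delta^\ell p = s$ in a set $B$ of positive upper Banach density, and the two sets are reconciled because $D$ has lower Banach density $1$ and $B$ has positive upper Banach density, so $D\cap B\neq\emptyset$. Your proposal contains neither the weakly-mixing estimate nor the mechanism for making the compact-component intervals compatible with it, and without both, thickness of $R_{\epsilon,A}$ does not follow.

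As a minor remark, your Bohr-set reduction would need to track not only that $\big(\Delta^i f(n)\alpha_k\big)_{i,k}$ is near $0$ but also that the fractional parts $\{\Delta^i f(n)\}$ are small simultaneously, since $\lfloor f(n+j)\rfloor$ is controlled by $\Delta^i g(n)$ (with $g=\lfloor f\rfloor$), not by $\Delta^i f(n)$ directly; the paper's Lemma 3.5 and Theorem 3.7 make this precise. This is a technical rather than conceptual point, but it is not optional.
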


We describe now the results which deal with the sets of multiple recurrence $R_A^{(k)}$ defined in \eqref{eq_intro_b}.
Let us first recall the polynomial multiple recurrence theorem obtained in \cite{Bergelson_McCutcheon96} (see also \cite{Bergelson_Leibman96, Bergelson_McCutcheon00, Bergelson_Leibman_Lesigne08}).

\begin{Theorem}[Polynomial multiple recurrence theorem]
\label{theorem:BL96}
For any $k\in\N$, any invertible probability measure preserving system $\xbmt$, any $A\in\B$ with $\mu(A)>0$ and any polynomials $g_1,\ldots,g_k\in \Q[x]$ with $g_i(\Z)\subset\Z$ and $g_i(0)=0$ for $i=1,\ldots,k$, the set
\begin{equation}\label{eq_intro1.3}
R=\big\{n\in\N:\mu(A\cap T^{-g_1(n)}A\cap \ldots\cap T^{-g_k(n)}A)>0\big\}
\end{equation}
is syndetic.
\end{Theorem}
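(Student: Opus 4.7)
The plan is to prove the stronger quantitative assertion that there exists $c = c(\mu(A), g_1, \ldots, g_k) > 0$ with
$$
\liminf_{N-M\to\infty} \frac{1}{N-M}\sum_{n=M}^{N-1} \mu\bigl(A\cap T^{-g_1(n)}A\cap \cdots \cap T^{-g_k(n)}A\bigr) \geq c.
$$
Syndeticity of $R$ is then an immediate corollary: if $R$ admitted arbitrarily long gaps $[M_j,N_j]$, the Cesàro average along these gaps would vanish, contradicting the uniform positive lower bound. By the ergodic decomposition, it suffices to handle ergodic systems, and I will work with the multicorrelation $\frac{1}{N}\sum_{n=1}^N\int \1_A\cdot T^{g_1(n)}\1_A\cdots T^{g_k(n)}\1_A\d\mu$.

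The main engine is Bergelson's PET (Polynomial Exhaustion Technique) induction. One assigns to every finite family of integer-valued polynomials $(g_1,\ldots,g_k)$ a ``weight'' that records the degrees and the configuration of leading coefficients, and well-orders such weights so that one reduction step strictly decreases them. That step is: apply van der Corput's $L^2$ inequality to the sequence $v_n \coloneqq T^{g_1(n)}\1_A\cdots T^{g_k(n)}\1_A$; after a change of variables, the resulting correlations involve families of shifted differences $g_i(n+h)-g_j(n)$ whose weight is strictly smaller. Iterating brings us to a trivial base case (e.g.\ a single linear polynomial, handled by the mean ergodic theorem), and simultaneously yields the $L^2$ bound that drives the rest of the argument.

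The same PET reduction shows that the multicorrelations are controlled by a finite-step characteristic factor, namely a pronilfactor in the sense of Host--Kra and Ziegler. Passing to an inverse limit of nilsystems, the multiple average becomes an integral on a nilmanifold, and Leibman's equidistribution theorem for polynomial sequences on nilmanifolds---combined with the normalization $g_i(0)=0$, which forces the orbit to return near the identity sub-nilmanifold---identifies the limit as an explicit integral that is bounded below by a positive function of $\mu(A)$ alone.

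The main obstacle is the PET bookkeeping: engineering a weight and well-ordering that is guaranteed to strictly decrease under the van der Corput step, and carefully handling the degenerate cases in which leading coefficients of shifted polynomials happen to coincide or cancel. A secondary but essential technical point is the identification of the characteristic factor, i.e.\ showing that $L^2$-orthogonality to the pronilfactor forces the multicorrelation to vanish. Once these two ingredients are in place, the uniform positive lower bound---and hence the syndeticity of $R$---follows with little additional effort.
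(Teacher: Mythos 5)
Your statement is one the paper does not prove at all: \cref{theorem:BL96} is recalled from the literature (it is the uniformity theorem of Bergelson--McCutcheon \cite{Bergelson_McCutcheon96}; see also \cite{Bergelson_Leibman96,Bergelson_McCutcheon00,Bergelson_Leibman_Lesigne08}), so there is no internal argument to compare you with. Your outline follows the modern characteristic-factor/nilmanifold route (PET as in \cite{Bergelson87}, Host--Kra factors \cite{Host_Kra05}, Leibman's equidistribution theorem \cite{Leibman05b}, positivity on nilsystems as in \cite{Bergelson_Leibman_Lesigne08}) rather than the IP/Furstenberg--Katznelson-style argument of the cited source; that route is viable, and it is in fact the same strategy the paper itself implements for its non-polynomial \cref{thm_multipleconsecutiverecurrence_0} in Sections 4 and 5.

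As a proof, however, what you have written is a roadmap with its two decisive steps asserted rather than carried out, and you name them yourself as the obstacles: the PET bookkeeping showing that the pronilfactors are characteristic, and the identification/positivity of the limit on the nilfactor. Two specific points need more than a citation-by-name. First, your quantitative claim is a liminf of averages over arbitrary intervals $[M,N)$ with $N-M\to\infty$, but Leibman's equidistribution theorem (and the Host--Kra convergence results) are stated for Ces\`aro averages over initial segments; with only those, you get positive lower density of $R$, not syndeticity. You need well-distribution versions of the equidistribution input along all long intervals --- exactly the kind of uniformity the present paper labors to establish for its Riesz means --- and your reduction of syndeticity to the averaged statement has no engine without it. Second, the asserted lower bound ``by a positive function of $\mu(A)$ alone'' is not what the sketched nilsystem argument yields: after projecting onto $\mathcal Z_s$ and approximating $\mathbb{E}(1_A\mid\mathcal Z_s)$ by a continuous function, the bound you obtain depends on that function (hence on the system), not merely on $\mu(A)$; uniformity in the system is a genuine additional theorem (it is precisely the content of \cite{Bergelson_McCutcheon96}). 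This overclaim is not fatal for syndeticity --- positivity on each ergodic component together with Fatou suffices in the ergodic decomposition step --- but as written that step silently leans on the unproved uniform constant, so the gap should be acknowledged and closed one way or the other.
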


By a classical theorem of algebra (cf. \cite{Ostrowski19} or \cite{Polya19}) any polynomial $g\in \Q[x]$ with $g(\Z)\subset\Z$ and $\deg g=d$ can be represented as an integer linear combination of binomial coefficients, $g(n)= a_d \binom{n}{d}+\ldots+a_1 \binom{n}{1}+a_0 \binom{n}{0}$. Since $\Delta \binom{n}{d}= \binom{n}{d-1}$, it follows that for any $g\in \Q[x]$ with $g(\Z)\subset\Z$ and $\deg g=d$ there exists $p(x)=a_d x^{d}+\ldots+a_1x+a_0 \in\Z[x]$ such that $g(n)=p(\Delta)\binom{n}{d}$, where the operator $p(\Delta)$ is defined as $p(\Delta) f\coloneqq a_d \Delta^{d}f+\ldots+a_1\Delta f +a_0f$.
This leads to the following equivalent formulation of \cref{theorem:BL96}.

\begin{Theorem}
\label{theorem:BL96-2}
Let $k,d\in\N$ and $\xbmt$ an invertible probability measure preserving system.
For any $A\in\B$ with $\mu(A)>0$ and any polynomials $p_1,\ldots,p_k\in \Z[x]$ with $\deg (p_i)<d$ for $i=1,\ldots,k$, the set
\begin{equation}\label{eq_intro1.4-before}
R=\Big\{n\in\N:\mu\big(A\cap T^{-p_1(\Delta) \binom{n}{d}}A\cap \ldots\cap T^{- p_k(\Delta)\binom{n}{d}}A\big)>0\Big\}
\end{equation}
is syndetic.
\end{Theorem}

Our second main result is a multiple recurrence result in the spirit of \cref{theorem:BL96-2}.

\begin{Maintheorem}
\label{thm_mainmultiple}
Let $f\in\F_{\ell}$ and define $W\coloneqq \Delta^{\ell-1} f$. For any $k\in\N$, any invertible probability measure preserving system $\xbmt$, any $A\in\B$ with $\mu(A)>0$ and any polynomials $p_1,\ldots,p_k\in \Z[x]$, the set
\begin{equation}\label{eq_intro1.4}
R=\Big\{n\in\N:\mu\big(A\cap T^{-\lfloor p_1(\Delta) f(n)\rfloor }A\cap \ldots\cap T^{-\lfloor p_k(\Delta)f(n)\rfloor}A\big)>0\Big\}
\end{equation}
is thick and $W$-syndetic.
\end{Maintheorem}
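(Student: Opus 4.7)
My plan is to reduce Theorem B to a single ``one-point'' $W$-syndeticity statement and then attack that statement via characteristic factors and equidistribution on nilmanifolds. The reduction rests on the operator identity $(1+\Delta)f(M)=f(M+1)$, which iterates to
$$p_i(\Delta)f(M+j)=p_i(\Delta)(1+\Delta)^j f(M)=\tilde q_{i,j}(\Delta)f(M),\qquad \tilde q_{i,j}(x)\coloneqq p_i(x)(1+x)^j\in\Z[x],$$
converting a shift of the argument of $f$ into a polynomial composition in $\Delta$. Using this identity, both conclusions of Theorem B follow from the \emph{core claim}: for every $f\in\F_{\ell+1}$, every finite family $q_1,\ldots,q_K\in\Z[x]$, every invertible system $\xbmt$, and every $A\in\B$ with $\mu(A)>0$, the set
$$R^*\coloneqq\Bigl\{n\in\N:\mu\Bigl(A\cap\bigcap_{i=1}^K T^{-\lfloor q_i(\Delta)f(n)\rfloor}A\Bigr)>0\Bigr\}$$
is $W$-syndetic, where $W=\Delta^\ell f$. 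Indeed, applied directly to $\{p_1,\ldots,p_k\}$ the core claim yields the $W$-syndeticity of $R$; applied to the enriched family $\{p_i(x)(1+x)^j:1\leq i\leq k,\,0\leq j\leq L\}$ it produces, for each $L$, a $W$-syndetic -- hence non-empty -- set of $M$'s for which $[M,M+L]\subset R$, giving thickness.

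To prove the core claim I would first use that $f\in\F_{\ell+1}$ forces $\Delta^s f(n)\to 0$ for every $s\geq\ell+1$, which reduces matters to the case where each $q_i$ has degree at most $\ell$: the contribution of the higher-degree monomials is asymptotically negligible and the bounded resulting floor discrepancies can be stabilised by a pigeonhole argument on $W$-wide intervals. Next, I would invoke the Host--Kra--Ziegler structure theory to show that a finite-step pronilpotent factor is characteristic for the $W$-weighted multiple averages
$$\frac{1}{W(N)}\sum_{n=1}^N\Delta W(n)\prod_{i=1}^K T^{\lfloor q_i(\Delta)f(n)\rfloor}F_i,$$
reducing to the case when $\xbmt$ is an $s$-step nilsystem $G/\Gamma$ with $Tx=g_0 x$. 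On such a nilsystem the problem becomes one of $W$-weighted equidistribution of the joint orbit $M\mapsto (g_0^{\lfloor q_i(\Delta)f(M)\rfloor}x)_{i=1}^K$ on its natural closure in $(G/\Gamma)^K$, which I would establish via a Weyl--Leibman-style argument using that $\Delta^{\ell+1}f\to 0$ forces cancellation in the relevant exponential sums on $W$-wide intervals.

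The main obstacle will be the equidistribution step: identifying and exploiting the correct orbit closure. Integer relations among the $q_i$ and between their values and iterated differences of $f$ may concentrate the orbit on a proper sub-nilmanifold, and this sub-nilmanifold must be matched with the one predicted by the polynomial multiple recurrence theorem (Theorem 1.6) so that positive Haar mass there translates into positive $\mu$-intersection with $A$ downstairs. Carrying this out amounts to adapting Leibman's analysis of polynomial orbits on nilmanifolds to the ``asymptotically polynomial'' sequences $q_i(\Delta)f$, using $\Delta^{\ell+1}f\to 0$ as the substitute for exact polynomiality, and simultaneously upgrading natural-density equidistribution to the $W$-weighted form required for $W$-syndeticity on every sufficiently $W$-wide interval.
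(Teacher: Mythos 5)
Your proposal takes essentially the same route as the paper. The identity $p_i(\Delta)(1+\Delta)^jf(M)=p_i(\Delta)f(M+j)$ is exactly how the paper derives both the $W$-syndetic and thick versions from a single weighted multiple-recurrence statement (Theorem~\ref{thm_multipleconsecutiverecurrence_0} / Theorem~\ref{thm_multipleconsecutiverecurrence}, followed by Corollary~F); the reduction to degree~$\leq\ell$ plus an $\oh(1)$ error is precisely the paper's class $\mathcal S(f)$ (Definition~\ref{def_ChildrenOfTheFunction}), and the pigeonhole stabilisation of floor errors reappears in the paper as the finite case-split in Lemma~\ref{lemma_characbasecase}; the Host--Kra characteristic-factor reduction to pro-nilsystems and the $W$-weighted Leibman-type equidistribution argument are the contents of Theorems~\ref{thm_characteristic_factor} and~\ref{thm_MainNilmanifoldEquidistribution}. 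The one refinement worth keeping in mind as you flesh this out is that the weighted averages must be \emph{uniform} Riesz means ($\text{UW-}\lim$ over all $W$-wide windows, not just $[1,N]$) in order for positivity of the limit to yield $W$-syndeticity rather than mere positive upper $W$-density; you flag this at the end, and it is exactly what the paper's Theorem~D formulation provides.
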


\begin{Remark}
Unlike the return-time set in \eqref{eq_intro1.4-before}, the return-time set in \eqref{eq_intro1.4} is in general not syndetic.
Nonetheless, \cref{thm_mainmultiple} shows that the set $R$ in \eqref{eq_intro1.4} is combinatorially rich in a new and different way.
\end{Remark}

\begin{Remark}\label{rmrk_comparison}
Multiple recurrence results dealing with non-polynomial functions from Hardy fields were obtained in \cite{Frantzikinakis09,Frantzikinakis10,Frantzikinakis15,Frantzikinakis_Wierdl09}.
In these papers it was established (among other things) that, for any $k\in\N$, any invertible probability measure preserving system $\xbmt$, any $A\in\B$ with $\mu(A)>0$ and for a function $f$ from a Hardy field satisfying certain growth conditions there exists $n\in\N$ such that
$$\mu(A\cap T^{\lfloor f(n)\rfloor}A\cap T^{2\lfloor f(n)\rfloor}A\cap\cdots\cap T^{k\lfloor f(n)\rfloor}A)>0.$$
Additionally, it was shown that under the same conditions, given functions $f_1,\dots,f_k$ from a Hardy field of different growth rates, there exists $n\in\N$ such that
$$\mu(A\cap T^{\lfloor f_1(n)\rfloor}A\cap T^{\lfloor f_2(n)\rfloor}A\cap\cdots\cap T^{\lfloor f_k(n)\rfloor}A)>0.$$
By way of contrast, \cref{thm_mainmultiple} applies to the functions $p_1(\Delta) f,\dots,p_k(\Delta) f$ which, in general, do not have different growth rates (in fact they may not even be linearly independent), nor are they all a scalar multiple of a common function.
These novel features are established with the help of some new ideas and techniques which are particularly patent in \cref{sec_u}.
\end{Remark}

By utilizing the identity
$$
f(n+k)=(1+\Delta)^k f(n),\qquad\forall n\in\N,~\forall k\in\N\cup\{0\},
$$
one obtains the following corollary from \cref{thm_mainmultiple}.

\begin{Maincorollary}
\label{cor_multipleconsecutiverecurrence_0}
Let $f\in\F_{\ell}$, let $\xbmt$ be an invertible probability measure preserving system and define $W\coloneqq \Delta^{\ell-1} f$. Then for any $A\in{\mathcal B}$ with $\mu(A)>0$ and any $k\in\N$ the set
\begin{equation*}
R^{(k)}_{A}=\big\{ n\in\N:
\mu\big(A\cap T^{\lfloor f(n)\rfloor}A\cap T^{\lfloor f(n+1)\rfloor}A\cap\cdots\cap T^{\lfloor f(n+k)\rfloor}A\big)>0\big\}
  \end{equation*}
is thick and $W$-syndetic.
\end{Maincorollary}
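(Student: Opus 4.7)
The plan is to deduce \cref{cor_multipleconsecutiverecurrence_0} directly from \cref{thm_mainmultiple} via the identity $f(n+j)=(1+\Delta)^j f(n)$, as hinted in the paragraph preceding the statement. This identity is immediate by induction: since $(1+\Delta)f(n)=f(n)+\Delta f(n)=f(n+1)$, iterating yields $(1+\Delta)^j f(n)=f(n+j)$ for every $j\in\N\cup\{0\}$.

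Concretely, I would introduce the polynomials $q_j(x):=(1+x)^{j-1}\in\Z[x]$ for $j=1,2,\ldots,k+1$, so that $q_j(\Delta)f(n)=f(n+j-1)$ by the identity above. Next, apply \cref{thm_mainmultiple} to the invertible measure preserving system $(X,\B,\mu,T^{-1})$ with these $k+1$ polynomials and the same $f\in\F_{\ell+1}$. Since $(T^{-1})^{-m}=T^m$ for every $m\in\Z$, the set produced by \cref{thm_mainmultiple} in this application is
\[
\big\{n\in\N:\mu\big(A\cap T^{\lfloor f(n)\rfloor}A\cap T^{\lfloor f(n+1)\rfloor}A\cap\cdots\cap T^{\lfloor f(n+k)\rfloor}A\big)>0\big\},
\]
which is precisely $R^{(k)}_{A}$. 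As \cref{thm_mainmultiple} asserts that this set is thick and $W$-syndetic with $W=\Delta^\ell f$, the conclusion of the corollary follows.

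Since the argument reduces to a direct invocation of \cref{thm_mainmultiple}, no genuine obstacle arises at this step. The only points worth verifying are the elementary identity $f(n+j)=(1+\Delta)^j f(n)$ and the observation that the invertibility of $T$ lets one absorb the positive sign in the exponent by passing to $T^{-1}$. All of the substantive analytic and combinatorial content is contained in the proof of \cref{thm_mainmultiple} itself.
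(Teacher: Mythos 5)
Your proof is correct and follows exactly the route the paper hints at: express $f(n+j)=(1+\Delta)^j f(n)$ via Newton's forward difference formula (\cref{lem_TaylorLeibnitz}), then invoke \cref{thm_mainmultiple} with the $k+1$ polynomials $q_j(x)=(1+x)^{j-1}$. You also correctly spot and handle the sign mismatch (Theorem~\ref{thm_mainmultiple} uses $T^{-\lfloor\cdot\rfloor}$ while the corollary uses $T^{\lfloor\cdot\rfloor}$) by applying the theorem to $(X,\B,\mu,T^{-1})$, which is legitimate since $T^{-1}$ is again an invertible measure preserving transformation.
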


The fact that the set $R^{(k)}_{A}$ is non-empty for the special case when $f(n)=n^\alpha$ answers \cite[Problem 23]{Frantzikinakis11}.
On the other hand, we remark that \cref{cor_multipleconsecutiverecurrence_0} does \emph{not} hold when $f$ is a polynomial with integer coefficients.
To illustrate this contrast, fix $\alpha\geq1$. Then the following is true if and only if $\alpha$ is not an integer:
For every invertible probability measure preserving system $\xbmt$, for any $A\in{\mathcal B}$ with $\mu(A)>0$ and for every $k\in\N$, there exists $n\in\N$ such that
\begin{equation}\label{equation_last}
  \mu(A\cap T^{\lfloor n^\alpha\rfloor}A\cap T^{\lfloor(n+1)^\alpha\rfloor}A\cap\cdots\cap T^{\lfloor(n+k)^\alpha\rfloor}A)>0.
\end{equation}
Indeed, when $\alpha\notin\N$ the function $f(n)=n^\alpha$ belongs to $\F$ and thus \eqref{equation_last} follows from \cref{cor_multipleconsecutiverecurrence_0}.
When $\alpha$ is an integer, one can take $\xbmt$ to be the rotation on $2$ points and $k=1$ to obtain a counterexample.
In fact, one can obtain a counterexample for any system with a non-trivial Kronecker factor.

\cref{cor_multipleconsecutiverecurrence_0} implies, via Furstenberg's correspondence principle (see \cite[Section 1]{Furstenberg77} or \cite[Theorem 1.1]{Bergelson87b}) several interesting combinatorial applications.
For instance, it follows that for every $f\in\F$ and $k\in\N$, a set with positive Banach density must contain a configuration of the form
$$\{a,~ a+\lfloor f(n)\rfloor,~ a+\lfloor f(n+1)\rfloor,~\ldots,~a+\lfloor f(n+k)\rfloor\}$$
for some $a,n\in\N$.
A stronger result is \cref{cor_originalmain} below.

\begin{Remark} We would like to note that
  \cref{cor_multipleconsecutiverecurrence_0} implies Szemer\'edi's theorem \cite{Szemeredi75} which states that every set $A\subset\N$ with positive upper Banach density contains arbitrarily long arithmetic progressions.
  To verify this claim, let $A\subset\N$ have positive upper Banach density and let $k\in\N$.
  Take any function $f\in\F_2$ and let $W=\Delta f$.
  Since $W\in\F_1$, we have $\Delta W(n)\to 0$ as $n\to\infty$, and therefore $|W(n+10k^2)-W(n)|\to0$ as $n\to\infty$.
  Using \cref{cor_multipleconsecutiverecurrence_0} find $a,n\in\N$ with $n$ large enough so that $|W(n+10k^2)-W(n)|<1/(5k)$ and such that
  $$\big\{a+\lfloor f(n)\rfloor,a+\lfloor f(n+1)\rfloor,\cdots,a+\lfloor f(n+10k^2)\rfloor\big\}\subset A.$$
  Using the pigeonhole principle, find $t\in\{1,2,\dots,5k\}$ such that $\|tW(n)\|_\T<1/(10k)$, where $\|x\|_\T$ denotes the distance from the real number $x$ to the closest integer.
  Observe that the set $\big\{a+f(n),a+f(n+t),\cdots,a+f(n+2kt)\big\}$ is, up to a small error, the arithmetic progression $\{a+f(n),a+f(n)+tW(n),a+f(n)+2tW(n),\cdots,a+f(n)+2ktW(n)\}$ whose common difference $tW(n)$ is almost an integer.
  Therefore either the first $k$ or the last $k$ terms of the set
  $$\big\{a+\lfloor f(n)\rfloor,a+\lfloor f(n+t)\rfloor,\cdots,a+\lfloor f(n+2kt)\rfloor\big\}$$
  form an arithmetic progression of length $k$.
\end{Remark}

We will derive \cref{thm_mainmultiple} from a more general result dealing with weighted multiple ergodic averages along sequences from $\F$. To state this theorem we need to introduce some notation first.

\begin{Definition}
Let $a\colon\N\to\C$ be bounded, let $W\in\F_1$ and let $M<N\in\N$.
Define
$$\E_{n\in[M,N]}^W a(n)\coloneqq \frac1{W(N)-W(M)}\sum_{n=M}^N\Delta W(n)a(n).$$
We denote by
$$\otherlim{W}{n\to\infty}a(n)\coloneqq \lim_{N\to\infty}\E_{n\in[1,N]}^W a(n)$$
the \define{(simple) Riesz mean}\footnote{We follow the terminology of Kuipers and Niederreiter \cite[Example 7.3 on page 63]{Kuipers_Niederreiter74}, but use a different notation.} of $a$ with respect to $W$, whenever this limit exists, and
$$\otherlim{UW}{n\to\infty}a(n)\coloneqq \lim_{W(N)-W(M)\to\infty}\E_{n\in[M,N]}^W a(n)$$
the \define{uniform Riesz mean} of $a$ with respect to $W$, whenever this limit exists.
\end{Definition}

\begin{Remark}
When $W=\operatorname{Id}$ the
uniform Riesz mean with respect to $W$ is the same as the more well-known \define{Uniform Ces\`aro mean}, which is a natural form of convergence often used in ergodic theory and is also closely related to the classical notion of \define{well-distribution} introduced by Hlawka \cite{Hlawka55a} and Peterson \cite{Petersen56} in the 1950s (cf. \cref{def_relative-u.d.} below).
\end{Remark}

\begin{Maintheorem}
\label{thm_multipleconsecutiverecurrence_0}
Let $k\in\N$, $\ell\in\N$, $f\in\F_{\ell}$, $W\coloneqq \Delta^{\ell-1} f$ and $p_1,\ldots,p_k\in \Z[x]$. Let $\xbmt$ be an invertible probability measure preserving system.
\begin{enumerate}
\item
For any $h_1,\ldots,h_k\in L^\infty\xbm$ the limit
$$
\otherlim{UW}{n\to\infty}
T^{\lfloor p_1(\Delta)f(n)\rfloor}h_1 T^{\lfloor p_2(\Delta)f(n)\rfloor}h_2\cdot\ldots\cdot T^{\lfloor p_k(\Delta)f(n)\rfloor}h_k
$$
exists in $L^2\xbm$.
\item
For any $A\in{\mathcal B}$ with $\mu(A)>0$,
  \begin{equation}
\otherlim{UW}{n\to\infty}\mu\big(A\cap T^{-\lfloor p_1(\Delta)f(n)\rfloor}A\cap T^{-\lfloor p_2(\Delta)f(n)\rfloor}A\cap\cdots\cap T^{-\lfloor p_k(\Delta)f(n)\rfloor}A\big)>0.
  \end{equation}
\end{enumerate}
\end{Maintheorem}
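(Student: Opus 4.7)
My plan is to derive \cref{thm_multipleconsecutiverecurrence_0} from the mean convergence and positivity of standard polynomial multiple ergodic averages via a change of variable that converts the uniform Riesz mean with respect to $W$ into a uniform Cesàro mean along a polynomial sequence. Since $f\in\F_{\ell+1}$, the function $W=\Delta^\ell f\in\F_1$ is eventually strictly monotone with $W(n)\to\pm\infty$ and $\Delta W(n)\to 0$. Hence $W$ admits an asymptotic inverse $\phi$ satisfying $\phi(W(n))=n+\oh(1)$, and an Abel-type rearrangement (exploiting the monotonicity of $\Delta W$) rewrites $\E_{n\in[M,N]}^W a(n)$ as a uniform Cesàro mean of $a\circ\phi$ over $[W(M),W(N)]$, up to negligible boundary terms.

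Next I would exploit the algebraic structure of $\F_{\ell+1}$. Writing $p_i(\Delta)f(n)=\sum_j a_{i,j}\Delta^j f(n)$, and noting that $\Delta^j f\in\F_{\ell+1-j}$ for $j\leq\ell+1$ while $\Delta^j f(n)\to 0$ for $j>\ell$, one sees that $p_i(\Delta)f$ is essentially a combination of iterated difference-antiderivatives of $W$. Reparameterized via $m=W(n)$, the $(\ell+1-j)$-fold ``antiderivative'' $\Delta^j f(\phi(m))$ is asymptotically polynomial of degree $\ell+1-j$ in $m$, so $p_i(\Delta)f(\phi(m))=q_i(m)+\epsilon_i(m)$ for some $q_i\in\Z[x]$ of degree $\leq\ell+1$ and an error $\epsilon_i(m)\to 0$. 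This allows me to replace (up to $L^2$-vanishing errors) the products $T^{\lfloor p_i(\Delta)f(n)\rfloor}h_i$ with $T^{\lfloor q_i(m)\rfloor}h_i$ at $m=W(n)$, the floor-rounding errors being absorbed by standard approximation since $\epsilon_i(m)\to 0$.

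With this reduction, part~(1) follows from the mean convergence of multiple polynomial ergodic averages (Walsh; Host--Kra--Leibman; Bergelson--Leibman), applied uniformly to $\prod_i T^{\lfloor q_i(m)\rfloor}h_i$. Part~(2) follows by combining (1) with \cref{theorem:BL96-2}: since the uniform Riesz limit coincides with the uniform Cesàro limit of the products along $(q_1,\ldots,q_k)\in(\Z[x])^k$, positivity for $h_i=\mathbbm{1}_A$ is inherited from the polynomial multiple recurrence of the $q_i$.

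The main technical obstacle is the uniformity of the change of variable: while the reduction to a Cesàro mean is transparent for initial segments $[1,N]$, the uniform Riesz mean requires control on arbitrary intervals $[M,N]$ with $W(N)-W(M)\to\infty$. This demands that both the polynomial approximation $p_i(\Delta)f(n)\approx q_i(W(n))$ and the control of floor-rounding errors be uniform in the starting point $M$. I would address this either via a PET-type van der Corput induction on the complexity of $(p_1,\ldots,p_k)$, which would reduce uniform convergence to uniform equidistribution of simpler sequences in nilmanifolds, or by a dyadic decomposition of $[M,N]$ in $W$-coordinates combined with a uniform equidistribution result for polynomial orbits.
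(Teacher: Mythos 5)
The central step of your reduction fails. You claim that after the change of variables $m=W(n)$ one has $p_i(\Delta)f(\phi(m))=q_i(m)+\epsilon_i(m)$ with $q_i\in\Z[x]$ of degree at most $\ell+1$ and $\epsilon_i(m)\to0$. The discrete antiderivatives of $W$ are taken with respect to $n$, not with respect to $m=W(n)$, and since $W\in\F_1$ grows sublinearly the reparametrization distorts them into non-polynomial functions. Concretely, take $f(n)=n\log n\in\F_2$ (so $\ell=1$) and $p_1=1$: then $W(n)=\Delta f(n)\sim\log n$, $\phi(m)\sim e^{m}$, and $f(\phi(m))\sim m\,e^{m}$, which is not within $o(1)$ of any polynomial, let alone one with integer coefficients; already $f(n)=n^{3/2}$ gives $f(\phi(m))\sim(2m/3)^{3}$, of degree $3>\ell+1$, with irrational leading coefficient and with an error that does not tend to zero. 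A second, related problem is the very first step: the uniform Riesz mean $\E^W_{n\in[M,N]}a(n)$ is not a Cesàro mean of $a\circ\phi$. Since $\Delta W(n)\to0$, each unit interval of $W$-values is hit by a long block of $n$'s whose total weight is about $1$, so the Riesz mean is an average over $m$ of block averages of $a$; this collapses to sampling $a(\phi(m))$ only when $a$ is essentially constant on each block, which is the case for expressions depending on $\lfloor W(n)\rfloor$ alone (this is exactly how the paper's base case, \cref{lemma_characbasecase}, works) but not for $\lfloor\Delta^j f(n)\rfloor$ with $j<\ell$, which vary enormously within a block. Consequently the appeal to Walsh/Host--Kra--Leibman convergence for part (1), and the appeal to \cref{theorem:BL96-2} for part (2), rest on an identification that is false, and positivity in particular cannot be inherited this way.

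What can be exploited is only the \emph{local} polynomial behaviour of $h\mapsto f(n+h)$, and this is how the paper proceeds: Theorem~\ref{thm_multipleconsecutiverecurrence_0} is deduced from \cref{thm_multipleconsecutiverecurrence} for the class $\mathcal{S}(f)$, whose proof runs through a PET-type van der Corput induction along Riesz means showing that the Host--Kra nilfactors are characteristic (\cref{thm_characteristic_factor}), a reduction to nilsystems, and an equidistribution theorem for the real-parameter orbits $b_0^{f(n)}b_1^{\Delta f(n)}\cdots b_\ell^{\Delta^\ell f(n)}\Gamma$ with respect to \textlim{UW} (\cref{thm_MainNilmanifoldEquidistribution}); positivity comes from the lower bound $\nu\geq C\mu_Y$ on the limiting measure in \cref{thm_lastuniformdistribution}, not from polynomial multiple recurrence. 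Your closing remark about a PET-type van der Corput induction points in this direction, but in your proposal it is offered only to repair uniformity in the left endpoint $M$, whereas in fact such an induction, combined with genuine nilmanifold equidistribution for these non-polynomial exponents, has to replace the main reduction rather than supplement it.
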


\begin{Remark}
If $f$ is a tempered function (see \cref{ftn_tempered}) then we conjecture that all uniform Riesz means with respect to $W$ appearing in \cref{thm_multipleconsecutiverecurrence_0} can actually be replaced with conventional Ces{\`a}ro averages, but this is not a consequence of our results. On the other hand, there are functions in $\F$, which even grow faster than linear, for which the results in \cref{thm_multipleconsecutiverecurrence_0} do not hold when Riesz means are replaced by Ces{\`a}ro averages, even for the special case $k=1$ and $p_1(\Delta)=\text{id}$. Examples of such functions $f$ can be found in \cite[p.\ 65]{BKQW05}.
\end{Remark}

As a corollary of \cref{thm_multipleconsecutiverecurrence_0} we get the following strengthening of \cref{thm_mainmultiple}, which quantifies the appearance frequency of long intervals in the set $R$ defined in \eqref{eq_intro1.4}.

\begin{Maincorollary}\label{cor_syndeticallythick}
Take $\ell\in\N$, $f\in\F_{\ell}$, $k\in\N$, $p_1,\ldots,p_k\in \Z[x]$, $\xbmt$ an invertible probability measure preserving systems and $A\in\B$ with $\mu(A)>0$.
Define $W\coloneqq \Delta^{\ell-1} f$.
Then for any $L\in\N$ the set of $m$ such that
$$
[m,m+L]\subset \Big\{n\in\N:\mu\big(A\cap T^{-\lfloor p_1(\Delta) f(n)\rfloor }A\cap \ldots\cap T^{-\lfloor p_k(\Delta)f(n)\rfloor}A\big)>0\Big\}
$$
is $W$-syndetic.
\end{Maincorollary}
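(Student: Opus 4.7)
The plan is to reduce the corollary to Theorem \ref{thm_multipleconsecutiverecurrence_0} via the polynomial trick already invoked in the derivation of Corollary \ref{cor_multipleconsecutiverecurrence_0}. For fixed $l\in\N$, define the polynomials $q_{i,j}(x):=p_i(x)(1+x)^j\in\Z[x]$ for $i\in\{1,\dots,k\}$ and $j\in\{0,\dots,l\}$. Using the identity $f(n+j)=(1+\Delta)^j f(n)$, one has
$$
q_{i,j}(\Delta)f(n)=p_i(\Delta)(1+\Delta)^j f(n)=p_i(\Delta)f(n+j),
$$
so the single ``long'' intersection indexed by the family $\{q_{i,j}\}$ at the point $n$ equals the intersection over $j=0,\dots,l$ of the ``short'' intersections defining $R$ at the points $n,n+1,\dots,n+l$.

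Applying Theorem \ref{thm_multipleconsecutiverecurrence_0}(2) to the family $\{q_{i,j}\}$, the uniform Riesz mean
$$
\otherlim{UW}{n\to\infty}\mu\Big(A\cap \bigcap_{j=0}^{l}\bigcap_{i=1}^{k} T^{-\lfloor p_i(\Delta)f(n+j)\rfloor}A\Big)
$$
exists and is strictly positive; call its value $c>0$. Let
$$
S\coloneqq\Big\{n\in\N:\mu\Big(A\cap\bigcap_{j=0}^{l}\bigcap_{i=1}^{k}T^{-\lfloor p_i(\Delta)f(n+j)\rfloor}A\Big)>0\Big\}.
$$
If $n\in S$, monotonicity of $\mu$ yields, for each $j=0,\dots,l$, that $\mu(A\cap\bigcap_{i=1}^k T^{-\lfloor p_i(\Delta)f(n+j)\rfloor}A)>0$, i.e.\ $n+j\in R$. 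Hence $S\subset\{n\in\N:[n,n+l]\subset R\}$, and it suffices to show that $S$ itself is $W$-syndetic.

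To extract $W$-syndeticity from the positivity of the uniform Riesz mean, fix $\epsilon\in(0,c)$ and choose, by definition of $\text{UW-lim}$, an $L_0\in\N$ such that whenever $W(N)-W(M)\geq L_0$ one has
$$
\E^W_{n\in[M,N]}\,\mu\Big(A\cap\bigcap_{j,i}T^{-\lfloor p_i(\Delta)f(n+j)\rfloor}A\Big)\geq c-\epsilon.
$$
Since the measure is bounded by $1$ and vanishes off $S$, this gives $\sum_{n\in S\cap[M,N]}\Delta W(n)\geq (c-\epsilon)(W(N)-W(M))$. Choosing $L\geq\max\{L_0,\,1/(c-\epsilon)\}$ forces $\sum_{n\in S\cap[M,N]}\Delta W(n)\geq 1$ for every $[M,N]$ with $W(N)-W(M)\geq L$, proving $W$-syndeticity of $S$ and hence of the larger set $\{n:[n,n+l]\subset R\}$.

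The only non-routine step is the reduction in the first paragraph, and even there the key algebraic identity $f(n+j)=(1+\Delta)^j f(n)$ is already exploited in the paper; the rest is a quantitative reading of the uniform Riesz mean. The main conceptual obstacle is recognizing that one should invoke Theorem \ref{thm_multipleconsecutiverecurrence_0}, not Theorem \ref{thm_mainmultiple}, because $W$-syndeticity of an intersection of $W$-syndetic sets is not automatic, whereas the uniform Riesz mean of a single (larger) multi-correlation automatically delivers the required uniform lower bound.
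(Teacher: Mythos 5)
Your proof is correct and takes the approach the paper has in mind (the paper states this corollary after Theorem \ref{thm_multipleconsecutiverecurrence_0} without spelling out a proof). The key steps — forming the augmented family $q_{i,j}(x)=p_i(x)(1+x)^j$, using Newton's forward difference formula $f(n+j)=(1+\Delta)^jf(n)$ to identify $q_{i,j}(\Delta)f(n)=p_i(\Delta)f(n+j)$, applying Theorem \ref{thm_multipleconsecutiverecurrence_0}(2) to the larger family, and then reading off $W$-syndeticity directly from the positive uniform Riesz mean of the single multi-correlation bounded by $1$ — are exactly what the corollary requires, and your closing remark correctly pinpoints why one must go through Theorem \ref{thm_multipleconsecutiverecurrence_0} rather than intersecting $W$-syndetic sets from Theorem \ref{thm_mainmultiple}.
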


Finally, we present a combinatorial application of \cref{cor_syndeticallythick}, which can de derived using Furstenberg's correspondence principle.
\begin{Maincorollary}\label{cor_originalmain}
Take $\ell\in\N$, $f\in\F_{\ell}$, $k\in\N$, $p_1,\ldots,p_k\in \Z[x]$, and let $E\subset\N$ have positive upper Banach density.
Then $E$ contains configurations of the form
\begin{equation}\label{eq_cor_originalmain}
\big\{a,~ a+\lfloor p_1(\Delta)f(n)\rfloor,~ a+\lfloor p_2(\Delta)f(n)\rfloor,~\ldots,~a+\lfloor p_k(\Delta)f(n)\rfloor\big\}
\end{equation}
with $a,n\in\N$.
Moreover, if for each $n\in\N$ one denotes by $A(n)$ the set of $a\in\N$ for which the configuration in \eqref{eq_cor_originalmain} is contained in $E$, then for any $L\in\N$ the set of $m$ such that
$$
[m,m+L]\subset \Big\{n\in\N:d^*\big((A(n)\big)>0\Big\}
$$
is $W$-syndetic, where $W\coloneqq \Delta^{\ell-1} f$.
\end{Maincorollary}

It is natural to ask whether \cref{thm_mainsingle} can be improved in the spirit of \cref{cor_originalmain}.
Unfortunately, our proof of \cref{thm_mainsingle} (see \cref{sec_SingleRec}) does not seem to allow for such an extension
.



~

\paragraph{\textit{\textbf{Organization of the paper.}}}

To establish (multiple) recurrence it is often helpful to start by analyzing translations on tori, where recurrence results can be obtained using the theory of uniform distribution.
In \cref{sec_u.d.-along-riesz-means} we study well-distribution of orbits along sequences from $\F$ in compact abelian groups.
{To that end we employ Riesz means, which are important for two reasons.
On the one hand, they allow us to treat functions, such as $f(n)=\log n$, which are \emph{not} uniformly distributed $\bmod~1$.
On the other hand, even for functions which are uniformly distributed $\bmod~1$, such as $f(n)=\sqrt{n}$, by using Riesz means we are able to obtain a variant of well-distribution (see \cref{def_relative-u.d.} and Theorems \ref{thm_tempereduniformalongW} and \ref{thm_MainNilmanifoldEquidistribution}), which is not discernible by Ces\`aro averages and is crucially needed in our proof of \cref{thm_multipleconsecutiverecurrence_0}.}

In \cref{sec_SingleRec} we prove \cref{thm_mainsingle} using equidistribution results from \cref{sec_u.d.-along-riesz-means}; an outline of the proof is presented at the beginning of the section. 

Sections \ref{sec_4} and \ref{sec_u} are devoted to the proof of \cref{thm_multipleconsecutiverecurrence_0}.
The method of the proof 
bears some similarities to the method used by Frantzikinakis in \cite{Frantzikinakis10}, but has several differences which are partly due to the fact that we are using Riesz means. 
The first step of the proof of \cref{thm_multipleconsecutiverecurrence_0}, executed in \cref{sec_CharFact}, is to show that the nilfactor is ``characteristic'' for the pertinent expressions.
We adapt a PET induction argument introduced in \cite{Bergelson87} together with results from \cite{Host_Kra05} to fit the framework of Riesz means.
The rest of \cref{sec_4} is dedicated to 
reducing \cref{thm_multipleconsecutiverecurrence_0} to the following equidistribution result on nilmanifolds, which is of independent interest.

\begin{Theorem}\label{thm_MainNilmanifoldEquidistribution}
Let $G$ be a connected and simply connected nilpotent Lie group, let $\Gamma\subset G$ be a co-compact discrete subgroup and let $X=G/\Gamma$.
Let $k,\ell\in\N$ with $k\leq\ell$, let $f\in\F_{\ell}$, let $W=\Delta^{\ell-1}f$, let $b_0,\dots,b_{k-1}\in G$ be commuting elements, let
$$Y=\overline{\Big\{b_0^{t_0}\cdots b_{k-1}^{t_{k-1}}\Gamma:t_0,\dots,t_{k-1}\in\R\Big\}}\subset X,$$
and let $\mu_Y$ be the normalized Haar measure on $Y$.
Then for every continuous function $H\in C(X)$ we have
$$\otherlim{UW}{n\to\infty}H\left(b_0^{f(n)}b_1^{\Delta f(n)}\cdots b_{k-1}^{\Delta^{k-1}f(n)}\Gamma\right)=\int_YH(y)\d\mu_Y(y).$$
\end{Theorem}

The proof of \cref{thm_MainNilmanifoldEquidistribution} is presented in \cref{sec_u} and
is based on a new induction procedure, loosely based on the scheme used by Green and Tao in \cite{Green_Tao12a}.
Some of the technical results which are used in the proof of \cref{thm_MainNilmanifoldEquidistribution} are collected in \cref{sec_appendix} .
\\

\paragraph{\textit{\textbf{Acknowledgments.}}}
We are grateful to the referee for their helpful comments and suggestions.

\section{Well-distribution with respect to Riesz means}
\label{sec_u.d.-along-riesz-means}

In this section we establish some facts about well-distribution of sequences in compact abelian groups with respect to Riesz means. These results will be utilized in the sequel.

\subsection{Weyl's criterion and van der Corput's lemma for Riesz means}

\begin{Definition}[{cf. \cite{Drmota_Tichy88,Schatte88}}]
\label{def_relative-u.d.}
Let $G$ be a compact abelian group, let $H$ be a closed subgroup of $G$ and let $\mu_H$ denote the (normalized) Haar measure on $H$.
Let $W\in\F_1$ and let $x\colon\N\to G$.
We say that $x(n)$ is \emph{$\mu_H$-well distributed ($\mu_H$-w.d.) with respect to \textlim{W}} if for every continuous function $F\in C(G)$,
$$
\otherlim{UW}{} F\big(x(n)\big) = \lim_{W(N)-W(M)\to\infty}\frac1{W(N)-W(M)}\sum_{n=M}^N\Delta W(n)F\big(x(n)\big)=\int_G F\d\mu_H.
$$
If $H=G$ we simply say that $x(n)$ is \define{well distributed (w.d.) with respect to \textlim{W}}.
\end{Definition}


We remark that well-distribution with respect to \textlim{W} is related to $W$-syndetic sets in the same way as the conventional notion of well-distribution connects to syndetic sets.


Here are two results which illustrate \cref{def_relative-u.d.}.
They both follow directly from \cref{cor_productequidistribution} below.

\begin{Example}
Let $W(n)=\log{n}$.
Then the sequence $x(n)=(\log{n}\bmod1,\log(n+1)\bmod1)\in\T^2$ is $\mu_H$-w.d.\ with respect to \textlim{W} where $H=\{(x,x):x\in\T\}$ is the diagonal on $\T^2$.
Observe that $x(n)$ is not $\mu_H$-u.d.\ (a sequence $y(n)$ is $\mu_H$-u.d.\ if for any $f\in C(\T^2)$ we have $\lim_{N\to\infty}\frac1N\sum_{n=1}^Nf(y(n))=\int_H f\d\mu_H$).
\end{Example}

\begin{Example}
Let $H:=\{(x,y,2y-x):x,y\in\T\}\subset\T^3$ and consider the sequence $x(n)=\big(n^{3/2}\bmod1,(n+1)^{3/2}\bmod1,(n+2)^{3/2}\bmod1\big)\in\T^3$. Since $(n+2)^{3/2}= 2(n+1)^{3/2}-n^{3/2}+\oh_{n\to\infty}(1)$, the sequence $x(n)$ is $\mu_H$-u.d.\ with respect to Ces\`aro means. But $x(n)$ is not $\mu_H$-w.d.\ because if $F\colon H\to \C$ denotes the function $F(x,y,2y-x)=e^{2\pi i (x-y)}$ for all $x,y\in\T$ then it is not hard to find $M,N\in\N$ with $N-M$ arbitrarily large such that $\frac{1}{N-M}\sum_{n=M}^N F(x(n))$ is not equal to $\int_H F\d\mu_H=0$.
However, if $W(n)=\sqrt{n}$ then $x(n)$ is $\mu_H$-w.d.\ with respect to \textlim{W}.
\end{Example}

We remark that in both examples above the sequence $x(n)$ is not contained in the subgroup $H$.

We will make use of the following version of Weyl's criterion.

\begin{Proposition}[Weyl criterion]\label{prop_weylforRiesz}
Let $G$ be a compact abelian group, let $H$ be a closed subgroup of $G$, let $\widehat G$ denote the dual group of $G$ and let $\Gamma\coloneqq \{\chi\in\widehat{G}:\chi(y)=1~\text{for all}~y\in H\}$.
Let $W\in\F_1$ and let $x\colon\N\to G$.
Then $x$ is $\mu_H$-w.d.\ with respect \textlim{W} if and only if for every $\chi\in\widehat G$,
$$
\otherlim{UW}{n\to\infty} \chi(x(n)) =
\begin{cases}
1,&\text{if $\chi\in\Gamma$,}\\
0,&\text{if $\chi\notin\Gamma$.}
\end{cases}
$$
\end{Proposition}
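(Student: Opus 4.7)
The plan is to mirror the classical proof of Weyl's criterion for Ces\`aro averages on compact abelian groups, with the single adjustment that Ces\`aro means are replaced by Riesz means $\E^W_{n\in[M,N]}$. The argument splits naturally into a trivial necessity direction and a sufficiency direction that reduces to the character case via Stone--Weierstrass.

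For the necessity direction, I would simply apply \cref{def_relative-u.d.} with the continuous test function $F=\chi$ for each $\chi\in\widehat G$. The integral $\int_G\chi\,\d\mu_H$ is computed by orthogonality of characters: it equals $1$ when $\chi|_H\equiv 1$ (i.e., $\chi\in\Gamma$) and $0$ otherwise, since in the second case $\chi|_H$ is a non-trivial character of $H$. This immediately yields the prescribed values for $\otherlim{UW}{n\to\infty}\chi(x(n))$.

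For the sufficiency direction, fix $F\in C(G)$ and $\epsilon>0$. The set of trigonometric polynomials on $G$ (finite $\C$-linear combinations of characters) is a self-adjoint unital subalgebra of $C(G)$ that separates points, so by Stone--Weierstrass there is a trigonometric polynomial $P=\sum c_\chi \chi$ with $\|F-P\|_\infty<\epsilon$. Linearity of the uniform Riesz mean together with the character hypothesis gives $\otherlim{UW}{n\to\infty}P(x(n)) = \sum_{\chi\in\Gamma} c_\chi = \int P\,\d\mu_H$, and a standard triangle-inequality decomposition
$$ \bigl|\E^W_{n\in[M,N]}F(x(n))-\textstyle\int F\,\d\mu_H\bigr| \leq \bigl|\E^W_{n\in[M,N]}(F-P)(x(n))\bigr| + \bigl|\E^W_{n\in[M,N]}P(x(n))-\int P\,\d\mu_H\bigr| + \epsilon $$
then transfers the conclusion from $P$ to $F$.

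The only step requiring a moment's care---and which I anticipate to be the main (though modest) obstacle---is that $\E^W_{n\in[M,N]}$ is not literally a probability average: the weights $\Delta W(n)/(W(N)-W(M))$ sum, by telescoping, to $(W(N+1)-W(M))/(W(N)-W(M))$, which exceeds $1$ by $\Delta W(N)/(W(N)-W(M))$. Since $W\in\F_1$ forces $\Delta W\in\F_0$, in particular $\Delta W(N)\to 0$, this excess tends to $0$ in the regime $W(N)-W(M)\to\infty$. Hence the first term above is bounded by $(1+o(1))\epsilon$, the middle term vanishes by the trigonometric-polynomial case, and letting $\epsilon\to 0$ completes the argument.
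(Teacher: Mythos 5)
Your proposal is correct and follows the same route as the paper: reduce to characters via Stone--Weierstrass and use the orthogonality fact $\int\chi\,\d\mu_H=1_\Gamma(\chi)$. The extra observation about the weights $\Delta W(n)/(W(N)-W(M))$ summing to $1+\Delta W(N)/(W(N)-W(M))=1+\oh(1)$ is a legitimate refinement that the paper's two-sentence proof elides, but it does not change the argument's essence.
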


\begin{proof}
Every continuous function on $G$ can be uniformly approximated by linear combinations of characters. So the proof follows from the fact that for each character $\chi$ we have $\int\chi\d\mu_H=1$ if $\chi\in\Gamma$ and $\int\chi\d\mu_H=0$ if $\chi\notin\Gamma$.
\end{proof}

We also need a version of van der Corput's lemma.
The proof is similar to that of \cite[Theorem 2.2]{Bergelson96}.

\begin{Proposition}[van der Corput]\label{thm_vdCforRiesz}
Let $\Hilb$ be a Hilbert space, let $W\in\F_1$,
and let $(N_t)_{t\in\N}$ and $(M_t)_{t\in\N}$ be sequences of positive integers with $W(N_t)-W(M_t)\to\infty$ as $t\to\infty$.
Let $f_1,f_2,\ldots \colon \N\to \Hilb$ satisfy $\sup_{t,n\in\N}\|f_t(n)\|<\infty$ and assume that for all but finitely many $d\in\N$ we have
$$\lim_{t\to\infty}\E_{n\in[M_t,N_t]}^W \big\langle f_t(n+d),f_t(n)\big\rangle=0.$$
Then
$$\lim_{t\to\infty}\E_{n\in[M_t,N_t]}^W f_t(n)=0\qquad\text{ in norm}.$$
\end{Proposition}

\begin{proof}
For each $d\in\N$ we have $\E_{n\in[M_t,N_t]}^W f_t(n)=\E_{n\in[M_t,N_t]}^W f_t(n+d)+\oh_{d;t\to\infty}(1)$.\footnote{Here and throughout the paper we write $\oh_{d;t\to\infty}(1)$ to denote a quantity that tends to $0$ as $t\to\infty$ and $d$ is kept fixed.} Averaging over $d$ and then using Jensen's inequality, we obtain
\begin{eqnarray*}
  \left\|\E_{n\in[M_t,N_t]}^W f_t(n)\right\|^2
  &=&
  \left\|\E_{d\in[1,D]}\E_{n\in[M_t,N_t]}^W f_t(n+d)\right\|^2+\oh_{D;t\to\infty}(1)
  \\&\leq&
  \E_{n\in[M_t,N_t]}^W\left\|\E_{d\in[1,D]} f_t(n+d)\right\|^2+\oh_{D;t\to\infty}(1)
  \\&=&
  \E_{n\in[M_t,N_t]}^W\E_{d,d'\in[1,D]}\langle f_t(n+d),f_t(n+d')\rangle+\oh_{D;t\to\infty}(1)
  \\&=&
  \E_{d,d'\in[1,D]}\E_{n\in[M_t,N_t]}^W\langle f_t(n+d-d'),f_t(n)\rangle+\oh_{D;t\to\infty}(1)
\end{eqnarray*}
When $d=d'$ the inner average equals $\|f_t(n)\|^2$ which is bounded by a constant.
When $d\neq d'$ the assumption implies that the inner average is $\oh_{D;t\to\infty}(1)$.
Therefore $\|\E_{n\in[M_t,N_t]}^W f_t(n)\|^2\leq\frac1D+\oh_{D;t\to\infty}(1)$. Letting $t\to\infty$ and then $D\to\infty$ we obtain the desired conclusion.
\end{proof}

Combining \cref{thm_vdCforRiesz} with \cref{prop_weylforRiesz} we deduce the following corollary.

\begin{Corollary}\label{cor_vdCalongW}
Let $G$ be a compact abelian group, let $u\colon\N\to G$ and let $W\in\F_1$.
  If for every $h\in\N$, the sequence $n\mapsto u(n+h)-u(n)$ is w.d.\ with respect to \textlim{W}, then the sequence $n\mapsto u(n)$ is w.d.\ with respect to \textlim{W} as well.
\end{Corollary}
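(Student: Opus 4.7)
The plan is to combine \cref{prop_weylforRiesz} and \cref{thm_vdCforRiesz} in the standard way, sandwiching van der Corput's lemma between two applications of Weyl's criterion. By \cref{prop_weylforRiesz} (applied with $H=G$, so that $\Gamma=\{1\}$), to prove that $u(n)$ is w.d.\ with respect to \textlim{W} it suffices to verify that for every non-trivial character $\chi\in\widehat{G}$ one has $\otherlim{UW}{n\to\infty}\chi(u(n))=0$; the trivial character contributes $1$ automatically.

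Fix a non-trivial $\chi\in\widehat{G}$ and consider the bounded sequence $f\colon\N\to\C$ defined by $f(n)\coloneqq\chi(u(n))$, viewed as taking values in the one-dimensional Hilbert space $\C$. Since $\chi$ is a group homomorphism into the unit circle, for each $d\in\N$ we have
$$
\big\langle f(n+d),f(n)\big\rangle=\chi(u(n+d))\,\overline{\chi(u(n))}=\chi\big(u(n+d)-u(n)\big).
$$
By hypothesis, for each fixed $d\in\N$ the sequence $n\mapsto u(n+d)-u(n)$ is w.d.\ in $G$ with respect to \textlim{W}, so another application of \cref{prop_weylforRiesz} (again with $H=G$, $\Gamma=\{1\}$) to this difference sequence gives
$$
\otherlim{UW}{n\to\infty}\chi\big(u(n+d)-u(n)\big)=0
$$
for every $d\in\N$. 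Feeding this into \cref{thm_vdCforRiesz} yields $\otherlim{UW}{n\to\infty}f(n)=0$ in $\C$, which is exactly the vanishing of the Weyl sum we needed.

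I do not expect a substantive obstacle; the argument is a mechanical pairing of the two propositions stated just above. The only minor point that warrants attention is that the two uses of \cref{prop_weylforRiesz} go in opposite directions — forward (``w.d.\ implies characters vanish'') for the difference sequence, and reverse (``characters vanish implies w.d.'') for the sequence $u(n)$ itself — and that the chosen $\chi$, being non-trivial on $G$, is an admissible test character both when evaluated at $u(n)$ and when evaluated at the differences $u(n+d)-u(n)$.
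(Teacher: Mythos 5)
Your proof is correct and matches the paper's (implicit) argument: the paper itself gives no written proof of this corollary beyond the remark that it follows by "combining" \cref{thm_vdCforRiesz} with \cref{prop_weylforRiesz}, and the mechanical sandwich you describe — reduce to a single non-trivial character via Weyl, compute the correlations $\chi\big(u(n+d)-u(n)\big)$, kill them via the hypothesis and Weyl again, then apply van der Corput — is exactly that combination.
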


\subsection{Well-distribution of sequences in \texorpdfstring{$\F$}{} with respect to Riesz means}

The following result is very similar to \cite[Theorem 1.7.16]{Kuipers_Niederreiter74}, but we don't require any smoothness and, additionally, obtain well-distribution.

\begin{Lemma}\label{lemma_unifdistrfejertorus}
  Let $\pi\colon \R\to\T\coloneqq \R/\Z$ be the canonical quotient map and let $W\in\F_1$.
  Then the sequence $\pi\big(W(n)\big)$ is w.d.\ with respect to \textlim{W}.
\end{Lemma}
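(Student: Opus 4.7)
The plan is to invoke the Weyl criterion (Proposition~\ref{prop_weylforRiesz}) with $H = G = \T$, which reduces the claim to showing that for every $k \in \Z \setminus \{0\}$, setting $\alpha \coloneqq 2\pi k$, we have
$$
\otherlim{UW}{n\to\infty} e^{i\alpha W(n)} = 0.
$$
Replacing $W$ by $-W$ if necessary, I may assume $\Delta W$ is eventually nonnegative and monotonically decreasing to $0$, so that $W$ is eventually strictly increasing with $W(N) \to \infty$.

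The guiding heuristic is that $\sum_{n=M}^N \Delta W(n) e^{i\alpha W(n)}$ is a Riemann-sum approximation to the bounded integral $\int_{W(M)}^{W(N)} e^{i\alpha w}\,\mathsf{d}w$; after dividing by $W(N) - W(M) \to \infty$ the result should be $0$. To make this rigorous, I would use the first-order Taylor expansion
$$
e^{i\alpha W(n+1)} - e^{i\alpha W(n)} = i\alpha\, \Delta W(n)\, e^{i\alpha W(n)} + R_n, \qquad |R_n| \leq \tfrac{\alpha^2}{2}|\Delta W(n)|^2,
$$
and sum telescopically over $n \in [M,N]$ to obtain
$$
i\alpha \sum_{n=M}^N \Delta W(n) e^{i\alpha W(n)} = e^{i\alpha W(N+1)} - e^{i\alpha W(M)} - \sum_{n=M}^N R_n.
$$
Using that $\Delta W$ is eventually of constant sign, the remainder is controlled by
$$
\sum_{n=M}^N |\Delta W(n)|^2 \leq \Big(\sup_{n \geq M}|\Delta W(n)|\Big)\cdot (W(N+1) - W(M)),
$$
so dividing the telescoped identity by $W(N) - W(M)$ yields an estimate of the shape
$$
\Big|\E_{n\in[M,N]}^W e^{i\alpha W(n)}\Big| \leq \frac{C}{W(N) - W(M)} + C'\sup_{n\geq M}|\Delta W(n)|,
$$
for constants $C, C'$ depending only on $\alpha$.

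The main obstacle is that $\sup_{n\geq M}|\Delta W(n)| \to 0$ only when $M \to \infty$, whereas the uniform Riesz mean ranges over all $(M,N)$ with $W(N) - W(M) \to \infty$, which a priori permits $M$ to remain bounded. I would circumvent this by a splitting argument: given $\epsilon > 0$, pick $M_0$ so that $\sup_{n \geq M_0}|\Delta W(n)| < \epsilon$. For $M \leq M_0$, split $\sum_{n=M}^N = \sum_{n=M}^{M_0 - 1} + \sum_{n=M_0}^N$; the first piece is bounded in modulus by the constant $W(M_0) - W(M) \leq W(M_0)$, which vanishes after dividing by $W(N) - W(M) \to \infty$, and the second piece is covered by the estimate above applied with $M_0$ in place of $M$. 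Thus $\limsup_{W(N)-W(M)\to\infty} |\E^W_{n\in[M,N]} e^{i\alpha W(n)}| \leq C'\epsilon$, and since $\epsilon$ was arbitrary, the uniform Riesz mean vanishes, completing the verification of Weyl's criterion.
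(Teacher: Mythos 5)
Your proposal is correct, and it takes a genuinely different route from the paper's proof. The paper proves the lemma directly from the definition of well-distribution, using the fact that indicator functions $1_{[0,x)}$ are uniformly dense in $C(\T)$ and then doing a combinatorial block-counting argument: it partitions $\N$ into windows over which $W$ increases by roughly one unit, and shows that the $\Delta W$-weighted mass of the $n$ with $\{W(n)\}\in[0,x)$ in each window is approximately $x$. In contrast, you route through the Weyl criterion (\cref{prop_weylforRiesz}, already established earlier and not dependent on this lemma, so no circularity), reducing matters to the vanishing of the uniform Riesz mean of $e^{2\pi ikW(n)}$ for each $k\neq 0$, which you then handle by telescoping $e^{2\pi ikW(n+1)}-e^{2\pi ikW(n)}$ and using the second-order Taylor remainder together with $\Delta W(n)\to 0$; the splitting at a fixed $M_0$ correctly absorbs the finitely many indices where $\Delta W$ may not yet be of constant sign, and the remaining tail contributes at most $\Oh(\sup_{n\geq M_0}\Delta W(n))$ after normalization. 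The only cosmetic slip is the claim that the initial block is bounded by $W(M_0)-W(M)$ --- one should say $\sum_{n<M_0}|\Delta W(n)|$ since $\Delta W$ may change sign there --- but this does not affect the argument, as any constant independent of $M,N$ suffices. Your Abel-summation approach is analytically cleaner and avoids the bookkeeping of the paper's proof, at the mild cost of invoking the already-proven Weyl criterion rather than giving a self-contained argument.
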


\begin{proof}
When convenient, we identify $\T$ with the interval $[0,1)$ in the standard way.
Since finite linear combinations of indicator functions $1_{[0,x)}$ of intervals $[0,x)\subset\T$ with $x\in[0,1]$ are uniformly dense in $C(\T)$, it suffices to show that $$\otherlim{UW}{n\to\infty}1_{[0,x)}\Big(\pi\big(W(n)\big)\Big)=x\qquad\text{ for every }x\in[0,1].$$
Define
$A_{M,N}\coloneqq \E^W_{n\in[M,N]}1_{[0,x)}(\pi(W(n)))$.
Our goal is to show that $\lim_{W(N)-W(M)\to\infty}A_{M,N}=x$.

For each $m\in\Z$, let $S(m)$ be the smallest integer such that
$W(S(m))>m$.
Given $M,N\in\N$, let $w_M$ and $w_N$ be the largest integers for which $S(w_M)\leq M$ and $S(w_N)\leq N$ and let $s_M=S(w_M)$ and $s_N=S(w_N)$.
It suffices to verify the following claims:

\paragraph{\textsc{Claim 1:}}

$A_{s_M,s_N}=x+\oh_{W(N)-W(M)\to\infty}(1)$;

\paragraph{\textsc{Claim 2:}}

$\displaystyle|A_{M,N}-A_{s_M,s_N}|=\oh_{W(N)-W(M)\to\infty}(1)$.

\paragraph{\textit{Proof of Claim 1:}}
Fix $\epsilon>0$ and choose
$m_0$ such that $\Delta W(n)<\epsilon$ for all $n>S(m_0)$.
Let $c(m)$ be the smallest integer in the interval
$[S(m),S(m+1)]$ such that $W(c(m))\geq m+x$.
A simple calculation shows that
$$
A_{s_M,s_N}=\frac{1}{w_M-w_N}\sum_{m=s_M}^{s_N}\big[W(c(m))-W(S(m))\big]=
x+\Oh\left(\epsilon+\frac{m_0}{w_N-w_M}\right).
$$
This finishes the proof of Claim 1.

\paragraph{\textit{Proof of Claim 2:}}
Observe that $w_M\leq W(M)<w_M+1$ and $w_N\leq W(N)<w_N+1$ and hence (assuming $W(N)-W(M)>2$) we have $s_M\leq M\leq s_N\leq N$.
Therefore
\begin{eqnarray*}
|A_{M,N}-A_{s_M,s_N}|
&=&\left|
\E^W_{n\in[M,N]} 1_{[0,x)}\Big(\pi\big(W(n)\big)\Big)-\E^W_{n\in[s_M,s_N]} 1_{[0,x)}\Big(\pi\big(W(n)\big)\Big)\right|
\\
&\leq &
\left|\frac{1}{w_N-w_M}-\frac{1}{W(N)-W(M)} \right|
 \big(w_N-W(M)\big)
 \\
&&\qquad\qquad
+~\frac{W(M)-w_M}{w_N-w_M}+\frac{W(N)-w_N}{W(N)-W(M)}
\\
&\leq&
\frac{4}{W(N)-W(M)-2}.
\end{eqnarray*}
\end{proof}

The following corollary follows immediately from \cref{lemma_unifdistrfejertorus} and the definition of \textlim{W}.
\begin{Corollary}\label{cor_udtimesalpha}
  Let $W\in\F_1$ and $\alpha\in\R\setminus\{0\}$.
  Then the sequence $\alpha W(n)\bmod 1$ is w.d.\ in $[0,1)$ with respect to \textlim{W}.
\end{Corollary}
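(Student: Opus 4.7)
The plan is to reduce the corollary to \cref{lemma_unifdistrfejertorus} by applying that lemma to the weight $\alpha W$ in place of $W$ and then identifying the resulting Riesz-limit operator with $\otherlim{UW}{}$. First I would observe that $\alpha W\in\F_1$: since $\Delta(\alpha W)=\alpha\Delta W$ and the class $\F_0$ is preserved under multiplication by any nonzero real (eventual monotonicity, convergence to zero, and absolute divergence of $\sum f(n)$ all survive scaling by $\alpha\ne 0$), we have $\alpha\Delta W\in\F_0$ and hence $\alpha W\in\F_1$.

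Next, a direct computation shows that the Riesz averages attached to $W$ and $\alpha W$ coincide: for every bounded $a\colon\N\to\C$ and every $M<N$,
$$\E^{\alpha W}_{n\in[M,N]}a(n)=\frac{1}{\alpha W(N)-\alpha W(M)}\sum_{n=M}^{N}\alpha\,\Delta W(n)\,a(n)=\E^{W}_{n\in[M,N]}a(n),$$
because the factor $\alpha$ cancels between numerator and denominator.

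The only subtlety is the asymmetry of the definition of the uniform Riesz limit, which insists on $W(N)-W(M)\to+\infty$, and so I would split on the sign of $\alpha$. If $\alpha>0$, the conditions $W(N)-W(M)\to\infty$ and $\alpha W(N)-\alpha W(M)\to\infty$ are equivalent, hence the uniform Riesz limits with respect to $W$ and $\alpha W$ are the same operator, and \cref{lemma_unifdistrfejertorus} applied to $\alpha W$ gives directly that $\pi(\alpha W(n))$ is w.d.\ with respect to \textlim{W}. If $\alpha<0$, I would first apply the positive case with $|\alpha|$ in place of $\alpha$ to conclude that $\pi(|\alpha|W(n))$ is w.d.\ with respect to \textlim{W}; by \cref{prop_weylforRiesz} this gives $\otherlim{UW}{n\to\infty}e^{2\pi i k|\alpha|W(n)}=0$ for every $k\in\Z\setminus\{0\}$, and substituting $k\mapsto -k$ yields $\otherlim{UW}{n\to\infty}e^{2\pi i k\alpha W(n)}=0$ for every $k\ne 0$, so a second invocation of \cref{prop_weylforRiesz} concludes that $\pi(\alpha W(n))$ is w.d.\ with respect to \textlim{W}. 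I do not anticipate any genuine obstacle: the closure of $\F_1$ under scaling and the cancellation of $\alpha$ in the averages are routine, and the sign asymmetry is handled by the invariance of Haar measure on $\T$ under $x\mapsto -x$, as encoded by Weyl's criterion.
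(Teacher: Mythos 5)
Your proof is correct and follows essentially the same route the paper intends (the paper simply states that the corollary follows immediately from \cref{lemma_unifdistrfejertorus} and the definition of \textlim{W}): apply \cref{lemma_unifdistrfejertorus} to $\alpha W$, which is again in $\F_1$, and observe that the Riesz averages for $\alpha W$ and $W$ coincide. Your separate treatment of the case $\alpha<0$ via Weyl's criterion and complex conjugation is a necessary refinement the paper glosses over, since when $\alpha<0$ the quantity $\alpha W(N)-\alpha W(M)$ does not tend to $+\infty$ and so \textlim{U(\alpha W)} is not meaningfully defined.
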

Here is another corollary which will be used later.

\begin{Corollary}\label{cor_zerodensity}
Let $W\in\F_1$ and let $f:\N\to\R$ be such that $f(n)\to0$ as $n\to\infty$.
Then the set $A:=\{n\in\N:\lfloor W(n)+f(n)\rfloor\neq\lfloor W(n)\rfloor\}$ has zero \define{Banach $W$-density}\footnote{Given a set $E\subset\N$ its \define{Banach $W$-density} is defined as $d^*_W(E)\coloneqq \otherlim{UW}{n\to\infty}\1_E(n)$ whenever this limit exists.}.
\end{Corollary}
\begin{proof}
  For each $\epsilon>0$ let $A_\epsilon=\{n\in\N:\|W(n)\|_\T<\epsilon\}$.\footnote{Here, and elsewhere in the paper, we use the notation $\|x\|_{\T}$ to denote the distance from $x\in\R$ to the closest point in $\Z$.} It follows from \cref{lemma_unifdistrfejertorus} that $d^*_W(A_\epsilon)=2\epsilon$.
  Since $A_\epsilon$ contains $A$, up to possibly finitely many exceptions, it follows that $d^*_W(A)<2\epsilon$, and hence taking $\epsilon\to0$ we obtain $d^*_W(A)=0$ as desired.
\end{proof}

The next lemma shows that the property of being w.d.\ is stable under small perturbations.
\begin{Lemma}\label{lemma_stillequidistributed}
Let $G$ be a compact abelian group, let $W\in\F_1$ and let $u\colon \N\to G$ be a sequence that is w.d.\ with respect to \textlim{W}.
Then for every $g\colon \N\to G$ such that $\lim_{n\to\infty} g(n)=0$, the sequence $u(n)+g(n)$ is w.d.\ with respect to \textlim{W}.
\end{Lemma}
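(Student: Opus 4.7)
The plan is to apply Weyl's criterion (\cref{prop_weylforRiesz} with $H=G$, in which case $\Gamma$ consists only of the trivial character). It therefore suffices to verify, for every non-trivial $\chi\in\widehat{G}$, that $\otherlim{UW}{n\to\infty}\chi(u(n)+g(n))=0$. Using multiplicativity of $\chi$, I would decompose
$$\chi\bigl(u(n)+g(n)\bigr)=\chi\bigl(u(n)\bigr)+\chi\bigl(u(n)\bigr)\bigl(\chi(g(n))-1\bigr),$$
and exploit the linearity of the averaging operator $\E^W_{n\in[M,N]}$ to handle the two summands separately.

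The first summand is taken care of by the hypothesis: since $u$ is w.d.\ with respect to \textlim{W}, Weyl's criterion applied to $u$ gives $\otherlim{UW}{n\to\infty}\chi(u(n))=0$. For the second, set $r(n)\coloneqq \chi(u(n))(\chi(g(n))-1)$. Continuity of $\chi$ at the identity, combined with $g(n)\to 0$ in $G$, gives $\chi(g(n))\to 1$, so $r$ is bounded and $|r(n)|\leq|\chi(g(n))-1|\to 0$. The whole proof therefore reduces to the following auxiliary fact: for any bounded $a\colon\N\to\C$ with $a(n)\to 0$, one has $\otherlim{UW}{n\to\infty}a(n)=0$.

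To prove this auxiliary fact, fix $\epsilon>0$ and choose $n_0$ with $|a(n)|<\epsilon$ for $n\geq n_0$. Because $\Delta W\in\F_0$ is eventually monotone with $\lim\Delta W=0$, $\Delta W$ has fixed sign on some interval $[n_1,\infty)$, and hence $\sum_{n=M}^{N}|\Delta W(n)|=|W(N+1)-W(M)|$ whenever $M\geq n_1$. Splitting the sum defining $\E^W_{n\in[M,N]}a(n)$ at $\max(M,n_0,n_1)$ yields a bound of the form
$$\bigl|\E^W_{n\in[M,N]}a(n)\bigr|\leq \frac{C}{|W(N)-W(M)|}+\epsilon\cdot\frac{|W(N+1)-W(M)|}{|W(N)-W(M)|},$$
where $C$ depends only on $\|a\|_\infty$, $W$, $n_0$ and $n_1$. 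Since $\Delta W(N)\to 0$, the second ratio tends to $1$ as $W(N)-W(M)\to\infty$, so $\limsup\bigl|\E^W_{n\in[M,N]}a(n)\bigr|\leq\epsilon$. Letting $\epsilon\to 0$ finishes the claim. The only mildly delicate point in the whole argument is this last bookkeeping with boundary terms and the sign of $\Delta W$; the rest is a direct reduction via Weyl's criterion and I do not anticipate any substantive obstacle.
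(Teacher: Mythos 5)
Your proof is correct and follows essentially the same route as the paper: reduce to Weyl's criterion, observe that $\chi(u(n)+g(n)) - \chi(u(n)) = \chi(u(n))(\chi(g(n))-1) = \oh(1)$, and conclude that the perturbation does not affect the uniform Riesz mean. The paper asserts this last implication without justification, whereas you prove the auxiliary fact (that a bounded sequence tending to $0$ has uniform Riesz mean $0$) in detail, with a careful treatment of the eventual sign of $\Delta W$ and the boundary terms; this fills in precisely the step the paper leaves implicit.
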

\begin{proof}
  Let $\chi\colon G\to\C$ be a non-trivial character.
  We need to show that \textlim{UW}$\chi\big(u(n)+g(n)\big)=0$.
Since $\chi$ is a continuous homomorphism it satisfies $\chi\big(u(n)+g(n)\big)=\chi(u(n))\chi(g(n))$ and $\chi(g(n))=1+o_{n\to\infty}(1)$, so $\chi\big(u(n)+g(n)\big)-\chi(u(n))=o_{n\to\infty}(1)$.
This implies that $\otherlim{UW}{}\chi\big(u(n)+g(n)\big)=\otherlim{UW}{}\chi\big(u(n)\big)=0$.
\end{proof}

The following classical identity will be used often throughout this paper.

\begin{Lemma}
\label{lem_TaylorLeibnitz}
  For every function $f\colon\N\to\C$ and $h\in\N$ we have
  \begin{equation}\label{eq_TaylorLeibnitz}
  f(n+h)=\big(\Delta+1)^hf(n)=\sum_{i=0}^h\binom hi\Delta^if(n).
  \end{equation}
\end{Lemma}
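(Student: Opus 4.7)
The plan is to give a clean operator-theoretic proof followed by a backup induction, since this is the classical identity and there are no surprises. I would introduce the shift operator $S$ defined on functions $f\colon\N\to\C$ by $(Sf)(n)\coloneqq f(n+1)$. By the very definition of $\Delta$ we have the operator identity $S=\Delta+I$, where $I$ is the identity operator. Since $\Delta$ and $I$ obviously commute (they act on the same commutative algebra of functions and $I$ is central), the binomial theorem for commuting operators applies and gives
$$
S^h = (\Delta+I)^h = \sum_{i=0}^h\binom{h}{i}\Delta^i I^{h-i} = \sum_{i=0}^h\binom{h}{i}\Delta^i.
$$
Applying both sides to $f$ and evaluating at $n$ yields \eqref{eq_TaylorLeibnitz}, because $(S^hf)(n)=f(n+h)$.

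For readers who prefer an elementary argument, I would alternatively proceed by induction on $h$. The base case $h=0$ reduces to $f(n)=\binom{0}{0}\Delta^0 f(n)$, which is trivial. For the inductive step, assuming the formula holds for $h$, I would write $f(n+h+1)=\bigl(\sum_{i=0}^h\binom{h}{i}\Delta^i f\bigr)(n+1)$, use the basic relation $(\Delta^i f)(n+1)=(\Delta^i f)(n)+\Delta^{i+1}f(n)$ (which is just the definition of $\Delta$ applied to $\Delta^i f$), and reindex one of the resulting two sums. Collecting like terms and invoking Pascal's rule $\binom{h}{i}+\binom{h}{i-1}=\binom{h+1}{i}$ then gives $f(n+h+1)=\sum_{i=0}^{h+1}\binom{h+1}{i}\Delta^i f(n)$, completing the induction.

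There is no real obstacle here: the only point deserving mention is that $\Delta$ and $I$ commute, which justifies the binomial expansion; beyond that, the identity is a purely formal manipulation, and the bookkeeping in the inductive version is routine.
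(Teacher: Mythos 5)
The paper states this lemma without proof, presenting it as a classical identity; the way it is written, $f(n+h)=(\Delta+1)^h f(n)$, already encodes exactly the operator identity $S=\Delta+I$ you invoke. Your operator-theoretic argument is correct and is the natural justification the authors have in mind, and the inductive backup is also sound.
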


\begin{Theorem}\label{thm_tempereduniformalongW}
  Let $\pi\colon \R\to\T$ be the canonical quotient map, let $\ell\geq0$ be an integer, let $f\in\F_{\ell+1}$ and let $W=\Delta^\ell f$.
  Then for any coefficients $c_0,\dots,c_\ell\in\R$, not all $0$, and any $g\colon \N\to\R$ which satisfies $\lim g(n)=0$, the function $F\coloneqq \pi\circ\big(c_0f+c_1\Delta f+\cdots+c_\ell\Delta^\ell f+g\big)$ is w.d.\ with respect to \textlim{W}.
\end{Theorem}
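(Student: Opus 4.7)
I would proceed by induction on $\ell \geq 0$.

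\textbf{Base case ($\ell = 0$).} Here $f \in \F_1$, $W = f$, and the only coefficient $c_0$ is nonzero. By \cref{cor_udtimesalpha} applied with $\alpha = c_0$, the sequence $\pi(c_0 f(n))$ is w.d.\ with respect to \textlim{W}. Since $g(n) \to 0$, \cref{lemma_stillequidistributed} lets me add the perturbation $g$ without losing well-distribution, handling the base case.

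\textbf{Inductive step.} Assume the theorem for $\ell - 1$ and prove it for $\ell$, with $f \in \F_{\ell+1}$ and coefficients $c_0, \dots, c_\ell$ not all zero. Let $u(n) \coloneqq c_0 f(n) + c_1 \Delta f(n) + \cdots + c_\ell \Delta^\ell f(n) + g(n)$. I split into two cases.

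\emph{Case 1 ($c_0 = 0$).} Set $\tilde f \coloneqq \Delta f \in \F_\ell$. Then $\Delta^j \tilde f = \Delta^{j+1} f$, so
\[
u(n) = c_1 \tilde f(n) + c_2 \Delta \tilde f(n) + \cdots + c_\ell \Delta^{\ell-1}\tilde f(n) + g(n),
\]
and $\Delta^{\ell-1}\tilde f = \Delta^\ell f = W$. Since $(c_1,\dots,c_\ell)$ are not all zero, the inductive hypothesis applied to $\tilde f$ gives that $\pi \circ u$ is w.d.\ with respect to \textlim{W}.

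\emph{Case 2 ($c_0 \neq 0$).} I apply \cref{cor_vdCalongW}: it suffices to show that for every $h \in \N$, the sequence $n \mapsto u(n+h) - u(n)$ is w.d.\ with respect to \textlim{W}. Using \cref{lem_TaylorLeibnitz} in the form
\[
\Delta^j f(n+h) - \Delta^j f(n) = \sum_{i=1}^{h} \binom{h}{i} \Delta^{i+j} f(n),
\]
I rewrite, collecting terms by the order $k = i + j$,
\[
u(n+h) - u(n) = \sum_{k=1}^{\ell + h} d_k \, \Delta^k f(n) + \bigl(g(n+h) - g(n)\bigr),
\]
where $d_k \coloneqq \sum_{j = \max(0,k-h)}^{\min(\ell, k-1)} c_j \binom{h}{k-j}$. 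In particular $d_1 = c_0 h \neq 0$.

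The crucial observation is that since $f \in \F_{\ell+1}$, repeated differencing gives $\Delta^{\ell+1} f \in \F_0$, so in particular $\Delta^k f(n) \to 0$ for every $k \geq \ell + 1$ (a function in $\F_0$ tends to zero, and the difference of a sequence tending to zero still tends to zero). Therefore all terms with $k \geq \ell + 1$ can be absorbed into a perturbation $\tilde g(n) \to 0$. Setting $\tilde f \coloneqq \Delta f \in \F_\ell$ and using $\Delta^k f = \Delta^{k-1} \tilde f$, I obtain
\[
u(n+h) - u(n) = \sum_{m=0}^{\ell - 1} d_{m+1} \, \Delta^m \tilde f(n) + \tilde g(n),
\]
with leading coefficient $d_1 = c_0 h \neq 0$. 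Since $\Delta^{\ell - 1}\tilde f = \Delta^\ell f = W$, the inductive hypothesis applied to $\tilde f \in \F_\ell$ gives that $\pi \circ (u(\cdot + h) - u(\cdot))$ is w.d.\ with respect to \textlim{W}, and \cref{cor_vdCalongW} completes the induction.

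\textbf{Main obstacle.} The delicate bookkeeping is in Case 2: I must verify that after expanding $f(n+h)$ via Newton's formula and re-expressing everything in terms of $\tilde f = \Delta f$, (i) all differences of order exceeding $\ell - 1$ in $\tilde f$ really do tend to zero (so they fall under \cref{lemma_stillequidistributed}), and (ii) the resulting leading coefficient on $\tilde f$ itself is nonzero so that the inductive hypothesis is actually applicable. Both points reduce to the chain $f \in \F_{\ell+1} \Rightarrow \Delta^{\ell+1}f \in \F_0 \Rightarrow \Delta^k f \to 0$ for $k \geq \ell + 1$, which is the key structural fact about the class $\F$.
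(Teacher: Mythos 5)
Your proof is correct and takes essentially the same approach as the paper: induction on $\ell$ with the base case handled by \cref{cor_udtimesalpha} and \cref{lemma_stillequidistributed}, the $c_0=0$ case handled by reduction to $\Delta f$, and the $c_0\neq 0$ case handled by van der Corput (\cref{cor_vdCalongW}) combined with Newton's forward difference formula to reduce to $\tilde f = \Delta f \in \F_\ell$ with leading coefficient $hc_0\neq 0$. Your bookkeeping of the coefficients $d_k$ and your explicit treatment of the high-order terms $\Delta^k f$ with $k\geq \ell+1$ as a vanishing perturbation $\tilde g$ is a slightly more careful version of what the paper does implicitly, but the argument is the same.
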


\begin{proof}
We prove this by induction on $\ell$.
For $\ell=0$ the conclusion follows directly from \cref{cor_udtimesalpha} and \cref{lemma_stillequidistributed}.
  Assume now that $\ell\geq1$ and the result has already been established for all smaller $\ell$.

  Let $F$ be as in the statement of the theorem.
  If $c_0=0$ the result follows by induction, hence, let us assume $c_0\neq0$.
  We will use \cref{cor_vdCalongW} to show that $F$ is w.d.\ with respect to the Riesz mean \textlim{W}.
  Hence it suffices to show that for any $h\in\N$ the sequence $n\mapsto F(n+h)-F(n)$ is w.d.\ with respect to \textlim{W}.
  In view of \cref{lem_TaylorLeibnitz} we have
  $$F(n+h)=\sum_{j=0}^\ell c_j\Delta^jf(n+h)+g(n+h)=\sum_{j=0}^\ell c_j\sum_{i=0}^h\binom hi\Delta^{j+i}f(n)+g(n+h)$$
  so $F(n+h)-F(n)=b_1\Delta f+\cdots+b_\ell\Delta^\ell f+o(1)$, where
  $$b_k=\sum_{i=0}^k c_{k-i}\binom hi-c_k=\sum_{i=1}^k c_{k-i}\binom hi.$$
  In particular $b_1=hc_0\neq0$.
  Since $\Delta f\in\F_\ell $ and $\Delta^{\ell -1}(\Delta f)=W$ we deduce from the induction hypothesis that indeed $F(n+h)-F(n)$ is w.d.\ with respect to \textlim{W}.
  \end{proof}

In view of the Weyl criterion, we can extrapolate the above result to well-distribution of sequences in higher-dimensional tori.

\begin{Lemma}\label{cor_productequidistribution}
Let $\ell\geq0$, let $f\in\F_{\ell+1}$ and let $W=\Delta^\ell f$.
Let $d\in\N$, let $\pi\colon \R^d\to\T^d$ be the canonical quotient map and, for each $i=0,\dots,\ell $, let $\alpha_{i}\in\R^d$.
Define the sequence $F\colon \N\to\R^{(\ell +1)d}$ by
$$F(n)=\big(\alpha_{i}\Delta^if(n)\big)_{i=0,\dots,\ell },$$
and consider the subgroup
$$H\coloneqq \bigotimes_{i=0}^\ell \overline{\big\{\pi(t\alpha_i):t\in\R\big\}}\subset\T^{(\ell +1)d}$$
with corresponding normalized Haar measure $\mu_H$.
Then the sequence $\pi\big(F(n)\big)$ is $\mu_H$-w.d.\ with respect to \textlim{W}.
\end{Lemma}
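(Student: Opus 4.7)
The plan is to apply the Weyl criterion (\cref{prop_weylforRiesz}) with $G = \T^{(\ell+1)d}$ and closed subgroup $H$ as given, reducing well distribution of $\pi(F(n))$ to computing, for every character $\chi$ of $G$, the uniform Riesz mean $\otherlim{UW}{n\to\infty}\chi(\pi(F(n)))$ and showing it equals $1$ when $\chi$ annihilates $H$ and $0$ otherwise.

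First I would parametrize: every character of $\T^{(\ell+1)d}$ has the form $\chi(x_0,\dots,x_\ell) = \exp\bigl(2\pi i\sum_{i=0}^\ell \langle m_i,x_i\rangle\bigr)$ for some $(m_0,\dots,m_\ell)\in(\Z^d)^{\ell+1}$. Setting $c_i \coloneqq \langle m_i,\alpha_i\rangle\in\R$ and evaluating at $\pi(F(n))$ gives
$$\chi\bigl(\pi(F(n))\bigr) \;=\; \exp\Bigl(2\pi i\sum_{i=0}^\ell c_i\,\Delta^i f(n)\Bigr).$$
Next I would identify the annihilator $\Gamma = \{\chi\in\widehat G : \chi|_H=1\}$. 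Since characters are continuous, $\chi\in\Gamma$ iff $\exp(2\pi i t c_i)=1$ for all $t\in\R$ and all $i$, which happens iff every $c_i=0$.

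In the case $\chi\in\Gamma$ all $c_i$ vanish, so $\chi(\pi(F(n)))\equiv 1$ and its uniform Riesz mean is $1$. In the case $\chi\notin\Gamma$ some $c_i$ is nonzero, so \cref{thm_tempereduniformalongW} (applied with $g\equiv 0$) gives that the $\T$-valued sequence $\pi(c_0 f + c_1\Delta f + \cdots + c_\ell\Delta^\ell f)$ is well distributed with respect to \textlim{W}. Applying the one-dimensional Weyl criterion to the character $t\mapsto e^{2\pi i t}$ on $\T$ then yields $\otherlim{UW}{n\to\infty}\chi(\pi(F(n))) = 0$. This verifies both cases of \cref{prop_weylforRiesz}, concluding that $\pi(F(n))$ is $\mu_H$-w.d.\ with respect to \textlim{W}.

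I do not anticipate a real obstacle here: once \cref{thm_tempereduniformalongW} is in hand, the argument is just the standard passage from one-dimensional well distribution to $\mu_H$-well distribution in a product via Pontryagin duality. The only point requiring a small amount of care is the identification of $\Gamma$, where one uses continuity of $\chi$ to pass from $\{\pi(t\alpha_i):t\in\R\}$ to its closure in $\T^d$.
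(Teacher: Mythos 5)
Your proposal is correct and follows essentially the same route as the paper: parametrize characters of $\T^{(\ell+1)d}$ by tuples of integer vectors, note that the annihilator $\Gamma$ of $H$ consists exactly of those characters with $\langle m_i,\alpha_i\rangle = 0$ for all $i$, and then handle the $\chi\notin\Gamma$ case by feeding the nonzero coefficient vector $(c_0,\dots,c_\ell)$ into \cref{thm_tempereduniformalongW}. The only cosmetic difference is notation (the paper writes $\tau_i$ for your $m_i$).
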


\begin{proof}
Let $\Gamma$ be the set of characters of $\T^{(\ell+1)d}$ that become trivial when restricted to $H$.
Given a character $\chi$ of $\T^{(\ell+1)d}$, we can find $\tau_0,\dots,\tau_\ell\in\Z^d$ such that $\chi\big(\pi(x_0,\dots,x_\ell)\big)=e\big(\langle x_0,\tau_0\rangle+\cdots +\langle x_\ell,\tau_\ell\rangle\big)$.
Then $\chi\in\Gamma$ if and only if $\langle\alpha_i,\tau_i\rangle=0$ for all $i=0,\dots,\ell$.
In view of \cref{thm_tempereduniformalongW} we have
$$\otherlim{UW}{n\to\infty}\chi\Big(\pi\big(F(n)\big)\Big)=
\otherlim{UW}{n\to\infty} e\left(\sum_{i=0}^\ell\langle\alpha_i,\tau_i\rangle\Delta^if(n)\right)
=1_\Gamma(\chi),$$
where $e(\theta)\coloneqq e^{2\pi i\theta}$,
and hence the Weyl criterion (\cref{prop_weylforRiesz}) implies that the sequence $\pi\big(F(n)\big)$ is $\mu_H$-w.d.\ with respect to \textlim{W}.
\end{proof}

Since functions $f\in\F$ are real valued, we consider the associated sequences $g(n)\coloneqq \lfloor f(n)\rfloor$.
Given a real number $x\in\R$ we denote by $\{x\}\coloneqq x-\lfloor x\rfloor$ the fractional part of $x$.
Since $\Delta$ is a linear operator we have that $\Delta^i g(n)=\Delta^i f(n)-\Delta^i\{f(n)\}$.
\begin{Lemma}\label{lemma_Deltag}
Let $f\in\F$, let $g(n)=\lfloor f(n)\rfloor$ and let $h\in\N$.
Then
\begin{equation}\label{eq_proof_lemma_Deltag}
\Delta^hg(n)=\Delta^hf(n)-\sum_{t=0}^h \binom{h}{t} (-1)^{h-t}\left\{\sum_{s=0}^t \binom{t}{s} \Delta^s f(n)\right\}
\end{equation}
\end{Lemma}

\begin{proof}
The following well known identity, which is similar to \eqref{eq_TaylorLeibnitz}, can be derived with the help of Newton's binomial formula:
\begin{equation}\label{eq_Leibnitz}
\Delta^h \tilde f(n)=\sum_{i=0}^h(-1)^{h-i}\binom h{i} \tilde f(n+i).
\end{equation}
Equation \eqref{eq_proof_lemma_Deltag} follows directly by applying \eqref{eq_Leibnitz} to the function $\tilde f(n)=\big\{f(n)\big\}$ and then using \cref{lem_TaylorLeibnitz}.
\end{proof}

We record for future reference the following inequality, which follows from \cref{lemma_Deltag}:
\begin{equation}\label{eq_floorderivativedistance}
\forall~h,n\in\N\qquad\qquad  \big|\Delta^hg(n)-\Delta^hf(n)\big|<2^h.
\end{equation}

\begin{Theorem}
\label{cor_tempfloorudalongWmultidimhigherk}
Let $\ell,d\in\N$, let $f\in\F_{\ell+1}$, let $W\coloneqq \Delta^\ell f$, let $g(n)\coloneqq \lfloor f(n)\rfloor$ and let $\pi\colon \R^{(d+1)(\ell+1)}\to\T^{(d+1)(\ell+1)}$ be the canonical projection.
Then, for all $\alpha\in\R^d$, the sequence
$$
G(n)\coloneqq \pi\Big(\big(\alpha \Delta^i g(n),\Delta^i f(n)\big)_{i=0,\dots,\ell }\Big)
$$
is $\mu_{(K\times\T)^{\ell+1}}$-w.d.\ with respect to \textlim{W}, where $K\coloneqq \overline{\{n\alpha\bmod \Z^d: n\in\N\}}\subset\T^d$.
\end{Theorem}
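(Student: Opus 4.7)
The plan is to verify the Weyl criterion (\cref{prop_weylforRiesz}) for $G(n)$ relative to the closed subgroup $(K\times\T)^{\ell+1}\subset\T^{(d+1)(\ell+1)}$. A character of the ambient torus is determined by data $(\beta_i,m_i)_{i=0}^\ell\in(\Z^d\times\Z)^{\ell+1}$, and a direct check shows that the annihilator $\Gamma$ of $(K\times\T)^{\ell+1}$ consists precisely of those characters with $m_i=0$ and $\beta_i\cdot\alpha\in\Z$ for every $i$ (using that $K$ is the closed subgroup of $\T^d$ generated by $\pi(\alpha)$). If $\chi\in\Gamma$ the Riesz mean is trivially $1$, since $\Delta^ig(n)\in\Z$ forces $\chi(G(n))\equiv 1$. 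If $\chi\notin\Gamma$ but every $\beta_i\cdot\alpha$ still lies in $\Z$, then the $\Delta^ig$ terms drop out of the exponential and $\chi(G(n))=e\bigl(\sum_i m_i\Delta^if(n)\bigr)$ with not all $m_i$ zero, so \cref{thm_tempereduniformalongW} yields a vanishing Riesz mean.

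The substantive case is when some $\beta_{i_0}\cdot\alpha\notin\Z$. The plan here is first to invoke \cref{lemma_Deltag} to write $\Delta^ig(n)=\Delta^if(n)-E_i(n)$, where each $E_i(n)$ is a bounded quantity depending only on the fractional parts $\{\Delta^sf(n)\}$ for $s\leq i$. Substituting into $\chi(G(n))$ and regrouping produces the representation
$$\chi(G(n))=\Psi\Bigl(\bigl(\beta_i\cdot\alpha\,\Delta^if(n),\,\Delta^if(n)\bigr)_{i=0}^\ell\bmod\Z^{2(\ell+1)}\Bigr),$$
where $\Psi\colon\T^{2(\ell+1)}\to\C$ has the form $\Psi\bigl((x_i,y_i)_i\bigr)=\prod_i e(x_i+m_iy_i)\cdot\phi(y_0,\dots,y_\ell)$, with $\phi$ a bounded Borel function of modulus one built out of the fractional-part expressions from \cref{lemma_Deltag}. \cref{cor_productequidistribution}, applied with auxiliary dimension $d=2$ and vectors $\alpha_i=(\beta_i\cdot\alpha,1)\in\R^2$, then yields $\mu_{H'}$-well-distribution of the underlying $\T^{2(\ell+1)}$-valued sequence with respect to \textlim{W}, where $H'=\prod_{i=0}^\ell\overline{\{\pi(t\beta_i\cdot\alpha,t):t\in\R\}}$.

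The main obstacle is that $\Psi$ fails to be continuous because of the fractional-part pieces in $\phi$; its jump set lies in a finite union of codimension-one hypersurfaces in $\T^{\ell+1}$. Since each projection $H'\to\T$ to a $y$-coordinate is onto with Haar pushforward, this jump set has $\mu_{H'}$-measure zero, so $\Psi$ is Riemann integrable and a standard sandwich between continuous approximations upgrades the well-distribution statement from continuous test functions (as in \cref{def_relative-u.d.}) to $\Psi$, giving $\otherlim{UW}{n\to\infty}\chi(G(n))=\int\Psi\,d\mu_{H'}$. To conclude, I would integrate out the $(x_{i_0},y_{i_0})$ factor first: when $\beta_{i_0}\cdot\alpha$ is irrational, $H_{i_0}'=\T^2$ and $\int_0^1 e(x_{i_0})\,dx_{i_0}=0$ kills the integral; when $\beta_{i_0}\cdot\alpha=p/q$ in lowest terms with $q>1$, the subtorus $H_{i_0}'$ is parameterised by $t\in\R/q\Z$, and slicing $[0,q)$ into unit intervals yields the factor $\sum_{k=0}^{q-1}e(kp/q)=0$. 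In either case the integral vanishes, completing the Weyl criterion and hence the proof.
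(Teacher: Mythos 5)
Your proof is correct, and although it follows the paper's skeleton up to a point -- the Weyl criterion (\cref{prop_weylforRiesz}), the substitution via \cref{lemma_Deltag}, and \cref{cor_productequidistribution} to reduce everything to showing $\int_{H'}\Psi\,\d\mu_{H'}=0$ -- the evaluation of that integral is genuinely different from, and simpler than, the paper's. The paper integrates the pairs of coordinates recursively from the top index down, which forces it to single out the \emph{largest} index $i$ with $\alpha_i\notin\Z$ or $h_i\neq0$ (in your notation $\beta_i\cdot\alpha\notin\Z$ or $m_i\neq0$), to compute all the intermediate integrals $\psi_r$, $r>i$, explicitly (\cref{lemma_proof_cor_tempfloorudalongWmultidimhigherk}), and to treat the endgame cases $h_i=0$ and $h_i\neq0$ separately. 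You instead (a) dispose of the case where every $\beta_i\cdot\alpha\in\Z$ but some $m_i\neq0$ before any integral appears, since then the $\Delta^i g$ terms contribute integers and \cref{thm_tempereduniformalongW} applies directly, and (b) in the remaining case exploit that $\mu_{H'}$ is a product measure and integrate the single bad factor $(x_{i_0},y_{i_0})$ first by Fubini: the fractional-part factor $\phi$ depends only on the $y$-coordinates and is invariant under $y_{i_0}\mapsto y_{i_0}+1$ (the binomial coefficients are integers), so on the irrational fibre $\int_\T e(x_{i_0})\d x_{i_0}=0$ kills the integral, while on the rational fibre $\R/q\Z$ one has $I(t+1)=e(p/q)I(t)$ and hence the vanishing geometric sum $\sum_{k=0}^{q-1}e(kp/q)=0$. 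This is the same cancellation mechanism the paper ultimately uses, but your ordering of the integrations eliminates the bookkeeping lemma and the $h_i\neq0$ endgame entirely. The one step you treat briefly -- passing from continuous test functions to the discontinuous $\Psi$ -- is fine: the discontinuities lie in the finitely many sets $\big\{\sum_{s\leq t}\binom{t}{s}y_s\in\Z\big\}$, each $\mu_{H'}$-null because the $y_t$-marginal of the corresponding factor of $\mu_{H'}$ is Haar on $\T$ and the coefficient of $y_t$ is $1$; the paper glosses over the same point with its appeal to Riemann integrability.
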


\begin{proof}
Let $\Gamma$ be the set of characters of $\T^{(d+1)(\ell+1)}$ which become trivial when restricted to $(K\times\T^d)^{\ell+1}$.
Let $\chi$ be a character of $\T^{(d+1)(\ell+1)}$.
For each $i=0,\dots,\ell$ there exist $\tau_i\in\Z^d$ and $h_i\in\Z$ such that
$$\chi\Big(\pi\big((x_i,y_i)_{i=0,\dots,\ell}\big)\Big)= e\left(\sum_{i=0}^\ell\langle x_i,\tau_i\rangle+h_iy_i\right).$$
Then $\chi\in\Gamma$ if and only if $\langle\alpha, \tau_i\rangle\in\Z$ and $h_i=0$ for every $i=0,\dots,\ell$.
In particular, if $\chi\in\Gamma$ then $\chi\big(G(n)\big)=1$ for every $n\in\N$.
In view of the Weyl criterion (\cref{prop_weylforRiesz}) it thus suffices to show that given $(\tau_i,h_i)\in\Z^{d+1}$, $i=0,\dots,\ell $, if there exists $i\in\{0,\ldots,\ell \}$ such that either $\alpha_i\coloneqq \langle \tau_i,\alpha\rangle\notin\Z$ or $h_i\neq0$, then one has
\begin{equation}\label{eq_proof_udgroup1-0}
\otherlim{UW}{n\to\infty}
e\left(\sum_{i=0}^{\ell }\alpha_i\Delta^i g(n)+h_i\Delta^if(n)\right)=0.
\end{equation}
Using \cref{lemma_Deltag} we can rewrite \eqref{eq_proof_udgroup1-0} as
\begin{equation}\label{eq_proof_udgroup2-0}
\otherlim{UW}{n\to\infty}
\psi\big(F(n)\big)=0
\end{equation}
where $F\colon \N\to\R^{2(\ell +1)}$ is the sequence
$$
F(n)\coloneqq \left(\Delta^if(n),~
\alpha_i\Delta^if(n)\right)_{ i=0,\dots,\ell }
$$
and $\psi\colon \R^{2(\ell +1)}\to\C$ is the map
\begin{eqnarray*}
  \psi(x_0,\dots,x_\ell ,y_0,\dots,y_\ell )&\coloneqq &e\left(\sum_{i=0}^{\ell }y_i+h_ix_i-\alpha_i\sum_{t=0}^i \binom{i}{t} (-1)^{i-t}\left\{\sum_{s=0}^t \binom{t}{s} x_s\right\}\right)\\&=&e\left(\sum_{i=0}^{\ell }y_i+h_ix_i\right)e\left(-\sum_{t=0}^\ell \left\{\sum_{s=0}^t \binom{t}{s} x_s\right\}\sum_{i=t}^\ell \alpha_i \binom{i}{t} (-1)^{i-t}\right).
\end{eqnarray*}

In view of \cref{cor_productequidistribution}, the sequence $F(n)\bmod\Z^{2(\ell +1)}$ is $\mu_H$-w.d.\ with respect to \textlim{W}, where
$$
H=\bigotimes_{i=0}^\ell  H_i\subset\T^{2(\ell +1)}
$$
and $H_i\coloneqq \overline{\{(t,\alpha_it):t\in\R\}}\subset \T^{2}$.

Since $\psi$ is Riemann integrable and periodic modulo $\Z^{2(\ell +1)}$, \eqref{eq_proof_udgroup2-0} will follow if we show that the integral $\int_H\psi\d\mu_H$ of $\psi$ over $H$ (with respect to the Haar measure $\mu_H$ on $H$) equals $0$.
For convenience, let $\psi_{\ell +1}\equiv1$ and for each $i=0,\dots,\ell $, define, recursively
\begin{equation*}
\begin{split}
&\psi_i(x_0,\dots,x_{i-1})\coloneqq \\
&\qquad\int_{H_i}e\left(y_i+h_ix_i-\left\{\sum_{s=0}^i\binom isx_s\right\}\sum_{j=i}^\ell \alpha_j\binom ji(-1)^{j-i}\right)\cdot\psi_{i+1}(x_0,\dots,x_i)\d\mu_{H_i}(x_i,y_i).
\end{split}
\end{equation*}
Notice that $\psi_0=\int_H\psi\d\mu_H$ (and in particular $\psi_0$ is a constant).

To show that $\psi_0=0$ we distinguish two cases: the case where some $\alpha_j$ is irrational, and the case where all $\alpha_i$ are rational for $i\in\{0,\ldots,\ell \}$.

For the first case, let $j\in\{0,\ldots,\ell \}$ be such that $\alpha_j$ is irrational.
In this case $H_j=\T^2$ and hence $\d\mu_{H_j}(x_j,y_j)=\d x_j\d y_j$.
Therefore, for any $x_0,\dots,x_{j-1}\in\T$,
\begin{equation*}
\begin{split}
&\psi_j(x_0,\dots,x_{j-1}) \\
&\qquad
=\int_{\T}e(y_j)\d y_j\cdot\int_\T e\left(h_ix_i-\left\{\sum_{s=0}^i\binom isx_s\right\}\sum_{j=i}^\ell \alpha_j\binom ji(-1)^{j-i}\right)\cdot\psi_{i+1}(x_0,\dots,x_i)\d x_j
\\
&\qquad=0.
\end{split}
\end{equation*}
Therefore, for every $i\leq j$ we have $\psi_i\equiv0$ and in particular $\psi_0=0$.

Next we treat the case when all $\alpha_i$ are rational.
Write $\alpha_{i}=a_{i}/b_{i}$ with $a_{i}\in\Z$ and $b_{i}\in\N$ coprime.
Then $H_{i}$ can be parameterized as $H_{i}=\{(x,\alpha_{i} x):x\in[0,b_{i})\}$ and, more generally, as $H_{i}=\{(x,\alpha_{i} x):x\in [c,c+b_i]\}$ for any $c\in\R$.
Let $c=c(x_0,\dots,x_{i-1})=-\sum_{s=0}^{i-1}\binom isx_s$ and note that

\begin{equation*}
\begin{split}
\psi_i(x_0,\dots & ,x_{i-1})
\\
&=
\int_c^{c+b_i}e\left((\alpha_i+h_i)x_i-\left\{\sum_{s=0}^i\binom isx_s\right\}\sum_{j=i}^\ell \alpha_j\binom ji(-1)^{j-i}\right)\cdot\psi_{i+1}(x_0,\dots,x_i)\d x_i\\
&=
\sum_{t=0}^{b_i-1}\int_{c+t}^{c+t+1}e\left((\alpha_i+h_i)x_i-\left\{x_i-c\right\}\sum_{j=i}^\ell \alpha_j\binom ji(-1)^{j-i}\right)\cdot\psi_{i+1}(x_0,\dots,x_i)\d x_i\\
&=
\sum_{t=0}^{b_i-1}\int_{c+t}^{c+t+1}e\left((\alpha_i+h_i)x_i-\left(x_i-c-t\right)\sum_{j=i}^\ell \alpha_j\binom ji(-1)^{j-i}\right)\cdot\psi_{i+1}(x_0,\dots,x_i)\d x_i.
\end{split}
\end{equation*}

Let $i$ be the largest index for which either $\alpha_i\notin\Z$ or $h_i\neq0$.
In order to finish the proof of \cref{cor_tempfloorudalongWmultidimhigherk} we need to following lemma:

\begin{Lemma}\label{lemma_proof_cor_tempfloorudalongWmultidimhigherk}
For every $r=i+1,\dots,\ell $
  $$\psi_r(x_0,\dots,x_{r-1})=e\left(-\sum_{s=0}^{r-1}x_s\sum_{j=r}^\ell a_j\binom js\sum_{k=r}^j\binom{j-s}{k-s}(-1)^{j-k}\right).$$
\end{Lemma}
\begin{proof}[Proof of \cref{lemma_proof_cor_tempfloorudalongWmultidimhigherk}]
 We will use backward induction on $r$.
  For $r=\ell+1$ the result is trivial. Assume now that we have established this for $r+1$.

  For each $r=i+1,\dots,\ell$ we have that $\alpha_r=a_r$, $b_r=1$ and $h_r=0$.
  Therefore
\begin{equation*}
\begin{split}
\psi_r=\int_{c}^{c+1}e\Bigg(a_rx_r-(x_r-c)\sum_{j=r}^\ell & a_j\binom jr(-1)^{j-r}
\\
& -\sum_{s=0}^rx_s\sum_{j={r+1}}^\ell a_j\binom j s\sum_{k=r+1}^j\binom {j-s} {k-s}(-1)^{j-k}\Bigg)\d x_r
\end{split}
\end{equation*}
 Observe that the coefficient of $x_r$ inside the exponential is $0$, so the integrand is in fact a constant function and hence
 \begin{eqnarray*}
   \psi_r &=& e\left(c\sum_{j=r}^\ell a_j\binom jr(-1)^{j-r}-\sum_{s=0}^{r-1}x_s\sum_{j={r+1}}^\ell a_j\binom j s\sum_{k=r+1}^j\binom {j-s} {k-s}(-1)^{j-k}\right) \\
    &=&  e\left(-\sum_{s=0}^{r-1}\binom rsx_s \sum_{j=r+1}^\ell a_j\binom jr(-1)^{j-r}-\sum_{s=0}^{r-1}x_s\sum_{j={r+1}}^\ell a_j\binom j s\sum_{k=r+1}^j\binom {j-s} {k-s}(-1)^{j-k}\right)
 \end{eqnarray*}
 Using the identity $\binom rs\binom jr=\binom js\binom{j-s}{r-s}$ we conclude that
\begin{eqnarray*}
   \psi_r &=&  e\left(-\sum_{s=0}^{r-1}x_s \sum_{j=r+1}^\ell a_j\binom js\binom{j-s}{r-s}(-1)^{j-r}-\sum_{s=0}^{r-1}x_s\sum_{j={r+1}}^\ell a_j\binom j s\sum_{k=r+1}^j\binom {j-s} {k-s}(-1)^{j-k}\right)
   \\&=&
   e\left(-\sum_{s=0}^{r-1}x_s\sum_{j=r}^\ell a_j\binom js\sum_{k=r}^j\binom{j-s}{k-s}(-1)^{j-k}\right)
 \end{eqnarray*}
 as desired.
\end{proof}

\paragraph{\textit{Continuation of the proof of \cref{cor_tempfloorudalongWmultidimhigherk}:}}
\cref{lemma_proof_cor_tempfloorudalongWmultidimhigherk} gives us a rather explicit (albeit cumbersome) expression for $\psi_i$.
Recall that $i$ was chosen as the largest index so that either $\alpha_i\notin\Z$ or $h_i\neq0$.
We further divide into two cases: $h_i=0$ (in which case $\alpha_i\notin\Z$) and $h_i\neq0$.

If $h_i=0$ then a quick computation shows that the integrand
$$e\left((\alpha_i+h_i)x_i-\left(x_i-c-t\right)\sum_{j=i}^\ell \alpha_j\binom ji(-1)^{j-i}\right)\cdot\psi_{i+1}(x_0,\dots,x_i)$$
does not depend on $x_i$, and hence we have
$$\psi_i=\sum_{t=0}^{b_i-1}e\left((c+t)\sum_{j=i}^\ell \alpha_j\binom ji(-1)^{j-i}-\sum_{s=0}^{i-1}x_s\sum_{j=i+1}^\ell a_j\binom js\sum_{k=i+1}^j\binom{j-s}{k-s}(-1)^{j-k}\right)$$
Notice that after factoring out of the sum the terms which do not depend on $t$ we end up with a simple geometric sum.
Since every $\alpha_r$ for $r>i$ is an integer (unless of course $i=\ell$ in which case there is no $r>i$), we conclude
\begin{eqnarray*}
\psi_i &=&\text{constant}\cdot \sum_{t=0}^{b_i-1}e\left(t\sum_{j=i}^\ell \alpha_j\binom ji(-1)^{j-i}\right)
\\
&=&
\text{constant}\cdot\sum_{t=0}^{b_i-1}e\left(t\alpha_i\right)
\\
&=&
\text{constant}\cdot\sum_{t=0}^{b_i-1}e\left(\frac {a_i}{b_i}\right)^t
\\
&=&0.
\end{eqnarray*}

Finally we address the case when $h_i\neq0$.
In this case, for each $t=0,\dots,b_i-1$, the integrand
$$e\left((\alpha_i+h_i)x_i-\left(x_i-c-t\right)\sum_{j=i}^\ell \alpha_j\binom ji(-1)^{j-i}\right)\cdot\psi_{i+1}(x_0,\dots,x_i)$$
is a constant multiple of $e(h_ix_i)$, so when integrated over an interval of length $1$, it vanishes.
\end{proof}

The following corollary of \cref{cor_tempfloorudalongWmultidimhigherk} will be used in the proof of \cref{thm_W-uniform-vonNeumann-along-functions-in-F} below.
\begin{Corollary}\label{cor_floorwd}
  Let $\ell\in\N$, let $f\in\F_{\ell+1}$ and let $W=\Delta^\ell f$.
  Then for every $\alpha\in\R\setminus\Z$ the sequence
  $(\alpha\lfloor f(n)\rfloor)$ is $\mu_H$-w.d.\ with respect to \textlim{W}, where $\mu_H$ is the normalized Haar measure of the closed subgroup of $\T$ defined by $H\coloneqq \overline{\{n\alpha :n\in\N\}}$.
  In particular, for every $\alpha\in\R\setminus\Z$ we have
  $$\otherlim{UW}{n\to\infty}e\big(\alpha\lfloor f(n)\rfloor\big)=0.$$
\end{Corollary}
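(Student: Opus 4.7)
The plan is to obtain this as a direct consequence of \cref{cor_tempfloorudalongWmultidimhigherk} applied with dimension parameter $d=1$. That theorem, invoked with the given $\ell$, $f$, $W$ and with the scalar $\alpha\in\R\setminus\Z$, asserts that the sequence
$$G(n) \coloneqq \pi\Big(\big(\alpha\Delta^i g(n),\Delta^i f(n)\big)_{i=0,\dots,\ell}\Big)\in\T^{2(\ell+1)},$$
with $g(n)=\lfloor f(n)\rfloor$, is $\mu_{(H\times\T)^{\ell+1}}$-w.d.\ with respect to \textlim{W}; here the subgroup $K=\overline{\{n\alpha\bmod 1:n\in\N\}}\subset\T$ from that theorem coincides with the subgroup $H$ appearing in the statement being proved.

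Next I would push this joint well distribution forward along the coordinate projection $p\colon \T^{2(\ell+1)}\to\T$ onto the slot containing $\alpha\Delta^0 g(n)=\alpha\lfloor f(n)\rfloor$. Since $p$ is a continuous surjective group homomorphism and the pushforward of the product Haar measure on $(H\times\T)^{\ell+1}$ along a projection onto one of the $H$-factors is the Haar measure $\mu_H$, for every $F\in C(\T)$ one has
$$\otherlim{UW}{n\to\infty} F\big(p(G(n))\big) = \int_{(H\times\T)^{\ell+1}} F\circ p \d\mu_{(H\times\T)^{\ell+1}} = \int_H F \d\mu_H.$$
Because $p(G(n))$ equals $\pi(\alpha\lfloor f(n)\rfloor)$, this is precisely the assertion that $\alpha\lfloor f(n)\rfloor$ is $\mu_H$-w.d.\ with respect to \textlim{W}.

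For the ``in particular'' assertion I would apply the above with $F$ equal to the character $x\mapsto e(x)$ on $\T$ and verify that this character remains non-trivial when restricted to $H$, so that $\int_H e(x)\d\mu_H(x)=0$. If $\alpha$ is irrational then $H=\T$ and the integral vanishes by orthogonality; if $\alpha=a/b$ with $\gcd(a,b)=1$ and $b\geq 2$, then $H=\{k/b:k=0,\dots,b-1\}$ is cyclic of order $b$ and $\frac{1}{b}\sum_{k=0}^{b-1}e(ka/b)=0$ by a geometric series argument (since $e(a/b)\neq 1$). The hypothesis $\alpha\notin\Z$ rules out the remaining case $b=1$, so the integral vanishes in all cases.

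No serious obstacle is anticipated at this stage: all of the delicate analytic content, namely the joint well distribution of $(\alpha\Delta^i g(n),\Delta^i f(n))_{i=0,\dots,\ell}$, is already encoded in \cref{cor_tempfloorudalongWmultidimhigherk} (which in turn leaned on \cref{cor_productequidistribution} and the combinatorial identity \cref{lemma_Deltag}), and what remains is only the formal projection argument and the elementary character integral.
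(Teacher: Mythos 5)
Your proposal is correct and is exactly the derivation the paper intends, which is why it is stated as a corollary of \cref{cor_tempfloorudalongWmultidimhigherk} without a written proof: specialize to $d=1$, project onto the coordinate carrying $\alpha g(n)$, note that the pushforward of the product Haar measure on $(H\times\T)^{\ell+1}$ along that projection is $\mu_H$, and finally observe that $\alpha\in H$ and $e(\alpha)\neq1$ so the character $e$ is non-trivial on $H$ and its integral vanishes.
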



\section{Optimal single recurrence along sequences in $\F$}\label{sec_SingleRec}
In this section we give a proof of \cref{thm_mainsingle}.
Here is the statement again, for the convenience of the reader.

\begin{named}{\cref{thm_mainsingle}}{}
Let $\ell\in\N$, $f\in\F_{\ell}$ and define $W\coloneqq \Delta^{\ell-1} f$. Then for any invertible probability measure preserving system $(X,\B,\mu,T)$, any $A\in \B$ and any $\epsilon>0$ the set
$$
R_{\epsilon, A}= \big\{n\in\N:\mu(A\cap T^{-\lfloor f(n)\rfloor}A)>\mu^2(A)-\epsilon\big\}
$$
is thick and $W$-syndetic.
\end{named}

\paragraph{\textit{\textbf{Outline of the proof:}}}
In the proof of \cref{thm_mainsingle} we will utilize the
Jacobs-de Leeuw-Glicksberg decomposition:

\begin{Theorem}[{cf. \cite[\S 2.4 ]{Krengel85} or \cite[Theorem 2.3]{Bergelson96}}]
\label{thm_jlg}
Let $\Hilb$ be a Hilbert space and $U\colon \Hilb\to\Hilb$ a unitary operator. Then any $h\in \Hilb$ can be written as $h=h_{wm} + h_c $, where $h_c \perp h_{wm}$ and
\begin{itemize}
\item $h_{wm}$ is a \define{weakly mixing element}, that is, for every $h'\in \Hilb$ we have
$$
\lim_{N-M\to\infty}\frac{1}{N-M}\sum_{n=M}^{N-1} |\langle U^n h_{wm},h'\rangle |=0.
$$
\item
$h_c$ is a \define{compact element}, i.e., the closure of $\{U^n h_c: n\in\N\}$ is a compact subset of $\Hilb$.
\end{itemize}
\end{Theorem}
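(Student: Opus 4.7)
The proof will proceed via the Jacobs--de Leeuw--Glicksberg decomposition (\cref{thm_jlg}) applied to $h\coloneqq 1_A - \mathbb{E}(1_A\mid\mathcal{I})$, where $\mathcal{I}$ is the $\sigma$-algebra of $T$-invariant sets. Splitting $h = \tilde h_c + h_{wm}$ into its compact and weakly mixing parts and setting $g(n)\coloneqq \lfloor f(n)\rfloor$, the spectral theorem for the unitary $U = U_T$ yields
\[
\mu\big(A\cap T^{-g(n)}A\big) - \|\mathbb{E}(1_A\mid\mathcal{I})\|_2^2 = \langle h,\, U^{g(n)} h\rangle = \int_\T e(\alpha\, g(n))\, d\sigma_h(\alpha),
\]
where $\sigma_h$ is a finite positive measure on $\T$ satisfying $\sigma_h(\{0\})=0$ (since $h$ is orthogonal to the $T$-invariant subspace). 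As $\|\mathbb{E}(1_A\mid\mathcal{I})\|_2^2\geq\mu(A)^2$ by Jensen, everything reduces to controlling the spectral integral.

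\textbf{$W$-syndeticity.} \cref{cor_floorwd} yields $\otherlim{UW}{n\to\infty} e(\alpha\, g(n)) = 0$ for every $\alpha\in\R\setminus\Z$, so bounded convergence inside the spectral integral gives
\[
\otherlim{UW}{n\to\infty}\mu\big(A\cap T^{-g(n)}A\big) = \|\mathbb{E}(1_A\mid\mathcal{I})\|_2^2 \geq \mu(A)^2.
\]
The $W$-syndeticity of $R_{\epsilon,A}$ then follows by a Markov-type argument: on any interval $[M,N]$ with $W(N)-W(M)$ sufficiently large, the $\Delta W$-weighted average of $\mu(A\cap T^{-g(n)}A)$ exceeds $\mu(A)^2 - \epsilon/2$, and bounding the summand by $\mu(A)^2 - \epsilon$ off $R_{\epsilon, A}$ and by $\mu(A)$ on it forces $\sum_{n\in R_{\epsilon,A}\cap[M,N]} \Delta W(n)$ to exceed a fixed positive multiple of $W(N)-W(M)$, in particular exceeding~$1$.

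\textbf{Thickness -- the main obstacle.} Thickness does not follow from the single-shift Riesz-mean convergence and requires joint control across the consecutive shifts $g(n), g(n+1),\dots,g(n+L)$. My plan is to prove the stronger assertion that, for every $L$, the set of $n$ with $[n,n+L]\subset R_{\epsilon,A}$ has positive $W$-density. Writing $\phi_{wm}(m)\coloneqq \langle h_{wm}, U^m h_{wm}\rangle$, expanding $|\phi_{wm}(g(n))|^2$ as an integral against $\sigma_{h_{wm}}\otimes \sigma_{h_{wm}}$ and applying \cref{cor_floorwd} inside collapses its Riesz mean to the diagonal mass of an atomless measure, which is $0$; Chebyshev's inequality then makes $\{n:|\phi_{wm}(g(n))|>\delta\}$ have zero $W$-density, so the same holds simultaneously for the $L+1$ shifts. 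For the compact part, I would approximate $\tilde h_c$ in $L^2$ to within $\delta$ by a finite combination $\sum_{j\leq J} c_j \phi_j$ of eigenfunctions with eigenvalues $e(\alpha_j)$, $\alpha_j\in\R\setminus\Z$, then use Newton's forward difference formula (\cref{lem_TaylorLeibnitz}) to expand $g(n+k) = \sum_{i=0}^k \binom{k}{i}\Delta^i g(n)$ and invoke \cref{cor_tempfloorudalongWmultidimhigherk} to conclude that the joint sequence $\big(e(\alpha_j\, g(n+k))\big)_{j\leq J,\, k\leq L}$ is w.d.\ on a closed subgroup of $\T^{J(L+1)}$ containing the identity. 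Consequently, a positive-$W$-density set of $n$ places all these exponentials simultaneously in any prescribed neighborhood of $1$, forcing $\langle \tilde h_c, U^{g(n+k)} \tilde h_c\rangle$ to be close to $\|\tilde h_c\|_2^2 \geq 0$ for every $k$. Combining the two estimates produces $n$'s satisfying $\mu(A\cap T^{-g(n+k)}A) > \mu(A)^2 - \epsilon$ for every $k\in\{0,\dots,L\}$, hence an interval of length $L+1$ inside $R_{\epsilon,A}$; as $L$ was arbitrary, $R_{\epsilon,A}$ is thick.
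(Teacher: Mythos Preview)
Your proposal does not address the stated result. \cref{thm_jlg} is the classical Jacobs--de~Leeuw--Glicksberg splitting, which the paper quotes from \cite{Krengel85,Bergelson96} without proof; there is no in-paper argument to compare against. What you have written is instead a sketch of a proof of \cref{thm_mainsingle}, invoking \cref{thm_jlg} as a black box.

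Read as an attempt at \cref{thm_mainsingle}, your $W$-syndeticity argument coincides with the paper's (spectral theorem plus \cref{cor_floorwd}, which the paper packages as \cref{thm_W-uniform-vonNeumann-along-functions-in-F}). For thickness the routes diverge. The paper, via \cref{prop:vdc-banachdensity1} and \cref{lemma_floriansjob}, first locates intervals $[a,a+N]$ on which $\Delta^\ell g$ is \emph{constant} (using \cref{lemma_joeldoingfloriansjob}), so that $g$ is an honest integer polynomial there; it then arranges the leading coefficient to lie in a lower-Banach-density-$1$ set that kills the weak-mixing correlations, and the lower-order derivatives to push all eigenvalues near $0$. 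Your alternative for the weak-mixing part --- continuity of $\sigma_{h_{wm}}$ plus \cref{cor_floorwd} and dominated convergence to get $\otherlim{UW}{}|\phi_{wm}(g(n))|^2=0$ --- is genuinely simpler and correct (the cross terms $\langle\tilde h_c,U^{g(n)}h_{wm}\rangle$ vanish identically since the compact subspace is $U$-invariant and orthogonal to the weakly-mixing one).

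The gap is in the compact part when the desired interval length $L$ exceeds $\ell$. Newton's formula writes $g(n+k)=\sum_{i=0}^{k}\binom{k}{i}\Delta^i g(n)$, but \cref{cor_tempfloorudalongWmultidimhigherk} only supplies the joint distribution of $(\alpha\,\Delta^i g(n),\Delta^i f(n))$ for $i\leq\ell$. For $k>\ell$ the summands $\alpha_j\Delta^i g(n)$ with $\ell<i\leq k$ enter; these are bounded integer multiples of $\alpha_j$, not asymptotically negligible, and not measurable functions of the coordinates that theorem controls. Hence the claimed joint well-distribution of $\big(e(\alpha_j g(n+k))\big)_{j\leq J,\,k\leq L}$ on a subgroup does not follow from the cited result. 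The paper sidesteps exactly this obstacle: \cref{lemma_joeldoingfloriansjob} forces $\Delta^{\ell+1}g\equiv0$ on the chosen interval, so the offending terms vanish and the polynomial expansion terminates at $i=\ell$. Your plan can be repaired along the same lines (restrict to $n$ in the set $D$ of \cref{cor:equi-dis-1} with $\delta$ small relative to $L$), but as written it does not close.
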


In light of \cref{thm_jlg} we can write $1_A= h_{wm}+ h_c$, where $h_{wm}\in L^2\xbm$ is weakly mixing and $h_c\in L^2\xbm$ is compact.

As for the component $h_{wm}$, it will be shown in the next subsection that for any $\ell\in\N$, any $h'\in L^2\xbm$ and any $\epsilon>0$ there exists $N\in\N$ such that $\frac{1}{N}\sum_{n=1}^N\big|\langle T^{p(n)} h_{wm},h'\rangle\big|<\epsilon$ uniformly over all polynomials $p\in\Z[x]$ of degree $\ell$ and having leading coefficient belonging to a set of lower Banach density equal to $1$.

In Subsection \ref{subsec-compact} we use the equidistribution results obtained in \cref{sec_u.d.-along-riesz-means} to show that for any $\epsilon>0$ there is a thick set $E\subset\N$ with the property that for any $n\in E$ the function $T^{\lfloor f(n)\rfloor}h_c$ is $\epsilon$-close to $h_c$ in $L^2$-norm and for all $n\in E$ the Taylor expansion of $k\mapsto f(n+k)$, truncated at an appropriate level, corresponds to a polynomial whose leading coefficient belongs to the set of lower Banach density equal to $1$.

Finally, in Subsection \ref{subsec-cwm}, these results are ``glued together'' to yield a proof of \cref{thm_mainsingle}.

\subsection{The weakly mixing component}

The following remark will be utilized in the proof of \cref{prop:vdc-banachdensity1} below.

\begin{Remark}\label{remark_weakmixing}Let $(X,{\mathcal B},\mu,T)$ be a measure preserving system and let $h\in L^2(X,\mathcal{B},\mu)$.
It is not hard to see from the definition that $h$ is weakly mixing if and only if for every $\epsilon>0$ and every $h'\in L^2(X)$, there exists a set $D\subset\N$ with $d_*(D)=1$ such that $|\langle T^dh,h'\rangle|<\epsilon$ for every $d\in D$. Here, $d_*$ denotes the lower Banach density, defined by the formula
$$d_*(D)=\lim_{N\to\infty}~\inf_{M\in\N}\frac{\big|D\cap\{M,M+1,\dots,M+N\}\big|}{N}.$$
\end{Remark}

\begin{Theorem}\label{prop:vdc-banachdensity1}
Let $\ell \in\N\cup\{0\}$, let $(X,{\mathcal B},\mu,T)$ be a measure preserving system and let $h\in L^2(X,\mathcal{B},\mu)$ be a weakly mixing function with $\|h\|_2\leq1$.
Then for any $\epsilon>0$, any $h'\in L^2(X,\mathcal{B},\mu)$ with $\|h'\|_2\leq1$ and sufficiently large $N\in\N$ there exists $D\subset \N$ with $d_*(D)=1$ such that for every polynomial $p\in\Z[x]$ of degree $\ell $ with $\Delta^\ell p\in D$, one has
$$
\frac{1}{N}\sum_{n=1}^N\big|\langle T^{p(n)} h,h'\rangle\big|~\leq~\epsilon.
$$
\end{Theorem}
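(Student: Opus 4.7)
The plan is to proceed by induction on the degree $\ell$. For $\ell=0$ a polynomial is a constant integer $a=\Delta^0 p$, and the remark immediately preceding the theorem produces a set $D\subset\N$ with $d_*(D)=1$ such that $|\langle T^a h,h'\rangle|<\epsilon$ for every $a\in D$; the average $\frac{1}{N}\sum_{n=1}^N|\langle T^{p(n)}h,h'\rangle|$ then collapses to a single term, so the base case is immediate.

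For the inductive step ($\ell\geq 1$) I would lift to the Cartesian square of the system. Writing $a_n:=\langle T^{p(n)}h,h'\rangle$, two applications of Cauchy--Schwarz give
$$\left(\frac{1}{N}\sum_{n=1}^N |a_n|\right)^2\leq\frac{1}{N}\sum_{n=1}^N |a_n|^2 = \left\langle\frac{1}{N}\sum_{n=1}^N (T\times T)^{p(n)} H,\ H'\right\rangle\leq\left\|\frac{1}{N}\sum_{n=1}^N (T\times T)^{p(n)} H\right\|_2,$$
where $H:=h\otimes\overline{h}$ and $H':=\overline{h'}\otimes h'$ lie in $L^2(\mu\times\mu)$ with norms at most $1$. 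A standard approximation by simple tensors shows that weak mixing of $h$ for $T$ implies weak mixing of $H$ for $T\times T$. Applying the Hilbert-space van der Corput lemma to the vectors $v_n:=(T\times T)^{p(n)} H$ yields, for any $K\in\N$ and large $N$,
$$\left\|\frac{1}{N}\sum_n v_n\right\|_2^2 \leq \frac{1}{K+1} + \frac{2}{K+1}\sum_{k=1}^K \frac{1}{N}\sum_n \big|\langle (T\times T)^{p(n+k)-p(n)} H, H\rangle\big| + O_K(N^{-1}).$$
For each fixed $k\geq 1$, the polynomial $q_k(n):=p(n+k)-p(n)$ has degree $\ell-1$ with $\Delta^{\ell-1}q_k = k\cdot\Delta^\ell p$; applying the inductive hypothesis to $(X\times X, T\times T)$ with weakly mixing function $H$ and test function $H$ produces, for any preassigned $\epsilon''>0$ and large $N$, a set $D_{\ell-1}\subset\N$ with $d_*(D_{\ell-1})=1$ such that each $k$th inner average is at most $\epsilon''$ whenever $\Delta^{\ell-1}q_k\in D_{\ell-1}$.

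I would then set $D:=\bigcap_{k=1}^K\{a\in\N:ka\in D_{\ell-1}\}$, which forces $\Delta^{\ell-1}q_k=k\Delta^\ell p\in D_{\ell-1}$ for every $k=1,\dots,K$ as soon as $\Delta^\ell p\in D$. Choosing $K$ so that $1/(K+1)<\epsilon^4/4$, setting $\epsilon'':=\epsilon^4/8$, and taking $N$ large enough for the $O_K(N^{-1})$ error to be absorbed and for the inductive hypothesis to apply at level $\ell-1$, we recover $\frac{1}{N}\sum_{n=1}^N|a_n|\leq\epsilon$. It remains to check that $d_*(D)=1$: this follows from the standard fact that if $E\subset\N$ has $d^*(E)=0$ then so does $\{a\in\N:ka\in E\}$ for each fixed $k$ (its density is bounded by $k\cdot d^*(E)=0$), so $D^c=\bigcup_{k=1}^K\{a:ka\in D_{\ell-1}^c\}$ is a finite union of Banach-density-zero sets. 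The main obstacle is bookkeeping of the quantifiers: one must carefully propagate the order ($\epsilon$, $h'$, then $N$, then $D$) through the induction, ensure that the van der Corput parameter $K$ and the auxiliary tolerance $\epsilon''$ can be fixed before the lower-level set $D_{\ell-1}$ is called upon, and verify that the resulting $D$ depends on $p$ only through $\Delta^\ell p$.
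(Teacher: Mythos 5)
Your argument is correct and follows essentially the same route as the paper's: pass to the product system via Cauchy--Schwarz, observe that $h\otimes\bar h$ remains weakly mixing, apply a finitary van der Corput estimate to reduce to the degree-$(\ell-1)$ case for the derivative polynomials $p(n+k)-p(n)$, and take a finite intersection of dilated copies of the inductive-level set. The only cosmetic difference is that you fix the van der Corput cutoff $K$ in advance (depending on $\epsilon$), whereas the paper uses a cutoff of $\sqrt{N}$; both work since a finite intersection of lower-Banach-density-one sets still has lower Banach density one.
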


(Observe that $\Delta^\ell p$ is a constant function whenever $p$ is a polynomial of degree $\ell$ and the (only) value it takes is $\ell!$ times the leading coefficient of $p$.)

\begin{proof}
For $\ell =0$ the conclusion follows directly from \cref{remark_weakmixing}.
We now proceed by induction on $\ell $, assuming that $\ell >0$ and that the result has been established for $\ell -1$.
Observe that
\begin{eqnarray*}
\big|\langle T^{p(n)} h,h'\rangle\big|^2&=&\int_XT^{p(n)}h\cdot h'\d\mu\int_XT^{p(n)}\bar h\cdot\bar h'\d\mu\\&=&\int_{X\times X}(T\times T)^{p(n)}(h\otimes \bar h)\cdot(h'\otimes\bar h')\d(\mu\otimes\mu).
\end{eqnarray*}
It follows easily from the definition that if $h$ is a weak mixing function then so is $h\otimes \bar h$.
Therefore, renaming $h\otimes\bar h$ as $h$ and $T\times T$ as $T$, it suffices to prove that
$$\left\|\frac1N\sum_{n=1}^NT^{p(n)}h\right\|<\epsilon.$$
To this end we will employ a version of the van der Corput inequality (cf. \cite[Lemma 1.3.1]{Kuipers_Niederreiter74}), which states that for every Hilbert space vectors $u_1,u_2,\dots$ with norm bounded by $1$, one has
\begin{equation}
\label{eq_vdCinequality}
\left\|\frac1N\sum_{n=1}^Nu_n\right\|
\leq
\frac2{\sqrt{N}}+ \sqrt{\frac1{\sqrt{N}}+
\frac2{\sqrt{N}}\sum_{1\leq m\leq\sqrt{N}}\left|\frac1{N-m}\sum_{n=1}^{N-m}\langle u_{n+m},u_n\rangle\right|}.
\end{equation}
Therefore, it suffices to show that for every $\delta>0$ and large enough $N\in\N$ there exists a set $D\subset\N$ with $d_*(D)=1$ such that for every polynomial $p\in\Z[x]$ of degree $\ell $ with $\Delta^\ell p$ in $D$ and all $m\in\{1,\dots,\lfloor\sqrt{N}\rfloor\}$, one has
\begin{equation}\label{eq_proof_prop_finitevdC}
\left|\frac1N\sum_{n=1}^N\Big\langle T^{p(n+m)}h,T^{p(n)}h\Big\rangle\right|<\delta.
\end{equation}
Observe that
\begin{eqnarray*}
\Big\langle T^{p(n+m)}h,T^{p(n)}h\Big\rangle&=&\int_XT^{p(n+m)}h\cdot T^{p(n)}\bar h\d\mu
\\
&=&\int_XT^{p(n+m)-p(n)}h\cdot\bar h\d\mu
\end{eqnarray*}
and $p_m\colon n\mapsto p(n+m)-p(n)$ is a polynomial of degree $\ell -1$ with $\Delta^{\ell-1}p_m=m\Delta^\ell p$.

By the induction hypothesis there exists a set $D_0\subset\N$ with $d_*(D_0)=1$ such that for any polynomial $q\in\Z[x]$ of degree $\ell -1$ with $\Delta^{\ell-1}q$ in $D_0$,
$$
\frac{1}{N}\sum_{n=1}^N\big|\langle T^{q(n)} h,\bar h\rangle\big|~\leq~\delta.
$$
Note now that the set $D_0/m \coloneqq \{a\in\N:am\in D_0\}$ satisfies $d_*(D_0/m)=1$ for any $m\in\N$.
Moreover, the collection of sets with lower Banach density $1$ has the finite intersection property. It follows that the set
$$D\coloneqq \bigcap_{m=1}^{\lfloor\sqrt{N}\rfloor}D_0/m$$
has $d_*(D)=1$.

Putting everything together, if $p\in\Z[x]$ is a polynomial of degree $\ell$ with $\Delta^\ell p$ in $D$, then the polynomial $p_m(n)=p(n+m)-p(n)$ has degree $\ell-1$ and $\Delta^{\ell-1}p_m$ is in $D_0$ for every $m\in\{1,\dots,\lfloor\sqrt{N}\rfloor\}$.
This implies that \eqref{eq_proof_prop_finitevdC} holds, which concludes the proof.
\end{proof}

\subsection{The compact component}
\label{subsec-compact}
To deal with the compact component in the proof of \cref{thm_mainsingle}, we will use the equidistribution results developed in \cref{sec_u.d.-along-riesz-means}.
We start with the following observation.
\begin{Lemma}\label{cor:equi-dis-1}
Let $\ell\geq0$, let $f\in\F_{\ell+1}$, let $g(n)=\lfloor f(n)\rfloor$ and let $W=\Delta^\ell  f$. For any $\alpha\in\R^d$ and any $\delta>0$ the set\footnote{Here, and elsewhere in the paper we use the notation $\|x\|_{\T^d}$ to denote the distance from the vector $x\in\R^d$ to the closest point in the lattice $\Z^d$.}
$$
D\coloneqq\Big\{n\in\N: \big\|\Delta^i g(n)\alpha\big\|_{\T^d}\leq\delta;~\big\{\Delta^i f(n)\big\}<\delta~\text{for all }i\in\{0,1,\ldots,\ell \}\Big\}
$$
satisfies $d_W(D)>0$.
\end{Lemma}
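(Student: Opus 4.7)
The plan is to deduce the lemma essentially immediately from \cref{cor_tempfloorudalongWmultidimhigherk}, by reading the conditions defining $D$ as membership of a single multidimensional orbit in a ``box'' around the origin. Specifically, set
$$G(n)\coloneqq \pi\Big(\big(\alpha\Delta^ig(n),\,\Delta^if(n)\big)_{i=0,\ldots,\ell}\Big)\in\T^{(d+1)(\ell+1)}$$
and
$$U\coloneqq \Big\{(x_i,y_i)_{i=0}^{\ell}:\|x_i\|_{\T^d}\leq\delta\text{ and }y_i\in[0,\delta)\text{ for all }i\Big\}.$$
Then by construction $D=\{n\in\N:G(n)\in U\}$, and \cref{cor_tempfloorudalongWmultidimhigherk} asserts that $G$ is $\mu_H$-w.d.\ with respect to \textlim{W}, where $H=(K\times\T)^{\ell+1}$.

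The next step is the standard passage from well distribution against continuous functions to positive density of hitting times, namely sandwiching $1_U$ from below by a continuous bump. I would take the open subset
$$V\coloneqq \Big\{(x_i,y_i):\|x_i\|_{\T^d}<\delta/2,\ y_i\in(0,\delta/2)\Big\}\subset U$$
and observe that $V\cap H$ is non-empty (e.g.\ take $x_i=0\in K$ and $y_i=\delta/4$ for each $i$) and relatively open in $H$, hence $\mu_H(V\cap H)>0$. Urysohn's lemma then produces a continuous $\phi\colon\T^{(d+1)(\ell+1)}\to[0,1]$ with $\supp(\phi)\subset V$ and $\int\phi\d\mu_H>0$. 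Well distribution gives
$$\otherlim{UW}{n\to\infty}\phi\big(G(n)\big)=\int\phi\d\mu_H>0,$$
and since $\phi\leq 1_U$ pointwise we have $\phi(G(n))\leq 1_D(n)$ for every $n$, whence $\overline{d}_W(D)>0$ (indeed the lower $W$-density is strictly positive as well).

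The main work is already contained in \cref{cor_tempfloorudalongWmultidimhigherk}, so there is no serious obstacle remaining. The only subtle point is the topological bookkeeping of checking that the ``box'' $U$ contains an open set whose intersection with the support $H=(K\times\T)^{\ell+1}$ has positive Haar measure, and this is automatic once one recalls that $0$ always lies in $K$.
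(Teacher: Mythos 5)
Your proof is correct, and it rests on exactly the same key ingredient as the paper's: applying \cref{cor_tempfloorudalongWmultidimhigherk} to get $\mu_H$-well distribution of the sequence $G(n)$ on $H=(K\times\T)^{\ell+1}$, then reading $D$ as the set of hitting times of a box. The only difference is the technical step that bridges uniform distribution (which is stated for continuous test functions) to the indicator of the box: you minorize the indicator of $U$ by a Urysohn bump $\phi$ supported in a smaller open box $V$ and check $\mu_H(V\cap H)>0$ using $0\in K$ (which indeed holds, since $K$ is the closure of a sub-semigroup of a compact group, hence a group), while the paper instead observes that $1_U$ is $\mu_H$-a.e.\ continuous for all but countably many $\delta$ (the boundaries of the boxes for distinct $\delta$ are disjoint) and replaces $\delta$ by a slightly smaller $\delta'$ if needed so that the indicator is Riemann integrable against $\mu_H$. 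Both are standard ways to make the passage; your version gives $d_W(D)\geq\int\phi\,\mathsf d\mu_H>0$ (a one-sided bound sufficient for the lemma and all downstream uses), whereas the paper's version additionally identifies $d_W(D)$ as the exact Haar measure of the box.
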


\begin{proof}
It follows from \cref{cor_tempfloorudalongWmultidimhigherk} that the sequence
$$
n\mapsto\left(g(n)\alpha , f(n),\Delta g(n)\alpha,\Delta f(n),\ldots,\Delta^\ell  g(n)\alpha,\Delta^\ell f(n)\right)
$$
is $\mu_H$-w.d.\ with respect to \textlim{W} for some closed subgroup $H$ of $\T^{(d+1)(\ell +1)}$.
Let $F_\delta\colon (\T^d\times\T)^{(\ell +1)} \to\{0,1\}$ denote the function
$$
F_\delta(x_0,y_0,\ldots,x_\ell ,y_\ell )=
\begin{cases}
1,&\text{if }\|x_i\|_{\T^d}\leq \delta\text{ and }\{y_i\}<\delta\text{ for all $0\leq i\leq \ell $.;}
\\
0,&\text{otherwise.}
\end{cases}
$$
Notice that while $F_\delta$ is not continuous, the sets of discontinuity for $F_\delta$ as $\delta$ changes are disjoint, and hence for all but at most countably many values of $\delta$, the function $F_\delta$ is almost everywhere continuous with respect to the Haar measure $\mu_H$ of $H$.
Replacing if necessary $F_\delta$ with $F_{\delta'}$ for some $\delta'<\delta$, and noticing that $F_{\delta'}\leq F_\delta$, we can then assume that $F_\delta$ is $\mu_H$-a.e. continuous.
We now use well-distribution to conclude that
\begin{eqnarray*}
d_W(D)
&=&
\lim_{N\to\infty}\frac{1}{W(N)}\sum_{n=1}^N \Delta W(n) F\left(g(n)\alpha , f(n),\ldots,\Delta^\ell  g(n)\alpha,\Delta^\ell f(n)\right)
\\
&=&\int_{H} F\d\mu_{H}>0.
\end{eqnarray*}
\end{proof}

 \begin{Lemma}\label{lem:w-density-to-upper-density}
         Let $f\in\F_{\ell +1}$, let $g(n)=\lfloor f(n)\rfloor$, let $W=\Delta^\ell  f$ and let $D\subset\N$ be such that $d_W(D)>0$. Define $B\coloneqq \Delta^\ell g(D)=\{\Delta^\ell g(n): n\in D\}$.
         Then $\bar d(B)>0$.
     \end{Lemma}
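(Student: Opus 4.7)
The plan is to compare the contributions to the $W$-density of $D$ with the contributions to the ordinary density of $B$ by decomposing $D$ along the level sets of the map $n\mapsto \Delta^\ell g(n)$ and showing that each such level set carries only a bounded amount of $W$-mass.

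First I would set up the basics. Since $f\in\F_{\ell+1}$, $W=\Delta^\ell f\in\F_1$, so $\Delta W\in\F_0$: $\Delta W(n)\to 0$ with eventually constant sign and $\big|\sum \Delta W(n)\big|=\infty$. Replacing $W$ by $-W$ if necessary, I may assume $W$ is eventually strictly increasing with $W(n)\to+\infty$. By the bound \eqref{eq_floorderivativedistance}, $|\Delta^\ell g(n)-W(n)|<2^\ell$, so for $n$ large the integer $\Delta^\ell g(n)$ is positive and satisfies $\Delta^\ell g(n)\leq W(n)+2^\ell$.

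Next I would prove the key level-set bound. For each $k\in\N$ put $L_k\coloneqq\{n:\Delta^\ell g(n)=k\}$. From $|\Delta^\ell g(n)-W(n)|<2^\ell$ we get $L_k\subseteq\{n:W(n)\in(k-2^\ell,k+2^\ell)\}$, which by monotonicity of $W$ is a (possibly empty) finite interval $\{a_k,a_k+1,\ldots,b_k\}$. Telescoping gives
$$\sum_{n\in L_k}\Delta W(n)\leq\sum_{n=a_k}^{b_k}\Delta W(n)=W(b_k+1)-W(a_k)\leq 2^{\ell+1}+\Delta W(b_k),$$
and since $b_k\to\infty$ with $k$, $\Delta W(b_k)<1$ eventually. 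Hence there exists a constant $C=C(\ell)$ with $\sum_{n\in L_k}\Delta W(n)\leq C$ for all sufficiently large $k$.

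Then I would combine the two ingredients. For large $N$, set $M\coloneqq\lfloor W(N)\rfloor+2^\ell$. Since $\Delta^\ell g(n)\in[1,M]$ for all sufficiently large $n\leq N$, partitioning $D\cap[1,N]$ by the level sets $L_k$ yields
$$\sum_{n\in D\cap[1,N]}\Delta W(n)\leq\sum_{k\in B\cap[1,M]}\ \sum_{n\in D\cap L_k\cap[1,N]}\Delta W(n)+\Oh(1)\leq C\cdot|B\cap[1,M]|+\Oh(1),$$
where the $\Oh(1)$ term absorbs the bounded exceptional initial segment. Choosing $N_j\to\infty$ along which $W(N_j)^{-1}\sum_{n\in D\cap[1,N_j]}\Delta W(n)\to \overline{d}_W(D)>0$, and noting that $M_j\coloneqq\lfloor W(N_j)\rfloor+2^\ell$ satisfies $M_j/W(N_j)\to 1$, we conclude
$$\overline{d}(B)\geq\limsup_{j\to\infty}\frac{|B\cap[1,M_j]|}{M_j}\geq\frac{\overline{d}_W(D)}{C}>0.$$

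The main obstacle is the level-set estimate in the second step; all other steps are bookkeeping. That estimate becomes tractable exactly because $W$ is monotone (forcing each $L_k$ into an interval) and $\Delta W\to 0$ (making the telescoping sum over that interval $\Oh(1)$). Once these two properties are exploited, the ordinary density of $B$ is controlled from below by the $W$-density of $D$ up to the multiplicative constant $1/C$.
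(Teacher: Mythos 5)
Your proof is correct and follows essentially the same route as the paper's: both decompose $D$ along the level sets of $n\mapsto\Delta^\ell g(n)$, use the inequality $|\Delta^\ell g(n)-W(n)|<2^\ell$ to confine each level set to a short $W$-interval, and then telescope $\Delta W$ over that interval to get a uniform $\Oh(1)$ bound on the $W$-mass per level set. The only cosmetic difference is bookkeeping — the paper fixes $N$ as a bound on $\Delta^\ell g$ and derives $M=M(N)$ as the corresponding cutoff in $n$, while you fix $N$ in $n$ and derive $M=\lfloor W(N)\rfloor+2^\ell$ — but the underlying argument and the constant $2^{\Oh(\ell)}$ relating $\overline d(B)$ to $\overline d_W(D)$ are the same.
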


     \begin{proof}
Let $s\leq N$ be natural numbers.
Denote by $(\Delta^\ell g)^{-1}(s)$ the set $\{n:\Delta^\ell g(n)=s\}$.
Let $M=M(N)=\max\big\{m:\Delta^\ell g(m)\leq N\big\}$ and let
         $$P(s,N)\coloneqq \frac1{W(M)}\sum_{n\in(\Delta^\ell g)^{-1}(s)}\Delta W(n).$$
         From \eqref{eq_floorderivativedistance} it follows that for large enough $N$ we have $W(M)/N\geq1/2$.
         Moreover, \eqref{eq_floorderivativedistance} also implies that if $n\in(\Delta^\ell g)^{-1}(s)$ then $W(n)\in(s-2^\ell ,s+2^\ell )$.
         Let $W^{-1}(a)\coloneqq \min\{x\in\N:W(x)\geq a\}$.
         We have
         $$\sum_{m\in(\Delta g)^{-1}(s)}\Delta W(m)\leq\sum_{m=W^{-1}(s-2^\ell )}^{W^{-1}(s+2^\ell )-1}\Delta W(m)\leq 2^{\ell +1}$$
         and hence, for large enough $N$, we have

         \begin{eqnarray*}
           \frac{d_W(D)}2
           &\leq&
           \frac1{W(M)}\sum_{m=1}^M1_D(m)\Delta W(m)
           =\frac1{W(M)}\sum_{s\in B\cap[N]}\sum_{m\in(\Delta g)^{-1}(s)}1_D(m)\Delta W(m)
           \\
           &\leq&\sum_{s\in B\cap[N]}P(s,N)
           \leq\frac{2^{\ell +1}\big|B\cap[N]\big|}{W(M)}
           \leq\frac{2^{\ell +2}\big|B\cap[N]\big|}{N}
           \leq 2^{\ell +3}\bar d(B).
         \end{eqnarray*}
 \end{proof}

\begin{Lemma}\label{lemma_joeldoingfloriansjob}
  For every $\ell,N\in\N$ there exists $\delta>0$ such that if $f\in\F_{\ell+1}$ and $a\in\N$ satisfy
  $$\forall i=0,\dots,\ell\qquad\big\{\Delta^i f(a)\big\}<\delta\qquad\text{and}\qquad\Delta^{\ell+1}f(a)<\delta$$
  then $\Delta^\ell g$ is constant in the interval $[a,a+N]$, where $g(m)=\lfloor f(m)\rfloor$.
\end{Lemma}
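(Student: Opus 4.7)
The plan is to apply the discrete Taylor expansion (iterated Newton's forward difference formula) at $a$ in order to rewrite $f(a+k)$, for $0\leq k\leq N+\ell$, as an integer polynomial in $k$ of degree $\ell$ plus small error terms, and then to argue that $\lfloor f(a+k)\rfloor$ differs from this integer polynomial by a $k$-independent constant, whence $\Delta^\ell g$ is constant on $[a,a+N]$.

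Concretely, iterating \cref{lem_TaylorLeibnitz} yields, for $0\leq k\leq N+\ell$, the decomposition
\begin{equation*}
f(a+k)=Q(k)+S(k)+R_a(k),
\end{equation*}
where $Q(k)=\sum_{i=0}^\ell\binom{k}{i}\lfloor\Delta^i f(a)\rfloor\in\Z$, $S(k)=\sum_{i=0}^\ell\binom{k}{i}\{\Delta^i f(a)\}\geq 0$, and $R_a(k)=\sum_{m=0}^{k-\ell-1}\binom{k-1-m}{\ell}\Delta^{\ell+1}f(a+m)$ is the discrete Taylor remainder. The hypothesis $\{\Delta^i f(a)\}<\delta$ gives $0\leq S(k)\leq C_1(N,\ell)\,\delta$. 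Because $\Delta^{\ell+1}f\in\F_0$ is eventually monotone with limit $0$, the monotonicity yields $|\Delta^{\ell+1}f(a+m)|\leq|\Delta^{\ell+1}f(a)|\leq\delta$ for every $m\geq 0$, and consequently $|R_a(k)|\leq C_2(N,\ell)\,\delta$. Moreover, the sign of $\Delta^{\ell+1}f$ is constant on its monotone tail, forcing $R_a(k)$ to have a single sign throughout $k\in[0,N+\ell]$.

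Choosing $\delta$ sufficiently small that $(C_1+C_2)\delta<\tfrac12$, I would argue that $S(k)+R_a(k)$ does not change sign as $k$ ranges over $[0,N+\ell]$. It would then follow that $\lfloor f(a+k)\rfloor=Q(k)+c$ for some $c\in\{-1,0\}$ independent of $k$, and therefore
\begin{equation*}
\Delta^\ell g(a+k)=\Delta^\ell Q(k)=\lfloor\Delta^\ell f(a)\rfloor,\qquad k\in[0,N],
\end{equation*}
using $\Delta^\ell\binom{k}{i}=0$ for $i<\ell$, $\Delta^\ell\binom{k}{\ell}=1$, and $\Delta^\ell c=0$. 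The main obstacle is verifying the sign-consistency of $S(k)+R_a(k)$ across the window: when $\Delta^{\ell+1}f\geq 0$ both $S$ and $R_a$ are non-negative and the claim is immediate; in the opposite case one needs to compare $S$ to $|R_a|$ carefully, exploiting that both scale linearly in $\delta$ while the relevant binomial coefficients remain bounded by constants depending only on $N$ and $\ell$.
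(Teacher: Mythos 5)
Your decomposition $f(a+k)=Q(k)+S(k)+R_a(k)$ is correct and this is a genuinely different route from the paper's proof: the paper works locally, invoking the identity of Lemma~\ref{lemma_Deltag} to write $\Delta^{\ell}g(m+1)-\Delta^{\ell}g(m)=\Delta^{\ell+1}f(m)-\big(E(m+1)-E(m)\big)$ for an explicit alternating sum $E$ of fractional parts, and then simply bounds $|\Delta^{\ell+1}g(m)|<1$. Your approach is more global and in a sense more ambitious: it seeks to identify $g(a+k)$ outright as $Q(k)+c$ for a $k$-independent integer $c$. That extra ambition is exactly where the argument breaks.

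The sign-consistency of $S(k)+R_a(k)$ is not merely a technicality you can defer; it is false in general, and the suggested fix of ``comparing $S$ to $|R_a|$'' cannot save it. Consider $a$ with $\{\Delta^i f(a)\}=0$ for every $i\le\ell$ (so $S(k)\equiv 0$), while $\Delta^{\ell+1}f(a)=-\epsilon$ for small $\epsilon>0$ (which satisfies the hypothesis $\Delta^{\ell+1}f(a)<\delta$). Then $R_a(k)=0$ for $k\le\ell$ but $R_a(\ell+1)=\Delta^{\ell+1}f(a)=-\epsilon<0$, so $S+R_a$ jumps from $0$ to a negative value, and indeed $g(a+k)=Q(k)$ for $k\le\ell$ while $g(a+\ell+1)=Q(\ell+1)-1$. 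Computing directly, $\Delta^{\ell+1}g(a)=-1\ne 0$, so $\Delta^\ell g$ is \emph{not} constant. Since $S$ is identically zero here, no comparison of $S$ with $|R_a|$ will help: the sign change is forced by the strict negativity of $R_a$, no matter how small $\delta$ is.

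It is worth saying that this very scenario also exposes a tacit assumption in the paper's own proof. The paper's final estimate $\{\Delta^{i}f(m)\}\le 2^{N}\delta$ is invoked for $i$ up to $\ell+1$, but if $\Delta^{\ell+1}f(a)$ is slightly negative then $\{\Delta^{\ell+1}f(a)\}=1-\epsilon$ is close to $1$, not close to $0$; the paper also writes $\Delta^{\ell+1}f(m)$ rather than $|\Delta^{\ell+1}f(m)|$ in the triangle-inequality bound. The intended hypothesis is evidently $\Delta^{\ell+1}f(a)\in[0,\delta)$, under which $\Delta^{\ell+1}f$ is nonnegative and decreasing on $[a,\infty)$; in that regime both $S$ and $R_a$ are nonnegative, your sign-consistency claim becomes trivial, and your proof goes through. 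So your argument does work in the case the lemma is actually meant to cover, but you should state that restriction explicitly rather than leaving the ``opposite case'' as an obstacle to be negotiated: in the opposite case the statement itself fails.

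Two smaller remarks. First, your proof ends by computing $\Delta^\ell Q(k)=\lfloor\Delta^\ell f(a)\rfloor$, which is correct, but you should add the justification that $\Delta^\ell\binom{k}{i}=\binom{k}{i-\ell}$ vanishes for $i<\ell$ and equals $1$ for $i=\ell$, and that the constant $c$ disappears under $\Delta^\ell$. Second, the paper's local argument (bound $|\Delta^{\ell+1}g(m)|<1$) is structurally more robust than tracking the global offset $c$: it never needs to know whether $c=0$ or $c=-1$, only that the integer-valued quantity $\Delta^{\ell+1}g(m)$ has small magnitude. Your method buys a more explicit description of $g$ on the interval, but at the cost of the delicate sign bookkeeping that tripped you up.
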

\begin{proof}
  Since $\Delta^ig$ is always an integer it suffices to show that $\big|\Delta^\ell g(m+1)-\Delta^\ell g(m)\big|<1$ for every $m\in[a,a+N)$.
  Then we use \cref{lemma_Deltag} and the observation that $\{x+y\}\leq\{x\}+\{y\}$ to deduce that
  \begin{equation*}
  \begin{split}
  \big|  \Delta^\ell & g(m+1)-\Delta^\ell g(m)\big|
  \\
  &\leq\Delta^{\ell+1}f(m)+\sum_{t=0}^\ell\binom \ell t\sum_{s=0}^t\binom ts\Big(2\left\{\Delta^{s}f(m)\right\}+\left\{\Delta^{s+1}f(m)\right\}\Big) \\ &\leq  \Delta^{\ell+1}f(m)+3^{\ell+1}\max_{i\geq0}\big\{\Delta^if(m)\big\}.
  \end{split}
  \end{equation*}
  Finally notice that in view of \cref{lem_TaylorLeibnitz} we have
  $$\big\{\Delta^if(m)\big\}=\left\{\sum_{j=0}^{m-a}\binom{m-a}j\Delta^{i+j}f(a)\right\}\leq 2^N\delta$$
  so we just need to choose $\delta$ so that $(1+3^{\ell+1}2^N)\delta<1$.
\end{proof}

We can now prove the main theorem of this subsection.
\begin{Theorem}\label{lemma_floriansjob}
Let $f\in\F_{\ell+1}$, let $g(n)=\lfloor f(n)\rfloor$, let $N\in\N$, $\epsilon>0$ and let $F\subset\T$ be a finite set.
Then there exists a set $B\subset\N$ with positive upper Banach density and for every $s\in B$ there exists $a\in\N$ such that for every $n\in\{0,1,\dots,N\}$
            \begin{enumerate}
                \item\label{item:fj-1} $\|g(a+n)x\|_\T<\epsilon$ for all $x\in F$,
                \item\label{item:fj-2} $\Delta^\ell g(a+n)=s$.
            \end{enumerate}
\end{Theorem}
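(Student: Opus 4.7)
The plan is to combine three earlier results: \cref{cor:equi-dis-1} (joint $W$-equidistribution of the discrete derivatives of $g$ together with the fractional parts of the derivatives of $f$), \cref{lemma_joeldoingfloriansjob} (a pointwise smallness condition that forces $\Delta^\ell g$ to be constant on an interval of length $N+1$), and \cref{lem:w-density-to-upper-density} (which pushes positive $W$-density forward through $\Delta^\ell g$ to yield positive upper density in the image). The key bookkeeping observation is that once $\Delta^\ell g$ is constant on $[a,a+N]$, all higher differences $\Delta^{\ell+j}g(a)$ with $1\leq j\leq N$ vanish identically, so the Newton expansion (\cref{lem_TaylorLeibnitz}) applied to $g$ collapses and $g(a+n)$ becomes an integer combination of $\Delta^0 g(a),\dots,\Delta^\ell g(a)$ alone for every $n\in\{0,\dots,N\}$.

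First I would fix parameters. Let $\delta>0$ be the constant provided by \cref{lemma_joeldoingfloriansjob} for the given $\ell$ and $N$, and set $\eta\coloneqq \min(\delta,\epsilon/2^{N+1})$. Viewing $F=\{x_1,\dots,x_d\}$ as a single vector $\alpha\in\T^d$, apply \cref{cor:equi-dis-1} with this $\alpha$ and tolerance $\eta$ to obtain a set $D\subset\N$ with $d_W(D)>0$ such that for every $a\in D$ and every $i\in\{0,\dots,\ell\}$, both $\|\Delta^i g(a)\alpha\|_{\T^d}\leq\eta$ and $\{\Delta^i f(a)\}<\eta$ hold. Since $f\in\F_{\ell+1}$ forces $\Delta^{\ell+1}f(a)\to 0$, after discarding finitely many elements from $D$ (which does not affect $d_W(D)$) one may also assume $\Delta^{\ell+1}f(a)<\delta$ for every $a\in D$.

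Next I would extract the set $B$. \cref{lemma_joeldoingfloriansjob} ensures that for every $a\in D$ the sequence $\Delta^\ell g$ is constant on $[a,a+N]$, so setting $s\coloneqq \Delta^\ell g(a)$ immediately delivers the second condition of the theorem. As noted above, this constancy makes $\Delta^{\ell+j}g(a)=0$ for $1\leq j\leq N$, so Newton's formula combined with the Diophantine bound yields, for every $n\in\{0,\dots,N\}$ and every $x\in F$,
$$\|g(a+n)x\|_\T=\Bigl\|\sum_{i=0}^{\min(n,\ell)}\binom{n}{i}\Delta^i g(a)\,x\Bigr\|_\T\leq 2^N\eta\leq\tfrac{\epsilon}{2},$$
which is the first condition. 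Setting $B\coloneqq \Delta^\ell g(D)$, \cref{lem:w-density-to-upper-density} gives $\bar d(B)>0$, and since upper Banach density dominates upper density we conclude that $B$ has positive upper Banach density; any preimage $a\in D$ of a given $s\in B$ then witnesses both parts of the conclusion simultaneously.

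The only delicate point is making the two forms of smallness (of $\{\Delta^i f(a)\}$ and of $\|\Delta^i g(a)x\|_\T$) coexist at the same point $a$, and this is precisely the content of the joint equidistribution statement \cref{cor:equi-dis-1}, which itself rests on \cref{cor_tempfloorudalongWmultidimhigherk}. Once that input is granted, the remaining step is a routine Newton-expansion computation together with the density transfer of \cref{lem:w-density-to-upper-density}.
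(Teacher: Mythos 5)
Your proof is correct and follows the same strategy as the paper's: combine \cref{cor:equi-dis-1} (with the chopping-off remark), \cref{lemma_joeldoingfloriansjob}, the Newton expansion, and \cref{lem:w-density-to-upper-density}. The only differences are cosmetic — you use the cruder bound $\sum_{i\leq\ell}\binom{n}{i}\leq 2^N$ with threshold $\epsilon/2^{N+1}$ where the paper uses $\binom{n}{i}\leq N^\ell$ with threshold $\epsilon/((\ell+1)N^\ell)$, and you spell out that positive upper density implies positive upper Banach density, which the paper leaves implicit.
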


        \begin{proof}
        Let $N\in\N$, $\epsilon>0$ and $F=\{x_1,\ldots,x_d\}\subset\T$ be given and define $\alpha\coloneqq (x_1,\ldots,x_d)$. Let $\delta_0$ be given by \cref{lemma_joeldoingfloriansjob}, set $\delta=\min(\delta_0,\tfrac{\epsilon}{(\ell+1)N^\ell})$ and let $D\subset\N$ be defined as
        \begin{equation}
        \label{eq:set-D-1}
                D\coloneqq \Big\{a\in\N: \big\|\Delta^i g(a)\alpha\big\|_{\T^d}\leq\delta;~\big\{\Delta^i f(a)\big\}<\delta~\text{for all }i\in\{0,1,\ldots,\ell\}\Big\}.
        \end{equation}
        According to \cref{cor:equi-dis-1} the set $D$ has positive $d_W$ density (where $W=\Delta^k f$).
        By chopping off a finite subset of $D$ if necessary, we can assume that $\Delta^{\ell+1}f(a)<\delta$ for every $a\in D$ (keeping in mind that removing a finite set does not affect the density $d_W(D)$).
        Define $B\coloneqq \Delta^\ell g(D)$; from \cref{lem:w-density-to-upper-density}, it follows that $B$ has positive upper density.

        Next let $s\in B$ be arbitrary. Let $a\in D$ be such that $\Delta^\ell g(a)=s$.
        We claim that conditions \eqref{item:fj-1} and \eqref{item:fj-2} are satisfied.
        In view of \cref{lemma_joeldoingfloriansjob} we have that $\Delta^\ell g(m)=s$ for every $m\in[a,a+N]$, establishing condition \eqref{item:fj-2}.

         Observe that condition \eqref{item:fj-2} also implies that $\Delta^{\ell+1}g(m)=0$ on $[a,a+N]$. Using \cref{lem_TaylorLeibnitz}, we get that
\begin{equation*}
g(a+h)=\sum_{i=0}^h\binom{h}{i}\Delta^i g(a)=\sum_{i=0}^{\min(h,\ell)} \binom{h}{i}\Delta^i g(a).
\end{equation*}
Therefore, for all $n\in\{0,\dots,N\}$, we obtain
        \begin{eqnarray*}
        \max_{x\in F} \|g(a+n)x\|_\T
        &\leq & \|g(a+n)\alpha \|_{\T^d}
        \\
        &\leq & \sum_{i=0}^\ell \binom{n}{i}\left\|\Delta^i g(a)\alpha \right\|_{\T^d}
         \\
        &< & \sum_{i=0}^\ell N^\ell\left\|\Delta^i g(a)\alpha \right\|_{\T^d}.
        \end{eqnarray*}
Finally, using \eqref{eq:set-D-1} and the fact that $a\in D$, we deduce that
$$
\sum_{i=0}^\ell N^\ell\left\|\Delta^i g(a)\alpha \right\|_{\T^d}\leq \epsilon.
$$
This finishes the proof.
        \end{proof}

        \subsection{Proof of {\cref{thm_mainsingle}}}
        \label{subsec-cwm}
        We now combine the main results of the two previous subsections to prove \cref{thm_mainsingle}.
\begin{Lemma}\label{lemma_combined}
Let $f\in\F_{\ell+1}$, let $g(n)=\lfloor f(n)\rfloor$ and $F\subset\T$ be a finite set.
Let $(X,{\mathcal B},\mu,T)$ be a measure preserving system and let $h_{wm}\in L^2(X,\mathcal{B},\mu)$ be a weakly mixing function.
Then for any $\delta>0$ and any sufficiently large $N\in\N$ there exists $a\in \N$ such that
 \begin{enumerate}
                \item\label{item:fj-1-2} $\|g(a+n)x\|_\T<\delta$ for all $x\in F$ and $n\in\{0,1,\dots,N\}$,
                \item\label{item:fj-2-2} $
\frac{1}{N}\sum_{n=1}^N\big|\langle T^{g(a+n)} h_{wm},h_{wm}\rangle\big|~\leq~\delta.
$
            \end{enumerate}
        \end{Lemma}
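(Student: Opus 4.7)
\textit{Proof proposal.} The plan is to combine \cref{prop:vdc-banachdensity1} and \cref{lemma_floriansjob}, with \cref{lem_TaylorLeibnitz} (Newton's forward difference formula) serving as the bridge between them. \cref{lemma_floriansjob} produces, for every large $N$, a set $B\subset\N$ of positive upper density such that for every $s\in B$ there is an $a\in\N$ satisfying the desired approximation $\|g(a+n)x\|_\T<\delta$ for all $x\in F$ and $n\in\{0,\dots,N\}$, together with the extra structural information $\Delta^\ell g(a+n)=s$ for all $n\in\{0,\dots,N\}$. In parallel, applying \cref{prop:vdc-banachdensity1} to the pair $(h_{wm},h_{wm})$ (after normalizing $h_{wm}$ so that $\|h_{wm}\|_2\leq 1$) yields, for all sufficiently large $N$, a set $D_0\subset\N$ of lower Banach density $1$ such that
$$\frac1N\sum_{n=1}^N\big|\langle T^{p(n)}h_{wm},h_{wm}\rangle\big|\leq\delta$$
for every integer-valued polynomial $p$ of degree $\ell$ whose top difference $\Delta^\ell p$ belongs to $D_0$.

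The bridging step is as follows: since $d_*(D_0)=1$, the complement $D_0^c$ has upper Banach density $0$, so $B\cap D_0$ retains the positive upper density of $B$; after discarding $0$ if necessary, choose some nonzero $s\in B\cap D_0$ and let $a$ be the corresponding integer from \cref{lemma_floriansjob}. The first conclusion of the lemma is then immediate. For the second conclusion, observe that the constancy $\Delta^\ell g(a+m)=s$ throughout $\{0,\dots,N\}$ forces $\Delta^i g(a)=0$ for every $\ell<i\leq N$, so \cref{lem_TaylorLeibnitz} gives
$$g(a+n)=\sum_{i=0}^n\binom{n}{i}\Delta^i g(a)=\sum_{i=0}^\ell\binom{n}{i}\Delta^i g(a)=:p(n)\qquad\text{for every }n\in\{0,\dots,N\}.$$
The right-hand side defines an integer-valued polynomial of degree exactly $\ell$ (as $s\neq 0$) with $\Delta^\ell p=s\in D_0$, so \cref{prop:vdc-banachdensity1} applied to $p$ delivers the required bound, since $p(n)=g(a+n)$ on the range of summation.

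The main subtlety to address is a mild mismatch in formulation: \cref{prop:vdc-banachdensity1} is stated for $p\in\Z[x]$, whereas the polynomial $p$ produced by Newton's formula is only integer-valued (its coefficients in the binomial basis are integers, but in the monomial basis they need not be). This is not a real obstruction, since the inductive proof of \cref{prop:vdc-banachdensity1} invokes only that $p$ takes integer values on $\N$ and that $p_m(n)=p(n+m)-p(n)$ is again an integer-valued polynomial of degree $\ell-1$ with $\Delta^{\ell-1}p_m=m\Delta^\ell p$ --- both of which persist in the integer-valued setting. Apart from this clarification, the argument is pure assembly of the two earlier results.
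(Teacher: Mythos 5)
Your proof is correct and follows essentially the same approach as the paper: combine \cref{prop:vdc-banachdensity1} and \cref{lemma_floriansjob}, intersect the resulting sets $D_0$ and $B$ (possible since $d_*(D_0)=1$ and $B$ has positive upper Banach density), and use \cref{lem_TaylorLeibnitz} to recognize $n\mapsto g(a+n)$ on $[0,N]$ as a polynomial of degree $\ell$ with $\ell$-th difference $s$. Your observation that the resulting polynomial is only integer-valued rather than in $\Z[x]$, and that the inductive argument for \cref{prop:vdc-banachdensity1} goes through unchanged in that setting, is a legitimate clarification of a detail the paper passes over silently.
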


\begin{proof}
First, we apply \cref{prop:vdc-banachdensity1}
to find a set $D\subset \N$ with $d_*(D)=1$ such that for every polynomial $p\in\Z[x]$ of degree $\ell$ with the constant $\Delta^\ell p$ in $D$, one has
$$
\frac{1}{N}\sum_{n=1}^N\big|\langle T^{p(n)} h_{wm},h_{wm}\rangle\big|~\leq~\delta.
$$
Then we apply \cref{lemma_floriansjob} to find a set $B\subset\N$ with positive upper Banach density such that for any $s\in B$ there exists $a\in\N$ such that for every $n\in\{0,1,\dots,N\}$ we have $\sup_{x\in F}\|g(a+n)x\|_\T<\delta$ and $\Delta^\ell g(a+n)=s$.

Note that $D\cap B\neq \emptyset$ and let $s$ be in this intersection.
Since $\Delta^\ell g(a+n)=s$ for all $n\in[0,N]$, we can use \cref{lem_TaylorLeibnitz} to deduce that for all $n$ in that interval $g(a+n)=p(n)$ for some polynomial $p$ of degree $\ell$ and with $\Delta^\ell p(n)=s$.
Therefore, $
\frac{1}{N}\sum_{n=1}^N\big|\langle T^{g(a+n)} h_{wm},h_{wm}\rangle\big|~\leq~\delta$.
\end{proof}

 We also need the following well known fact whose proof we include for completeness.
Recall that a F\o lner sequence in $\N$ is a sequence $(F_N)_{N\in\N}$ of finite sets such that $\big|(F_N+x)\cap F_N\big|/|F_N|\to1$ as $N\to\infty$.
Given a F\o lner sequence $(F_N)_{N\in\N}$, the upper density of a set $E\subset\N$ relative to $(F_N)_{N\in\N}$ is
$$\bar d_{(F_N)}(E)\coloneqq \limsup_{N\to\infty}\frac{|E\cap F_N|}{F_N}.$$

\begin{Lemma}\label{lemma_thickfolner}
    Let $x_n$ be a sequence of non-negative real numbers and let $L>0$.
    If there exists a F\o lner sequence $(F_N)_{N\in\N}$ such that
    $$\lim_{N\to\infty}\frac1{|F_N|}\sum_{n\in F_N}|x_n-L|=0,$$
    then for every $\epsilon>0$ the set $E\coloneqq \{n:x_n>L-\epsilon\}$ is thick.
\end{Lemma}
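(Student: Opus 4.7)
The plan is to argue by contradiction. Suppose $E$ were not thick; then there would exist $l\in\N$ such that every interval of length $l$ in $\N$ meets the complement $E^c=\{n:x_n\leq L-\epsilon\}$. Equivalently, $E^c$ would be syndetic with gaps at most $l$, which I would rephrase as the covering
\[
\N\subseteq \bigcup_{j=0}^{l-1}(E^c-j),\qquad \text{where } E^c-j\coloneqq\{n\in\N:n+j\in E^c\}.
\]

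The main step I would carry out is to convert this covering into a lower bound for the density of $E^c$ along the F\o lner sequence. From the inclusion above, for any finite $F\subset\N$ one gets
\[
|F|\leq \sum_{j=0}^{l-1}\big|(E^c-j)\cap F\big|=\sum_{j=0}^{l-1}\big|E^c\cap(F+j)\big|.
\]
Applying this with $F=F_N$ and invoking the F\o lner property $|(F_N+j)\triangle F_N|=\oh(|F_N|)$ for each fixed $j\in\{0,1,\dots,l-1\}$, each summand on the right equals $|E^c\cap F_N|+\oh(|F_N|)$, so
\[
\frac{|E^c\cap F_N|}{|F_N|}\geq \frac1l-\oh(1).
\]

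To finish, I would observe that every $n\in E^c$ satisfies $|x_n-L|\geq\epsilon$, hence
\[
\frac1{|F_N|}\sum_{n\in F_N}|x_n-L|\geq \epsilon\cdot\frac{|E^c\cap F_N|}{|F_N|}\geq \frac{\epsilon}{l}-\oh(1).
\]
Letting $N\to\infty$ contradicts the hypothesis that this average tends to $0$. The only real content is the F\o lner covering step that produces the bound $1/l-\oh(1)$; everything else is a routine averaging argument, and I do not anticipate any serious obstacle.
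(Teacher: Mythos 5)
Your proof is correct and takes essentially the same route as the paper: both hinge on the observation that $|x_n-L|\geq\epsilon$ on $E^c$, forcing the $(F_N)$-density of $E^c$ to be zero, which is incompatible with $E^c$ being syndetic. The only difference is that the paper states the implication ``$\bar d_{(F_N)}(E^c)=0$ implies $E$ is thick'' without elaboration, whereas you supply the missing detail — the covering argument $\N\subseteq\bigcup_{j<l}(E^c-j)$ combined with approximate shift-invariance of $(F_N)$ — which is exactly the standard justification that a syndetic set has positive lower density along every F\o lner sequence.
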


\begin{proof}
    Denote by $D\coloneqq \N\setminus E$
    $$\sum_{n\in F_N}|x_n-L|=\sum_{n\in F_N\cap E}|x_n-L|+\sum_{n\in F_N\cap D}|x_n-L|\geq\epsilon|F_N\cap D|$$
    Dividing by $|F_N|$ and letting $N\to\infty$ we deduce that $\bar d_{(F_N)}(D)=0$ and hence $d_{(F_N)}(E)=1$, which implies that $E$ is thick.
\end{proof}

The following theorem is used in the proof of \cref{thm_mainsingle} to show that the set $R_{\epsilon,A}$ defined in \eqref{eq_intro_iii} is $W$-syndetic.
It can be viewed as an analogue of the classical von Neumann ergodic theorem.

\begin{Theorem}
\label{thm_W-uniform-vonNeumann-along-functions-in-F}
Let $U\colon \Hilb\to\Hilb$ be a unitary operator on a Hilbert space $\Hilb$
and let $P\colon \Hilb\to\Hilb$ denote the orthogonal projection onto the subspace of $U$-invariant elements in $\Hilb$.
Let $\ell\in\N\cup\{0\}$, $f\in\F_{\ell+1}$ and define $W\coloneqq \Delta^\ell f$. Then for any $h\in \Hilb$ we have
$$
\otherlim{UW}{n\to\infty} U^{\lfloor f(n) \rfloor}h~=~P h \qquad\text{in norm}.
$$
\end{Theorem}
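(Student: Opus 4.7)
The plan is to reduce the vector-valued mean ergodic statement to a scalar uniform-distribution statement via the spectral theorem for unitary operators, and then to apply \cref{cor_floorwd} fiberwise over the spectrum.

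First I will invoke the spectral theorem: there is a projection-valued measure $E$ on $\T = \R/\Z$ such that $U^m = \int_\T e(m\alpha)\,\d E(\alpha)$ for every $m \in \Z$, with $e(\theta) = e^{2\pi i \theta}$. The orthogonal projection $P$ onto the $U$-invariant vectors then equals $E(\{0\})$. Writing $\mu_h$ for the finite positive measure $B \mapsto \langle E(B) h, h \rangle$ on $\T$, the spectral identity turns the Hilbert-space norm into an $L^2(\mu_h)$ norm; explicitly, for any $M < N$,
$$
\left\| \E^W_{n \in [M,N]} U^{\lfloor f(n) \rfloor} h - Ph \right\|^2 = \int_\T \left| \E^W_{n \in [M,N]} e\big(\alpha \lfloor f(n) \rfloor\big) - 1_{\{0\}}(\alpha) \right|^2 \d\mu_h(\alpha).
$$

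Next I analyze the integrand pointwise in $\alpha \in \T$. At $\alpha = 0$ the quantity $e(\alpha \lfloor f(n) \rfloor)$ is identically $1$, so the integrand vanishes. For every other $\alpha \in \T$, lifting to a representative $\alpha \in \R \setminus \Z$ and noting that $e(\alpha \lfloor f(n) \rfloor)$ depends only on $\alpha \bmod 1$, \cref{cor_floorwd} gives $\otherlim{UW}{n\to\infty} e(\alpha \lfloor f(n) \rfloor) = 0$, so the integrand tends to $0$ as $W(N) - W(M) \to \infty$.

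Finally, the integrand is uniformly bounded by $4$ and $\mu_h$ is a finite Borel measure on $\T$, so the dominated convergence theorem permits me to pass the limit inside the integral, which yields the desired norm convergence. I do not anticipate any real obstacle: the analytic heavy lifting has already been carried out in the equidistribution results of \cref{sec_u.d.-along-riesz-means}, and the present theorem is essentially a spectral-theoretic unpacking of \cref{cor_floorwd}. The only mild subtlety is noting that the trivial character corresponds precisely to the $U$-invariant subspace, so the limit integral isolates exactly $Ph$.
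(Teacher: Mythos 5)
Your proof is correct and follows essentially the same strategy as the paper's: reduce to a scalar statement via the spectral theorem, apply \cref{cor_floorwd} pointwise, and pass the limit inside via dominated convergence. The only difference is that you phrase the spectral reduction via the projection-valued measure and the scalar measures $\mu_h$, whereas the paper invokes the multiplication-operator model $\Hilb = L^2(\T,\nu)$ directly (which implicitly requires first decomposing into cyclic subspaces); your version sidesteps that technicality but the substance is identical.
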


\begin{proof}
Using the spectral theorem for unitary operators, we can assume without loss of generality that $\Hilb=L^2(\T,\nu)$ for some Borel probability measure $\nu$ on $\T$, and $U h= e(x)h(x)$ for all $h\in L^2(\T,\nu)$, where $e(x)=e^{2\pi i x}$.
Also, note that the orthogonal projection onto the subspace of $U$-invariant elements in $L^2(\T,\nu)$ is given by
$$
P h (x) =
\begin{cases}
h(0),&\text{if } x=0;\\
0,&\text{if }x\in\T\setminus \{0\}.
\end{cases}
$$

Fix $h\in L^2(\T,\nu)$. Then, in light of \cref{cor_floorwd}, we have for any $x\in \T\setminus\{0\}$,
$$
\otherlim{UW}{n\to\infty} U^{\lfloor f(n) \rfloor}h(x)=h(x)\left( \otherlim{UW}{n\to\infty} e(x\lfloor f(n)\rfloor)\right)=0.
$$
On the other hand, $\otherlim{UW}{n\to\infty} U^{\lfloor f(n) \rfloor}h(0)=h(0)$.
It now follows from the dominated convergence theorem that $\otherlim{UW}{n\to\infty} U^{\lfloor f(n) \rfloor}h=P h$ in $L^2$-norm and the proof is completed.
\end{proof}

We are finally in position to prove \cref{thm_mainsingle}.
\begin{proof}[Proof of \cref{thm_mainsingle}]
Let $\xbmt$ be an invertible probability measure preserving system, let $A\in{\mathcal B}$ and let $\epsilon>0$.
Also, let $\ell\in\N\cup\{0\}$, let $f\in\F_{\ell+1}$ and let $W=\Delta^\ell f$.
Using \cref{thm_W-uniform-vonNeumann-along-functions-in-F}
 and the ergodic decomposition we conclude that
\begin{equation}\label{eq_temperedkhintchine}
  \otherlim{UW}{n\to\infty}\mu(A\cap T^{-\lfloor f(n) \rfloor}A)\geq\mu^2(A).\footnote{The inequality \eqref{eq_temperedkhintchine} can be viewed as an analogue of the classical result due to Khintchine \cite{Khintchine34} which states that for every measure preserving system $(X,{\mathcal B},\mu,T)$ and every $A\in{\mathcal B}$, $\displaystyle \lim_{N-M\to\infty}\frac1{N-M}\sum_{n=M}^N\mu(A\cap T^{-n}A)\geq\mu^2(A)$.}
\end{equation}
This proves that the set $R_{\epsilon,A}$ defined by \eqref{eq_intro_iii} is $W$-syndetic.

We now move to show that $R_{\epsilon,A}$ is also thick.
In view of the Jacobs-de Leeuw-Glicksberg Decomposition, \cref{thm_jlg}, we can write $\1_A=h_{c}+h_{wm}+h_\epsilon$, where $h_c$, $h_{wm}$ and $h_{\epsilon}$ are pairwise orthogonal, $h_{wm}$ is a weakly mixing function, $h_{\epsilon}$ satisfies $\|h_\epsilon\|_{L^2}\leq \epsilon/2$ and
$$h_c=\sum_{j=1}^d c_j f_j$$
for some $d\in\N$, $c_j\in\C$ and eigenfunctions $f_j\in L^2(X,\mathcal{B},\mu)$ satisfying $Tf_j=e(\theta_j) f_j$ for some $\theta_j\in\T$.
Moreover, $h_c\geq 0$ and $\int h_c \d\mu =\mu(A)$.
In view of Cauchy-Schwarz inequality we have that $\|h_c\|_{L^2}^2\geq\mu(A)^2$.

    Observe that $R_{\epsilon,A}$ contains the set
    $$J\coloneqq \Big\{n\in\N:\langle T^{g(n)}h_c,h_c\rangle + \langle T^{g(n)}h_{wm},h_{wm}\rangle > \left\| h_c\right\|_{L^2}^2-\tfrac\epsilon2\Big\},$$
    so it suffices to show that $J$ is thick.
    In view of \cref{lemma_thickfolner} it suffices to find a F\o lner sequence $(F_N)_{N\in\N}$ of $\N$ for which
    \begin{equation}\label{eq_proof_main-01}
    \lim_{N\to\infty}\frac1{|F_N|}\sum_{n\in F_N}\left|\langle T^{g(n)}h_c,h_c\rangle + \langle T^{g(n)}h_{wm},h_{wm}\rangle -  \left\| h_c\right\|_{L^2}^2\right|=0.
    \end{equation}
As a matter of fact, we will show that there exists a F\o lner sequence $(F_N)_{N\in\N}$ such that simultaneously
\begin{equation}\label{eq_proof_main-02}
    \lim_{N\to\infty}\frac1{|F_N|}\sum_{n\in F_N}\left|\langle T^{g(n)}h_c,h_c\rangle -  \left\| h_c\right\|_{L^2}^2\right|=0,
    \end{equation}
\begin{equation}\label{eq_proof_main-03}
    \lim_{N\to\infty}\frac1{|F_N|}\sum_{n\in F_N}\left|\langle T^{g(n)}h_{wm},h_{wm}\rangle \right|=0.
    \end{equation}
It is obvious that \eqref{eq_proof_main-02} and \eqref{eq_proof_main-03} together imply \eqref{eq_proof_main-01}.
The existence of such a F\o lner sequence is equivalent to the statement that, for every $N_0\in\N$ and every $\delta>0$ there exist $a,N\in\N$ with $N\geq N_0$ such that
\begin{eqnarray}
\label{eq_proof_main-04}
\frac1N\sum_{n=0}^N\left|\langle T^{g(a+n)}h_c,h_c\rangle -  \left\| h_c\right\|_{L^2}^2\right|\leq\delta,
\\
\label{eq_proof_main-05}
\frac1N\sum_{n=0}^N
\left|\langle T^{g(a+n)}h_{wm},h_{wm}\rangle \right|\leq\delta.
\end{eqnarray}

Let $N_0\in\N$ and $\delta>0$ be arbitrary.
Recall that  $h_c=\sum_{j=1}^d c_j f_j$ where $f_j\in L^2(X,\mathcal{B},\mu)$ satisfy $Tf_j=e(\theta_j) f_j$.
Define $F\coloneqq \{\theta_1,\ldots,\theta_d\}\subset \T$.
    Applying \cref{lemma_combined} we find $a,N\in\N$ such that \eqref{eq_proof_main-05} is satisfied and $\|g(a+n)\theta_j\|_\T<\delta^2/(2\pi\|h_c\|^4_{L^2})$ for all $\theta_j\in F$ and every $n\in\{0,1,\dots,N\}$.

Since $T^{g(a+n)} f_j = e(\theta_j g(a+n)) f_j$ and since $|e(x)-1|<2\pi\|x\|_\T$ for every $x\in\T$, we deduce that $\|h_c-T^{g(a+n)}h_c\|_{L^2}\leq\delta/\|h_c\|_{L^2}$. It follows that
\begin{equation}
\label{eq_proof_main-06}
\left|\langle T^{g(a+n)}h_c,h_c\rangle -  \left\| h_c\right\|_{L^2}^2\right|\leq \delta,\qquad\forall n\in\{1,\ldots,N\}.
\end{equation}
From \eqref{eq_proof_main-06} it follows that \eqref{eq_proof_main-04} holds and this finishes the proof.
     \end{proof}

\section{Multiple recurrence along sequences in $\F$}
\label{sec_4}
In this section we provide a proof of \cref{thm_multipleconsecutiverecurrence_0}, modulo a result concerning equidistribution on nilmanifolds (see \cref{thm_MainNilmanifoldEquidistribution}) to be proved in \cref{sec_u}.
Actually, we prove a slightly stronger result (see \cref{thm_multipleconsecutiverecurrence} below), that deals with the class of sequences $\mathcal{S}(f)$ which we will presently introduce.

\begin{Definition}
\label{def_ChildrenOfTheFunction}
Let $\ell\in\N$ and $f\in\F_{\ell}$.
Denote by ${\mathcal S}(f)$ the collection
\begin{equation*}
{\mathcal S}(f)\coloneqq \left\{
\sum_{i=0}^k c_i\Delta^i f:k\geq0,c_0,\ldots,c_k\in\Q\right\}.
\end{equation*}
Note that ${\mathcal S}(f)$ is closed under pointwise addition 
and under (discrete) differentiation $\Delta$.
In view of \cref{lem_TaylorLeibnitz}, the set ${\mathcal S}(f)$ is also closed under the shift $n\mapsto n+1$.
Observe that every $g\in{\mathcal S}(f)$ tends either to $\infty$ or to $0$ as $n\to\infty$. Therefore if $g,\tilde g\in{\mathcal S}(f)$ are such that $\lim_{n\to\infty}\big| g(n)-\tilde g(n)\big|=c<\infty$, then $c=0$.

If $g\in\mathcal{S}(f)$ is such that $g(n)\to0$ as $n\to\infty$ we say that the \define{degree} of $g$ is $0$.
For every other $g\in\mathcal{S}(f)$ there exists a unique $d\in\N$ and $\tilde g\in\F_{d}$ such that $|g(n)-\tilde g(n)|\to0$ as $n\to\infty$.
We call $d$ the \define{degree} of $g$.
\end{Definition}

\begin{Theorem}
\label{thm_multipleconsecutiverecurrence}
Let $k\in\N$, $\ell\in\N$, $f\in\F_{\ell}$, $W\coloneqq \Delta^{\ell-1} f$ and $f_1,\ldots,f_k\in {\mathcal S}(f)$.
Let $\xbmt$ be an invertible probability measure preserving system.
\begin{enumerate}
\item
For any $h_1,\ldots,h_k\in L^\infty\xbm$ the limit
$$
\otherlim{UW}{n\to\infty}
T^{\lfloor f_1(n)\rfloor}h_1\cdot T^{\lfloor f_2(n)\rfloor}h_2\cdots T^{\lfloor f_k(n)\rfloor}h_k
$$
exists in $L^2\xbm$.
\item
For any $A\in{\mathcal B}$ with $\mu(A)>0$,
  \begin{equation}
\otherlim{UW}{n\to\infty}\mu\big(A\cap T^{-\lfloor f_1(n)\rfloor}A\cap T^{-\lfloor f_2(n)\rfloor}A\cap\cdots\cap T^{-\lfloor f_k(n)\rfloor}A\big)>0.
  \end{equation}
\end{enumerate}
\end{Theorem}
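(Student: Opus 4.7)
The plan is to combine three ingredients: a characteristic-factor reduction in the spirit of Host--Kra, the Riesz-mean nilmanifold equidistribution result \cref{thm_MainNilmanifoldEquidistribution} to be proved in \cref{sec_u}, and a ``diagonal in the orbit closure'' argument for positivity. The key structural feature making all of this work is that $\mathcal{S}(f)$ is closed under $\Delta$ and under integer shifts (the latter via Newton's forward difference formula \cref{lem_TaylorLeibnitz}), so every auxiliary sequence produced along the way still lies in $\mathcal{S}(f)$.

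First I would reduce both claims to the case where $\xbmt$ is an inverse limit of nilsystems. Apply the Riesz-mean van der Corput lemma \cref{thm_vdCforRiesz} to the vector-valued sequence
\[
n \mapsto T^{\lfloor f_1(n)\rfloor}h_1 \cdot T^{\lfloor f_2(n)\rfloor}h_2\cdots T^{\lfloor f_k(n)\rfloor}h_k.
\]
After one step, one is led to study the uniform Riesz mean of the corresponding inner product, whose exponents take the form $\lfloor f_i(n+h)\rfloor - \lfloor f_j(n)\rfloor$; modulo $\Oh(1)$ integer corrections absorbed in the $\oh(1)$ slack of \cref{def_ChildrenOfTheFunction}, these are again sequences in $\mathcal{S}(f)$, but of strictly smaller PET-complexity. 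Iterating this van der Corput reduction finitely many times---the standard Bergelson--Host--Kra PET scheme---one bounds the $L^2$-norm of the original average by a Host--Kra seminorm $\|h_i\|_{U^{s+1}(T)}$ for some $s$ depending only on $k$ and $\ell$. Hence, up to arbitrarily small $L^2$-error, each $h_i$ may be replaced by its projection onto the $s$-step pronilfactor, reducing the problem to an ergodic $s$-step nilsystem $X=G/\Gamma$ with $Tx = a\cdot x$.

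On such a nilsystem, the average at a point $x$ takes the form
\[
\E^W_{n\in[M,N]}\;(h_1\otimes\cdots\otimes h_k)\bigl(a^{\lfloor f_1(n)\rfloor}x,\ldots,a^{\lfloor f_k(n)\rfloor}x\bigr),
\]
and \cref{thm_MainNilmanifoldEquidistribution} asserts precisely that the sequence of $k$-tuples $\bigl(a^{\lfloor f_1(n)\rfloor}x,\ldots,a^{\lfloor f_k(n)\rfloor}x\bigr)$ is $\mu_{Y(x)}$-well distributed with respect to \textlim{W} on some closed sub-nilmanifold $Y(x)\subseteq (G/\Gamma)^k$ which contains the diagonal point $(x,\ldots,x)$. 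This proves existence of the $L^2$-limit in part (i), with explicit value $\int_{Y(x)} h_1\otimes\cdots\otimes h_k\,d\mu_{Y(x)}$. For part (ii), taking $h_i=1_A$ and integrating in $x$, positivity follows from the observation that since the diagonal of $(G/\Gamma)^k$ meets every fiber $Y(x)$, the measure $\mu_{Y(x)}(A^k\cap Y(x))$ is strictly positive on a set of positive $\mu$-measure inside $A$; a Cauchy--Schwarz lower bound of Bergelson--Host--Kra type then yields the strict positivity of the uniform Riesz mean of $\mu(A\cap T^{-\lfloor f_1(n)\rfloor}A\cap\cdots\cap T^{-\lfloor f_k(n)\rfloor}A)$.

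The main obstacle is precisely \cref{thm_MainNilmanifoldEquidistribution}, i.e.\ the Riesz-mean lift of Leibman's equidistribution theorem for orbits of the form $n\mapsto \bigl(a^{\lfloor f_1(n)\rfloor},\ldots,a^{\lfloor f_k(n)\rfloor}\bigr)e_{\Gamma}^{\,k}$ on nilmanifolds. Its proof, carried out in \cref{sec_u}, must extend the Weyl-criterion and van der Corput machinery of \cref{sec_u.d.-along-riesz-means} from tori to horizontal characters on nilmanifolds, once more exploiting the stability of $\mathcal{S}(f)$ under $\Delta$ and integer shifts so that the PET induction on nilpotency step stays within the class. Everything else in the outline above---the characteristic factor reduction, the passage from nilsystem averages to orbit integrals, and the positivity argument---is formal, modulo routine care for the floor-function corrections controlled by \eqref{eq_floorderivativedistance} and \cref{lemma_Deltag}.
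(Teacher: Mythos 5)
Your outline follows the paper's proof essentially step for step: a PET/van der Corput reduction to a Host--Kra seminorm bound (\cref{thm_characteristic_factor}), the Host--Kra structure theorem to pass to a pronilsystem and then (by approximation) to a genuine nilsystem, and finally \cref{thm_MainNilmanifoldEquidistribution} to identify the limit as $\int_{Y_x}H_x\,d\mu_{Y_x}$ and to get positivity from the fact that $1_{G^k}\Gamma^k\in Y_x$. That is the right skeleton, and the role of the $\Delta$- and shift-stability of $\mathcal{S}(f)$ is indeed exactly what keeps the PET induction closed.

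Two of your intermediate claims, however, are stated in a form that does not quite work as written. First, the integer corrections $\lfloor f_i(n+h)\rfloor-\lfloor f_j(n)\rfloor-\lfloor f_i(n+h)-f_j(n)\rfloor$ are $\Oh(1)$ but not $\oh(1)$, so they cannot be absorbed into the $\oh(1)$ slack in \cref{def_ChildrenOfTheFunction}. The paper instead splits $\N$ into finitely many pieces according to the error vector and uses \cref{lemma_wmMagic} (a Bergelson--Knutson style product-system trick) to remove the resulting bounded weight $1_{A_v}$ at the cost of one extra seminorm level; this is a genuine lemma and not merely ``routine care.'' Second, for a general measurable $A$, the fact that the diagonal point lies in $Y(x)$ does not give $\mu_{Y(x)}(A^k\cap Y(x))>0$ -- the Haar measure of $A^k\cap Y(x)$ can vanish even though the base point belongs to it. The paper avoids this by first approximating $1_A$ (after projection to the nilfactor) by a continuous non-negative $h$ with $\int h\,d\mu=\mu(A)$, so that $H_x(1_{G^k}\Gamma^k)=h(x)^k>0$ together with continuity forces $\int_{Y_x}H_x\,d\mu_{Y_x}>0$ on a set of positive measure; the ``Cauchy--Schwarz lower bound of Bergelson--Host--Kra type'' you invoke is not what is used and is not needed once the continuity reduction is in place. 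With these two repairs your argument aligns with the paper's.
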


The structure of \cref{sec_4} is as follows.
In Subsection \ref{sec_background} we provide a summary of relevant background material. In Subsection \ref{sec_CharFact} we prove \cref{thm_characteristic_factor} which essentially shows that nilfactors are characteristic for the averages appearing in \cref{thm_multipleconsecutiverecurrence}.
In Subsection \ref{sec_reduction} we use \cref{thm_characteristic_factor} to reduce \cref{thm_multipleconsecutiverecurrence} to nilsystems; this is the content of \cref{thm_multirecnilsystem}.
We then show how \cref{thm_multirecnilsystem} follows from an equidistribution result, \cref{thm_lastuniformdistribution}.
Finally, in Subsection \ref{sec_additionalreduction}, we further reduce \cref{thm_lastuniformdistribution} to a more streamlined statement concerning equidistribution on nilmanifolds, \cref{thm_MainNilmanifoldEquidistribution}, which is proved in \cref{sec_u}.

\subsection{Background}\label{sec_background}
In this subsection we collect some facts which will be used throughout \cref{sec_4}.
\\

\paragraph{\textit{\textbf{Nilsystems:}}} Let $G$ be an $s$-step nilpotent Lie group and let $\Gamma\subset G$ be a discrete closed subgroup such that $X\coloneqq G/\Gamma$ is compact.
The space $X$ is called a \define{nilmanifold}.
A closed subset $Y\subset X$ is a \define{sub-nilmanifold} if there exists a closed subgroup $H\subset G$ such that $Y=\pi(H)$, where $\pi\colon G\to X$ is the natural projection.

For each $b\in G$ we define a translation on $X$ (which we also denote by $b$) as follows: given $g\Gamma \in X$ we let $b\cdot g\Gamma=(bg)\Gamma$.
Let us denote by $\mu_X$ the normalized Haar measure on $X$, i.e., the unique Borel probability measure invariant under the natural action of $G$ on $X$ (cf. \cite{Raghunathan72}).
Given $b\in G$, the triple $(X,\mu_X,b)$ is called an \define{$s$-step nilsystem}.
A measure preserving system is called an \define{$s$-step pro-nilsystem} if it is (isomorphic in the category of measure preserving systems to) an inverse limit of $s$-step nilsystems.
In other words, $\xbmt$ is an $s$-step pro-nilsystem if there exist $\sigma$-algebras ${\mathcal B}_1\subset{\mathcal B_2}\subset\cdots\subset{\mathcal B}$ such that ${\mathcal B}=\bigcup {\mathcal B}_n$ and for all $n$, the system $(X,{\mathcal B}_n,\mu,T)$ is isomorphic to an $s$-step nilsystem.
\\

\paragraph{\textit{\textbf{Uniformity seminorms:}}}We will make use of the uniformity seminorms introduced in \cite{Host_Kra05} for ergodic systems. As was observed in \cite{Chu_Frantzikinakis_Host11}, the ergodicity of the system is not necessary.

Given a bounded sequence $a\colon \N\to\C$ we define
the \define{Ces{\`a}ro mean} of $a$ as
$$
\otherlim{C}{n\to\infty} a(n) \coloneqq \lim_{N\to\infty}\frac{1}{N}\sum_{n=1}^N a(n)
$$
whenever this limit exists. Note that \textlim{C} coincides with \textlim{W} for $W(n)= n$.

\begin{Definition}
  Let $\xbmt$ be an invertible probability measure preserving system.
  We define the \define{uniformity seminorms} on $L^\infty(X)$ recursively as follows.
  $$\lhk h\rhk_0=\int_Xh\d\mu\qquad\text{and}\qquad\lhk h\rhk_{s}^{2^{s}}=\otherlim{C}{n\to\infty}\lhk \bar h\cdot T^nh\rhk_{s-1}^{2^{s-1}}\text{ for every }s\in\N.$$
\end{Definition}
The existence of the limits in this definition was established in \cite{Host_Kra05} for ergodic systems and in \cite[Section 2.2]{Chu_Frantzikinakis_Host11} in general.

The following result from \cite{Host_Kra05} gives the relation between the uniformity seminorms and nilsystems.
\begin{Theorem}\label{thm_hk}
  Let $\xbmt$ be an ergodic system. For each $s\in\N$ there exists a factor ${\mathcal Z}_s\subset{\mathcal B}$ with the following properties:
\begin{enumerate}
  \item The measure preserving system $(X,{\mathcal Z}_s,\mu,T)$ is an $s$-step pro-nilsystem;
  \item $\displaystyle L^\infty\big({\mathcal Z}_s\big)=\left\{h\in L^\infty({\mathcal B}):\int_Xh\cdot h'\d\mu=0\quad\forall h'\in L^\infty(X,{\mathcal B},\mu)\text{ with }\lhk h'\rhk_{s+1}=0 \right\}$.
\end{enumerate}
\end{Theorem}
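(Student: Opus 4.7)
My plan is to follow the construction of Host and Kra from \cite{Host_Kra05}, replacing the ergodicity assumption by the standard decomposition argument of Chu, Frantzikinakis and Host \cite{Chu_Frantzikinakis_Host11}. The bulk of the work splits into three stages: (a) building the cubic measures $\mu^{[s]}$ on $X^{2^s}$ that encode the seminorms $\lhk\cdot\rhk_s$, (b) defining the factor $\mathcal{Z}_s$ through orthogonality to the null-class of $\lhk\cdot\rhk_{s+1}$, and (c) proving that $\mathcal{Z}_s$ carries a pro-nilpotent structure.

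For (a), I would set $\mu^{[0]}\coloneqq\mu$ and recursively define $\mu^{[s+1]}$ as the relatively independent self-joining of $\mu^{[s]}$ over the $\sigma$-algebra $\mathcal{I}^{[s]}\subset\B^{\otimes 2^s}$ of sets invariant under the diagonal action $T\times\cdots\times T$ on $X^{2^s}$. A short induction shows that $\mu^{[s]}$ is a well-defined probability measure, invariant under each of the $2^s$ ``face'' transformations that apply $T$ to the coordinates indexed by a face of the discrete cube $\{0,1\}^s$. Applying the von Neumann mean ergodic theorem to the diagonal action at each level of the recursion yields the identity
$$\lhk h\rhk_s^{2^s}=\int_{X^{2^s}}\prod_{\epsilon\in\{0,1\}^s}\mathcal{C}^{|\epsilon|}h(x_\epsilon)\d\mu^{[s]}(x),$$
where $\mathcal{C}$ denotes complex conjugation. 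This identity simultaneously produces the limits defining $\lhk\cdot\rhk_s$ and a Cauchy--Schwarz--Gowers inequality, from which the seminorm axioms follow directly.

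For (b), set $\mathcal{N}_{s+1}\coloneqq\{h\in L^\infty(\B):\lhk h\rhk_{s+1}=0\}$ and declare $L^\infty(\mathcal{Z}_s)\coloneqq\mathcal{N}_{s+1}^{\perp}\cap L^\infty(\B)$, which matches the formula in property (2). It remains to check that this orthocomplement is the $L^\infty$-space of a genuine $T$-invariant sub-$\sigma$-algebra, and here I would use the Cauchy--Schwarz--Gowers inequality from the previous step to show that $\mathcal{N}_{s+1}$ is a $T$-invariant module over $L^\infty(\mathcal{Z}_s)$: multiplying by a bounded function that is already in $\mathcal{Z}_s$ cannot move a seminorm-null function out of the null-class. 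Standard measure-theoretic arguments then identify $L^\infty(\mathcal{Z}_s)$ with a sub-$\sigma$-algebra, giving a factor in the usual sense.

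The main obstacle is (c). First I would reduce to the ergodic setting by decomposing $\mu=\int\mu_y\d\tau(y)$ and verifying that the cubic measures, seminorms, and hence the factors $\mathcal{Z}_s$ all disintegrate consistently over the ergodic components, so that the $\mu$-factor is obtained by integrating the $\mu_y$-factors (this is the precise content taken from \cite{Chu_Frantzikinakis_Host11}). In the ergodic case I would proceed by induction on $s$: exploiting the additional symmetries of $\mu^{[s]}$ under coordinate permutations of $\{0,1\}^s$, show that $\mathcal{Z}_s$ is a compact abelian group extension of $\mathcal{Z}_{s-1}$, described by a cocycle satisfying a cubic coboundary condition (a ``cocycle of type $s$''). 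Classifying such cocycles up to coboundary shows that, after modification, each produces a rotation on an $s$-step nilmanifold extension; passing to an inverse limit yields the pro-nilpotent structure. This classification step is the true technical heart of \cite{Host_Kra05}: it rests on a careful study of the Polish group of measure-preserving transformations of $(X^{2^s},\mu^{[s]})$ commuting with every face action, together with Mackey's theory of group extensions. This is the machinery the present paper is content to invoke as a black box.
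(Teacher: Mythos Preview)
Your proposal is a faithful outline of the Host--Kra structure theorem as proved in \cite{Host_Kra05}, with the non-ergodic extension handled via \cite{Chu_Frantzikinakis_Host11}. However, the paper does not prove this theorem at all: it is stated purely as a citation (``The following result from \cite{Host_Kra05}\ldots'') and used as a black box. So there is no ``paper's own proof'' to compare against; you have simply sketched the contents of the cited references, which is more than the paper itself undertakes.
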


\subsection{Characteristic Factors}\label{sec_CharFact}

Here is the main theorem of this subsection.

\begin{Theorem}\label{thm_characteristic_factor}
Let $\ell,k\in\N$, $f\in\F_\ell$ and $W=\Delta^{\ell-1}f$.
For every $f_1,\dots,f_k\in{\mathcal S}(f)$ with $\lim|f_i(n)-f_j(n)|=\lim|f_i(n)|=\infty$ whenever $i\neq j$, there exists $s\in\N$ such that for any measure preserving system $(X,\mu,T)$ and any functions $h_1,\dots,h_k\in L^\infty(X)$ with the property $\lhk h_i\rhk_s=0$ for some $i$, one has
$$\otherlim{UW}{n\to\infty}\prod_{i=1}^k T^{\lfloor f_i(n)\rfloor}h_i=0,
$$
in $L^2\xbm$.
\end{Theorem}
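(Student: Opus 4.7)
The strategy is a PET-style induction on the family $\{f_1, \ldots, f_k\} \subset \mathcal{S}(f)$, using van der Corput's lemma for Riesz means (\cref{thm_vdCforRiesz}) as the main reduction tool. The structural facts I would exploit are exactly those listed after \cref{def_ChildrenOfTheFunction}: $\mathcal{S}(f)$ is closed under pointwise sums, under the operator $\Delta$ (which lowers degree by one), and under shifts (via Newton's forward difference formula, \cref{lem_TaylorLeibnitz}). I would attach to each family $\{f_1, \ldots, f_k\}$ a \emph{type} — the multiset of degrees sorted in decreasing order, refined by finer invariants to record which index carries the distinguished function $h_i$ — and equip types with a well-founded lexicographic order so that each van der Corput step strictly decreases the type.

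For the inductive step, set $F(n) \coloneqq \prod_{i=1}^k T^{\lfloor f_i(n)\rfloor} h_i \in L^2(X)$. By \cref{thm_vdCforRiesz}, the goal $\otherlim{UW}{n\to\infty} F(n) = 0$ reduces to showing that $\otherlim{UW}{n\to\infty}\langle F(n+d), F(n)\rangle = 0$ for each $d \in \N$. Expanding the inner product and applying the measure-preserving substitution $T^{-\lfloor f_{i_0}(n+d)\rfloor}$ for a carefully chosen pivot $i_0$ (taken among indices realizing the maximal degree), the inner product becomes the integral of a new product of at most $2k-1$ shifted functions whose sequences are differences $f_i(n+d) - f_{i_0}(n+d)$ (for $i \neq i_0$) and $f_j(n) - f_{i_0}(n+d)$. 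All these differences lie in $\mathcal{S}(f)$; the self-difference $f_{i_0}(n) - f_{i_0}(n+d) = -\sum_{j=1}^d\binom{d}{j}\Delta^j f_{i_0}(n)$ has degree strictly smaller than $\deg f_{i_0}$; and the pivot is chosen so that the new family has strictly smaller type. The discrepancy between $\lfloor f_i - f_{i_0}\rfloor$ and $\lfloor f_i\rfloor - \lfloor f_{i_0}\rfloor$ is uniformly bounded in view of \eqref{eq_floorderivativedistance}-type reasoning, and can be absorbed by splitting into finitely many cases on the integer parts.

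The induction hypothesis applied to the new family then makes the $UW$-limit of the new product vanish in $L^2$, provided an appropriate derived $h$-function (descended from $h_i$) has vanishing uniformity seminorm at a smaller level. The bookkeeping step — standard in PET arguments — chooses the pivot so that $h_i$ stays attached to a sequence of maximal degree throughout the induction; at each van der Corput step one effectively replaces $h_i$ by $\bar h_i \cdot T^m h_i$, whose $\lhk \cdot \rhk_{s-1}$-seminorm is controlled by $\lhk h_i \rhk_s$ via the recursive definition. After finitely many reductions the base case is reached and the average is bounded by a constant multiple of $\lhk h_i \rhk_s = 0$. The main obstacle I anticipate is the PET bookkeeping itself: defining a well-founded type on families in $\mathcal{S}(f)$, verifying that some choice of pivot strictly decreases it, and tracking the position of $h_i$ through the reductions so that the final seminorm extracted is indeed $\lhk h_i \rhk_s$ with $s$ depending only on the initial type of $\{f_1, \ldots, f_k\}$ and not on the system $(X,\mu,T)$ or the functions $h_j$.
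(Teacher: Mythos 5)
Your overall PET-via-van-der-Corput framework is the right one, and several ingredients (splitting into cases on the integer parts of the floor discrepancies, tracking a well-founded ``type'' of the family, and arranging that the distinguished function $h_i$ stays attached to a maximal-degree member) are exactly what the paper does. However, there is a genuine error in the choice of pivot, and a second gap in the seminorm bookkeeping.

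You state that the pivot $i_0$ should be ``taken among indices realizing the maximal degree.'' This is backwards and breaks the induction. If you subtract off a maximal-degree $f_{i_0}$, then every $f_j$ of \emph{strictly smaller} degree gets promoted: $f_j - f_{i_0}$ has degree $\deg f_{i_0}$ (the leading term of $f_{i_0}$ survives), so lower-degree terms turn into top-degree terms and your type does not decrease. Concretely, for $P=\{f_1,f_2\}$ with $\deg f_1=1<2=\deg f_2$, pivoting by $f_2$ yields a family with degree multiset $\{1,2,2\}$, which is strictly \emph{larger} than the original $\{1,2\}$ in your decreasing-lex order. The paper instead takes the pivot to be of \emph{lowest} degree. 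Then $f_j - f_1$ keeps the degree and leading coefficient of $f_j$ whenever $\deg f_j>\deg f_1$, the self-difference $f_1(n+m)-f_1(n)$ drops degree, and the count of equivalence classes at degree $\deg f_1$ strictly decreases. Relatedly, a multiset of raw degrees is not a sufficient invariant even with the correct pivot: $\{n, n^2\}$ becomes essentially $\{n^2,n^2\}$ with degree multiset $\{2,2\}>\{1,2\}$. The paper's characteristic vector counts \emph{equivalence classes} (same degree \emph{and} same leading coefficient), which is what makes the lexicographic decrease work, because the two descendants $f_j(n+m)-f_1(n)$ and $f_j(n)-f_1(n)$ fall into a single class.

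The second gap concerns how the level $s$ is lowered. You say that $\bar h_i\cdot T^m h_i$ has $\lhk\cdot\rhk_{s-1}$-seminorm ``controlled by $\lhk h_i\rhk_s$ via the recursive definition.'' But the recursion only controls the \emph{Ces\`aro average} over $m$ of $\lhk\bar h_i\cdot T^m h_i\rhk_{s-1}^{2^{s-1}}$, not each individual $m$; and \cref{thm_vdCforRiesz} requires the inner limit to vanish for \emph{every} $d$, not almost every $d$. Moreover, the finitely many indicator sequences $1_{A_v}(n)$ coming from your case-split on floor discrepancies sit inside the $UW$-average and must be removed before the inductive hypothesis can be invoked. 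The paper handles both points at once via \cref{lemma_wmMagic}: it tensors the system with its conjugate, uses the pointwise inequality $\lhk h\otimes\bar h\rhk_{s-1}\le\lhk h\rhk_s^2$ (which holds for every $h$, not just on average), and a Cauchy--Schwarz step to strip off any bounded weight $a(n)$ at the cost of raising the seminorm level by one. Without something playing the role of \cref{lemma_wmMagic}, your reduction from level $s$ to level $s-1$ in the presence of the $1_{A_v}$ factors does not go through.
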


The following lemma represents a useful intermediate step in obtaining \cref{thm_characteristic_factor}.

\begin{Proposition}\label{prop_characteristic_factor}
Let $\ell,k\in\N$, let $f\in\F_\ell$ and let $W=\Delta^{\ell-1}f$.
Suppose $f_0,f_1,\dots,f_k\in{\mathcal S}(f)$ satisfy $\lim|f_i(n)-f_j(n)|=\lim|f_i(n)|=\infty$ whenever $i\neq j$ and that the degree of $f_k$ is greater or equal than the degree of any other $f_i$.
Then there exists $s\in\N$ such that for any measure preserving system $(X,\mu,T)$, any $h_k\in L^\infty(X)$ with $\lhk h_k\rhk_s=0$ and any bounded sequence $(a_n)_{n\in\N}$ in $\C$ we have
$$
\lim_{W(N)-W(M)\to\infty}~\sup_{\|h_0\|_{\infty},\ldots,\|h_{k-1}\|_{\infty}\leq 1}~\left\|\E_{n\in[M,N]}^Wa_n\prod_{i=0}^k T^{\lfloor f_i(n)\rfloor}h_i\right\|_{L^2}=0.
$$
\end{Proposition}

\begin{proof}[Proof that \cref{prop_characteristic_factor} implies \cref{thm_characteristic_factor}]

After permuting the $f_i$'s and $h_i$'s we assume that $\lhk h_k\rhk_s=0$. Also, after multiplying by a constant we will assume that $\|h_i\|_\infty\leq 1$ for all $i$.

The desired conclusion can be rephrased as follows: let $(M_t)_{t\in\N}$, $(N_t)_{t\in\N}$ be sequences in $\N$ satisfying $W(N_t)-W(M_t)\to\infty$.
Then
\begin{equation}\label{eq_proof_corol_characteristicfactor}
  \lim_{t\to\infty}\left\|\E^W_{n\in[M_t,N_t]}\prod_{i=1}^k T^{\lfloor f_i(n)\rfloor}h_i\right\|_{L^2}=0.
\end{equation}
Letting $h_0^{(t)}:=\overline{\E^W_{n\in[M_t,N_t]}\prod_{i=1}^k T^{\lfloor f_i(n)\rfloor}h_i}$, we can rewrite \eqref{eq_proof_corol_characteristicfactor} as
\begin{equation}\label{eq_proof_corol_characteristicfactor2}
  \lim_{t\to\infty}\int_X\E^W_{n\in[M_t,N_t]}h_0^{(t)}\prod_{i=1}^k T^{\lfloor f_i(n)\rfloor}h_i\d\mu=0.
\end{equation}
  Let $\tilde f\in\F_{\ell+1}$ be such that $\Delta\tilde f=f$.
  Observe that ${\mathcal S}(f)\subset{\mathcal S}(\tilde f)$.
  Since $T$ preserves the measure,
  we have
  $$\int_X\E^W_{n\in[M_t,N_t]}h_0^{(t)}\prod_{i=1}^k T^{\lfloor f_i(n)\rfloor}h_i\d\mu
  =
  \int_X\E^W_{n\in[M_t,N_t]}T^{\lfloor\tilde f(n)\rfloor}h_0^{(t)}\prod_{i=1}^k T^{\lfloor f_i(n)\rfloor+\lfloor\tilde f(n)\rfloor}h_i\d\mu.$$
  Observe that $\lfloor x\rfloor+\lfloor y\rfloor=\lfloor x+y\rfloor+e$ for some $e=e(x,y)\in\{0,1\}$.
  Therefore we can write $\lfloor f_i(n)\rfloor+\lfloor\tilde f(n)\rfloor=\lfloor f_i(n)+\tilde f(n)\rfloor+e_{i,n}$ where $e_{i,n}\in\{0,1\}$.
Let $v_n:=(e_{1,n},\dots,e_{k,n})\in\{0,1\}^k$ and, for each $v=(v_1,\dots,v_k)\in\{0,1\}^k$, let $A_v=\{n\in\N:v_n=v\}$ be the set of $n$'s for which $(e_{1,n},\dots,e_{k,n})=v$.
Since $1=\sum_{v\in\{0,1\}^k}1_{A_v}$, \eqref{eq_proof_corol_characteristicfactor2} becomes
\begin{equation*}
  \sum_{v\in\{0,1\}^k}\lim_{t\to\infty}\E^W_{n\in[M_t,N_t]}1_{A_v}(n)\int_XT^{\lfloor\tilde f(n)\rfloor}h_0^{(t)}\prod_{i=1}^k T^{\lfloor f_i(n)+\tilde f(n)\rfloor}\big(T^{v_i}h_i\big)\d\mu=0.
\end{equation*}

After replacing $h_i$ with $T^{v_i}h_i$ (and using the fact that $\lhk h_i\rhk_s=\lhk T^{v_i}h_i\rhk_s$ for every $s\in\N$) it suffices to show that, for any fixed bounded sequence $(a_n)$,
  \begin{equation}\label{eq_proof_corol_characteristicfactor3}
  \lim_{t\to\infty}\E^W_{n\in[M_t,N_t]}a_n\int_XT^{\lfloor\tilde f(n)\rfloor}h_0^{(t)}\prod_{i=1}^k T^{\lfloor f_i(n)+\tilde f(n)\rfloor}h_i\d\mu=0.
\end{equation}
  Finally, observe that now all the functions $ f_i(n)+\tilde f(n)$ have the same degree $\ell+1$, so that we satisfy all the conditions of \cref{prop_characteristic_factor}.
  Using the Cauchy-Schwarz inequality we conclude that indeed

\begin{eqnarray*}
  &&\left|\E^W_{n\in[M_t,N_t]}a_n\int_XT^{\lfloor\tilde f(n)\rfloor}h_0^{(t)}\prod_{i=1}^k T^{\lfloor f_i(n)+\tilde f(n)\rfloor}h_i\d\mu\right|
  \\&\leq &
  \left\|\E^W_{n\in[M_t,N_t]}a_nT^{\lfloor\tilde f(n)\rfloor}h_0^{(t)}\prod_{i=1}^{k}T^{\lfloor f_i(n)+\tilde f(n)\rfloor}h_i\right\|_{L^2}
  \\&\leq &
  \sup_{\|\tilde h_0\|_{\infty},\ldots,\|\tilde h_{k-1}\|_{\infty}\leq 1}~\left\|\E^W_{n\in[M_t,N_t]}a_n\prod_{i=0}^{k}T^{\lfloor f_i(n)+\tilde f(n)\rfloor}\tilde h_i\right\|_{L^2}
  \\&\to&0\qquad\text{ as }t\to\infty,
\end{eqnarray*}
where in the last step we invoked \cref{prop_characteristic_factor} (and we set $f_0\equiv0$ and $\tilde h_k:=h_k$).
\end{proof}

Let us now turn to the proof of \cref{prop_characteristic_factor}. We start with the following lemma whose simple proof is omitted.

\begin{Lemma}\label{cor_multidimRiesz0}
Let $W\in\F_1$ and let $(N_t)_{t\in\N}$ and $(M_t)_{t\in\N}$ be sequences of positive integers with $W(N_t)-W(M_t)\to\infty$ as $t\to\infty$. Then for every $\epsilon>0$ there exists $\delta>0$ and $t_0>1$ such that for all $t\geq t_0$ and $a\colon \N^2\to\C$ with $\sup_{(n,m)\in\N^2}|a(n,m)|\leq 1$, if
$$
\E_{n,m\in[M_t,N_t]}^W|a_t(n,m)|^2 \leq \delta
$$
then
$$
\E_{n,m\in[M_t,N_t]}^W|a_t(n,m)| \leq \epsilon.
$$
%
\end{Lemma}

The following lemma is based on \cite[Lemma 3.2]{Bergelson_Knutson09}; see also \cite{Frantzikinakis10}.
\begin{Lemma}\label{lemma_wmMagic}
Let $\{f_0,f_1,\dots,f_k\}\subset\F$, let $W\in\F_1$ and let $s\in\N$.
Suppose for any
measure preserving system $(X,\mu,T)$ and any $h_k\in L^\infty(X)$ with $\lhk h_k\rhk_{s-1}=0$ we have
$$
\lim_{W(N)-W(M)\to\infty}~\sup_{\|h_0\|_{\infty},\ldots,\|h_{k-1}\|_{\infty}\leq 1}~\left\|\E_{n\in[M,N]}^W\prod_{i=0}^k T^{\lfloor f_i(n)\rfloor}h_i\right\|_{L^2}=0.
$$
Then for every bounded sequence $a\colon \N\to\C$, any measure preserving system $(X,\mu,T)$ and any $h_k\in L^\infty(X)$ with $\lhk h_k\rhk_s=0$ we also have
$$
\lim_{W(N)-W(M)\to\infty}~\sup_{\|h_0\|_{\infty},\ldots,\|h_{k-1}\|_{\infty}\leq 1}~\left\|\E_{n\in[M,N]}^Wa_n\prod_{i=0}^k T^{\lfloor f_i(n)\rfloor}h_i\right\|_{L^2}=0.
$$
\end{Lemma}

\begin{proof}
Let $c$ be an upper bound on $|a(n)|$. We have
\begin{eqnarray*}
&&\left\|\E^W_{n\in[M,N]}a(n)\prod_{i=0}^k T^{\lfloor f_i(n)\rfloor}h_i\right\|_{L^2}^2
\\&=&
\int_X\E^W_{n,m\in[M,N]}a(n)\overline{a(m)}\prod_{i=0}^k T^{\lfloor f_i(n)\rfloor}h_i\cdot T^{\lfloor f_i(m)\rfloor}\bar h_i\d\mu
\\ &\leq&
c^2\E^W_{n,m\in[M,N]}\left|\int_X\prod_{i=0}^k T^{\lfloor f_i(n)\rfloor}h_i\cdot T^{\lfloor f_i(m)\rfloor}\bar h_i\d\mu\right|.
\end{eqnarray*}
In view of \cref{cor_multidimRiesz0}, in order to show that the limit as $W(N)-W(M)\to\infty$ of the expression in the previous line is $0$ uniformly over all $h_0,\ldots,h_{k-1}\in L^\infty$ with $\|h_i\|_\infty\leq 1$, it suffices to show that the limit as $W(N)-W(M)\to\infty$ of $\Psi(M,N)$ is $0$ uniformly over $h_0,\ldots,h_{k-1}$, where
$$
\Psi(M,N)\coloneqq \E^W_{n,m\in[M,N]}\left|\int_X\prod_{i=0}^k T^{\lfloor f_i(n)\rfloor}h_i\cdot T^{\lfloor f_i(m)\rfloor}\bar h_i\d\mu\right|^2.
$$
We have
\begin{eqnarray*}
\Psi(M,N)&=&
\E^W_{n,m\in[M,N]}\int_{X\times X}\prod_{i=0}^k(T\times T)^{\lfloor f_i(n)\rfloor}h_i\otimes\bar h_i\cdot(T\times T)^{\lfloor f_i(m)\rfloor}\bar h_i\otimes h_i\d(\mu\otimes\mu)
\\&=&
\int_{X\times X}\left|\E^W_{n\in[M,N]} \prod_{i=0}^k(T\times T)^{\lfloor f_i(n)\rfloor}(h_i\otimes \bar h_i)\right|^2\d(\mu\otimes\mu)
\\&=&\left\|\E^W_{n\in[M,N]} \prod_{i=0}^k(T\times T)^{\lfloor f_i(n)\rfloor}(h_i\otimes \bar h_i)\right\|_{L^2(\mu\otimes\mu)}^2.
\end{eqnarray*}
A quick induction argument gives the inequality $\lhk h\otimes\bar h\rhk_{s-1}\leq\lhk h\rhk_s^2$.
In particular, if $\lhk h_k\rhk_s=0$ then $\lhk h_k\otimes\overline{h_k}\rhk_{s-1}=0$, and so the assumptions of the lemma imply that the limit as $W(N)-W(M)\to\infty$ of this last expression is $0$ uniformly over $h_0\otimes \bar{h}_0,\ldots,h_{k-1}\otimes \bar{h}_{k-1}\in L^\infty(X\times X)$ with $\|h_i\otimes \bar{h}_i\|\leq 1$.
\end{proof}

We also need the following result.

\begin{Lemma}[{\cite[Proposition 21.7, page 327]{Host_Kra18}}]
\label{lem_multicor_uniform_zero}
Let $(X,\mu,T)$ be a measure preserving system, and $h_k\in L^\infty(X)$ with $\lhk h_k\rhk_{k}=0$.
Then for all $c_0,c_1,\ldots,c_k\in \Z$ with $c_k\neq 0$ we have
$$
\lim_{N-M\to\infty}~\sup_{\|h_0\|_{\infty},\ldots,\|h_{k-1}\|_{\infty}\leq 1}\left\|\frac1{N-M} \sum_{n=M}^{N-1} T^{c_0 n}h_0\, T^{c_1 n}h_1\cdot\ldots\cdot T^{c_k n}h_k\right\|_{L^2}~=~0.
$$
\end{Lemma}

We shall prove \cref{prop_characteristic_factor} by an induction scheme described below. The base case of this induction is covered by the following lemma.

\begin{Lemma}[Base case]\label{lemma_characbasecase}
Let $W\in\F_1$, let $f_0,f_1,\dots,f_k\colon \N\to\R$ be functions such that each $f_i(n)\to0$ as $n\to\infty$, and let $c_0,c_1,\dots,c_k\in\Z\setminus\{0\}$ be pairwise distinct.
Then for every measure preserving system $(X,\mu,T)$, every $h_k\in L^\infty(X)$ with $\lhk h_k\rhk_{k+1}=0$, and every bounded sequence $(a_n)_{n\in\N}$ in $\C$ we have
$$
\lim_{W(N)-W(M)\to\infty}~\sup_{\|h_0\|_{\infty},\ldots,\|h_{k-1}\|_{\infty}\leq 1}~\left\|\E_{n\in[M,N]}^W a_n\prod_{i=0}^k T^{\lfloor c_iW(n)+f_i(n)\rfloor}h_i\right\|_{L^2}=0.
$$
\end{Lemma}

\begin{proof}
In view of \cref{cor_zerodensity}, the set $\big\{n\in\N:\lfloor W(n)+f_i(n)\rfloor\neq\lfloor W(n)\rfloor\big\}$ has zero Banach $W$-density, and hence
$$\otherlim{UW}{n\to\infty}\left\|\prod_{i=0}^kT^{\lfloor c_iW(n)+f_i(n)\rfloor}h_i-\prod_{i=0}^kT^{\lfloor c_iW(n)\rfloor}h_i\right\|_{L^2}=0.$$
Since each $c_i$ is an integer, iterating the fact that $\lfloor x+y\rfloor=\lfloor x\rfloor+\lfloor y\rfloor+e$ for some $e=e(x,y)\in\{-1,0\}$, we can write $\lfloor c_iW(n)\rfloor=c_i\lfloor W(n)\rfloor+e_{i,n}$ where $e_{i,n}$ takes only finitely many values.
Thus the vectors $(e_{1,n},\dots,e_{k,n})$ take only finitely many values as $n$ goes through $\N$.
For each of those finitely many vectors $v=(v_1,\dots,v_k)$, let $A_v$ be the set of $n$'s for which $(e_{1,n},\dots,e_{k,n})=v$.
Since $1=\sum_v1_{A_v}$, after replacing $h_i$ with $T^{v_i}h_i$ (and using the fact that $\lhk h_k\rhk_s=\lhk T^{v_k}h_k\rhk_s$ for every $s\in\N$) it suffices to show that
$$
\lim_{W(N)-W(M)\to\infty}~\sup_{\|h_0\|_{\infty},\ldots,\|h_{k-1}\|_{\infty}\leq 1}~\left\|\E_{n\in[M,N]}^W1_{A_v}(n)a_n\prod_{i=0}^kT^{c_i\lfloor W(n)\rfloor}h_i\right\|_{L^2}=0.
$$
for every $v$.
In view of \cref{lemma_wmMagic} it thus suffices to show that
$$\lim_{W(N)-W(M)\to\infty}~\sup_{\|h_0\|_{\infty},\ldots,\|h_{k-1}\|_{\infty}\leq 1}~\left\|\E_{n\in[M,N]}^W\prod_{i=0}^kT^{c_i\lfloor W(n)\rfloor}h_i\right\|_{L^2}=0.$$
whenever $\lhk h_k\rhk_k=0$.

Let $N_m=\max\{n\in\N:\lfloor W(n)\rfloor=m\}$.
Observe that $\sum_{n=N_{m-1}+1}^{N_m}\Delta W(n)=1+\oh_{m\to\infty}(1)$.
If $N_{p-1}< M\leq N_p$ and $N_{q-1}<N\leq N_q$ then
\begin{equation*}
  \begin{split}\sum_{n=M}^N\Delta W(n)&\prod_{i=0}^kT^{c_i\lfloor W(n)\rfloor}h_i=\\ &=\sum_{j=p}^{q-1}\big(1+\oh_{W(N)-W(M)\to\infty}(1)\big)\prod_{i=0}^kT^{c_ij}h_i +\Oh_{W(N)-W(M)\to\infty}(1)\prod_{i=0}^kT^{c_im}h_i
\end{split}
\end{equation*}
and so
\begin{equation*}
  \begin{split}
  \lim_{W(N)-W(M)\to\infty}~\sup_{\|h_0\|_{\infty},\ldots,\|h_{k-1}\|_{\infty}\leq 1}~&\left\|\frac1{W(N)-W(M)}\sum_{n=M}^N\Delta W(n)\prod_{i=0}^kT^{c_i\lfloor W(n)\rfloor}h_i\right\|_{L^2}
\\
 & =\lim_{p-q\to\infty}~\sup_{\|h_0\|_{\infty},\ldots,\|h_{k-1}\|_{\infty}\leq 1}~\left\|\frac1{p-q} \sum_{j=p}^{q-1}\prod_{i=0}^kT^{c_ij}h_i\right\|_{L^2}.
  \end{split}
\end{equation*}
In view of \cref{lem_multicor_uniform_zero}, this limit is $0$.
\end{proof}

The induction that we will use to prove \cref{prop_characteristic_factor} is similar to the PET-induction scheme which was utilized in \cite{Bergelson87}.

Let $f\in\F_{\ell}$.
Given $\phi(n)=\beta(n)+c_1\Delta^{\ell-1}f+\cdots+c_{\ell-1}\Delta f +c_\ell f\in{\mathcal S}(F)$ (cf. \cref{def_ChildrenOfTheFunction}), the highest $i$ for which $c_i\neq0$ coincides with the degree of $\phi$. We call $c_i$ the \define{leading coefficient of $\phi$}.
We say that $\phi,\tilde\phi\in{\mathcal S}(f)$ are \define{equivalent} if they have the same degree and leading coefficient.
Now fix a finite set $P\subset{\mathcal S}(f)$ and, for each $j=1,\dots,\ell$, let $m_j$ denote the number of equivalence classes in $P$ of degree $j$.
The vector $(m_1,\dots,m_\ell)$ is called the \define{characteristic vector} of $P$.
We order characteristic vectors by letting $(m_1,\dots,m_\ell)<(\tilde m_1,\dots,\tilde m_\ell)$ if the maximum $j$ for which $m_j\neq\tilde m_j$ satisfies $m_j<\tilde m_j$.

\begin{proof}[Proof of {\cref{prop_characteristic_factor}}]
We prove the theorem by induction on the characteristic vector of $P=\{f_0,f_1,\dots,f_k\}$.
The case when the characteristic vector is of the form $(m,0,\dots,0)$ (i.e., all functions have degree $1$) follows from \cref{lemma_characbasecase}.
Assume now that some function in $P$ has degree at least $2$ and that the theorem has been proved for all families whose characteristic vector is strictly smaller than that of $P$.

If not all functions in $P$ have the same degree then pick $i_0\in\{0,1,\ldots,k-1\}$ such that the degree of $f_{i_0}$ is minimal among all degrees present in $P$. If all functions in $P$ have the same degree and there is more than one equivalence class in $P$ then pick $i_0\in\{0,1,\ldots,k-1\}$ such that $f_{i_0}$ is not equivalent to $f_k$.
Finally, if there is only one equivalency class in $P$ then we can pick $i_0\in\{0,1,\ldots,k-1\}$ arbitrarily.

We now use the van der Corput lemma.
  Let $u_n=a_n\prod_{i=0}^k T^{\lfloor f_i(n)\rfloor}h_i$.
In view of \cref{thm_vdCforRiesz}, instead of
$$
\lim_{W(N)-W(M)\to\infty}~\sup_{\|h_0\|_{\infty},\ldots,\|h_{k-1}\|_{\infty}\leq 1}~\left\|\E_{n\in[M,N]}^W u_n \right\|_{L^2}=0,
$$
it suffices to show that
$$
\lim_{W(N)-W(M)\to\infty}~\sup_{\|h_0\|_{\infty},\ldots,\|h_{k-1}\|_{\infty}\leq 1}~\E_{n\in[M,N]}^W \langle u_{n+m},u_n\rangle =0.
$$
We have
\begin{eqnarray*}\langle u_{n+m},u_n\rangle&=& a_{n+m}\overline{a_n}\int_X\prod_{i=0}^kT^{\lfloor f_i(n+m)\rfloor}h_i\cdot T^{\lfloor f_i(n)\rfloor}\overline{h_i}\d\mu
\\&=&
a_{n+m}\overline{a_n} \int_X\prod_{0\leq i\leq k} T^{\lfloor f_i(n+m)\rfloor-\lfloor f_{i_0}(n)\rfloor}h_i\cdot T^{\lfloor f_i(n)\rfloor-\lfloor f_{i_0}(n)\rfloor}\overline{h_i}\d\mu.
\end{eqnarray*}

We now repeat an argument that was already used in the proof of \cref{lemma_characbasecase}. Since $\lfloor a\rfloor-\lfloor b\rfloor=\lfloor a-b\rfloor+e$ for some $e\in\{0,1\}$, we can write $\lfloor f_i(n+m)\rfloor-\lfloor f_{i_0}(n)\rfloor=\lfloor f_i(n+m)- f_{i_0}(n)\rfloor+e_{i,n}$ where $e_{i,n}\in\{0,1\}$ for every $i=1,\dots,k$ and $\lfloor f_i(n)\rfloor-\lfloor f_{i_0}(n)\rfloor=\lfloor f_i(n)- f_{i_0}(n)\rfloor+\tilde e_{i,n}$ where $\tilde e_{i,n}\in\{0,1\}$ for every $i=2,\dots,k$.
For each vector $v=(v_1,\dots,v_k,\tilde v_2,\dots\tilde v_k)\in\{0,1\}^{2k-1}$, let $A_v$ be the set of $n$'s for which $(e_{1,n},\dots,e_{k,n},\tilde e_{2,n},\dots\tilde e_{k,n})=v$.
Since $1=\sum_v1_{A_v}$, we have that $\langle u_{n+m},u_n\rangle$ can be written as
$$\sum_v1_{A_v}(n)a_{n+m}\overline{a_n}\int_X\prod_{0\leq i\leq k}T^{\lfloor f_i(n+m)\rfloor-\lfloor f_{i_0}(n)\rfloor}h_i\cdot T^{\lfloor f_i(n)\rfloor-\lfloor f_{i_0}(n)\rfloor}\overline{h_i}\d\mu.$$
We will show that each summand goes to $0$ as $W(N)-W(M)\to\infty$ uniformly over $h_0$.
Fix $v\in\{0,1\}^{2k-1}$ and let
\begin{equation*}
\tilde f_i(n)\coloneqq
\begin{cases}
f_i(n)-f_{i_0}(n),&\text{if }i<i_0
\\
f_{i-1}(n)-f_{i_0}(n),&\text{if }i>i_0
\end{cases}
\quad\text{and}\quad\tilde h_i\coloneqq
\begin{cases}
T^{v_i}h_i,&\text{if }i<i_0
\\
T^{v_{i-1}}h_{i-1},&\text{if }i>i_0
\end{cases},
\end{equation*}
$$\tilde f_{k+i}\coloneqq f_i(n+m)-f_{i_0}(n) \quad\text{and}\quad\tilde h_{k+i}\coloneqq T^{\tilde v_i}\overline{h_i}\qquad\text{for each }i\in\{0,1,\dots,k\}.$$
Observe that $\lhk\tilde h_{2k}\rhk_s=\lhk h_k\rhk_s=0$.
Since $f_k$ has the highest degree within $P$ and due to our choice of $i_0$, it follows that $\tilde f_{2k}$ has the highest degree within $\tilde P\coloneqq \{\tilde f_1,\dots,\tilde f_{2k}\}$.
Indeed $\tilde f_{2k}$ either has the same degree as $f_k$ (in the case $f_{i_0}$ and $f_k$ are not equivalent) or its degree is that of $f_k$ minus $1$ (in the case that there is only one equivalence class in $P$, which implies that every function in $\tilde P$ has the same degree)\footnote{For some values of $m$ the degree of $\tilde f_k$ may be smaller than the degree of $f_k$ minus one; for instance if $f_{i_0}(n)=f_k(n+1)$ and $m=1$. Nevertheless, for all but finitely many values of $m$ the degree of $\tilde f_k$ is precisely one less than the degree of $f_k$.}.
Moreover, since the degree of $f_k$ is at least $2$, the degree of $\tilde f_{2k}$ is at least $1$.

It remains to show that
$$\lim_{W(N)-W(M)\to\infty}~\sup_{\|h_0\|_{\infty},\ldots,\|h_{k-1}\|_{\infty}\leq 1}~\E_{n\in[M,N]}^W 1_{A_v}(n)a_{n+m}\overline{a_n}\int_X\prod_{i=0}^{2k} T^{\lfloor\tilde f_i(n)\rfloor}\tilde h_i \d\mu=0$$
for all $v$ and $m$.
Replacing $\sup_{\|h_0\|_{\infty},\ldots,\|h_{k-1}\|_{\infty}\leq 1}$ with $\sup_{\|h_0\|_{\infty},\ldots,\|h_{2k-1}\|_{\infty}\leq 1}$, replacing the convergence in measure with norm convergence, and using \cref{lemma_wmMagic}, it thus suffices to prove that
\begin{equation}\label{eq_proof_petinduction1} \lim_{W(N)-W(M)\to\infty}~\sup_{\|h_0\|_{\infty},\ldots,\|h_{2k-1}\|_{\infty}\leq 1} \E_{n\in[M,N]}^W \prod_{i=0}^{2k} T^{\lfloor\tilde f_i(n)\rfloor}\tilde h_i=0
\end{equation}
whenever $\tilde h_{2k}\in L^\infty(X)$ with $\lhk \tilde{h}_{2k}\rhk_{s-1}=0$.
If some function $\tilde f_i$ in $\tilde P$ has degree $0$, then $\lim_{n\to\infty}\tilde f_i(n)=0$ and so we can remove it from $\tilde P$ and ignore the corresponding term $T^{\lfloor\tilde f_i(n)\rfloor}\tilde h_i$ in \eqref{eq_proof_petinduction1}.
We can therefore assume that $\tilde P\subset{\mathcal S}(f)$.
Thus \eqref{eq_proof_petinduction1} will follow by induction after we show that the characteristic vector $(\tilde m_1,\dots,\tilde m_\ell)$ of $\tilde P$ is strictly smaller than the characteristic vector $(m_1,\dots,m_\ell)$ of $P$.

Indeed, for each $f_i$ which is not equivalent to $f_{i_0}$, the functions $f_i(n)-f_{i_0}(n)$ and $f_i(n+m)-f_{i_0}(n)$ are equivalent to each other, and have the same degree as $f_i$.
Moreover, if $f_i$ and $f_j$ are not equivalent to $f_{i_0}$, then $f_i-f_{i_0}$ is equivalent to $ f_j-f_{i_0}$ if and only if $f_i$ is equivalent to $f_j$.
Letting $d$ be the degree of $f_{i_0}$, this shows that $\tilde m_j=m_j$ for all $j>d$.
Finally, if $f_i$ is equivalent to $f_{i_0}$, then both $f_i(n)-f_{i_0}$ and $f_{i}(n+m)-f_{i_0}$ (when $i\neq i_0$) have degree smaller than that of $f_i$.
This shows that $\tilde m_d<m_d$.

Therefore $(\tilde m_1,\dots,\tilde m_\ell)$ is strictly smaller than $(m_1,\dots,m_\ell)$ and by induction \eqref{eq_proof_petinduction1} holds.
\end{proof}

\subsection{Reduction to an equidistribution result}\label{sec_reduction}

In this subsection we prove \cref{thm_multipleconsecutiverecurrence} modulo a result about equidistribution of certain sequences in nilmanifolds.
Let $k,\ell\in\N$, let $f\in\F_{\ell}$, let $W=\Delta^{\ell-1} f$, let $f_1,\dots,f_k\in{\mathcal S}(f)$, let $(X,\mu,T)$ be an invertible probability measure preserving system and let $A\subset X$ have positive measure.
We need to show that
\begin{equation}\label{eq_proof_thm_multipleconsecutiverecurrence}
  \otherlim{UW}{n\to\infty}\mu\big(A\cap T^{\lfloor f_1(n)\rfloor}A\cap T^{\lfloor f_2(n)\rfloor}A\cap\cdots\cap T^{\lfloor f_k(n)\rfloor}A\big)>0.
\end{equation}
and that for any $h_1,\ldots,h_k\in L^\infty\xbm$ the limit
\begin{equation}\label{eq_proofmultipleconvergence}
\otherlim{UW}{n\to\infty}
T^{\lfloor f_1(n)\rfloor}h_1\cdot T^{\lfloor f_2(n)\rfloor}h_2\cdots T^{\lfloor f_k(n)\rfloor}h_k
\end{equation}
exists in $L^2\xbm$.

We start by making several standard reductions whose details are largely omitted (cf. \cite{Bergelson_Leibman_Lesigne08,Frantzikinakis10} where similar reductions were performed).
First, employing the ergodic decomposition (see, for instance, \cite[Theorem 4.8]{Einsiedler_Ward11}) we may assume that $(X,\mu,T)$ is ergodic.
Indeed, assuming that \cref{thm_multipleconsecutiverecurrence} holds for each ergodic component of $\mu$ it follows that \eqref{eq_proof_thm_multipleconsecutiverecurrence} holds (cf. \cite[Section 7.2.3]{Einsiedler_Ward11}), and that \eqref{eq_proofmultipleconvergence} holds (cf. the last paragraph of \cite[Section 21.2.3]{Host_Kra18}).
Next, let $s\in\N$ be given by \cref{thm_characteristic_factor} and let $h$ be the projection of $1_A$ onto the $(s-1)$-step nilfactor ${\mathcal Z}_{s-1}$.
In view of \cref{thm_characteristic_factor} and \cref{thm_hk}, the left hand side of \eqref{eq_proof_thm_multipleconsecutiverecurrence} is the same as
\begin{equation}\label{eq_proof_thm_multipleconsecutiverecurrence2}
  \otherlim{UW}{n\to\infty}\int_X h\cdot T^{\lfloor f_1(n)\rfloor}h\cdot T^{\lfloor f_2(n)\rfloor}h\cdots T^{\lfloor f_k(n)\rfloor}h\d\mu.
\end{equation}
Similarly, letting $\tilde h_i$ be the projection of $h_i$ onto the $(s-1)$-step nilfactor, the limit in \eqref{eq_proofmultipleconvergence} becomes
\begin{equation}\label{eq_proofmultipleconvergence2}
\otherlim{UW}{n\to\infty}
T^{\lfloor f_1(n)\rfloor}\tilde h_1\cdot T^{\lfloor f_2(n)\rfloor}\tilde h_2\cdots T^{\lfloor f_k(n)\rfloor}\tilde h_k.
\end{equation}

The $s$-step nilfactor is an inverse limit of $s$-step nilsystems.
A standard approximation argument (see \cite[Lemma 3.2]{Furstenberg_Katznelson78}) shows that it suffices to establish positivity of \eqref{eq_proof_thm_multipleconsecutiverecurrence2} and show the limit in \eqref{eq_proofmultipleconvergence2} exists under the additional assumptions that the system is a nilsystem and that $h, \tilde h_1,\dots,\tilde h_k$ are continuous functions with $h\geq0$ and $\int_X h\d\mu=\mu(A)>0$.

We have now reduced \cref{thm_multipleconsecutiverecurrence} to the following statement.
\begin{Theorem}\label{thm_multirecnilsystem}
  Let $G$ be a nilpotent Lie group, let $\Gamma\subset G$ be a co-compact discrete subgroup, let $X=G/\Gamma$ and let $b\in G$.
  Let $\ell,k\in\N$, let $f\in\F_{\ell+1}$, let $W\coloneqq \Delta^\ell f$ and let $f_1,\dots,f_k\in{\mathcal S}(f)$.
  \begin{enumerate}
    \item For any $h_1,\ldots,h_k\in C(X)$ and $x\in X$ the limit
$$
\otherlim{UW}{n\to\infty}
h_1\big(b^{\lfloor f_1(n)\rfloor} x\big)\cdot h_2\big(b^{\lfloor f_2(n)\rfloor} x\big)\cdots h_k\big(b^{\lfloor f_k(n)\rfloor}x\big)
$$
exists.
    \item For every non-negative function $h\in C(X)$ with $\int_X h\d\mu_X>0$
  $$\otherlim{UW}{n\to\infty}\int_Xh(x)\cdot h\big(b^{\lfloor f_1(n)\rfloor}x\big)\cdot h\big(b^{\lfloor f_2(n)\rfloor}x\big)\cdots h\big(b^{\lfloor f_k(n)\rfloor}x\big)\d\mu(x)>0.$$
  \end{enumerate}
\end{Theorem}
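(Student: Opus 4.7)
The plan is to derive both statements from the equidistribution result \cref{thm_MainNilmanifoldEquidistribution} applied to the product nilmanifold $\tilde X = X^{k+1} = \tilde G/\tilde \Gamma$, where $\tilde G = G^{k+1}$ and $\tilde \Gamma = \Gamma^{k+1}$. For each $x \in X$ consider the orbit
$$F_x(n) \coloneqq \big(x,\, b^{\lfloor f_1(n)\rfloor}x,\, \ldots,\, b^{\lfloor f_k(n)\rfloor}x\big) \in \tilde X.$$
The first step is to invoke \cref{thm_MainNilmanifoldEquidistribution} to show that for each $x \in X$ the sequence $F_x$ is $\mu_{Y_x}$-well distributed with respect to \textlim{UW} on a sub-nilmanifold $Y_x \subset \tilde X$. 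The hypothesis $f_1, \ldots, f_k \in \mathcal{S}(f)$ guarantees that each $f_i$ is, modulo an $\oh(1)$ error, a rational linear combination of $\Delta^0 f, \ldots, \Delta^\ell f$, so that the toral equidistribution result \cref{cor_tempfloorudalongWmultidimhigherk} is exactly the abelian template that must be lifted to the nilpotent setting via a Leibman-style factorization of the orbit. Once this equidistribution is in hand, part (1) is immediate: for any $h_1, \ldots, h_k \in C(X)$ and any $x \in X$,
$$\otherlim{UW}{n\to\infty}\prod_{i=1}^k h_i\big(b^{\lfloor f_i(n)\rfloor} x\big) = \int_{Y_x} 1 \otimes h_1 \otimes \cdots \otimes h_k \, d\mu_{Y_x},$$
which gives the pointwise existence of the limit.

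For part (2), the integrand is uniformly bounded and the pointwise limit from part (1) exists on all of $X$, so dominated convergence allows the interchange of \textlim{UW} with the $\mu_X$-integral:
$$\otherlim{UW}{n\to\infty} \int_X h(x) \prod_{i=1}^k h\big(b^{\lfloor f_i(n)\rfloor} x\big) \, d\mu_X(x) = \int_X \int_{Y_x} h^{\otimes (k+1)} \, d\mu_{Y_x} \, d\mu_X(x).$$
The family $\{Y_x\}_{x \in X}$ transforms equivariantly under the diagonal action of $G$ on $\tilde X$: writing $\iota(g) \coloneqq (g, g, \ldots, g) \in \tilde G$, one has $Y_{gx} = \iota(g) \cdot Y_x$ and $\mu_{Y_{gx}} = \iota(g)_{*} \mu_{Y_x}$. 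Consequently the averaged measure $\nu \coloneqq \int_X \mu_{Y_x} \, d\mu_X(x)$ on $\tilde X$ is $\iota(G)$-invariant and projects onto $\mu_X$ in each coordinate. A Cauchy--Schwarz/Fubini argument in the spirit of \cite{Bergelson_Leibman_Lesigne08, Frantzikinakis10} then lower-bounds $\int h^{\otimes (k+1)} \, d\nu$ by a positive power of $\int h \, d\mu_X > 0$, giving the required strict positivity.

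The main obstacle lies in the first step: adapting Leibman's polynomial equidistribution theorem to the \textlim{UW} setting and to sequences built from functions in $\F$. The key technical hurdle is to carry Ces\`aro averages over to Riesz means throughout the PET and Weyl-criterion machinery, and to absorb the floor function (using arguments of the \cref{lemma_Deltag} flavor at the nilpotent level). This is exactly what the nilmanifold equidistribution theorem of \cref{sec_u} is designed to deliver. By contrast, the positivity step is essentially an exercise in nilmanifold algebra once the equivariance $Y_{gx} = \iota(g) \cdot Y_x$ has been verified, and the reduction from \cref{thm_multipleconsecutiverecurrence} to the nilsystem case performed earlier in the subsection ensures that no further ergodic-theoretic input is needed.
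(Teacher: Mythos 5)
Your approach to part (1) is broadly in the right spirit but misattributes the floor-handling: \cref{thm_MainNilmanifoldEquidistribution} establishes equidistribution of floor-free sequences $b_0^{f(n)}b_1^{\Delta f(n)}\cdots$; the floor function is handled in \cref{thm_lastuniformdistribution}, which passes to $\tilde G = G\times\R$ and pushes forward under a discontinuous map. The resulting measure $\nu$ is in general not the Haar measure of a subnilmanifold — what \cref{thm_lastuniformdistribution} delivers is the inequality $\nu \geq C\mu_Y$ for a subnilmanifold $Y$ containing the identity, not $\mu_{Y_x}$-well-distribution as you claim. This distinction matters because the positivity argument in part (2) hinges on exactly this lower bound.

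The positivity argument in your part (2) has a genuine error: the claimed equivariance $Y_{gx} = \iota(g)\cdot Y_x$ is false in the non-abelian setting. Writing $F_{gx}(n) = (gx,\, b^{\lfloor f_1(n)\rfloor}gx\Gamma,\ldots)$, one has $b^{\lfloor f_i(n)\rfloor}gx \neq g\, b^{\lfloor f_i(n)\rfloor}x$ unless $b$ and $g$ commute; the orbit of $gx$ under $b$ is the $\iota(g)$-translate of the orbit of $x$ under the \emph{conjugate} $g^{-1}bg$, not under $b$. Consequently the averaged measure $\nu = \int_X \mu_{Y_x}\,\d\mu_X(x)$ is not $\iota(G)$-invariant, and the Cauchy--Schwarz/Fubini step you sketch has no foundation. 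The paper circumvents this by performing the conjugation explicitly — rewriting $b^{\lfloor f_i(n)\rfloor}x$ as $g_x(g_x^{-1}bg_x)^{\lfloor f_i(n)\rfloor}\Gamma$, applying \cref{thm_lastuniformdistribution} with $g_x^{-1}bg_x$ in place of $b$, and using crucially that the constant $C$ in $\nu_x\geq C\mu_{Y_x}$ is uniform in $x$. Positivity then falls out directly from the structural fact that $(\Gamma,\dots,\Gamma)\in Y_x$ (so the continuous nonnegative function $H_x$ is positive at a point of the support of $\mu_{Y_x}$ whenever $h(x)>0$), with no Cauchy--Schwarz argument needed. Without the conjugation step and the uniform $C$, your proposal does not close.
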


In order to prove \cref{thm_multirecnilsystem} we will need the following result, which is proved in the next subsection.

\begin{Theorem}\label{thm_lastuniformdistribution}
Let $G$ be a nilpotent Lie group, let $\Gamma\subset G$ be a co-compact discrete subgroup and let $X=G/\Gamma$.
  Let $k,\ell\in\N$, let $f\in\F_{\ell+1}$, let $W=\Delta^\ell f$ and let $f_1,\dots,f_k\in{\mathcal S}(f)$.
Then there exists a constant $C>0$ and for every $b\in G$ there exists a measure $\nu$ on $X^k$ such that the sequence
  \begin{equation}\label{eq_thm_removingfloors}
    n\mapsto\Big(b^{\lfloor f_1(n)\rfloor},b^{\lfloor f_2(n)\rfloor},\dots,b^{\lfloor f_k(n)\rfloor}\Big)\Gamma^k
  \end{equation}
  is $\nu$-w.d.\ with respect to \textlim{W}.
  Moreover, there are linear maps $\phi_1,\dots,\phi_k\colon \Z^{\ell+1}\to\Z$ such that the Haar measure $\mu_Y$ of the subnilmanifold\footnote{The fact that $Y$ is a subnilmanifold of $X$ follows from \cite{Leibman05a} or \cite{Shah98}.}
  $$
    Y\coloneqq \overline{\Big\{\big(b^{\phi_1({\vec m})}\Gamma,\dots,b^{\phi_k({\vec m})}\Gamma\big):{\vec m}\in\Z^{\ell+1}\Big\}}
$$
  satisfies $\nu\geq C\mu_Y$.
\end{Theorem}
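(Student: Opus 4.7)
My approach is to combine a Leibman-type nilmanifold equidistribution theorem with the uniform-distribution machinery established in Section~\ref{sec_u.d.-along-riesz-means}. Since $G$ is connected and simply connected, each $b\in G$ lies on a unique one-parameter subgroup $\{b^t\}_{t\in\R}$, and in Mal'cev coordinates compatible with $\Gamma$ the map $t\mapsto b^t\Gamma$ is polynomial in $t$. Using the definition of $\mathcal{S}(F)$, I would first write
\[
f_i(n)=\sum_{j=0}^{\ell}c_{i,j}\Delta^jF(n)+\beta_i(n),\qquad c_{i,j}\in\Q,\ \beta_i(n)=\oh(1),
\]
and, after clearing a common denominator, express $\lfloor f_i(n)\rfloor$ as $\phi_i(\vec g(n))+e_i(n)$, where $\vec g(n)$ records the integer parts of the appropriately scaled $\Delta^jF(n)$ for $j=0,\dots,\ell$, $\phi_i\colon\Z^{\ell+1}\to\Z$ is the natural linear map induced by the $c_{i,j}$, and $e_i(n)$ is a bounded integer-valued error. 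The required identity between $\lfloor f_i(n)\rfloor$ and $\phi_i(\vec g(n))$ modulo a bounded error follows from Lemma~\ref{lemma_Deltag}, which controls the interaction between the floor function and integer-linear combinations of the iterated differences.

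The next step is to partition $\N$ into the finitely many classes on which $(e_1(n),\dots,e_k(n))$ is constant and, on each class, prove that $\bigl(b^{\phi_1(\vec g(n))}\Gamma,\dots,b^{\phi_k(\vec g(n))}\Gamma\bigr)$ is well-distributed in a translate of $Y$ with respect to \textlim{W}. Since $\vec m\mapsto\bigl(b^{\phi_1(\vec m)}\Gamma,\dots,b^{\phi_k(\vec m)}\Gamma\bigr)$ is a linear polynomial orbit on $X^k$, its closure is a sub-nilmanifold by the structure theorems of Leibman and Shah cited in the statement. To transfer equidistribution of this integer-parameter orbit to the non-polynomial sequence $\vec g(n)$, I would apply the Weyl criterion (Proposition~\ref{prop_weylforRiesz}): it suffices to check that every non-trivial horizontal character of $Y$, pulled back through $(\phi_1,\dots,\phi_k)$, produces an integer linear combination of the coordinates of $\vec g(n)$ whose Riesz-$W$ mean vanishes. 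Such combinations are of exactly the form handled by Theorem~\ref{cor_tempfloorudalongWmultidimhigherk} and Corollary~\ref{cor_floorwd}, so the required character estimates follow immediately.

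Finally, to obtain $\nu\geq C\mu_Y$ I would sum over the atoms of the partition: each atom contributes a left-translate of $\mu_Y$ weighted by its (positive) Riesz-$W$ density, and the total is bounded below by $C\mu_Y$ for a constant $C>0$ depending only on the combinatorics of the partition and on $F,f_1,\dots,f_k$, but crucially not on $b$. The principal obstacle I anticipate is the non-polynomial nature of $\vec g(n)$, which blocks any direct appeal to Leibman's theorem and forces one to work through horizontal characters using the specialized Riesz-mean equidistribution results of Section~\ref{sec_u.d.-along-riesz-means}; a secondary technical difficulty is keeping the bounded errors $e_i(n)$ and the denominator introduced during clearing under sufficiently uniform control so that the final constant $C$ is independent of $b$.
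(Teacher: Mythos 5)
Your decomposition $\lfloor f_i(n)\rfloor = \phi_i(\vec g(n)) + e_i(n)$ with $\vec g(n) = (\Delta^j\lfloor F(n)\rfloor)_{j=0}^\ell$ and bounded integer error $e_i(n)$ is correct --- inequality~\eqref{eq_floorderivativedistance} and the definition of $\mathcal{S}(F)$ give exactly that after clearing denominators --- but the core of your argument has a real gap at the equidistribution step.

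You invoke the Weyl criterion (\cref{prop_weylforRiesz}) to reduce equidistribution of $n\mapsto\big(b^{\phi_1(\vec g(n))},\dots,b^{\phi_k(\vec g(n))}\big)\Gamma^k$ to vanishing of horizontal-character sums. That proposition, however, is stated and proved for sequences taking values in a \emph{compact abelian group}, and the target $Y\subset X^k$ is in general a genuinely non-abelian nilmanifold. For non-abelian nilmanifolds, the statement ``equidistribution follows from vanishing of horizontal characters'' is a deep theorem of Green--Tao--Leibman type; it is not automatic, and it is precisely what \cref{thm:qualitative-green-leibman-theorem-for-discrete-tempered} and \cref{thm_MainNilmanifoldEquidistribution} establish in this paper --- by an induction on the degree of the sequence and the dimension of $G$, using a van der Corput trick at each step. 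Moreover, even granting such a reduction theorem, the existing versions (Leibman's and the one proved here) apply to polynomial sequences $\vec m\mapsto g(\vec m)$ or to sequences of the explicit form $g_0(h)^{f(n)}g_1(h)^{\Delta f(n)}\cdots g_\ell(h)^{\Delta^\ell f(n)}$; your sequence is driven by the integer-valued, \emph{non-polynomial} object $\vec g(n)$, which is neither, so you cannot simply cite Theorem~\ref{cor_tempfloorudalongWmultidimhigherk} and \cref{cor_floorwd} (these only control equidistribution in tori, i.e., the abelian quotient). The partition-into-finitely-many-error-classes step also does not repair this: the problem is the non-abelian structure of $Y$, which your Weyl-criterion appeal cannot see.

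The paper circumvents exactly this obstacle by lifting to the extended group $\tilde G = G\times\R$, $\tilde\Gamma = \Gamma\times\Z$, $a=(b,1)$, and an almost-everywhere-continuous projection $\pi\colon\tilde X^k\to X^k$ with the key property $\pi\big(a^{t_1},\dots,a^{t_k}\big)=\big(b^{\lfloor t_1\rfloor},\dots,b^{\lfloor t_k\rfloor}\big)$. This turns the floor function into a coordinate projection on a suspension, so that the \emph{real-parameter} sequence $n\mapsto\big(a^{f_1(n)},\dots,a^{f_k(n)}\big)\tilde\Gamma^k$ falls squarely inside the class handled by \cref{thm_nilequidistribution} (a consequence of \cref{thm_MainNilmanifoldEquidistribution}). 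The measure $\nu$ is then the pushforward $\pi_*\mu_{\tilde Y}$, and the lower bound $\nu\geq C\mu_Y$ comes from a direct computation (comparing the integral over $[0,1]^{\ell+1}$ of the fractional-part contribution to the Haar integral over $Y$). If you want to salvage your route, you would need to prove a Green--Tao--Leibman-type equidistribution criterion for the integer-parameter sequences $\vec g(n)$, which is essentially as hard as the theorem you are trying to prove; the suspension is what makes the floor disappear cleanly.
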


\begin{proof}[Proof of {\cref{thm_multirecnilsystem}} conditionally on {\cref{thm_lastuniformdistribution}}]

For each $x\in X$, let $g_x\in G$ be such that $x=g_x\Gamma$.
We have
$$\prod_{i=1}^kh_i\big(b^{\lfloor f_i(n)\rfloor}x\big)
=
\prod_{i=1}^kh_i\Big(g_x\big(g_x^{-1}bg_x\big)^{\lfloor f_i(n)\rfloor}\Gamma\Big).$$
Now let $C$ be the constant given by \cref{thm_lastuniformdistribution} and let $\nu_x$ be the measure on $X^k$ given by the same theorem with $g_x^{-1}bg_x$ in place of $b$, noting crucially that $C$ does not depend on $x$.
Finally, let $H_x(x_1,x_2,\dots,x_k)=h_1(g_xx_1)h_2(g_xx_2)\cdots h_k(g_xx_k)$
We then have
\begin{equation}\label{eq_proof_thm_multirecnilsystem1}
\otherlim{W}{n\to\infty}\prod_{i=1}^kh_i\big(b^{\lfloor f_i(n)\rfloor}x\big)=\int_{X^k} H_x\d\nu_x
\end{equation}
and in particular the limit exists, proving part (1).

Let now $h_1=\cdots=h_k=h$ be a non-negative function with $\int_Xh\d\mu_X>0$.
Let
$$Y_x=\overline{\Big\{\big((g_x^{-1}bg_x)^{\phi_1({\vec m})}\Gamma,\dots,(g_x^{-1}bg_x)^{\phi_k({\vec m})}\Gamma\big):{\vec m}\in\Z^{\ell+1}\Big\}}$$
and let $\mu_{Y_x}$ be the Haar measure on $Y_x$.
Thus \cref{thm_lastuniformdistribution} and \eqref{eq_proof_thm_multirecnilsystem1} imply that
\begin{equation}\label{eq_proof_thm_multirecnilsystem2}
\otherlim{W}{n\to\infty}\prod_{i=1}^kh\big(b^{\lfloor f_i(n)\rfloor}x\big)\geq C\int_{Y_x}H_x\d\mu_{Y_x}.
\end{equation}
Observe that if $h(x)>0$, then $H_x(1_{G^k}\Gamma^k)=h(x)^k>0$.
Since $H_x$ is a continuous function and $1_{G^k}\Gamma^k\in Y_x$ for every $x\in X$, it follows that whenever $h(x)>0$, also $\int_{Y_x}H_x\d\mu_{Y_x}>0$.
Now let $\tilde X\subset X$ be the set of points $x$ for which $h(x)>0$.
Since $\int_Xh\d\mu_X>0$ we have that $\mu_X(\tilde X)>0$.
Let $I\colon \tilde X\to\R$ be the function defined by
$$I(x)\coloneqq h(x)\int_{Y_x}H_x\d\mu_{Y_x}.$$
Since $I(x)>0$ for every $x\in \tilde X$ we have $\int_{\tilde X}I(x)\d\mu_X(x)>0$.
Combining \eqref{eq_proof_thm_multirecnilsystem2} with the dominated convergence theorem we conclude that
$$\otherlim{UW}{n\to\infty}\int_Xh(x)\cdot \prod_{i=1}^kh\big(b^{\lfloor f_i(n)\rfloor}x\big)\d\mu(x)\geq \int_XI(x)\d\mu_X(x)>\int_{\tilde X}I(x)\d\mu_X(x)>0.$$

\end{proof}

\subsection{An additional reduction}\label{sec_additionalreduction}
Given a connected and simply connected nilpotent Lie group $G$, for each $b\in G$ and $t\in\R$, the element $b^t$ is well defined in $G$.
In particular, for every discrete and co-compact subgroup $\Gamma\subset G$, every nilrotation on $G/\Gamma$ is embedable in a flow.
Under these assumptions we are able to derive results about sequences of the form $b^{\lfloor f(n)\rfloor}$ from results about
(simpler) sequences $b^{f(n)}$.

In this subsection we derive \cref{thm_lastuniformdistribution} from \cref{thm_MainNilmanifoldEquidistribution}, which is proved in \cref{sec_u} and whose statement we now recall.

\begin{named}{\cref{thm_MainNilmanifoldEquidistribution}}{}
Let $G$ be a connected and simply connected nilpotent Lie group, let $\Gamma\subset G$ be a co-compact discrete subgroup and let $X=G/\Gamma$.
Let $k,\ell\in\N$ with $k\leq\ell$, let $f\in\F_{\ell}$, let $W=\Delta^{\ell-1}f$, let $b_0,\dots,b_{k-1}\in G$ be commuting elements, let
$$Y=\overline{\Big\{b_0^{t_0}\cdots b_{k-1}^{t_{k-1}}\Gamma:t_0,\dots,t_{k-1}\in\R\Big\}}\subset X,$$
and let $\mu_Y$ be the normalized Haar measure on $Y$.
Then for every continuous function $H\in C(X)$ we have
$$\otherlim{UW}{n\to\infty}H\left(b_0^{f(n)}b_1^{\Delta f(n)}\cdots b_{k-1}^{\Delta^{k-1}f(n)}\Gamma\right)=\int_YH(y)\d\mu_Y(y).$$
\end{named}

To achieve our goal we actually need the following corollary of \cref{thm_MainNilmanifoldEquidistribution}.
\begin{Corollary}\label{thm_nilequidistribution}
Let $k,\ell\in\N$, let $f\in\F_{\ell+1}$, let $W=\Delta^{\ell}f$ and let $f_1,\dots,f_k\in{\mathcal S}(f)$.
Then there are linear maps $\phi_1,\dots,\phi_k\colon \R^{\ell+1}\to\R$ with integer coefficients which satisfy the following property.
  Let $G$ be a connected and simply connected nilpotent Lie group, let $\Gamma\subset G$ be a co-compact discrete subgroup, let $b\in G$, $X=G/\Gamma$ and let $H\in C(X^k)$.

  Then we have
  \begin{equation}\label{eq_thm_nilequidistribution}
  \otherlim{UW}{n\to\infty}H\bigg(\Big(b^{f_1(n)},b^{f_2(n)},\dots,b^{f_k(n)}\Big)\Gamma^k\bigg)=\int_Y H(y)\d\mu_Y,
  \end{equation}
  where $\mu_Y$ is the normalized Haar measure on the subnilmanifold $Y\subset X^k$ defined as
$$
    Y\coloneqq \overline{\Big\{\big(b^{\phi_1({\vec t})}\Gamma,\dots,b^{\phi_k({\vec t})}\Gamma\big):{\vec t}\in\R^{\ell+1}\Big\}}.
$$
\end{Corollary}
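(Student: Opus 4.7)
My plan is to reduce \cref{thm_nilequidistribution} to \cref{thm_MainNilmanifoldEquidistribution} applied to the product nilmanifold $X^k = G^k/\Gamma^k$. By \cref{def_ChildrenOfTheFunction}, each $f_i$ can be written as $f_i(n) = \sum_{j=0}^{\ell} c_{i,j}\,\Delta^j f(n) + \beta_i(n)$ with rational coefficients $c_{i,j}\in\Q$ and $\beta_i(n) = \oh(1)$; these coefficients depend only on the functions $f_1,\dots,f_k$ and $f$, not on $G$, $\Gamma$, $b$, or $H$. For each $j \in \{0, \dots, \ell\}$, set $B_j \coloneqq (b^{c_{1,j}}, \ldots, b^{c_{k,j}}) \in G^k$, where the real powers $b^t$ are well defined because $G$ is connected, simply connected, and nilpotent. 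In each coordinate the relevant factors lie in the abelian one-parameter subgroup $\{b^s : s \in \R\}$, so the elements $B_0, \ldots, B_\ell$ commute pairwise. Applying \cref{thm_MainNilmanifoldEquidistribution} to $G^k$, $\Gamma^k$, and $B_0, \dots, B_\ell$ (taking the theorem's upper index to be $\ell$) yields
\begin{equation*}
\otherlim{UW}{n\to\infty} H\!\left(B_0^{f(n)} B_1^{\Delta f(n)} \cdots B_\ell^{\Delta^\ell f(n)} \Gamma^k\right) = \int_{Y_0} H \d\mu_{Y_0},
\end{equation*}
where $Y_0 \coloneqq \overline{\{B_0^{t_0} \cdots B_\ell^{t_\ell}\Gamma^k : (t_0,\dots,t_\ell) \in \R^{\ell+1}\}}$.

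The next step is to absorb the $\oh(1)$ corrections $\beta_i$. Because the $B_j^{t_j}$ act coordinatewise and all powers commute in each slot, the identity
\begin{equation*}
(b^{f_1(n)}, \dots, b^{f_k(n)}) = B_0^{f(n)} B_1^{\Delta f(n)} \cdots B_\ell^{\Delta^\ell f(n)} \cdot \varepsilon_n, \qquad \varepsilon_n \coloneqq (b^{\beta_1(n)}, \dots, b^{\beta_k(n)}),
\end{equation*}
holds in $G^k$. Continuity of $t \mapsto b^t$ gives $\varepsilon_n \to 1_{G^k}$. Equipping $G^k$ with a left-invariant metric and projecting it to $X^k$ yields the uniform estimate $d(g\varepsilon_n \Gamma^k,\, g\Gamma^k) \leq d_{G^k}(\varepsilon_n, 1_{G^k})$ valid for all $g \in G^k$. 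Combined with the uniform continuity of $H$ on the compact space $X^k$, this gives
\begin{equation*}
\left| H\!\left((b^{f_1(n)}, \dots, b^{f_k(n)}) \Gamma^k\right) - H\!\left(B_0^{f(n)} \cdots B_\ell^{\Delta^\ell f(n)} \Gamma^k\right) \right| \longrightarrow 0,
\end{equation*}
so the $\otherlim{UW}{n\to\infty}$ of the left-hand side of \eqref{eq_thm_nilequidistribution} equals $\int_{Y_0} H \d\mu_{Y_0}$.

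It remains to rewrite $Y_0$ in the form required by the corollary, with integer-coefficient linear maps. Expanding coordinatewise gives $B_0^{t_0} \cdots B_\ell^{t_\ell}\Gamma^k = (b^{\phi_1(\vec t)} \Gamma, \dots, b^{\phi_k(\vec t)} \Gamma)$ with $\phi_i(\vec t) = \sum_j c_{i,j}\, t_j$, which has rational coefficients. Choosing $N \in \N$ so that $N c_{i,j} \in \Z$ for all $i,j$ and defining $\tilde\phi_i(\vec s) \coloneqq \sum_j (N c_{i,j})\, s_j$, the reparameterization $\vec t = N \vec s$ shows that $\{(b^{\tilde\phi_i(\vec s)}\Gamma)_{i=1}^k : \vec s \in \R^{\ell+1}\}$ and $\{(b^{\phi_i(\vec t)}\Gamma)_{i=1}^k : \vec t \in \R^{\ell+1}\}$ coincide as subsets of $X^k$, so their closures agree and the set $Y$ built from $\tilde\phi_i$ equals $Y_0$. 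One then takes the $\phi_i$ of the corollary to be these $\tilde\phi_i$ (and $\phi_0 \equiv 0$, which appears to be a vestigial index in the statement). Everything in this argument is a formal consequence of \cref{thm_MainNilmanifoldEquidistribution} combined with linear algebra and standard uniform-continuity considerations; the actual obstacle—equidistribution of $n \mapsto b_0^{f(n)} b_1^{\Delta f(n)} \cdots b_k^{\Delta^k f(n)} \Gamma$ on nilmanifolds—is \cref{thm_MainNilmanifoldEquidistribution}, whose proof is postponed to \cref{sec_u}.
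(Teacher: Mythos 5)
Your proof is correct and follows essentially the same route as the paper's: expand each $f_i$ in terms of $\Delta^j f$ with rational coefficients and an $\oh(1)$ tail, apply \cref{thm_MainNilmanifoldEquidistribution} to the commuting elements $B_j\in G^k$ built from powers of $b$, absorb the $\oh(1)$ error via uniform continuity of $H$ on the compact $X^k$, and read off $Y$. The only cosmetic difference is in enforcing integer coefficients: the paper rescales $f$ by a common denominator before applying \cref{thm_MainNilmanifoldEquidistribution} (tacitly using that Riesz means are invariant under scaling $W$), whereas you keep $f$ and $W$ fixed and instead reparameterize $\vec t = N\vec s$ at the end, which is arguably cleaner and avoids the need to track the change in $W$; you also correctly flag $\phi_0$ as a vestigial index in the statement.
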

\begin{proof}
For each $j=1,\dots,k$, write
$$f_j(n)=c_{j,0}f(n)+c_{j,1}\Delta f(n)+\cdots+c_{j,\ell}\Delta^\ell f(n)+\beta_j(n),$$
where $\beta_j(n)\to0$ as $n\to\infty$ and $(c_{j,0},\dots,c_{j,\ell})\in\Q^{\ell+1}\setminus\{\vec 0\}$.
Multiplying $f$ by a common multiple of the denominators of all the $c_{j,i}$ if necessary we can and will assume that all the $c_{j,i}$ are integers.

For each $i=0,\dots,\ell$ let $b_i\in G^k$ be the point $b_i=\Big(b^{c_{1,i}},b^{c_{2,i}},\dots,b^{c_{k,i}}\Big)$.
Also let $g(n)=(b^{\beta_1(n)},\dots,b^{\beta_k(n)})$.
We have
\begin{equation}\label{eq_proof_thm_nilequidistribution1}
\Big(b^{f_1(n)},b^{f_2(n)},\dots,b^{f_k(n)}\Big)=b_0^{f(n)}b_1^{\Delta f(n)}\cdots b_\ell^{\Delta^\ell f(n)}g(n).
\end{equation}

Since each $\beta_j(n)\to0$ as $n\to\infty$ it follows that $g(n)\to1_G$ as $n\to\infty$.
Since $X$ is compact, $H$ is uniformly continuous and hence
$$\lim_{n\to\infty}\left|H\left(\Big(b^{f_1(n)},b^{f_2(n)},\dots,b^{f_k(n)}\Big)\Gamma^{k}\right)-H\Big(b_0^{f(n)}b_1^{\Delta f(n)}\cdots b_\ell^{\Delta^\ell f(n)}\Gamma^{k}\Big)\right|=0.$$
For each $j=1,\dots,k$ and ${\vec t}=(t_0,\dots,t_\ell)\in\R^{\ell+1}$, define $\phi_j({\vec t})=c_{j,0}t_0+\cdots+c_{j,\ell}t_\ell$.
Notice that $b_0^{t_0}\cdots b_\ell^{t_\ell}=\big(b^{\phi_1({\vec t})}\Gamma,\dots,b^{\phi_k({\vec t})}\big)$.
Appealing to \cref{thm_MainNilmanifoldEquidistribution} it follows that \eqref{eq_thm_nilequidistribution} holds with
$$Y=
\overline{\Big\{b_0^{t_0}\cdots b_\ell^{t_\ell}\Gamma^{k}:{\vec t}\in\R^{\ell+1}\Big\}}
=
\overline{\Big\{\big(b^{\phi_1({\vec t})}\Gamma,\dots,b^{\phi_k({\vec t})}\Gamma\big):{\vec t}\in\R^{\ell+1}\Big\}}
\subset X^{k}.$$
\end{proof}

We are now ready to prove \cref{thm_lastuniformdistribution}.

\begin{proof}[Proof of {\cref{thm_lastuniformdistribution}}]
Every nilpotent Lie group $G$ is the quotient of a simply connected nilpotent Lie group $\tilde G$ by a discrete subgroup.
In turn every simply connected nilpotent Lie group is (isomorphic as a Lie group to) a closed subgroup of a connected and simply connected nilpotent Lie group $\hat G$ (cf. \cite[Subsection 1.11]{Leibman05a}).
Therefore it suffices to prove the theorem assuming $G$ is connected and simply connected.

Let $\tilde G=G\times\R$, let $\tilde\Gamma=\Gamma\times\Z$, let $\tilde X=\tilde G/\tilde\Gamma$ and let $a=(b,1)\in\tilde G$.
Recall the notation $\{t\}=t-\lfloor t\rfloor$ and let $\pi\colon \tilde X^k\to X^k$ be the map
$$\pi\big((g_1,t_1,\dots,g_k,t_k)\tilde\Gamma^k\big)= (b^{-\{t_1\}}g_1,\dots,b^{-\{t_k\}}g_k)\Gamma^K.$$
Observe that $\pi$ is well defined (i.e., the choice of the co-set representative does not matter in the definition of $\pi$) and that $\pi(a^{t_1},\dots,a^{t_k})=(b^{\lfloor t_1\rfloor},\dots,b^{\lfloor t_k\rfloor})$ for every $t_1,\dots,t_k\in\R$.
We warn the reader that $\pi$ is not a continuous map; indeed $\pi$ is discontinuous at the points $(g_1,t_1,\dots,g_k,t_k)\tilde\Gamma^k$ where at least one $t_i\in\Z$, but continuous elsewhere.
In particular $\pi$ is continuous almost everywhere (with respect to the Haar measure on $\tilde X^k$).

Let $\phi_1,\dots,\phi_k$ be given by \cref{thm_nilequidistribution}, let
$$\tilde Y\coloneqq \overline{\Big\{\big(a^{\phi_1(\vec t)},\dots,a^{\phi_k(\vec t)}\big)\tilde\Gamma^k:\vec t\in\R^{\ell+1}\Big\}},$$
let $\mu_{\tilde Y}$ be the Haar measure on $\tilde Y$ and let $\nu\coloneqq \pi_*\mu_{\tilde Y}$ be the pushforward measure.
Notice that for any $H\in C(X^k)$,
$$H\bigg(\Big(b^{\lfloor f_1(n)\rfloor},b^{\lfloor f_2(n)\rfloor},\dots,b^{\lfloor f_k(n)\rfloor}\Big)\Gamma^k\bigg)= H\circ\pi \bigg(\Big(a^{f_1(n)},a^{f_2(n)},\dots,a^{f_k(n)}\Big)\tilde\Gamma^k\bigg)$$
so in view of \cref{thm_nilequidistribution}, and using the fact that $\pi$ is continuous $\mu_{\tilde Y}$-a.e., it follows that the sequence defined in \eqref{eq_thm_removingfloors} is indeed $\nu$-w.d.\ with respect to \textlim{W}.

Finally we show that $\nu\geq C\mu_Y$ for some constant $C$.
Let $H\in C(X^k)$, assume $H(x)\geq0$ for every $x\in X^k$.
In view of \cite[Theorem B]{Leibman05b}
\begin{eqnarray*}
  \int_{X^k}H(x)\d\mu_{Y}(x)
  &=&
  \lim_{N\to\infty}\frac1{N^{\ell+1}}\sum_{\vec n\in[0,N]^{\ell+1}} H\Big(\big(b^{\phi_1(\vec n)},\dots,b^{\phi_k(\vec n)}\big)\Gamma^k\Big)
\end{eqnarray*}
and similarly
\begin{eqnarray*}
  \int_{X^k}H(x)\d\nu(x)
  &=&
  \int_{\tilde Y}H\circ\pi(\tilde y)\d\mu_{\tilde Y}(\tilde y)
  \\&=&
  \lim_{T\to\infty}\frac1{T^{\ell+1}}\int_{[0,T]^{\ell+1}} H\circ\pi\Big(\big(a^{\phi_1(\vec t)},\dots,a^{\phi_k(\vec t)}\big)\tilde\Gamma^k\Big)\d\vec t
  \\&=&
  \lim_{T\to\infty}\frac1{T^{\ell+1}}\int_{[0,T]^{\ell+1}} H\Big(\big(b^{\lfloor \phi_1(\vec t)\rfloor},\dots,b^{\lfloor \phi_k(\vec t)\rfloor}\big)\Gamma^k\Big)\d\vec t
  \\&=&
  \lim_{N\to\infty}\frac1{N^{\ell+1}}\sum_{\vec n\in[0,N)^{\ell+1}}\int_{[0,1]^{\ell+1}} H\Big(\big(b^{\phi_1(\vec n)+\lfloor \phi_1(\vec t)\rfloor},\dots,\phi^{\phi_k(\vec n)+\lfloor\phi_k(\vec t)\rfloor}\big)\Gamma^k\Big)\d\vec t
  \\&\geq&
  C\lim_{N\to\infty}\frac1{N^{\ell+1}}\sum_{\vec n\in[0,N)^{\ell+1}}H\Big(\big(b^{\phi_1(\vec n)},\dots,b^{\phi_k(\vec n)}\big)\Gamma^k\Big)
  \\&=&
  C\int_{X^k}H(x)\d\mu_{Y}(x),
\end{eqnarray*}
where $C>0$ is the Lebesgue measure of the set
$$\big\{\vec t\in[0,1]^{\ell+1}:\lfloor \phi_1(\vec t)\rfloor=\lfloor \phi_2(\vec t)\rfloor=\cdots=\lfloor \phi_k(\vec t)\rfloor=0\big\}.$$
\end{proof}

\section{Well-distribution with respect to Riesz means on nilmanifolds}
\label{sec_u}

In this section we prove \cref{thm_MainNilmanifoldEquidistribution}.
Results of similar nature were obtained by Frantzikinakis in \cite{Frantzikinakis09} (see \cref{rmrk_comparison} in the Introduction).

Let $G$ be a connected simply connected nilpotent Lie group, $\Gamma$ a uniform and discrete subgroup of $G$ and $X\coloneqq G/\Gamma$. We use $\pi\colon G\to X$ to denote the natural projection of $G$ onto $X$.

\begin{Definition}
\label{def:polynomial-sequneces}
Let $m\in\N$. A map $g\colon \Z^m\to G$ is called a \define{polynomial sequence} if there exist $u\in\N$, $a_1,\ldots,a_u\in G$ and $p_1,\ldots,p_u\in\R[x_1,\ldots,x_m]$ such that
$$
g(h)=a_1^{p_1(h)}a_2^{p_2(h)}\cdot\ldots\cdot a_u^{p_u(h)}, \qquad\forall h\in\Z^m.
$$
\end{Definition}

It will also be convenient to introduce the following notation: Given a bounded complex-valued sequence $a\colon\N\to\C$ we define
$$
\otherlimsup{C}{n\to\infty} a(n)\coloneqq \limsup_{N\to\infty}\left|\frac{1}{N}\sum_{n=1}^N a(n) \right|
$$
and
$$
\otherlimsup{UW}{n\to\infty} a(n)\coloneqq \sup_{(M_i,N_i)_{i\in\N}}\lim_{i\to\infty}\left|\frac{1}{W(N_i)-W(M_i)}\sum_{n=M_i}^{N_i}\Delta W(n) a(n) \right|,
$$
where the supremum is taken of all sequences $(M_i,N_i)_{i\in\N}\subset\N^2$ satsifying $M_i\leq N_i$ and $\lim_{i\to\infty} W(N_i)-W(M_i)=\infty$.

We can now state the main theorem of this section.
Notice that \cref{thm_MainNilmanifoldEquidistribution} corresponds to the special case $m=0$.
\begin{Theorem}
\label{thm:equidistribution-of-discrete-tempred-on-subnilmanifold}
Let $G$ be a connected simply connected nilpotent Lie group, $\Gamma$ a uniform and discrete subgroup of $G$ and define $X\coloneqq G/\Gamma$.
Let $\ell,u\in \N$, let $m\in\Z$ with $m\geq0$, let $f\in\F_{\ell}$ and let $g_0,g_1,\ldots,g_{\ell-1}\colon \Z^m\to G$ be polynomial sequences defined by
$$
g_i(h)\coloneqq a_{i,1}^{p_1(h)}a_{i,2}^{p_2(h)}\cdot\ldots\cdot a_{i,u}^{p_{i,u}(h)}, \qquad\forall h\in\Z^m,
$$
where $a_{i,j}$ with $(i,j)\in\{0,1,\ldots,\ell-1\}\times\{1,\ldots,u\}$ is a collection of commuting elements of $G$ and $p_{i,j}\in\R[x_1,\ldots x_m]$.
If $m=0$ then simply let $g_i$ be commuting elements of $G$ for $i=0,\dots,\ell-1$. Set
$$
g(n,h)\coloneqq g_0(h)^{f(n)}g_1(h)^{\Delta f(n)}\cdot\ldots\cdot g_{\ell-1}(h)^{\Delta^{\ell-1} f(n)}, \qquad\forall n\in\N,~\forall h\in\Z^{m}.
$$
Then for all $F\in C(X)$ we have
$$
\otherlimsup{C}{h_m\to\infty} \cdots\otherlimsup{C}{h_1\to\infty}\otherlimsup{UW}{n\to\infty}\left( F\big(g(n,h)\Gamma\big) -\int_Y F\d\mu_Y\right)=0,
$$
where $h=(h_1,\ldots,h_m)$, $W\coloneqq \Delta^{\ell-1} f$, and $\mu_Y$ is the Haar measure of the connected subnilmanifold $Y$\footnote{It is shown in \cref{lem:orbit-closure-is-subnilmanifold} that $Y$ is indeed a connected subnilmanifold.} defined by
$$
Y\coloneqq\overline{\{g_0(h)^{t_0}g_1(h)^{t_1}\cdot\ldots\cdot g_{\ell-1}(h)^{t_{\ell-1}}\Gamma: h\in\Z^m, (t_0,t_1,\ldots,t_{\ell-1})\in\R^{\ell}\}}.
$$
\end{Theorem}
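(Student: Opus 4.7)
The plan is to prove the theorem by induction on the nilpotency step $s$ of $G$, combining vertical Fourier analysis along the identity component of the center with the van der Corput lemma (\cref{thm_vdCforRiesz}), using the abelian equidistribution results of \cref{sec_u.d.-along-riesz-means} as the base case.

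Before launching the induction, I would verify that $Y$ is indeed a connected subnilmanifold of $X$. Because the generating elements $a_{i,j}$ all commute, the closure of $\{g_i(h)^t : i, h, t\}$ in $G$ is an abelian connected subgroup $A$, and the Leibman--Shah structure theorem for orbit closures on nilmanifolds ensures that $Y=\overline{A\Gamma}/\Gamma$ is a connected subnilmanifold. Replacing $G$ by the smallest closed subgroup of $G$ whose orbit on $X$ is $Y$, and $X$ by $Y$, I may assume $Y=X$ and reduce the problem to showing
\begin{equation*}
\otherlimsup{C}{h_m\to\infty}\!\cdots\!\otherlimsup{C}{h_1\to\infty}\otherlimsup{UW}{n\to\infty}\Bigl(F\bigl(g(n,h)\Gamma\bigr)-\int_X F\,\mathsf{d}\mu_X\Bigr)=0.
\end{equation*}

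\textbf{Base case ($s=1$).} Here $G\cong\R^N$ and $X$ is a torus. Using the Weyl criterion (\cref{prop_weylforRiesz}) I reduce to the case $F=\chi$ a non-trivial character of $X$ (for which $\int F\,\mathsf{d}\mu_X=0$). Writing $\chi\circ g(n,h)=e\bigl(\sum_{i=0}^{\ell}c_i(h)\,\Delta^if(n)\bigr)$ where each $c_i(h)$ is a polynomial in $h\in\Z^m$ determined by the exponents $p_{i,j}$, the inner uniform Riesz mean vanishes whenever some $c_i(h)$ is irrational, by the higher-dimensional torus equidistribution used in the proof of \cref{cor_productequidistribution} and \cref{cor_tempfloorudalongWmultidimhigherk}. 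The exceptional set of resonant $h$ where all $c_i(h)\in\Q$ is then absorbed by the Cesaro averages over $h_1,\dots,h_m$, using equidistribution of polynomial sequences on tori and the fact that the hypothesis $Y=X$ prevents the full vanishing of all $c_i(h)$ on a set of positive $h$-density.

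\textbf{Inductive step.} Assume the theorem in step $<s$, and let $Z=Z(G)^{\circ}$. Decompose $F=\sum_\chi F_\chi$ by vertical Fourier series along the torus $Z/(Z\cap\Gamma)$. The zero-character component $F_0$ descends to a continuous function on the step-$(s-1)$ nilmanifold $(G/Z)/\pi(\Gamma)$, on which $g(n,h)$ projects to a sequence of the same product shape with the $g_i(h)$ replaced by their cosets; the inductive hypothesis applies directly to this piece. For each non-trivial vertical character $\chi$, I would apply the van der Corput lemma (\cref{thm_vdCforRiesz}) in the variable $n$, reducing to showing that for every $d\in\N$,
\begin{equation*}
\otherlimsup{C}{h_m\to\infty}\!\cdots\!\otherlimsup{C}{h_1\to\infty}\otherlimsup{UW}{n\to\infty}\bigl\langle F_\chi\circ g(n+d,h),\,F_\chi\circ g(n,h)\bigr\rangle=0.
\end{equation*}
Expanding $g(n+d,h)\,g(n,h)^{-1}$ using Newton's forward difference formula (\cref{lem_TaylorLeibnitz}) together with the commutation relations among the $a_{i,j}$'s, the resulting integrand is of exactly the same form as the statement of the theorem, but now built out of $\Delta f\in\F_\ell$ and a finite collection of commuting elements lying in the subgroup $[G,Z]\cdot(\text{lower commutators})$, whose effective nilpotency step is strictly less than $s$. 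The inductive hypothesis then finishes the argument.

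\textbf{Main obstacle.} The principal technical difficulty will be the bookkeeping at the inductive step: one must verify that, after van der Corput differencing and Newton expansion, the new sequence still satisfies the hypotheses of the theorem (commuting generators in a lower-step quotient, the driving function now $\Delta f\in\F_\ell$, and the product structure preserved in the correct variables), while keeping control of how the van der Corput auxiliary parameter $d$ interleaves with the iterated Cesaro means over $h_1,\ldots,h_m$. Showing that the non-triviality of $\chi$ on $Z$ is transferred to non-triviality of the reduced problem on the step-$(s-1)$ ambient group — i.e., that $Y=X$ is preserved after the quotient — is the crucial structural point, and is handled by a standard argument tracing the relation between vertical characters, the center, and the orbit closure.
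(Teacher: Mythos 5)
Your overall architecture (reduce to $Y=X$, abelian base case via Weyl, vertical Fourier decomposition $F=\sum_\chi F_\chi$, van der Corput to kill the nontrivial vertical pieces) is the right family of ideas, but there is a genuine gap at the heart of the inductive step, and a secondary problem with the base of the induction.

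\textbf{The induction variable is wrong.} You induct on the nilpotency step $s$ of $G$ and assert that, after the van der Corput differencing and quotienting by the diagonal of the center, the new problem lives on a group of step strictly less than $s$. This is not true, and it is not what the paper uses. The group that appears after the van der Corput step is $\tilde G=\sigma(K)$ with $K=G^\triangle\cdot(L\times L)$ and $\sigma\colon G\times G\to(G\times G)/Z(G)^\triangle$, where $L$ is the normal closure of the connected subgroup $H$ with $\pi(H)=Z$ and $Z$ is the sub-nilmanifold generated by $g_0,\dots,g_{\ell-1}$ (not the center of $G$; in particular the subgroup you write as ``$[G,Z]\cdot(\text{lower commutators})$'' is garbled, since $[G,Z(G)]$ is trivial). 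Since $K\supset G^\triangle\cong G$ and modding out by the central subgroup $Z(G)^\triangle$ need not annihilate the last nonzero term of the lower central series of $K$, the nilpotency step of $\tilde G$ can still equal $s$. What actually decreases is the \emph{degree} of the sequence $\tilde g(n,h')$ measured against a normal pre-filtration $K_\bullet$ (this is Claim~3 in the paper's proof of \cref{thm:qualitative-green-leibman-theorem-for-discrete-tempered}, a PET-style invariant). Without switching to that invariant, the induction does not close.

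\textbf{The van der Corput step is stated in the wrong form, and the base case $\ell=0$ is not reachable by van der Corput at all.} You invoke \cref{thm_vdCforRiesz}, which requires $\otherlim{UW}{n\to\infty}\langle F_\chi\circ g(n+d,h),F_\chi\circ g(n,h)\rangle=0$ for \emph{every fixed} $d$. That fails as soon as $\ell=0$: there $g(n+d,h)g(n,h)^{-1}=g_0(h)^{f(n+d)-f(n)}\to 1_G$ because $\Delta f\to 0$, so the correlation tends to $|F_\chi(g(n,h)\Gamma)|^2\ge 0$ and does not average to zero. The paper never applies van der Corput to this case; it handles the bottom of the induction (when the auxiliary sub-nilmanifold $Z$ is trivial) by the Fej\'er/Ratner-type argument of \cref{prop_Fejerequidistributionnilmanifold} and \cref{lem:qualitative-green-leibman-theorem-for-discrete-fejer}, which exploits that along the Riesz mean $W$-lim the discrete orbit $b^{W(n)}$ approximates the continuous flow $b^t$. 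Even for $\ell\ge 1$, the correct van der Corput is the iterated Ces\`aro-averaged version (Lemma~\ref{lem:C_lim-W_lim-vdC}), introducing a new averaged variable $h_{m+1}$ rather than demanding vanishing for each fixed $d$; that additional Ces\`aro average over $h_{m+1}$ is what makes the new polynomial sequences $g_i'(h')=\prod_{j<i}g_j(h)^{\binom{h_{m+1}}{i-j}}$ well-behaved enough to be fed back into the induction. In short: the vertical Fourier idea is aligned with the paper, but the correct control mechanism is (a) the pre-filtration degree as the induction quantity, and (b) the averaged van der Corput plus the Fej\'er base case, not nilpotency step plus fixed-$d$ van der Corput.
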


\subsection{The abelian case of \cref{thm:equidistribution-of-discrete-tempred-on-subnilmanifold}}
\label{sec:abelian-case-of-equidistribution-of-discrete-tempred-on-subnilmanifold}

Before we proceed to give a proof of \cref{thm:equidistribution-of-discrete-tempred-on-subnilmanifold} in its full generality, let us establish the following special case.

\begin{Lemma}[\cref{thm:equidistribution-of-discrete-tempred-on-subnilmanifold} for the special case $G=\R^d$]
\label{lem:equidistribution-of-discrete-tempred-on-subtorus}
Let $d,\ell\in \N$, let $m\in\Z$ with $m\geq0$, let $f\in\F_{\ell}$, let $p_0,\ldots,p_{\ell-1}\colon\Z^m\to\R^d$ be polynomials (or just constants if $m=0$) and set
$$
p(n,h)\coloneqq p_0(h){f(n)}+p_1(h){\Delta f(n)}+\ldots+ p_{\ell-1}(h){\Delta^{\ell-1} f(n)},\qquad\forall n\in\N,~\forall h\in\Z^{m}.
$$
Let $\pi_1\colon \R^d\to \T^d$ denote the natural projection from $\R^d$ onto $\T^d=\R^d/\Z^d$ and define $W\coloneqq \Delta^{\ell-1} f$.
Let $Y$ denote the connected subtorus of $\T^d$ given by
$$
Y\coloneqq\overline{\Big\{\pi_1\big(p_0(h){t_0}+p_1(h){t_1}+\ldots+ p_{\ell-1}(h){t_{\ell-1}}\big): h\in\Z^m, (t_0,t_1,\ldots,t_{\ell-1})\in\R^{\ell}\Big\}}.
$$
Then for all $F\in C(\T^d)$ we have
$$
\otherlim{C}{h_m\to\infty} \cdots\otherlim{C}{h_1\to\infty}\otherlim{UW}{n\to\infty} F\big(\pi_1\big(p(n,h)\big)\big) = \int_{Y} F\d\mu_{Y},
$$
where $h=(h_1,\ldots,h_m)$.
\end{Lemma}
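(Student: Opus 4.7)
The plan is to reduce to exponential sums by a Weyl-type criterion, evaluate the innermost Riesz mean with \cref{thm_tempereduniformalongW}, and dispatch the iterated Ces\`aro average by an induction on $m$.

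Since finite linear combinations of characters are uniformly dense in $C(\T^d)$ and all averages appearing in the statement are contractive on $\|\cdot\|_\infty$, a standard $\epsilon/3$ argument reduces the claim to the case $F=\chi$, where $\chi(x)=e(\langle x,\tau\rangle)$ for some $\tau\in\Z^d$. Setting $c_i(h)\coloneqq \langle p_i(h),\tau\rangle$ (a polynomial in $h\in\Z^m$), one has
$$\chi\bigl(\pi_1(p(n,h))\bigr)=e\Bigl(\sum_{i=0}^{\ell}c_i(h)\,\Delta^i f(n)\Bigr).$$
The character $\chi$ is trivial on $Y$ iff $\sum_i c_i(h)t_i\in\Z$ for every $h\in\Z^m$ and every $t\in\R^{\ell+1}$; since $t$ ranges over all of $\R^{\ell+1}$, continuity forces each polynomial $c_i$ to be identically zero. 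For such $\chi$ the above expression is constantly $1$, in accordance with $\int_Y\chi\d\mu_Y=1$. It therefore suffices to show that when some $c_{i_0}$ is not identically zero, the iterated limit in the statement is $0$.

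For any fixed $h\in\Z^m$, \cref{thm_tempereduniformalongW} applied to the coefficient tuple $(c_0(h),\dots,c_\ell(h))$ (with $g\equiv 0$) yields
$$\otherlim{UW}{n\to\infty}e\Bigl(\sum_{i=0}^{\ell}c_i(h)\,\Delta^i f(n)\Bigr)=1_E(h),\qquad E\coloneqq \{h\in\Z^m:c_0(h)=\cdots=c_\ell(h)=0\},$$
since the exponential is well-distributed on $\T$ whenever the coefficients do not all vanish, and equals $1$ otherwise. Consequently, the proof reduces to verifying
$$\otherlim{C}{h_m\to\infty}\cdots\otherlim{C}{h_1\to\infty}1_E(h)=0.$$

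I would prove this last step by induction on $m$. For $m=1$ the set $E\subset\Z$ is the common zero set of finitely many polynomials, at least one of which is non-zero, hence $E$ is finite and its Ces\`aro mean is $0$. For $m\geq 2$, fix $(h_2,\dots,h_m)$; the slice $\{h_1:(h_1,\dots,h_m)\in E\}$ is either all of $\Z$ (when every polynomial $c_i(\,\cdot\,,h_2,\dots,h_m)$ vanishes identically in $h_1$) or finite, so
$$\otherlim{C}{h_1\to\infty}1_E(h_1,\dots,h_m)=1_{E'}(h_2,\dots,h_m),$$
where $E'\subset\Z^{m-1}$ is the common zero set of the polynomials in $(h_2,\dots,h_m)$ arising as the coefficients of each $c_i$ expanded in powers of $h_1$. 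Since some $c_{i_0}$ was not identically zero in all variables, at least one of these coefficient polynomials is not identically zero either, and the induction hypothesis applies. The only mildly delicate point is this bookkeeping --- verifying that non-triviality of the original polynomials persists after slicing --- and the elementary argument just sketched takes care of it.
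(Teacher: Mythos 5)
Your proof is correct and follows essentially the same route as the paper's: reduce to characters via Weyl's criterion, evaluate the innermost $UW$-limit with \cref{thm_tempereduniformalongW}, and then observe that the iterated Ces\`aro mean of the indicator of a proper algebraic subset of $\Z^m$ vanishes. The only cosmetic differences are that the paper bounds the inner limit by $1_R(h)$ for $R$ the zero set of a single well-chosen coordinate $c_j$ rather than computing the exact identity $1_E(h)$ on the common zero set as you do, and that the paper states the iterated-Ces\`aro-density-zero fact without the explicit slicing induction you supply.
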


\begin{proof}
It suffices to show that for all continuous group characters $\chi\colon \T^d\to\{w\in\C:|w|=1\}$ with the property that $\chi$ restricted to $Y$ is non-trivial, one has
\begin{equation}
\label{eqn:torus-case-1}
\otherlim{C}{h_m\to\infty} \cdots\otherlim{C}{h_1\to\infty}
\otherlim{UW}{n\to\infty} \chi\big(\pi_1\big(p(n,h)\big)\big) = 0,
\end{equation}
because for any such character we have $\int_Y\chi\d\mu_Y=0$ and linear combinations of continuous group characters of this kind together with constants are dense in $C(Y)$.
The character $\chi$ is naturally associated with an element $\tau\in\Z^d$ such that $\chi\big(\pi(t)\big)=e(\langle t,\tau\rangle)$ for every $t\in\R^d$, where $e(x)\coloneqq e^{2\pi i x}$.
The condition that $\chi$ restricted to $Y$ is non-trivial implies that the map
$$
(h,t_0,\ldots,t_{\ell-1})\mapsto \big\langle p_0(h)t_0+p_1(h)t_1+\ldots+ p_{\ell-1}(h)t_{\ell-1},~\tau\big\rangle \bmod 1
$$
is not constant.
Therefore, for some $j\in\{0,\dots,\ell-1\}$, the map $h\mapsto \langle p_j(h),\tau\rangle$ is not identically $0$. Choose the smallest such $j$.
Since $p_j$ is a polynomial, $\langle p_j(h),\tau\rangle\neq0$ for most $h\in\Z^m$, in the sense that the set of zeros $R\coloneqq \{h\in\Z^m: \langle p_j(h),\tau\rangle=0\}$ satisfies
$$
\otherlim{C}{h_m\to\infty} \cdots\otherlim{C}{h_1\to\infty}\1_{R}(h_1,\ldots,h_m) =0.
$$
On the other hand, for each $h\in\Z^m\setminus R$, the function
$
n\mapsto\big\langle p(n,h),~\tau\big\rangle
$
belongs to $\F$ and hence, using \cref{thm_tempereduniformalongW}, we get
$$
\otherlim{UW}{n\to\infty} e\left(
\big\langle  p(n,h),~\tau\big\rangle
\right) = 0.
$$
Therefore
\begin{eqnarray*}
\Big|\otherlim{C}{h_m\to\infty} \cdots\otherlim{C}{h_1\to\infty}
\otherlim{UW}{n\to\infty} \chi\big(\pi_1\big(p(n,h)\big)\big)\Big|
&=&
\Big|\otherlim{C}{h_m\to\infty} \cdots\otherlim{C}{h_1\to\infty}
\otherlim{UW}{n\to\infty} e\big(\langle p(n,h),\tau\rangle\big)\Big|
\\&\leq&
\otherlim{C}{h_m\to\infty} \cdots\otherlim{C}{h_1\to\infty}
\Big|\otherlim{UW}{n\to\infty} e\big(\langle p(n,h),\tau\rangle\big)\Big|
\\&\leq&
\otherlim{C}{h_m\to\infty} \cdots\otherlim{C}{h_1\to\infty}
1_R(h_1,\dots,h_m)
\\&=&0.
\end{eqnarray*}
\end{proof}

\subsection{Lifting equidistribution from the horizontal torus to the nilmanifold}

\begin{Definition}Let $G$ be a connected and simply connected nilpotent Lie group and let $\Gamma\subset G$ be a uniform and discrete subgroup.
  The \define{horizontal torus} of the nilmanifold $X:=G/\Gamma$ is the compact abelian group $G/(\Gamma[G,G])$, where $[G,G]$ denotes the commutator subgroup of $G$ (i.e. the subgroup of $G$ generated by all elements $[g,h]=g^{-1}h^{-1}gh$).

  A \define{horizontal character} is a map of the form $\eta\circ\pi$, where $\pi:X\to G/(\Gamma[G,G])$ is the projection onto the horizontal torus and $\eta$ is a character of $G/(\Gamma[G,G])$.

The following theorem is the main technical result of this paper. We will show that it implies \cref{thm:equidistribution-of-discrete-tempred-on-subnilmanifold}.

\end{Definition}
\begin{Theorem}
\label{thm:qualitative-green-leibman-theorem-for-discrete-tempered}
Let $G$ be a connected simply connected nilpotent Lie group, $\Gamma$ a uniform and discrete subgroup of $G$ and define $X\coloneqq G/\Gamma$.
Let $\ell,u\in \N$, let $m\in\Z$ with $m\geq0$, let $f\in\F_{\ell}$, let $a_{i,j}$ with $(i,j)\in\{0,1,\ldots,{\ell-1}\}\times\{1,\ldots,u\}$ be a collection of commuting elements of $G$ and, for $(i,j)\in\{0,1,\ldots,\ell-1\}\times\{1,\ldots,u\}$, let $p_{i,j}\in\R[x_1,\ldots x_m]$.
Let $g_0,g_1,\ldots,g_{\ell-1}\colon \Z^m\to G$ denote the polynomial sequences defined by
$$
g_i(h)\coloneqq a_{i,1}^{p_1(h)}a_{i,2}^{p_2(h)}\cdot\ldots\cdot a_{i,u}^{p_{i,u}(h)}, \qquad\forall h\in\Z^m,
$$
(or let $g_i\in G$ be commuting constants if $m=0$) and set
$$
g(n,h)\coloneqq g_0(h)^{f(n)}g_1(h)^{\Delta f(n)}\cdot\ldots\cdot g_{\ell-1}(h)^{\Delta^{\ell-1} f(n)}, \qquad\forall n\in\N,~\forall h\in\Z^{m}.
$$
Define $W\coloneqq \Delta^{\ell-1} f$.
If for all non-trivial horizontal characters $\eta$ one has
\begin{equation}
\label{eqn:qualitative-green-leibman-theorem-for-discrete-tempered-0}
\otherlimsup{C}{h_{m}\to\infty}\cdots \otherlimsup{C}{h_1\to\infty}\otherlimsup{UW}{n\to\infty}\eta\big(g(n,h)\Gamma\big)=0,
\end{equation}
then for all $F\in C(X)$ with $\int_XF\d\mu_X=0$, we have
\begin{equation}
\label{eqn:qualitative-green-leibman-theorem-for-discrete-tempered-0-0}
\otherlimsup{C}{h_{m}\to\infty}\cdots \otherlimsup{C}{h_1\to\infty}\otherlimsup{UW}{n\to\infty}F\big(g(n,h)\Gamma\big) =0,
\end{equation}
where $h=(h_1,\ldots,h_m)$.
\end{Theorem}

{
\paragraph{\textit{\textbf{Outline of the proof of \cref{thm:qualitative-green-leibman-theorem-for-discrete-tempered}:}}}
For the proof of \cref{thm:qualitative-green-leibman-theorem-for-discrete-tempered} we have to distinguish between the cases $\ell=1$ and $\ell\geq 2$. The case $\ell=1$ is easy and taken care of in \cref{sec_case_l_one}. For the case $\ell\geq 2$, which is proved in \cref{sec_case_l_general}, we use induction on the degree of $g(n,h)$ (see \cref{def_degree_0}). The base case of this induction follows immediately from \cref{lem:equidistribution-of-discrete-tempred-on-subtorus}.
For the proof of the inductive step we use a variant of the classical van der Corput Lemma, which says that instead of \eqref{eqn:qualitative-green-leibman-theorem-for-discrete-tempered-0-0} it suffices to show
\begin{equation}
\label{eqn:qualitative-green-leibman-theorem-for-discrete-tempered-0-0-0}
\otherlimsup{C}{h_{m+1}\to\infty}\otherlimsup{C}{h_{m}\to\infty}\cdots \otherlimsup{C}{h_1\to\infty}\otherlimsup{UW}{n\to\infty}
F\big(g(n+h_{m+1},h)\Gamma\big)
\overline{F}\big(g(n,h)\Gamma\big) = 0.
\end{equation}
We can also assume without loss of generality that $F$ satisfies
\begin{equation}
\label{eqn:qualitative-green-leibman-theorem-for-discrete-tempered-2-0-0}
F(t x)=\chi(t)F(x), \qquad\forall t\in Z(G),~\forall x\in X,
\end{equation}
where $Z(G)$ is the center of $G$ and $\chi\colon Z(G)\to\{w\in\C: |w|=1\}$ is a continuous group character of $Z(G)$. Indeed, the algebra generated by functions satisfying \eqref{eqn:qualitative-green-leibman-theorem-for-discrete-tempered-2-0-0} is uniformly dense in $C(X)$.
Using ideas form \cite{Green_Tao12a}, one can show that the projection of the sequence $(g(n+h_{m+1},h),g(n,h))$ onto the quotient group $(G\times G)/Z(G)^\triangle$, where $Z(G)^\triangle=\{(g,g): g\in Z(G)\}$, has degree strictly smaller than of $g(n,h)$. Moreover, due to \eqref{eqn:qualitative-green-leibman-theorem-for-discrete-tempered-2-0-0}, the function $F\otimes \overline{F}$ is invariant under $Z(G)^\triangle$. It thus follows from the induction hypothesis that
\begin{equation*}
\otherlimsup{C}{h_{m+1}\to\infty}\otherlimsup{C}{h_{m}\to\infty}\cdots \otherlimsup{C}{h_1\to\infty}\otherlimsup{UW}{n\to\infty}
F\big(g(n+h_{m+1},h)\Gamma\big)
\overline{F}\big(g(n,h)\Gamma\big) = \int_Y F\otimes \overline{F} \d\mu_Y,
\end{equation*}
where $Y$ is a certain sub-nilmanifold of $X\times X$ which we describe explicitly in our proof. By judiciously using \eqref{eqn:qualitative-green-leibman-theorem-for-discrete-tempered-2-0-0} one can then show that $\int_Y F\otimes \overline{F} \d\mu_Y=0$, which completes the proof.
\vspace{1 em}
}

{We end this subsection by showing that \cref{thm:qualitative-green-leibman-theorem-for-discrete-tempered} implies \cref{thm:equidistribution-of-discrete-tempred-on-subnilmanifold}.
}
\begin{proof}[Proof that \cref{thm:qualitative-green-leibman-theorem-for-discrete-tempered} implies \cref{thm:equidistribution-of-discrete-tempred-on-subnilmanifold}]
We assume that $m>0$, the case $m=0$ can be proved in the same way.
We proceed by induction of the dimension $d$ of $G$.
If $d=1$ then $G=\R$, in which case \cref{thm:equidistribution-of-discrete-tempred-on-subnilmanifold} follows from \cref{lem:equidistribution-of-discrete-tempred-on-subtorus}.
Assume therefore that $d>1$ and that \cref{thm:equidistribution-of-discrete-tempred-on-subnilmanifold} has already been established for all connected simply connected nilpotent Lie groups $G'$ with dimension $d'$ smaller than $d$.

Recall that $Y$ is defined as
$$
Y\coloneqq\overline{\{g_0(h)^{t_0}g_1(h)^{t_1}\cdot\ldots\cdot g_{\ell-1}(h)^{t_{\ell-1}}\Gamma: h\in\Z^m, (t_0,t_1,\ldots,t_{\ell-1})\in\R^{\ell}\}}.
$$
Let $\pi\colon G\to X$ denote the natural projection from $G$ onto $X$ and, using \cref{lem:orbit-closure-is-subnilmanifold}, choose a closed, connected and \define{rational}\footnote{A closed subgroup $H$ of $G$ is called \define{rational} if the set $H\Gamma$ is a closed subset of $G$ (cf. \cite{Leibman06}).} subgroup $H$ of $G$ such that $Y=\pi(H)$.
Let $\vartheta\colon G\to G/[G,G]$ denote the natural projection of $G$ onto $G/[G,G]$ and define $\tilde G\coloneqq \vartheta(G)= G/[G,G]$.
If $\vartheta(H)$ is a proper subgroup of $\tilde{G}$, then $G'\coloneqq \vartheta^{-1}(\vartheta(H)) = H\cdot [G,G]$ is a closed, normal connected and rational subgroup of $G$ with $\dim (G')<\dim (G)$ and $g_0(h)^{t_0}g_1(h)^{t_1}\cdot\ldots\cdot g_{\ell-1}(h)^{t_{\ell-1}}\in G'$ for all $h\in\Z^m$ and $(t_0,t_1,\ldots,t_{\ell-1})\in\R^{\ell}$.
In this case we can replace $G$ with $G'$ and \cref{thm:equidistribution-of-discrete-tempred-on-subnilmanifold} follows from the induction hypothesis.

Let us therefore assume that $\vartheta(H)=\tilde{G}$.
Define $\tilde\Gamma\coloneqq \vartheta(\Gamma)$, $T_1\coloneqq\tilde G/\tilde\Gamma$ and, for $i=0,1,\ldots,\ell-1$, let $p_i(h)\coloneqq \vartheta(g_i(h))$.
Set
$$
p(n,h)\coloneqq p_0(h){f(n)}+p_1(h){\Delta f(n)}+\ldots+ p_{\ell-1}(h){\Delta^{\ell-1} f(n)},\qquad\forall n\in\N,~\forall h\in\Z^{m}.
$$
Note that $\tilde G\cong \R^{d'}$ and $T_1\cong \T^{d'}$ for some $d'\in\N$.
Let $\pi_1\colon \tilde G\to T_1$ denote the natural factor map from $\tilde G$ onto $T_1$.
Fix a non-trivial horizontal character $\eta\colon X\to\{w\in\C:|w|=1\}$.
Since $[G,G]\cdot\Gamma$ belongs to the kernel of $\eta$, there exists a non-trivial group character $\chi\colon T_1\to \{w\in\C:|w|=1\}$ such that
$$
\chi(\pi_1(\vartheta(a)))=\eta(a\Gamma),\qquad\forall a\in G.
$$
Since $\vartheta(H)=\tilde{G}$, the set
$$
\big\{\pi_1\big(p_0(h){t_0}+p_1(h){t_1}+\ldots+ p_{\ell-1}(h){t_{\ell-1}}\big): h\in\Z^m, (t_0,t_1,\ldots,t_{\ell-1})\in\R^{\ell}\big\}
$$
is dense in $T_1$.
It thus follows from \cref{lem:equidistribution-of-discrete-tempred-on-subtorus} that
$$
\otherlim{C}{h_{m}\to\infty}\cdots \otherlim{C}{h_1\to\infty}\otherlim{UW}{n\to\infty}\chi\big(\pi_1(p(n,h))\big)=\int_{T_1} \chi\d\mu_{T_1}=0.
$$
This implies that
$$
\otherlim{C}{h_{m}\to\infty}\cdots \otherlim{C}{h_1\to\infty}\otherlim{UW}{n\to\infty}\eta\big(g(n,h)\Gamma\big)=0
$$
and therefore, using \cref{thm:qualitative-green-leibman-theorem-for-discrete-tempered}, we obtain
$$
\otherlimsup{C}{h_{m}\to\infty}\cdots \otherlimsup{C}{h_1\to\infty}\otherlimsup{UW}{n\to\infty}F\big(g(n,h)\Gamma\big) = 0
$$
for all $F\in C(X)$ with $\int_X F\d\mu_X=0$.
It must therefore be the case that $Y=X$, which implies
$$
\otherlimsup{C}{h_{m}\to\infty}\cdots \otherlimsup{C}{h_1\to\infty}\otherlimsup{UW}{n\to\infty}F\big(g(n,h)\Gamma\big) = 0
$$
for all $F\in C(X)$ with $\int_Y F\d\mu_Y=0$ and the proof is completed.
\end{proof}

\subsection{Proof of \cref{thm:qualitative-green-leibman-theorem-for-discrete-tempered} for $\ell=1$}
\label{sec_case_l_one}
In this section we prove the case $\ell=1$ of \cref{thm:qualitative-green-leibman-theorem-for-discrete-tempered}.
\begin{Proposition}\label{prop_Fejerequidistributionnilmanifold}
Let $G$ be a connected and simply connected nilpotent Lie group, let $\Gamma\subset G$ be a uniform discrete subgroup and let $X\coloneqq G/\Gamma$.
Let $b\in G$ and let $\mu$ be the Haar measure on the subnilmanifold $\overline{\{b^t:t\in\R\}}$. Then for every $W\in\F_1$,
$$\otherlim{UW}{n\to\infty}\delta_{b^{W(n)}\Gamma}=\mu\qquad\text{in the weak}^*\text{ topology.}$$
\end{Proposition}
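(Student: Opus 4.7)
The plan is to reduce the Riesz mean along $(W(n))_{n\in\N}$ to the continuous-time ergodic average of the one-parameter flow $t\mapsto b^t\Gamma$ on its orbit closure $Y\coloneqq\overline{\{b^t\Gamma:t\in\R\}}\subset X$, and then to invoke the classical unique ergodicity of one-parameter nilflows on their orbit closures (Auslander--Green--Hahn, or equivalently the theorem of Parry). Since weak-$*$ convergence of measures on $X$ is determined by continuous test functions, it suffices to show that for every $F\in C(X)$,
$$\otherlim{UW}{n\to\infty}F(b^{W(n)}\Gamma)=\int_Y F\d\mu.$$

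My first step would be a Riemann-sum comparison between the discrete Riesz average and the continuous average $(W(N)-W(M))^{-1}\int_{W(M)}^{W(N)}F(b^t\Gamma)\d t$. Using the identity $\Delta W(n)F(b^{W(n)}\Gamma)=\int_{W(n)}^{W(n+1)}F(b^{W(n)}\Gamma)\d t$, the difference between these two averages is bounded in absolute value by
$$\frac{1}{W(N)-W(M)}\sum_{n=M}^{N-1}\int_{W(n)}^{W(n+1)}\bigl|F(b^{W(n)}\Gamma)-F(b^t\Gamma)\bigr|\d t.$$
Strong continuity of the translation action, which follows from uniform continuity of $F$ on the compact space $X$ combined with continuity of the one-parameter subgroup $t\mapsto b^t$, gives for every $\epsilon>0$ a $\delta>0$ such that $|s|<\delta$ implies $\sup_{y\in X}|F(b^sy)-F(y)|<\epsilon$. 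Because $W\in\F_1$ forces $\Delta W(n)\to 0$, the bound $\Delta W(n)<\delta$ holds for all large $n$, which controls the displayed quantity by $\epsilon$ uniformly in sufficiently large $M\leq N$.

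My second step would invoke unique ergodicity of the one-parameter flow $(b^t)_{t\in\R}$ on $Y$. This classical fact yields that the continuous averages satisfy
$$\lim_{T-S\to\infty}\frac{1}{T-S}\int_S^T F(b^ty)\d t=\int_Y F\d\mu\qquad\text{uniformly in }y\in Y;$$
applying this with $y=b^{W(M)}\Gamma\in Y$ and interval length $T-S=W(N)-W(M)$ produces the desired continuous-time limit. Combining with the Riemann-sum comparison and letting $\epsilon\to 0$ concludes the proof.

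The main obstacle is the uniform nature of the conclusion: we need the full \textlim{UW} mean, not merely the Ces\`aro--Riesz mean, so mere ergodicity of the nilflow on $Y$ is not enough --- \emph{unique} ergodicity is essential in order to obtain the convergence uniformly over the basepoint $b^{W(M)}\Gamma$. This uniform ergodic input, together with the condition $\Delta W(n)\to 0$ built into the definition of $\F_1$ (which is precisely what allows the discrete Riesz sum to be read as a Riemann approximation to a continuous integral), are the two ingredients whose interaction drives the proof.
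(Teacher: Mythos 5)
Your proof is correct and uses the same core idea as the paper: the condition $\Delta W(n)\to 0$, built into $\F_1$, lets one read the Riesz sum $\sum\Delta W(n)F(b^{W(n)}\Gamma)$ as a Riemann approximation to an ergodic average of the nilflow $t\mapsto b^t\Gamma$ restricted to $Y$, and unique ergodicity of that (minimal) nilflow then closes the argument. Where the two arguments diverge is in the middle step. You compare the Riesz sum directly to the continuous-time average $\frac{1}{W(N)-W(M)}\int_{W(M)}^{W(N)}F(b^t\Gamma)\,\mathsf d t$ and invoke unique ergodicity of the one-parameter nilflow on $Y$ in the form of uniform convergence of continuous Birkhoff averages. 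The paper instead replaces each $F(b^{W(n)}\Gamma)$ by $F(b^{\lfloor W(n)/\eta\rfloor\eta}\Gamma)$ for a small $\eta>0$, so the Riesz average becomes a discrete Ces\`aro average of the $\eta$-spaced points $b^{k\eta}\Gamma$, and then appeals to \cref{lemma_orbitdiscretecontinuous} (Ratner/Leibman) to ensure that for a generic spacing $\eta$ the discrete orbit $\{b^{k\eta}\Gamma\}$ has the same closure $Y$ and hence equidistributes there. Your route is arguably a bit cleaner: it sidesteps the need to pick $\eta$ from a full-measure set, a selection the paper leaves implicit in its choice of $\eta$ by uniform continuity alone, at the price of invoking the continuous-time (rather than discrete-time) form of unique ergodicity for nilflows, which is of course also classical.
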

\begin{proof}
We need to show that for every $F\in C(X)$
\begin{equation}\label{eq_proof_prop_Fejerequidistributionnilmanifold}
\lim_{W(N)-W(M)\to\infty}\left|\frac1{W(N)-W(M)}\sum_{n=M}^N\Delta W(n)F(b^{W(n)}\Gamma)-\frac1{N-M}\int_M^NF(b^t\Gamma)\d t\right|=0.
\end{equation}
Fix $\epsilon>0$ and let $\eta=\eta(\epsilon)>0$ be such that whenever $t_1,t_2\in\R$ with $|t_1-t_2|<\eta$ then $|F(b^{t_1}\Gamma)-F(b^{t_2}\Gamma)|<\epsilon$.
We can thus replace $F(b^{W(n)}\Gamma)$ with $F(b^{\lfloor W(n)/\eta\rfloor\eta}\Gamma)$ at a cost of $\epsilon$.
Note that
\begin{equation*}
\begin{split}
\frac1{W(N)-W(M)}\sum_{n=M}^N & \Delta W(n)F(b^{\lfloor W(n)/\eta\rfloor\eta}\Gamma)\\
&=
\frac1{W(N)-W(M)}\sum_{k=\lfloor W(M)/\eta\rfloor}^{\lfloor W(N)/\eta\rfloor}~~\sum_{n:W(n)\in\big[k\eta,(k+1)\eta\big)}\Delta W(n)F(b^{k\eta}\Gamma).
\end{split}
\end{equation*}
Also, since $W\in \F_1$, we have
$$\lim_{k\to\infty}\sum_{n:W(n)\in\big[k\eta,(k+1)\eta\big)}\Delta W(n)=\eta.$$
Therefore
$$\limsup_{W(N)-W(M)\to\infty}\left|\tfrac1{W(N)-W(M)}\sum_{n=M}^N\Delta W(n)F(b^{W(n)}\Gamma)
-
\tfrac\eta{W(N)-W(M)}\sum_{k=\lfloor W(M)/\eta\rfloor}^{\lfloor W(N)/\eta\rfloor}F(b^{k\eta}\Gamma)\right|<\epsilon.$$
Since $\epsilon$ is arbitrary, \eqref{eq_proof_prop_Fejerequidistributionnilmanifold} now follows from \cref{lemma_orbitdiscretecontinuous}.

\end{proof}

\begin{Lemma}[\cref{thm:qualitative-green-leibman-theorem-for-discrete-tempered} for the special case $\ell=1$]
\label{lem:qualitative-green-leibman-theorem-for-discrete-fejer}
Let $G$ be a connected simply connected nilpotent Lie group, $\Gamma\subset G$ a uniform and discrete subgroup and define $X\coloneqq G/\Gamma$.
Let $u,m\in \N$, let $W\in\F_{1}$, let $a_1,\ldots,a_u\in G$ be commuting and let $p_1,\ldots,p_u\in\R[x_1,\ldots x_m]$.
Let
$$
g(h)=a_{1}^{p_1(h)}\cdot\ldots\cdot a_{u}^{p_{u}(h)}\qquad\text{and }\qquad g(n,h)\coloneqq g(h)^{W(n)}, \qquad\forall n\in\N,~\forall h\in\Z^{m}.
$$
If for all non-trivial horizontal characters $\eta$ one has
\begin{equation*}
\otherlim{C}{h_{m}\to\infty}\cdots \otherlim{C}{h_1\to\infty}\otherlim{UW}{n\to\infty}\eta\big(g(n,h)\Gamma\big)=0,
\end{equation*}
then the points $g(n,h)\Gamma$, where $h=(h_1,\ldots,h_m)$, equidistribute as follows:
$$
\otherlim{C}{h_{m}\to\infty}\cdots \otherlim{C}{h_1\to\infty}\otherlim{UW}{n\to\infty}\delta_{g(n,h)\Gamma} = \mu_X.
$$
\end{Lemma}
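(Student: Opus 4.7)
The plan is to apply Proposition \ref{prop_Fejerequidistributionnilmanifold} pointwise in $h$ to eliminate the inner $\otherlim{UW}{n}$ limit, and then promote the resulting horizontal equidistribution to equidistribution on the full nilmanifold by induction on the nilpotency step of $G$.

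For each fixed $h\in\Z^m$, Proposition \ref{prop_Fejerequidistributionnilmanifold} applied to $b=g(h)$ gives
$$
\otherlim{UW}{n\to\infty}\delta_{g(n,h)\Gamma}=\mu_{Y_h}\qquad\text{in the weak* topology,}
$$
where $Y_h\coloneqq\overline{\{g(h)^t\Gamma:t\in\R\}}$ is the orbit closure of the $1$-parameter subgroup generated by $g(h)$. The desired conclusion is therefore equivalent to
$$
\otherlim{C}{h_m\to\infty}\cdots\otherlim{C}{h_1\to\infty}\mu_{Y_h}=\mu_X
$$
in the weak* topology. Calling any weak* subsequential limit $\nu$, the hypothesis of the lemma rephrases (after passing the $\otherlim{UW}{n}$ through by the above reduction) as $\int_X\eta\,d\nu=0$ for every non-trivial horizontal character $\eta$, equivalently as the assertion that the pushforward of $\nu$ to the horizontal torus $X_{\mathrm{hor}}=G/[G,G]\Gamma$ is its Haar measure.

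To lift this horizontal equidistribution to equidistribution on $X$, I would argue by induction on the nilpotency step of $G$. The abelian base case is precisely Lemma \ref{lem:equidistribution-of-discrete-tempred-on-subtorus}. For the inductive step, let $G_s$ be the last non-trivial term of the lower central series and let $T_{\mathrm{vert}}\coloneqq G_s\Gamma/\Gamma$ be the associated central vertical torus in $X$. Fourier-decompose $C(X)$ into eigenfunctions along translations by $T_{\mathrm{vert}}$. The trivial-vertical-character summand descends to the quotient nilmanifold $X/T_{\mathrm{vert}}$, which has strictly smaller nilpotency step, and one checks that the horizontal-character hypothesis survives the passage to this quotient so that the inductive hypothesis applies. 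For each non-trivial vertical character, the plan is to invoke a van der Corput argument via Proposition \ref{thm_vdCforRiesz}: passing to inner products introduces the difference element $g(h)^{W(n+d)}g(h)^{-W(n)}$, which by the Baker--Campbell--Hausdorff formula becomes a product of commutators in $[G,G]$ whose exponents are polynomial combinations of $W(n)$ and $W(n+d)$. After absorbing the new shift parameter $d$ into the auxiliary variables $h_1,\dots,h_m$, this derived sequence has precisely the form covered by the lemma, but now in the smaller nilpotent Lie group $[G,G]$, and the inductive hypothesis disposes of it.

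The main obstacle is a careful execution of the van der Corput step for non-trivial vertical characters. Two points require delicate bookkeeping. First, non-triviality of the vertical character on $T_{\mathrm{vert}}$ must be shown to force a corresponding non-triviality of some horizontal character on the quotient nilmanifold of $[G,G]$ governing the derived sequence; tracking this through Baker--Campbell--Hausdorff and through the shift-absorption is where the heart of the technical work lies. Second, one must verify that enlarging the parameter space from $(h_1,\dots,h_m)$ to $(h_1,\dots,h_m,d)$ preserves the polynomial structure required by the statement, and that the outer iterated Cesàro averages interact compatibly with the inner van der Corput manipulation so that the inductive conclusion genuinely combines with the identity $\otherlim{C}{d}$ of the Cauchy--Schwarz bound to yield the vanishing of the $\nu$-integral against every non-trivial vertical-character eigenfunction.
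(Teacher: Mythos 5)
Your first reduction is correct and matches the paper: applying \cref{prop_Fejerequidistributionnilmanifold} pointwise in $h$ replaces the inner $UW$-limit by the Haar measure $\mu_{Y_h}$ of the orbit closure $Y_h=\overline{\{g(h)^t\Gamma:t\in\R\}}$, so the goal becomes $\otherlim{C}{h_m\to\infty}\cdots\otherlim{C}{h_1\to\infty}\mu_{Y_h}=\mu_X$. The gap is in how you propose to finish.

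The vertical Fourier decomposition plus van der Corput induction on nilpotency step is precisely what the paper uses to handle \emph{general} $\ell$, but it collapses in the base case $\ell=1$, and the collapse is fatal. Set $u_n\coloneqq F\big(g(h)^{W(n)}\Gamma\big)$ for a continuous vertical-character eigenfunction $F$. The hypothesis of \cref{thm_vdCforRiesz} would require $\otherlim{UW}{n\to\infty}\langle u_{n+d},u_n\rangle=0$ for every fixed $d$. But your ``difference element'' is $g(h)^{W(n+d)}g(h)^{-W(n)}=g(h)^{W(n+d)-W(n)}$: all exponents sit on the \emph{same} commuting element $g(h)$, so no Baker--Campbell--Hausdorff commutators in $[G,G]$ appear at all. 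Moreover $W\in\F_1$ forces $W(n+d)-W(n)\to 0$, hence $g(h)^{W(n+d)-W(n)}\to 1_G$, hence $u_{n+d}=u_n+o(1)$, and by \cref{prop_Fejerequidistributionnilmanifold} again $\otherlim{UW}{n\to\infty}\langle u_{n+d},u_n\rangle=\int_{Y_h}|F|^2\,\d\mu_{Y_h}>0$ whenever $F$ does not vanish identically on $Y_h$. The vdC hypothesis fails; the derived sequence degenerates onto the diagonal instead of dropping into $[G,G]$, and no reduction occurs. This is not a bookkeeping obstacle, it is why the paper treats $\ell=1$ as a separate base case.

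The idea you are missing is the discretization step. The paper invokes \cref{lemma_orbitdiscretecontinuous} (Ratner's theorem) to choose a time $t\in\R$ with $\mu_{Y_h}=\otherlim{C}{n\to\infty}\delta_{g(h)^{tn}\Gamma}$ for every $h$ simultaneously, turning the continuous flow into a \emph{discrete polynomial sequence} $(h_1,\dots,h_m,n)\mapsto g(h)^{tn}$ in $m+1$ integer variables. The iterated Ces\`aro equidistribution of such a sequence, given vanishing against nontrivial horizontal characters, is exactly Leibman's theorem \cref{thm:Leibman-thmB}, which finishes the proof in one stroke. Without this discretization there is no polynomial sequence to feed into the nilmanifold machinery, and the direct inductive approach in the $n$-variable is blocked by the degree-zero increments of $W$.
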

\begin{proof}
For each $h\in\N^m$, let $Y_h\subset X$ be the subnilmanifold $Y_h\coloneqq \overline{\big\{g(h)^t\Gamma:t\in\R\big\}}\subset X$ and let $\mu_h$ be the Haar measure on $Y_h$.
In view of \cref{prop_Fejerequidistributionnilmanifold},
  \begin{equation}\label{eq_proof_lemma_basecase}
  \otherlim{UW}{n\to\infty}\delta_{g(n,h)\Gamma}=\mu_h.
  \end{equation}
On the other hand, \cref{lemma_orbitdiscretecontinuous} implies that there exists a time $t\in\R$ such that
$\otherlim{C}{n\to\infty}\delta_{g(h)^{tn}\Gamma}=\mu_h$, for every $h\in\N^m$, so we need to show that
$$
\otherlim{C}{h_{m}\to\infty}\cdots \otherlim{C}{h_1\to\infty}\otherlim{C}{n\to\infty}\delta_{g(h)^{tn}\Gamma} = \mu_X.
$$
Since the map $\tilde g\colon (h_1,\dots,h_m,n)\mapsto g(h)^{tn}$ is a polynomial sequence, the result now follows directly from \cref{thm:Leibman-thmB}.

\end{proof}

\subsection{Proof of \cref{thm:qualitative-green-leibman-theorem-for-discrete-tempered} for general $\ell$}
\label{sec_case_l_general}

\begin{Definition}
Let $s\in\N$ and let $G_1,\ldots,G_s,G_{s+1}$ be subgroups of a nilpotent Lie group $G$. We call $G_\bullet\coloneqq \{G_1,\ldots,G_s,G_{s+1}\}$ an \define{$s$-step pre-filtration} of $G$ if
$$
[G_i,G_j]\subset G_{i+j},\qquad \forall j,i=\{1,\ldots,s\}~\text{with}~i+j\leq s+1,
$$
and
$
G_{s+1}=\{1_G\}.
$
A pre-filtration $G_\bullet$ is called a \define{filtration} if $G_1=G$. A \define{normal pre-filtration} is a pre-filtration consisting only of normal subgroups.
\end{Definition}

Note that for any connected simply connected nilpotent Lie group $G$ there exists a right-invariant metric $d_G\colon G\times G\to [0,\infty)$ on $G$. For any uniform and discrete subgroup $\Gamma$ the metric $d_G$ descends to a metric $d_{X}$ on $X\coloneqq G/\Gamma$ via
\begin{equation}
\label{eqn:def-of-metric}
d_{X}(x\Gamma,y\Gamma)\coloneqq \inf\{d_G(x\gamma,y\gamma'):\gamma,\gamma'\in\Gamma\}.
\end{equation}
Given a subset $S\subset G$ and a point $g\in G$ we denote $d(g,S)=\inf_{s\in S}d_G(g,s)$.

\begin{Definition}
\label{def_degree_0}
Let $\ell\in\N$, let $f\in\F_{\ell}$, let $g_0,g_1,\ldots,g_{\ell-1}\colon \Z^m\to G$ be polynomial sequences and set
$$
g(n,h)\coloneqq g_0(h)^{f(n)}g_1(h)^{\Delta f(n)}\cdot\ldots\cdot g_{\ell-1}(h)^{\Delta^{\ell-1} f(n)}, \qquad\forall n\in\N,~\forall h\in\Z^{m}.
$$
For every $n_1\in\N$ let $\Delta_{n_1}g(n,h)$ denote the \define{discrete derivative of $g(n,h)$ in direction $n_1$}, that is, $\Delta_{n_1}g(n,h)\coloneqq g(n+n_1,h)g(n,h)^{-1}$.
We define the degree of $g(\cdot,h)$ to be the smallest number $s\in\N$ such that there exists a $s$-step normal pre-filtration $G_\bullet =\{G_1,G_2,\ldots, G_{s},G_{s+1}=\{1_G\}\}$ with the property that for every fixed $h\in\N^m$ we have
$$
\lim_{n\to\infty}d_G\Big(g(n,h),G_1\Big)=0
$$
and, for all $j\in\{1,\ldots,s\}$ and all $n_1,\ldots n_j\in\N$,
$$
\lim_{n\to\infty}d_G\Big(\Delta_{n_j}\cdots \Delta_{n_1} g(n,h),G_{j+1}\Big)=0.
$$
In this case we say $G_\bullet$ is a normal pre-filtration of $G$ that \define{realizes} the step of $g(n,h)$.
If there exists no such filtration, then we say that $g\colon\N\times\Z^m\to G$ has infinite degree.
\end{Definition}

\begin{Lemma}
\label{lem_degree}
Let $\ell\in\N$, let $f\in\F_{\ell}$ and $g_0,g_1,\ldots,g_{\ell-1}\colon \Z^m\to G$ be as in \cref{thm:qualitative-green-leibman-theorem-for-discrete-tempered} and define
$$
g(n,h)\coloneqq g_0(h)^{f(n)}g_1(h)^{\Delta f(n)}\cdot\ldots\cdot g_{\ell-1}(h)^{\Delta^{\ell-1} f(n)}, \qquad\forall n\in\N,~\forall h\in\Z^{m}.
$$
Let $s$ denote the nilpotency step of $G$. Then $g\colon\N\times\Z^m\to G$ has finite degree (in fact, the degree is smaller or equal to $\ell(s+1)$).
\end{Lemma}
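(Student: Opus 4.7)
Since the collection $\{a_{i,j}\}$ is assumed to consist of pairwise commuting elements of $G$, the elements $g_0(h),\ldots,g_\ell(h)$ pairwise commute for each fixed $h$: they are products, with real exponents, of commuting elements of a connected simply connected nilpotent Lie group, and real powers of commuting elements still commute (via the exponential correspondence with the Lie algebra). Therefore all discrete derivatives of $g(\cdot,h)$ can be performed inside the abelian subgroup generated by $g_0(h),\dots,g_\ell(h)$, and a direct induction yields the explicit formula
\begin{equation*}
\Delta_{n_j}\cdots\Delta_{n_1}g(n,h)\;=\;\prod_{i=0}^{\ell}g_i(h)^{\Delta_{n_j}\cdots\Delta_{n_1}\Delta^if(n)}.
\end{equation*}

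Next I would establish the elementary claim that \emph{if $F\in\F_k$, then $\Delta_{n_1}\cdots\Delta_{n_k}F(n)\to 0$ as $n\to\infty$ for any fixed $n_1,\ldots,n_k\in\N$}. This is a routine induction on $k$: the base case $k=0$ is the definition of $\F_0$, and the inductive step uses the identity $\Delta_{n_1}F(n)=\sum_{m=0}^{n_1-1}\Delta F(n+m)$ together with $F\in\F_k\iff\Delta F\in\F_{k-1}$. Applied to $F=\Delta^i f\in\F_{\ell+1-i}$ for each $0\leq i\leq\ell$, it shows that every exponent in the formula above tends to $0$ as soon as $j\geq\ell+1$, and hence $\Delta_{n_j}\cdots\Delta_{n_1}g(n,h)\to 1_G$ as $n\to\infty$ whenever $j\geq\ell+1$.

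Now construct the pre-filtration from the lower central series $\Gamma_{(1)}=G$, $\Gamma_{(k+1)}=[G,\Gamma_{(k)}]$, which satisfies $\Gamma_{(s+1)}=\{1_G\}$ since $G$ is $s$-step nilpotent. Set
\begin{equation*}
G_r\;\coloneqq\;\Gamma_{(\min\{\lceil r/(\ell+1)\rceil,\,s+1\})}\qquad\text{for }1\leq r\leq (\ell+1)(s+1)+1.
\end{equation*}
Each $G_r$ is a closed normal subgroup of $G$ since each $\Gamma_{(k)}$ is. The terminal condition $G_{(\ell+1)(s+1)+1}=\{1_G\}$ follows from $\lceil\tfrac{(\ell+1)(s+1)+1}{\ell+1}\rceil\geq s+1$ together with $\Gamma_{(s+1)}=\{1_G\}$. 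The pre-filtration axiom $[G_i,G_j]\subset G_{i+j}$ follows at once from the standard inclusion $[\Gamma_{(a)},\Gamma_{(b)}]\subset\Gamma_{(a+b)}$ and the inequality $\lceil a\rceil+\lceil b\rceil\geq\lceil a+b\rceil$.

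Finally, the derivative conditions split into three trivial ranges. For $j=0$ one has $g(n,h)\in G=G_1$; for $1\leq j\leq \ell$ one has $G_{j+1}=\Gamma_{(1)}=G$, so the condition is automatic; and for $j\geq\ell+1$ the second paragraph gives $\Delta_{n_j}\cdots\Delta_{n_1}g(n,h)\to 1_G\in G_{j+1}$. This exhibits a normal pre-filtration of length at most $(\ell+1)(s+1)$ realizing the step of $g$, so the degree of $g$ is at most $(\ell+1)(s+1)$. The only point that requires care is the abelian reduction at the outset, which rests on commuting elements of $G$ continuing to commute after being raised to arbitrary real powers; everything else is bookkeeping with the lower central series and the $\F_k$ hierarchy.
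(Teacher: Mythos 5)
Your proof is correct and follows essentially the same route as the paper: use commutativity of the $g_i(h)$ (justified via the exponential map) to get $\Delta_{n_j}\cdots\Delta_{n_1}g(n,h)=\prod_{i=0}^{\ell}g_i(h)^{\Delta_{n_j}\cdots\Delta_{n_1}\Delta^i f(n)}$, observe that these exponents vanish once $j\geq\ell+1$ because $\Delta^i f\in\F_{\ell+1-i}$, and realize the degree by the ``stretched'' lower central series $G_r=C_{\lceil r/(\ell+1)\rceil}$, which is exactly the paper's filtration $G_{(j-1)(\ell+1)+i}=C_j$. You merely spell out the details the paper calls straightforward (the ceiling inequality for the pre-filtration axiom and the induction showing $k$ discrete derivatives annihilate $\F_k$ in the limit), so there is nothing further to add.
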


\begin{proof}
Let $C_\bullet \coloneqq  \{G=C_1,C_2,\ldots, C_{s},C_{s+1}=\{1_G\}\}$ denote the lower central series of $G$. Define a new filtration $G_\bullet=\{G_1, G_2,\ldots,G_{\ell(s+1)},G_{\ell(s+1)+1}=\{1_G\}\}$ by setting $G_{(j-1)\ell+i}\coloneqq C_j$ for all $j\in\{1,\ldots,s+1\}$ and $i\in\{1,\ldots,\ell\}$.

Using the fact that the polynomial sequences $g_0,\ldots,g_{\ell-1}$ commute we see that for all $n_1,\ldots n_j\in\N$ we have
$$
\Delta_{n_j}\cdots \Delta_{n_1} g(n,h)=
g_0(h)^{\Delta_{n_j}\cdots \Delta_{n_1}f(n)}g_1(h)^{\Delta_{n_j}\cdots \Delta_{n_1}\Delta f(n)}\cdot\ldots\cdot g_{\ell-1}(h)^{\Delta_{n_j}\cdots \Delta_{n_1}\Delta^{\ell-1} f(n)}.
$$
Since $f\in\F_\ell$ it follows that for all $j\geq \ell$ and all $i\geq 0$ the expression $\Delta_{n_j}\cdots \Delta_{n_1} \Delta^i f(n)$ converges to $0$ as $n\to\infty$. This implies that as long as $j\geq\ell$ then for all choices of $n_1,\ldots,n_j$ the sequence $\Delta_{n_j}\cdots \Delta_{n_1} g(n,h)$ converges to the identity $1_G$ of $G$.
Using this observation it is now straightforward to check that $G_\bullet$ is a normal filtration with the property that for all $h\in\N^m$, all $j\in\{1,\ldots,\ell(s+1)\}$ and all $n_1,\ldots, n_j\in\N$,
\begin{equation}
\label{eqn_rev_eq_1}
\lim_{n\to\infty}
d_G\Big(\Delta_{n_j}\cdots \Delta_{n_1} g(n,h),~G_{j+1}\Big)=0.
\end{equation}
Indeed, if $j<\ell$ then $G_{j+1}$ equals the entire group $G$ and so \eqref{eqn_rev_eq_1} holds trivially, whereas if $j\geq \ell$ then \eqref{eqn_rev_eq_1} holds because $\Delta_{n_j}\cdots \Delta_{n_1} g(n,h)$ converges to $1_G$.
\end{proof}

\begin{proof}[Proof of \cref{thm:qualitative-green-leibman-theorem-for-discrete-tempered}]
We proceed by induction on the degree of $g(n,h)$, which is finite according to \cref{lem_degree}. If the degree of $g(n,h)$ equals $1$ then $G$ must be abelian, in which case the claim follows from \cref{lem:equidistribution-of-discrete-tempred-on-subtorus}.
Let us therefore assume that the degree of $g(n,h)$ is equal to some $s\geq 2$ and that \cref{thm:qualitative-green-leibman-theorem-for-discrete-tempered} has already been proven for all systems $(\tilde{G},\tilde{\Gamma},f,\tilde{g}_0,\tilde{g}_1,\ldots,\tilde{g}_\ell)$ where $\tilde{g}(n,h')= \tilde{g}_0(h')^{f(n)}\tilde{g}_1(h')^{\Delta f(n)}\cdot\ldots\cdot \tilde{g}_{\ell-1}(h')^{\Delta^{\ell-1} f(n)}$ has degree smaller than $s$.

Define
\begin{equation}
\label{eqn:def-of-subnilmanifold-Z}
Z\coloneqq\overline{\{g_0(h)^{t_0}g_1(h)^{t_1}\cdot\ldots\cdot g_{\ell-2}(h)^{t_{\ell-2}}\Gamma: h\in\Z^m, (t_0,t_1,\ldots,t_{\ell-2})\in\R^{\ell-1}\}}.
\end{equation}
If $Z=\{1_X\}$ then $g_0(h)=\ldots=g_{\ell-2}(h)=1_G$ for all $h\in\Z^m$, in which case \cref{thm:qualitative-green-leibman-theorem-for-discrete-tempered} follows from \cref{lem:qualitative-green-leibman-theorem-for-discrete-fejer}.
Thus we can assume that $Z\neq\{1_X\}$.

Invoking \cref{lem:orbit-closure-is-subnilmanifold} we can find a closed rational and connected subgroup $H$ of $G$ such that $\pi(H)=Z$.
Let $L$ denote the normal closure of $H$ in $G$, i.e., the smallest normal subgroup of $G$ containing $H$.
One can show that $L$ is also connected, simply connected, rational and closed (see \cite[p.\ 844]{Leibman10}).
Also, we remark that $L$ is unique in the sense that if $H'$ is another closed rational and connected subgroup of $G$ with $\pi(H')=Z$ then the normal closure of $H'$ coincides with $L$.
Let $L_\bullet\coloneqq  \{L=L_1,L_2,\ldots, L_{r},L_{r+1}=\{1_G\}\}$ denote the lower central series of $L$ and note that all elements in $L_\bullet$ are themselves connected, simply connected, rational, closed and normal subgroups of $G$ (cf.\ \cite{Raghunathan72}).

Let $C_\bullet\coloneqq  \{G=C_1,C_2,\ldots, C_{r},C_{r+1}=\{1_G\}\}$ denote the lower central series of $G$ and
let $j_0$ denote the biggest number in $\{1,\ldots,r\}$ such that $L\cap G_{j_0}\neq \{1_G\}$. Note that $j_0$ exists, because $L\neq\{1_G\}$.
Define $V\coloneqq L\cap G_{j_0}$.

For any $a\in G$ and any $t\in V$ the element $[a,t]=a^{-1}t^{-1}at$ belongs to $C_{j_0+1}$.
Since $L$ is normal and $t\in L$, we also have $[a,t]\in L$.
Therefore $[a,t]$ belongs to $L\cap C_{j_0+1}$, which implies that $[a,t]=1_G$. This proves that $V$ is a subgroup of the center $Z(G)$ of $G$.

Define $T\coloneqq Z(G)/(Z(G)\cap \Gamma)$. Note that $T$ is a torus (i.e., isomorphic to $\T^d$, where $d\coloneqq\dim(Z(G))$).
Also, since $Z(G)\cap \Gamma$ acts trivially on $X$, the action of $Z(G)$ on $X$ naturally descends to an action of $T$ on $X$.
Recall that our goal is to show
\begin{equation}
\label{eqn:qualitative-green-leibman-theorem-for-discrete-tempered-1}
\otherlimsup{C}{h_{m}\to\infty}\cdots \otherlimsup{C}{h_1\to\infty}\otherlimsup{UW}{n\to\infty}\left(F\big(g(n,h)\Gamma\big) -\int_X F\d\mu_X\right)= 0
\end{equation}
for all all $F\in C(X)$, where $h=(h_1,\ldots,h_m)$.
Note that any continuous group character $\chi$ of $T$ lifts to a continuous group homomorphism from $Z(G)$ to $\{z\in\C:|z|=1\}$ whose kernel contains $Z(G)\cap\Gamma$. By abuse of language we will use $\chi$ to denote both those maps.

Consider the class of functions $\phi\in C(X)$ with the property that
\begin{equation}
\label{eqn:qualitative-green-leibman-theorem-for-discrete-tempered-2}
\phi(t x)=\chi(t)\phi(x), \qquad\forall t\in Z(G),~\forall x\in X,
\end{equation}
for some continuous group character $\chi$ of $T$. This class of functions separates points in $X$.\footnote{
Let $x$ and $y$ be two distinct points in $X$. To show that functions satisfying \eqref{eqn:qualitative-green-leibman-theorem-for-discrete-tempered-2} separate these two points, we distinguish the cases $y\notin Tx$ and $y\in Tx$. If $y\notin Tx$ then $Tx\neq Ty$ and so there exists a continuous function $\phi'\in C(T\backslash X)$ with $\phi'(Tx)\neq \phi'(Ty)$. Then $\phi'$ lifts to a continuous and $T$-invariant function $\phi\in C(X)$ with $\phi(x)\neq \phi(y)$. On the other hand, if $y\in Tx$ then there is $t_0\in T$, which is not the identity, such that $y=t_0x$. Let $\chi_0$ be any group character of $T$ with the property that $\chi_0(t_0)\neq 1$. Let $U$ be a small neighborhood of $x$ and let $\rho$ be a continuous function on $X$ with the property that $\rho(x)=1$ and $\rho(z)=0$ for all $z\notin U$. Now define $\phi(z)\coloneqq \int_T \rho(sz)\chi_0(s) \d\mu_{T}(s)$, where $\mu_T$ is the normalized Haar measure on $T$. It is straightforward to check that $\phi$ is a non-zero and continuous function on $X$ satisfying $\phi(t z)=\chi_0(t)\phi(z)$ for all $t\in T$ and $z\in X$. In particular $\phi(y)=\phi(t_0x)=\chi_0(t_0)\phi(x)$ is not equal to $\phi(x)$.}
Thus, by the Stone-Weierstrass theorem, the span of such functions is dense in $C(X)$. Therefore, to prove \eqref{eqn:qualitative-green-leibman-theorem-for-discrete-tempered-1}, it suffices to show
\begin{equation}
\label{eqn:qualitative-green-leibman-theorem-for-discrete-tempered-3}
\otherlimsup{C}{h_{m}\to\infty}\cdots \otherlimsup{C}{h_1\to\infty}\otherlimsup{UW}{n\to\infty} \left(\phi\big(g(n,h)\Gamma\big)  - \int_X \phi\d\mu_X\right)=0
\end{equation}
for all functions $\phi$ of this kind.

Fix $\phi\in C(X)$ and a continuous group character $\chi$ of $T$ such that \eqref{eqn:qualitative-green-leibman-theorem-for-discrete-tempered-2} is satisfied.
If $\chi$ is trivial when restricted to $V$, then, after we mod out $G$ by $V$, we have reduced the question to a nilpotent Lie group of smaller dimension. Hence, by induction on the dimension of $G$, we can assume without loss of generality that $\chi$ is non-trivial when restricted to $V$.
This also implies that $\int_X \phi\d\mu_X=0$ and hence \eqref{eqn:qualitative-green-leibman-theorem-for-discrete-tempered-3} becomes
\begin{equation}
\label{eqn:qualitative-green-leibman-theorem-for-discrete-tempered-4}
\otherlimsup{C}{h_{m}\to\infty}\cdots \otherlimsup{C}{h_1\to\infty}\otherlimsup{UW}{n\to\infty} \phi\big(g(n,h)\Gamma\big)  = 0.
\end{equation}

Using a version of van der Corput's lemma proved in the appendix (see \cref{lem:C_lim-W_lim-vdC}) we see that instead of \eqref{eqn:qualitative-green-leibman-theorem-for-discrete-tempered-4} it suffices to show
\begin{equation}
\label{eqn:qualitative-green-leibman-theorem-for-discrete-tempered-5}
\otherlimsup{C}{h_{m+1}\to\infty}\otherlimsup{C}{h_{m}\to\infty}\cdots \otherlimsup{C}{h_1\to\infty}\otherlimsup{UW}{n\to\infty}
\phi\big(g(n+h_{m+1},h)\Gamma\big)
\overline{\phi}\big(g(n,h)\Gamma\big) = 0.
\end{equation}
Using \cref{lem_TaylorLeibnitz} we can rewrite $\Delta^j f(n+h_{m+1})$ as
\begin{eqnarray*}
\Delta^j f(n+h_{m+1})
&=& \sum_{i=0}^{h_{m+1}} \binom{h_{m+1}}{i} \Delta^{i+j} f(n)
\\
&=& \sum_{i=j}^{\ell-1} \binom{h_{m+1}}{i-j} \Delta^{i} f(n)
+ \epsilon_{j}(n),
\end{eqnarray*}
where $\epsilon_{j}(n)\coloneqq\sum_{i=\ell}^{h_{m+1}} \binom{h_{m+1}}{i} \Delta^{i+j} f(n)$.
Define $\epsilon(n)\coloneqq \prod_{j=0}^\ell g_j(h_1,\ldots,h_m)^{\epsilon_{j}(n)}$.
Using the fact that the polynomial sequences $g_0,\ldots,g_{\ell-1}$ commute, we obtain
\begin{equation}
\label{eqn:qualitative-green-leibman-theorem-for-discrete-tempered-6}
\begin{split}
g(n+h_{m+1},h)
&~=~
\prod_{j=0}^{\ell-1} g_j(h)^{\Delta^jf(n+h_{m+1})}
\\
&~=~
\prod_{j=0}^{\ell-1} \prod_{i=j}^{\ell-1} g_j(h)^{\binom{h_{m+1}}{i-j}\Delta^if(n)+\epsilon_j(n)}
\\
&~=~
\epsilon(n)\prod_{i=0}^{\ell-1}\left(\prod_{j=0}^{i}
g_j(h)^{\binom{h_{m+1}}{i-j}} \right)^{\Delta^{i} f(n)}
\\
&~=~\epsilon(n)\Big(g_1'(h')^{\Delta f(n)}\cdot\ldots\cdot g_{\ell-1}'(h')^{\Delta^{\ell-1}f(n)}\Big)
\Big(g_0(h)^{f(n)}\cdot\ldots\cdot g_{\ell-1}(h)^{\Delta^{\ell-1} f(n)}\Big),
\end{split}
\end{equation}
where $h=(h_1,\ldots,h_m)$, $h'=(h_1,\ldots,h_m,h_{m+1})$ and
\begin{equation}
\label{eqn:qualitative-green-leibman-theorem-for-discrete-tempered-6.4}
g_i'(h')\coloneqq
\prod_{j=0}^{i-1}
g_j(h)^{\binom{h_{m+1}}{i-j}}\qquad\text{ for all } i=0,\dots,\ell-1,
\end{equation}
where the empty product defining $g_0'(h')$ is to be understood as $1_G$.
Define
$$
g'(n,h')\coloneqq g_1'(h')^{\Delta f(n)}\cdot\ldots\cdot g_{\ell-1}'(h')^{\Delta^{\ell-1}f(n)}\qquad\forall n\in\N,~\forall h'\in\Z^{m+1}.
$$

Thus from \eqref{eqn:qualitative-green-leibman-theorem-for-discrete-tempered-6} we get
\begin{equation}
\label{eqn:qualitative-green-leibman-theorem-for-discrete-tempered-6.5}
g(n+h_{m+1},h)= \epsilon(n) g'(n,h') g(n,h).
\end{equation}
Using the definition of the metric $d_X$ given in \eqref{eqn:def-of-metric} and the right-invariance of $d_G$ we see that
\begin{equation*}
\begin{split}
d_X\Big(\epsilon(n) g'(n,h') g(n,h)\Gamma & ,~ g'(n,h') g(n,h)\Gamma\Big)
\\
&= \inf_{\gamma,\gamma'\in\Gamma}
d_G\Big(\epsilon(n) g'(n,h') g(n,h)\gamma,~ g'(n,h') g(n,h)\gamma'\Big)
\\
&\leq d_G\Big(\epsilon(n) g'(n,h') g(n,h),~ g'(n,h') g(n,h)\Big)
\\
&=  d_G\left(\epsilon(n),~ 1_G\right).
\end{split}
\end{equation*}
Since for fixed $h_1,\ldots,h_{m+1}\in \N$ the error terms $\epsilon_{i}(n)$ converge to $0$ as $n\to\infty$ for all $i\in\{0,1,\ldots,\ell-1\}$, it follows that
$$
\lim_{n\to\infty}d_G\left(\epsilon(n),~ 1_G\right)=0
$$
and hence
\begin{equation}
\label{eqn:qualitative-green-leibman-theorem-for-discrete-tempered-7}
\lim_{n\to\infty}
d_X\Big(\epsilon(n) g'(n,h') g(n,h)\Gamma,~ g'(n,h') g(n,h)\Gamma\Big)=0.
\end{equation}
Combining \eqref{eqn:qualitative-green-leibman-theorem-for-discrete-tempered-6.5} and \eqref{eqn:qualitative-green-leibman-theorem-for-discrete-tempered-7} and using the fact that $\phi$ is continuous, we obtain
$$
\lim_{n\to\infty} \left|\phi\big(g(n+h_{m+1},h)\Gamma\big)-
\phi\big(g'(n,h')g(n,h)\Gamma\big)\right| =0.
$$
Hence \eqref{eqn:qualitative-green-leibman-theorem-for-discrete-tempered-5} is equivalent to
\begin{equation}
\label{eqn:qualitative-green-leibman-theorem-for-discrete-tempered-8}
\otherlimsup{C}{h_{m+1}\to\infty}\otherlimsup{C}{h_{m}\to\infty}\cdots \otherlimsup{C}{h_1\to\infty}\otherlimsup{UW}{n\to\infty}
\phi\big(g'(n,h')g(n,h)\Gamma\big)
\overline{\phi}\big(g(n,h)\Gamma\big) = 0,
\end{equation}
where $h=(h_1,\ldots,h_m)$ and $h'=(h_1,\ldots,h_m,h_{m+1})$.

Let $Z(G)^\triangle\coloneqq\{(a,a):a\in Z(G)\}$ and denote by $\sigma\colon G\times G\to (G\times G)/Z(G)^\triangle$ the natural projection of $G\times G$ onto $(G\times G)/Z(G)^\triangle$.
Define
\begin{eqnarray*}
G^\triangle &\coloneqq & \{(a,a):a\in G\};
\\
K&\coloneqq & G^\triangle\cdot (L\times L);
\\
\Gamma_K & \coloneqq & K\cap (\Gamma\times\Gamma)
\\
\tilde{G}&\coloneqq & \sigma(K);
\\
\tilde{\Gamma}&\coloneqq & \sigma\big(\Gamma_K\big);
\\
\tilde{X}&\coloneqq &\tilde{G}/\tilde{\Gamma};
\\
(\text{for $0\leq i \leq \ell-1$})\qquad\qquad
\tilde{g}_i(h')&\coloneqq &  \sigma\big(g'_i(h')g_i(h),g_i(h)\big);
\\
\tilde{g}(n,h')&\coloneqq &  \tilde{g}_0(h')^{f(n)}\tilde{g}_1(h')^{\Delta f(n)}\cdot\ldots\cdot \tilde{g}_{\ell-1}(h')^{\Delta^{\ell-1} f(n)}.
\end{eqnarray*}
Also, it follows from \eqref{eqn:qualitative-green-leibman-theorem-for-discrete-tempered-2} that $\phi\otimes \overline{\phi}\colon X\times X\to\C$ is invariant under the action of $Z(G)^\triangle$.
Therefore there exists a continuous function $\tilde{\phi}\colon\tilde{X}\to\C$ with the property that
$$
(\phi\otimes\overline{\phi})(a\Gamma,b\Gamma)=\tilde{\phi}(\sigma(a,b)\tilde{\Gamma}),\qquad\forall (a,b)\in K.
$$
One can check that \eqref{eqn:qualitative-green-leibman-theorem-for-discrete-tempered-8} is equivalent to
\begin{equation}
\label{eqn:qualitative-green-leibman-theorem-for-discrete-tempered-9}
\otherlimsup{C}{h_{m+1}\to\infty}\otherlimsup{C}{h_{m}\to\infty}\cdots \otherlimsup{C}{h_1\to\infty}\otherlimsup{UW}{n\to\infty}
\tilde{\phi}\big(\tilde{g}(n,h')\tilde{\Gamma}\big)= 0.
\end{equation}

We now make three claims:

\paragraph{\textsc{Claim 1:}} We have $\int_{\tilde{X}}\tilde{\phi}\d\mu_{\tilde{X}}=0$.

\paragraph{\textsc{Claim 2:}} For all non-trivial horizontal characters $\tilde{\eta}$ of $\tilde{G}/\tilde{\Gamma}$ we have
\begin{equation}
\label{eqn:qualitative-green-leibman-theorem-for-discrete-tempered-9.5}
\otherlimsup{C}{h_{m+1}\to\infty}\otherlimsup{C}{h_{m}\to\infty}\cdots \otherlimsup{C}{h_1\to\infty}\otherlimsup{UW}{n\to\infty}\tilde{\eta}\big(\tilde{g}(n,h')\tilde{\Gamma}\big)=0.
\end{equation}
\paragraph{\textsc{Claim 3:}}
The sequence $\tilde{g}(n,h')$ has degree smaller than the sequence $g(n,h)$.

~

Note that once Claims 1, 2 and 3 have been proven, the proof of \cref{thm:qualitative-green-leibman-theorem-for-discrete-tempered} is completed. Indeed, Claims 2 and 3 allow us to invoke the induction hypothesis and apply \cref{thm:qualitative-green-leibman-theorem-for-discrete-tempered} to the system $(\tilde{G},\tilde{\Gamma},f,\tilde{g}_0,\tilde{g}_1,\ldots,\tilde{g}_\ell)$ in order to obtain the identity
\begin{equation}
\label{eqn:qualitative-green-leibman-theorem-for-discrete-tempered-10}
\otherlimsup{C}{h_{m+1}\to\infty}\otherlimsup{C}{h_{m}\to\infty}\cdots \otherlimsup{C}{h_1\to\infty}\otherlimsup{W}{n\to\infty}
\left(\tilde{\phi}\big(\tilde{g}(n,h')\tilde{\Gamma}\big)- \int_{\tilde{X}}\tilde{\phi}\d\mu_{\tilde{X}}\right)=0.
\end{equation}
However, due to Claim 1 we have that \eqref{eqn:qualitative-green-leibman-theorem-for-discrete-tempered-10} implies \eqref{eqn:qualitative-green-leibman-theorem-for-discrete-tempered-9}, which finishes the proof.

\paragraph{\textit{Proof of Claim 1:}}
Define $\tilde{V}\coloneqq \sigma(V\times\{1_G\})$ and note that $\tilde{V}$ and $V$ are isomorphic.
Also, from \eqref{eqn:qualitative-green-leibman-theorem-for-discrete-tempered-2} we can derive that for all $\tilde{t}\in \tilde{V}$ and $\tilde x\in\tilde X$ one has
$$
\tilde{\phi}(\tilde{t}\tilde{x})=\tilde{\chi}(\tilde{t})\tilde{\phi}(\tilde{x}),
$$
where $\tilde{\chi}$ is defined via $\tilde{\chi}(\sigma(t,1_G))\coloneqq\chi(t)$ for all $t\in V$.
Since $\chi$ is non trivial when restricted to $V$, and there is an isomorphism from $V$ to $\tilde{V}$ taking $\chi$ to $\tilde\chi$, we conclude that $\tilde{\chi}$ is also non-trivial.
Let $\tilde t\in\tilde V$ be such that $\tilde\chi(\tilde t)\neq1$.
Since $\tilde{V}$ is a subgroup of $\tilde{G}$, we have that $\mu_{\tilde{X}}$ is invariant under $\tilde t$, which implies that
$$
\int_{\tilde{X}}\tilde{\phi}(\tilde{x})\d\mu_{\tilde{X}}(\tilde{x})=
\int_{\tilde{X}}\tilde{\phi}(\tilde{t}\tilde{x})\d\mu_{\tilde{X}}(\tilde{x})=
\tilde{\chi}(\tilde{t})
\int_{\tilde{X}}\tilde{\phi}(\tilde{x})\d\mu_{\tilde{X}}(\tilde{x}).
$$
and hence $\int_{\tilde{X}}\tilde{\phi}\d\mu_{\tilde{X}}=0$ as claimed.

\paragraph{\textit{Proof of Claim 2:}}
Since $\tilde{g}(n,h')=\sigma\big(g'(n,h') g(n,h),g(n,h)\big)$ and for any horizontal character $\tilde{\eta}$ of $\tilde{G}/\tilde{\Gamma}$ there exists a horizontal character $\eta''$ of $K/\Gamma_K$ such that $\tilde{\eta}\circ \sigma=\eta ''$, it suffices to show that for all non-trivial horizontal characters $\eta''$ of $K/\Gamma_K$ we have
\begin{equation}
\label{eqn:qualitative-green-leibman-theorem-for-discrete-tempered-10.5}
\otherlimsup{C}{h_{m+1}\to\infty}\otherlimsup{C}{h_{m}\to\infty}\cdots \otherlimsup{C}{h_1\to\infty}\otherlimsup{UW}{n\to\infty}\eta''\Big(\big(g'(n,h') g(n,h),g(n,h)\big)\Gamma_K\Big)=0.
\end{equation}
Note that $[L,G]$ is a normal subgroup of $L$.
Let $\omega\colon G/[G,G]\times L/[L,G]\to K/[K,K]$ denote the map $\omega(a[G,G],b[L,G])= (ab,a)[K,K]$. The first step in proving \eqref{eqn:qualitative-green-leibman-theorem-for-discrete-tempered-10.5} is to show that $\omega$ is a well defined continuous and surjective homomorphism from $G/[G,G]\times L/[L,G]$ onto $K/[K,K]$.
It is straightforward to check that the map $(a,b)\mapsto (ab,a)[K,K]$ is a continuous and surjective group homomorphism.
So it only remains to show that $[G,G]\times[L,G]$ belongs to the kernel of the homomorphism $(a,b)\mapsto (ab,a)[K,K]$. This, however, follows immediately from \cref{lem:description-of-commutator-subgroup-of-K}.

Let $\vartheta_1\colon G\to G/[G,G]$ denote the natural factor map from $G$ onto $G/[G,G]$. Since $G/[G,G]$ is a connected, simply connected and abelian Lie group and $\vartheta_1(\Gamma)$ a uniform and discrete subgroup of $G/[G,G]$, there exists $d_1\in\N$ such that  $G/[G,G]$ is isomorphic to $\R^{d_1}$ and $\vartheta_1(\Gamma)$ is isomorphic to $\Z^{d_1}$.
Similarly, if $\vartheta_2\colon L\to L/[L,G]$ denotes the natural factor map from $L$ onto $L/[L,G]$ and $\vartheta_3\colon K\to K/[K,K]$ the natural factor map from $K$ onto $K/[K,K]$, then we can identify $L/[L,G]$ with $\R^{d_2}$ and $\vartheta_2(L\cap\Gamma)$ with $\Z^{d_2}$ as well as $K/[K,K]$ with $\R^{d_3}$ and $\vartheta_3(\Gamma_K)$ with $\Z^{d_3}$, for some numbers $d_2,d_3\in\N$.

Observe that $L\cdot\Gamma$ is a subgroup of $G$ and that $L$ is the connected component of the identity of $L\cdot\Gamma$. Also, since $\pi(H)=Z$ and $H\subset L$, we have that $\pi^{-1}(Z)\subset L\cdot\Gamma$ and hence, using the definition of $Z$ (see. \eqref{eqn:def-of-subnilmanifold-Z}), we conclude that the set $\{g_0(h)^{t_0}g_1(h)^{t_1}\cdot\ldots\cdot g_{\ell-2}(h)^{t_{\ell-2}}: h\in\Z^m, (t_0,t_1,\ldots,t_{\ell-2})\in\R^{\ell-1}\}$ is a subset of $L\cdot\Gamma$. However, the set $\{g_0(h)^{t_0}g_1(h)^{t_1}\cdot\ldots\cdot g_{\ell-2}(h)^{t_{\ell-2}}: h\in\Z^m, (t_0,t_1,\ldots,t_{\ell-2})\in\R^{\ell-1}\}$ is connected and contains the identity. It follows that
$$
g_i(h)^t\in L,\qquad \forall h\in\Z^{m},~\forall t\in\R,~\forall i\in\{0,1,\ldots,\ell-1\}
$$
and therefore, using \eqref{eqn:qualitative-green-leibman-theorem-for-discrete-tempered-6.4},
$$
g_i'(h')\in L,\qquad \forall h'\in\Z^{m},~\forall i\in\{0,1,\ldots,\ell-1\}.
$$

For $i=0,1,\ldots,\ell-1$, define $p_i(h)\coloneqq \vartheta_1(g_i(h))$, for $i=0,1,\ldots,\ell-2$, define $q_i(h)\coloneqq \vartheta_2(g_i(h))$ and, for $i=1,\ldots,\ell-1$, define $q_i'(h')\coloneqq \vartheta_2(g_i'(h'))$. It follows from \eqref{eqn:qualitative-green-leibman-theorem-for-discrete-tempered-6.4} (also cf.\ \eqref{eq:equidistribution-on-product-1}) that
$$
q_i'(h_1,\ldots,h_{m+1})\coloneqq \sum_{j=1}^i \binom{h_{m+1}}{j} q_{i-j}(h_1,\ldots,h_{m}).
$$
Set
$$
p(n,h)\coloneqq p_0(h){f(n)}+p_1(h){\Delta f(n)}+\ldots+ p_{\ell-1}(h){\Delta^{\ell-1} f(n)},\qquad\forall n\in\N,~\forall h\in\Z^{m}.
$$
and
$$
q'(n,h')\coloneqq q_1'(h'){\Delta f(n)}+\ldots+ q_{\ell-1}'(h'){\Delta^{\ell-1} f(n)},\qquad\forall n\in\N,~\forall h'\in\Z^{m+1}.
$$
Denote by $T_1$ the torus $(G/[G,G])/\vartheta_1(\Gamma)$ and let $\pi_1\colon G/[G,G]\to (G/[G,G])/\vartheta_1(\Gamma)$ denote the corresponding factor map.
Similarly, let $T_2$ and $T_3$ be the tori $T_2\coloneqq (L/[L,G])/\vartheta_2(\Gamma_L)$ and $T_3\coloneqq (K/[K,K])/\vartheta_3(\Gamma_K)$ and let $\pi_2$ and $\pi_3$ be the corresponding factor maps.
Using \cref{lem:normal-closure-as-product}, we can identify $T_2$ with $(H\cap[L,G])\backslash Z$. In particular, we have that
$$
\pi_2\big(\big\{q_0(h){t_0}+q_1(h){t_1}+\ldots+ q_{\ell-2}(h){t_{\ell-2}}): h\in\Z^m, (t_0,t_1,\ldots,t_{\ell-2})\in\R^{\ell-1}\big\}\big)
~\text{is dense in $T_2$}.
$$
Moreover, the hypothesis \eqref{eqn:qualitative-green-leibman-theorem-for-discrete-tempered-0} implies that
$$
\pi_1\big(\big\{p_0(h){t_0}+p_1(h){t_1}+\ldots+ p_{\ell-1}(h){t_{\ell-1}}: h\in\Z^m, (t_0,t_1,\ldots,t_{\ell-1})\in\R^{\ell}\big\}\big)~\text{is dense in $T_1$}.
$$
It is therefore guaranteed by \cref{lem:equidistribution-on-product} that
$$
(\pi_1\otimes \pi_2)\big(\big\{\tilde{p}_0(h'){t_0}+\ldots+ \tilde{p}_{\ell-1}(h'){t_{\ell-1}}: h'\in\Z^{m+1}, (t_0,\ldots,t_{\ell-1})\in\R^{\ell}\big\}\big)~\text{is dense in $T_1\times T_2$},
$$
where $\tilde{p}_0(h_1,\ldots,h_{m+1})\coloneqq(p_0(h_1,\ldots,h_m),0)$ and
$$
\tilde{p}_i(h_1,\ldots,h_{m+1})\coloneqq(p_i(h_1,\ldots,h_m),q_i'(h_1,\ldots,h_{m+1}))
$$
for $i=1,\ldots,\ell-1$.
Using \cref{lem:equidistribution-of-discrete-tempred-on-subtorus} we deduce that
\begin{equation}
\label{eqn:qualitative-green-leibman-theorem-for-discrete-tempered-11}
\otherlim{C}{h_{m+1}\to\infty}\otherlim{C}{h_{m}\to\infty}\cdots \otherlim{C}{h_1\to\infty}\otherlim{W}{n\to\infty}
F_1\big(\pi_1\big(p(n,h)\big)\big)F_2\big(\pi_2\big(q'(n,h')\big)\big)=\int_{T_1} F_1\d\mu_{T_1}\int_{T_2} F_2\d\mu_{T_2}
\end{equation}
for all $F_1\in C(T_1)$ and $F_2\in C(T_2)$.

Fix now a non-trivial horizontal characters $\eta''$ of $K/\Gamma_K$. Since $[K,K]\cdot\Gamma_K$ belongs to the kernel of $\eta''$, there exists a character $\chi''\colon T_3\to \{w\in\C:|w|=1\}$ such that $\eta''=\chi''\circ\pi_3\circ\vartheta_3$.
Next, using the fact that $\omega\colon G/[G,G]\times L/[L,G]\to K/[K,K]$ is a continuous and surjective homomorphism satisfying $\omega(\vartheta_1(\Gamma)\times \vartheta_2(\Gamma\cap L))\subset\vartheta_3(\Gamma_K)$, we can find two other characters $\chi\colon T_1\to \{w\in\C:|w|=1\}$ and $\chi'\colon T_2\to \{w\in\C:|w|=1\}$, such that
$$
(\chi''\circ\pi_3)\circ \omega =(\chi\circ\pi_1 )\otimes(\chi'\circ\pi_2) .
$$
Also, we have
$$
\vartheta_3 \big(g'(n,h') g(n,h),g(n,h)\big)=\omega(p(n,h), q'(n,h'));
$$
Hence \eqref{eqn:qualitative-green-leibman-theorem-for-discrete-tempered-10.5} follows from

\begin{equation}
\label{eqn:qualitative-green-leibman-theorem-for-discrete-tempered-12}
\otherlim{C}{h_{m+1}\to\infty}\otherlim{C}{h_{m}\to\infty}\cdots \otherlim{C}{h_1\to\infty}\otherlim{UW}{n\to\infty} \chi\big(\pi_1(p(n,h))\big)\chi'\big(\pi_2(q'(n,h'))\big)=0.
\end{equation}
But \eqref{eqn:qualitative-green-leibman-theorem-for-discrete-tempered-12} follows immediately from \eqref{eqn:qualitative-green-leibman-theorem-for-discrete-tempered-11}, which finishes the proof of Claim 2.

\paragraph{\textit{Proof of Claim 3:}}
Let $G_\bullet\coloneqq  \{G_1,G_2,\ldots, G_{s},G_{s+1}=\{1_G\}\}$ be a normal pre-filtration of $G$ that realizes the step of $g(n,h)$.
Define a new normal pre-filtration $K_\bullet= \{K_1,K_2,\ldots, K_{s},K_{s+1}=\{1_G\}\}$ via
$$
K_i\coloneqq K\cap\Big( G_i^\triangle\cdot (G_{i+1}\times G_{i+1})\Big)=G_i^\triangle\cdot\big((L\cap G_{i+1})\times(L\cap G_{i+1})\big).
$$
It is shown in \cite[Proposition 7.2]{Green_Tao12a} that $[K_i,K_j]\subset K_{i+j}$; in particular, $K_i$ is normal in $K$ and hence $K_\bullet$ is a normal pre-filtration. 

From \eqref{eqn:qualitative-green-leibman-theorem-for-discrete-tempered-6.5} we deduce that
$$
\epsilon(n)g'(n,h')=\Delta_{h_{m+1}}g(n,h)
$$
and hence
$$
\lim_{n\to\infty} \inf_{a\in G_2} d_G ( g'(n,h'),a)=0.
$$
Using the metric $d_K\big((a,b),(c,d)\big)\coloneqq d_G(a,c)+d_G(b,d)$, it follows that
\begin{eqnarray*}
\lim_{n\to\infty} & \displaystyle\inf_{(a,b)\in K_1} &d_{K} \Big(\big( g'(n,h')g(n,h),g(n,h)\big),(a,b)\Big)
\\&\leq&
\lim_{n\to\infty} \inf_{a\in G_2} d_{K} \Big(\big( g'(n,h')g(n,h),g(n,h)\big),\big(ag(n,h),g(n,h)\big)\Big)
\\&=&
\lim_{n\to\infty} \inf_{a\in G_2} d_{G} \big(g'(n,h'),a\big)
\\&=&0.
\end{eqnarray*}
A similar argument shows that
for all $j\in\{1,\ldots,s\}$ and all $n_1,\ldots n_j\in\N$,
$$
\lim_{n\to\infty} ~\inf_{(a,b)\in K_{j+1}}
d_K \big(\Delta_{n_j}\cdots \Delta_{n_1}(g'(n,h')g(n,h),g(n,h)),(a,b)\big)=0.
$$
To finish the proof of Claim 3 define $\tilde{G_i}\coloneqq \sigma(K_i)$. Since $K_s=G_s^\triangle$ and $G_s$ is a subset of $Z(G)$, we have $K_s\subset Z(G)^\triangle$. In particular, $\tilde{G}_s=\sigma(K_s)$ is trivial. Therefore $\tilde{G}_\bullet =\{\tilde{G}_1,\tilde{G}_2,\ldots, \tilde{G}_{s-1},\tilde{G}_{s}=\{1_{\tilde{G}}\}\}$ is a $(s-1)$-step normal pre-filtration of $\tilde{G}$ with the property that for every fixed $h'\in\N^{m+1}$ we have
$$
\lim_{n\to\infty}
d_{\tilde G}\Big(\tilde{g}(n,h'),~\tilde{G}_{1}\Big)=0,
$$
where $d_{\tilde G}(x,y)\coloneqq \inf\{d_K(a,b):a,b\in K, \sigma(a)=x,\sigma(b)=y\}$ and, for all $j\in\{1,\ldots,s-1\}$ and all $n_1,\ldots n_j\in\N$,
$$
\lim_{n\to\infty}
d_G\Big(\Delta_{n_j}\cdots \Delta_{n_1} \tilde{g}(n,h'),~\tilde{G}_{j+1}\Big)=0.
$$
This shows that the degree of $\tilde{g}(n,h')$ is smaller than or equal to $s-1$.
\end{proof}

\appendix
\section{Some auxiliary Lemmas}\label{sec_appendix}

\begin{Lemma}
\label{lem_nal}
If $f\colon[1,\infty
)\to\R$ is either a tempered function or
a function from a Hardy field satisfying $\lim_{x\to\infty }f^{(\ell)}(x)=\pm\infty$ and $\lim_{x\to\infty }f^{(\ell+1)}(x)=0$ for some $\ell\in \N\cup\{0\}$, then $f$ restricted to $\N$ belongs to $\F$.
\end{Lemma}

\begin{proof}
If we are in the case when $f$ is tempered, then by definition there exists $\ell\in\N\cup\{0\}$ such that $f$ is $(\ell+1)$-times continuously differentiable, $f^{(\ell+1)}(x)$ tends monotonically to $0$, and $\lim_{x\to\infty }x f^{(\ell+1)}(x)=\pm\infty$.
We can rewrite the last condition as $\lim_{x\to\infty }|f^{(\ell+1)}(x)|\succ1/x$, which implies $\lim_{x\to\infty } f^{(\ell)}(x)=\pm\infty$.
On the other hand, since $f^{(\ell+1)}(x)$ tends  to $0$,
$f^{(\ell)}(N)\sim \sum_{n=1}^N f^{(\ell+1)}(n)$, and hence we can conclude that $f^{(\ell+1)}$ restricted to $\N$ belongs to $\F_0$.
Since $\big(\Delta (f^{(\ell)})\big)(n)=f^{(\ell+1)}(n)\big(1+\oh_{n\to\infty}(1)\big)$, we conclude that $f^{(\ell)}$ restricted to $\N$ belongs to $\F_1$. Repeating this arguments shows that $f^{(i)}$ restricted to $\N$ belongs to $\F_{\ell+1-i}$; in particular $f$ restricted to $\N$ belongs to $\F_{\ell+1}\subset\F$.

If we are in the case where $f$ belongs to a Hardy field, then $f$ is also $(\ell+1)$-times continuously differentiable, $f^{(\ell+1)}(x)$ tends monotonically to $0$, and $\lim_{x\to\infty } f^{(\ell)}(x)=\pm\infty$. Hence, similar arguments can be used to show that $f$ restricted to $\N$ belongs to $\F_{\ell+1}$.
\end{proof}

\begin{Lemma}
\label{lem:normal-closure-as-product}
Let $G$ be a group, let $H$ be a subgroup of $G$ and let $L$ denote the normal closure of $H$ in $G$. Define $N\coloneqq [L,G]$. Then $L=H\cdot N$.
\end{Lemma}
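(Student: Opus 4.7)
The plan is to prove the double inclusion $L \subseteq H\cdot N$ and $H\cdot N \subseteq L$, with the first inclusion being the routine direction and the second following from a minimality argument applied to the normal closure.

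First I would dispense with the easy inclusion $H\cdot N \subseteq L$: since $H\subseteq L$ by definition of the normal closure, and since $L$ is normal in $G$, every commutator $[l,g]=lgl^{-1}g^{-1}$ with $l\in L$, $g\in G$ lies in $L$, giving $N=[L,G]\subseteq L$. Hence $H\cdot N\subseteq L$.

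For the reverse inclusion, the plan is to verify that $H\cdot N$ is itself a normal subgroup of $G$ containing $H$, and then invoke the fact that $L$ is the \emph{smallest} such subgroup. To do this I will first check that $N$ is normal in $G$: since $L$ is normal in $G$, for every $g\in G$ we have $gNg^{-1}=g[L,G]g^{-1}=[gLg^{-1},gGg^{-1}]=[L,G]=N$. Because $N$ is normal in $G$, the product $H\cdot N$ is automatically a subgroup of $G$ (the computation $(h_1n_1)(h_2n_2)=h_1h_2(h_2^{-1}n_1h_2)n_2$ shows closure under products, and closure under inverses is similar). To check normality of $H\cdot N$ in $G$, the key identity is that for any $h\in H$ and $g\in G$,
$$
ghg^{-1}=h\cdot\bigl(h^{-1}ghg^{-1}\bigr)=h\cdot [h^{-1},g],
$$
and since $h^{-1}\in H\subseteq L$ and $g\in G$ we get $[h^{-1},g]\in [L,G]=N$, so $ghg^{-1}\in H\cdot N$. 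Combined with $gNg^{-1}=N$, this yields $g(H\cdot N)g^{-1}\subseteq H\cdot N$ for every $g\in G$.

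Having established that $H\cdot N$ is a normal subgroup of $G$ containing $H$, the minimality of the normal closure $L$ forces $L\subseteq H\cdot N$, completing the proof. I do not anticipate any real obstacle here — the only thing requiring care is the commutator bookkeeping in the identity $ghg^{-1}=h\cdot[h^{-1},g]$, which depends on the (harmless) convention chosen for $[a,b]$; any consistent convention works since the whole point is just that $ghg^{-1}$ differs from $h$ by an element of $[L,G]$.
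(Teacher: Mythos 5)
Your proof is correct and follows essentially the same approach as the paper: establish the easy inclusion $H\cdot N\subseteq L$, then show $H\cdot N$ is a normal subgroup of $G$ containing $H$ (via the commutator identity and normality of $N$), and invoke minimality of the normal closure to conclude $L\subseteq H\cdot N$.
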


\begin{proof}
Since $L$ is a normal subgroup of $G$, the group $N=[L,G]$ is a subgroup of $L$ and therefore $H\cdot N\subset L$.
To show that $H\cdot N\supset L$ it suffices to show that $H\cdot N$ is a normal subgroup of $G$, because any normal subgroup of $G$ containing $H$ must also contain $L$, because $L$ is the normal closure of $H$.

First, let us show that $H\cdot [L,G]$ is a group. Since $L$ is a normal subgroup of $G$, the commutator group $N=[L,G]$ is also a normal subgroup of $G$. It is then easy to check that the product $H\cdot N$ of a group $H$ and a normal subgroup $N$ is itself a group.

Finally, we show that $H\cdot N$ is normal. Let $g\in G$ and $h\in H$.
Then
$$
g h N g^{-1}~=~ hg g^{-1}h^{-1}gh N g^{-1}~=~ h g [g,h] N g^{-1}.
$$
Note that $[g,h]\in N$ and that $g N g^{-1}=N$. Therefore
$$
h g [g,h] N g^{-1}~=~ hN.
$$
It follows that for all $g\in G$ one has
$$
g (H\cdot N) g^{-1}=g\left(\bigcup_{h\in H} h N\right)g^{-1}=\bigcup_{h\in H}g h N g^{-1}=\bigcup_{h\in H} h N =H\cdot N.
$$
This proves that $H\cdot N$ is normal.
\end{proof}

\begin{Lemma}
\label{lem:description-of-commutator-subgroup-of-K}
Let $G$ be a group and denote by $G^\triangle$ the diagonal $\{(g,g):g\in G\}\subset G^2$.
Let $L$ be a normal subgroup of $G$ and define $K\coloneqq G^{\triangle}\cdot (L\times L)\subset G\times G$.
Then $[K,K]\supset [L,G]\times[L,G]$.
\end{Lemma}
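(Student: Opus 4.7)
The plan is to show that $[K,K]$ contains the two ``coordinate copies'' $[L,G]\times\{1\}$ and $\{1\}\times[L,G]$; these together generate $[L,G]\times[L,G]$ as a subgroup of $G\times G$, giving the claim.

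For the first copy, take an arbitrary generator $[a,b]$ of $[L,G]$, with $a\in L$ and $b\in G$. The key observation is that both $(a,1)$ and $(b,b)$ lie in $K$: indeed $(a,1)\in L\times L\subset K$ (using $a\in L$ and $1\in L$), and $(b,b)\in G^{\triangle}\subset K$. A direct computation in $G\times G$ then yields
\[
\bigl[(a,1),(b,b)\bigr]=(a^{-1}b^{-1}ab,\,1^{-1}b^{-1}\cdot 1\cdot b)=\bigl([a,b],\,1\bigr),
\]
so $([a,b],1)\in[K,K]$. Running $[a,b]$ over all generators of $[L,G]$, and using that $[K,K]$ is a subgroup, shows $[L,G]\times\{1\}\subset[K,K]$.

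The second copy is handled by the same trick with the roles of the two coordinates swapped: $(1,a)\in L\times L\subset K$ and $(b,b)\in G^{\triangle}\subset K$, and
\[
\bigl[(1,a),(b,b)\bigr]=\bigl(1,\,[a,b]\bigr)\in[K,K],
\]
which gives $\{1\}\times[L,G]\subset[K,K]$. Multiplying elements of these two subgroups produces every element of $[L,G]\times[L,G]$, finishing the proof.

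I do not expect any real obstacle here: everything reduces to the single commutator identity above, which is entirely formal and does not even require normality of $L$ in $G$ (normality is used elsewhere to ensure $K$ really is a subgroup, but not in this calculation).
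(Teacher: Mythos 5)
Your proof is correct and follows essentially the same route as the paper: both hinge on the single commutator identity $\bigl[(a,1),(b,b)\bigr]=([a,b],1)$ with $(a,1)\in L\times L$ and $(b,b)\in G^{\triangle}$, then obtain the other coordinate copy by the symmetric computation (the paper simply says ``by symmetry''). Your closing remark that the calculation itself does not use normality of $L$ is accurate and a small bonus, but the argument is otherwise identical.
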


\begin{proof}
We will show that $[L,G]\times \{1_K\}\subset [K,K]$. From this it will follow by symmetry that $\{1_K\}\times [L,G]\subset [K,K]$, which implies that $[L,G]\times[L,G]\subset [K,K]$.

$[L,G]$ is the group generated by the set $\{[l,g]:l\in L,g\in G\}$. It thus suffices to show that $([l,g],1)\in [K,K]$ for all $l\in L$ and $g\in G$.
But observe that
$$
([l,g],1)=\big[(l,1),(g,g)\big].
$$
Since $(g,g)\in G^\triangle\subset K$ and $(l,1)\in L\times L\subset K$, we conclude that $\big[(l,1),(g,g)\big]\in [K,K]$ and hence $([l,g],1)\in [K,K]$.
\end{proof}

The following lemma is similar in spirit to \cite[Lemma 1.1]{Bergelson_Leibman15}.
\begin{Lemma}\label{lemma_iteratedlim}
  Let $m\in\N$, let $K$ be a compact subset of a Banach space and let $a\colon \N^m\to K$ be a function.
  Assume that for every $h_1,\dots,h_m\in\N$, for every $i=1,\dots,m$, and every F\o lner sequence $(F_N)$ on $\N^i$, the limit
  $$\lim_{N\to\infty}\frac1{|F_N|}\sum_{h'\in F_N}a(h',h_{i+1},\dots,h_m)$$
  exists.
  Then for every F\o lner sequence $(F_N)_{N\in\N}$ on $\N^m$,
  $$\otherlim{C}{h_m\to\infty}\otherlim{C}{h_{m-1}\to\infty}\cdots \otherlim{C}{h_1\to\infty}a(h_1,\dots,h_m)= \lim_{N\to\infty}\frac1{|F_N|}\sum_{h\in F_N}a(h).$$
\end{Lemma}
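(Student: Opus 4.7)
The plan is to argue by induction on $m$. For the base case $m=1$, the hypothesis immediately gives that $\lim_N \frac{1}{|F_N|}\sum_{h\in F_N}a(h)$ exists along every F\o lner sequence on $\N$; in particular, taking $F_N=\{1,\ldots,N\}$ yields the Ces\`aro limit $\otherlim{C}{h_1\to\infty}a(h_1)$. To see that every other F\o lner sequence produces the same value, one interleaves it with the standard one to form another F\o lner sequence $G_N$; by hypothesis the $G_N$-average converges, which forces the two original limits to agree.

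For the inductive step I would assume the conclusion for $m-1$ and set
$b(h_m)\coloneqq\otherlim{C}{h_{m-1}\to\infty}\cdots\otherlim{C}{h_1\to\infty}a(h_1,\ldots,h_m)$,
which exists for each $h_m$ by hypothesis. The hypotheses for the slice $(h_1,\ldots,h_{m-1})\mapsto a(h_1,\ldots,h_{m-1},h_m)$ are inherited from those of $a$, so the induction hypothesis applied slice-wise yields
$$b(h_m)=\lim_{N\to\infty}\frac{1}{|G_N|}\sum_{h'\in G_N}a(h',h_m)$$
for every F\o lner sequence $(G_N)$ on $\N^{m-1}$. The same interleaving argument shows that the F\o lner limit on $\N^m$ is independent of the F\o lner sequence chosen, so it suffices to evaluate it along one convenient sequence.

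That evaluation is the heart of the argument, and it proceeds by a diagonal / product construction. Fix a F\o lner sequence $(G_N)$ on $\N^{m-1}$ and set $c_N(h_m)\coloneqq\frac{1}{|G_N|}\sum_{h'\in G_N}a(h',h_m)$, so that $c_N(h_m)\to b(h_m)$ pointwise. Since pointwise convergence on a finite set is automatically uniform, I would choose $N_k\to\infty$ with $\sup_{h_m\leq k} d(c_{N_k}(h_m),b(h_m))<1/k$, and set $M_k\coloneqq k$. Then $F_k\coloneqq G_{N_k}\times\{1,\ldots,M_k\}$ is a F\o lner sequence on $\N^m$, and
$$\frac{1}{|F_k|}\sum_{h\in F_k}a(h)=\frac{1}{M_k}\sum_{h_m=1}^{M_k}c_{N_k}(h_m).$$
Translation-invariance of $d$ together with the choice of $N_k$ bounds the distance between this average and $\frac{1}{M_k}\sum_{h_m=1}^{M_k}b(h_m)$ by $1/k\to 0$, while the latter tends to $\otherlim{C}{h_m\to\infty}b(h_m)$ by definition. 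Combined with the independence established above, this identifies the common F\o lner limit with the iterated Ces\`aro limit.

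The main technical obstacle I anticipate is justifying the inequality $d\bigl(\tfrac{1}{N}\sum x_i,\tfrac{1}{N}\sum y_i\bigr)\leq\tfrac{1}{N}\sum d(x_i,y_i)$ used in the final estimate; this is immediate when the metric is translation-invariant and compatible with division by integers, as in any normed vector space (the setting relevant to the paper's intended applications), but in a general divisible abelian metric group one might need a slightly more careful formulation. The remaining bookkeeping points, namely that $F_k=G_{N_k}\times\{1,\ldots,M_k\}$ is F\o lner on $\N^m$ because both factor cardinalities tend to infinity, and that the interleaving of two F\o lner sequences on the same $\N^i$ is again F\o lner, are routine.
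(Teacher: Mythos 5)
Your proposal is correct and follows essentially the same route as the paper's proof: induction on $m$, the interleaving argument to show the F\o lner limit is independent of the chosen sequence, and the diagonal product construction $F_{N(H)}\times\{1,\dots,H\}$ to evaluate it. The technical caveat you flag about averaging the metric inequality in a general divisible abelian group is a real subtlety, but it is present equally in the paper's argument (which averages the same inequality over $h_m\in\{1,\dots,H\}$ without comment), so it does not distinguish your approach from theirs.
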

\begin{proof}
First observe that since the limit $\lim_{N\to\infty}\frac1{|F_N|}\sum_{h\in F_N}a(h)$ exists for every F\o lner sequence, its value does not depend on the choice of F\o lner sequence.
Indeed, if two F\o lner sequences witnessed different limits, then one could form a third F\o lner sequence by alternating the sets in each of the given sequences, thus forming a F\o lner sequence for which the limit would not exist.

  We now prove the lemma by induction on $m$.
  For $m=1$ the statement becomes trivial.
  Next assume that $m>1$ and that the conclusion has been established for $m-1$.
  Fix a F\o lner sequence $(F_N)_{N\in\N}$ on $\N^{m-1}$.
  By induction hypothesis, for every $h_m\in\N$,
  $$\otherlim{C}{h_{m-1}\to\infty}\cdots \otherlim{C}{h_1\to\infty}a(h_1,\dots,h_m)= \lim_{N\to\infty}\frac1{|F_N|}\sum_{h'\in F_N}a(h',h_m).$$
  For each $H\in\N$, let $N(H)\in\N$ be such that for every $h_m\in\{1,\dots,H\}$,
  $$\left\|\lim_{N\to\infty}\frac1{|F_N|}\sum_{h'\in F_N}a(h',h_m)~-~\frac1{|F_{N(H)}|}\sum_{h'\in F_{N(H)}}a(h',h_m)\right\|<\frac1H,$$
where $\|.\|$ is the norm on the Banach space.
  Letting $\tilde F_H=F_{N(H)}\times\{1,\dots,H\}\subset\N^m$ we have that $\big(\tilde F_H)_{H\in\N}$ form a F\o lner sequence.
  Averaging the previous inequality over all $h_m\in\{1,\dots,H\}$ and taking $H\to\infty$ we conclude that
  $$\otherlim{C}{h_m\to\infty}\otherlim{C}{h_{m-1}\to\infty}\cdots \otherlim{C}{h_1\to\infty}a(h_1,\dots,h_m)= \lim_{N\to\infty}\frac1{|\tilde F_N|}\sum_{h\in\tilde F_N}a(h).$$
\end{proof}

Combining the previous lemma with Theorems B and B$^*$ from \cite{Leibman05b} we obtain the following.
\begin{Theorem}
\label{thm:Leibman-thmB}
Let $G$ be a connected simply connected nilpotent Lie group, $\Gamma$ a uniform and discrete subgroup of $G$ and define $X\coloneqq G/\Gamma$.
Let $g\colon \Z^m\to G$ be a polynomial sequence. The following are equivalent:
\begin{itemize}
\item
$g(\Z^m)\Gamma$ is dense in $X$;
\item
$\otherlim{C}{h_{m}\to\infty}\cdots \otherlim{C}{h_1\to\infty} F(g(h_1,\ldots,h_\ell)\Gamma)=\int_X F\d\mu_X$ for all $F\in C(X)$;
\item
$\otherlim{C}{h_{m}\to\infty}\cdots \otherlim{C}{h_1\to\infty} \eta(g(h_1,\ldots,h_\ell)\Gamma)=0$ for all non-trivial horizontal characters $\eta$ of $G/\Gamma$.
\end{itemize}
\end{Theorem}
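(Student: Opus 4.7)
The plan is to derive this theorem as a direct combination of the preceding lemma with the classical equidistribution results of Leibman (Theorems~B and B$^*$ of \cite{Leibman05b}), which furnish a Weyl-type criterion for polynomial sequences on nilmanifolds averaged along F\o lner sequences in $\Z^m$. The preceding lemma will serve as the bridge from Leibman's single-F\o lner-average formulation to the iterated Ces\`aro means appearing in our statement.

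First, fix a convenient F\o lner sequence $(F_N)_{N\in\N}$ on $\Z^m$, say $F_N \coloneqq [1,N]^m$, and translate each of the three conditions into its F\o lner-average counterpart. To invoke the preceding lemma on the bounded function $a(h_1,\ldots,h_m) \coloneqq F(g(h_1,\ldots,h_m)\Gamma)$ (or on $a = \eta \circ g$ for a horizontal character $\eta$), one must verify that for every $i \in \{1,\ldots,m\}$, every fixed choice of parameters $h_{i+1},\ldots,h_m \in \N$, and every F\o lner sequence on $\N^i$, the corresponding partial F\o lner average of $a$ converges. This is routine once one notices that the partial evaluation $g(\,\cdot\,,h_{i+1},\ldots,h_m)$ is itself a polynomial sequence $\Z^i \to G$ (in the sense of \cref{def:polynomial-sequneces}), and the convergence of F\o lner averages of continuous functions along polynomial orbits on a nilmanifold is one of the main results of \cite{Leibman05b}.

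With the hypotheses of the preceding lemma verified, the iterated Ces\`aro means appearing in conditions (2) and (3) equal the corresponding F\o lner averages
$$
\lim_{N\to\infty}\frac{1}{|F_N|}\sum_{h\in F_N} F\big(g(h)\Gamma\big)\qquad\text{and}\qquad \lim_{N\to\infty}\frac{1}{|F_N|}\sum_{h\in F_N} \eta\big(g(h)\Gamma\big).
$$
Leibman's Theorem~B then gives $(1)\Leftrightarrow(2)$: density of $g(\Z^m)\Gamma$ in $X$ is equivalent to the F\o lner average of $F\big(g(h)\Gamma\big)$ being $\int_X F\d\mu_X$ for every $F\in C(X)$. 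Theorem~B$^*$ gives $(2)\Leftrightarrow(3)$ via the standard Weyl-type argument, approximating each $F\in C(X)$ uniformly by a linear combination of characters and noting that any character of $X$ that is non-trivial on the orbit closure descends from a non-trivial horizontal character of $G/\Gamma$, while those characters that are trivial on the orbit closure integrate to $\int_X F\d\mu_X$.

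The main (and very mild) obstacle is the bookkeeping needed for the hypothesis check of the preceding lemma, particularly in ensuring that the partial-evaluation argument is uniform over the remaining coordinates; once that is in place the statement is a direct transcription of Leibman's theorems through the lemma's lens.
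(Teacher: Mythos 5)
Your proof is correct and follows the same route as the paper: the paper simply states that the theorem follows by ``combining the previous lemma with Theorems B and B$^*$ from \cite{Leibman05b}'', and your proposal spells out exactly this combination, including the necessary verification that partial evaluations of $g$ are again polynomial sequences so that the hypotheses of the preceding lemma hold.
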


\begin{Remark}
  \cref{lemma_iteratedlim} and \cref{thm:Leibman-thmB} have more general versions involving uniform Ces\`aro averages, which coincide with the $UW$-$\lim$ when $W(n)=n$.
  More precisely, every \textlim{C} appearing in either statement could be replaced with \textlim{UC}, which is defined as
  $$\otherlim{UC}{n\to\infty}a(n):=\lim_{N-M\to\infty}\frac1{N-M}\sum_{n=M}^Na(n).$$
\end{Remark}

\begin{Lemma}[cf. {\cite[Lemmas 4.3 and 5.2]{Frantzikinakis09}}]\label{lemma_orbitdiscretecontinuous}
Let $G$ be a connected simply connected nilpotent Lie group, $\Gamma$ a uniform and discrete subgroup of $G$ and define $X\coloneqq G/\Gamma$.
For every $g_0,\dots,g_\ell\in G$ there exists a set $R\subset\R^{\ell+1}$ of full measure such that for every $(\xi_0,\dots,\xi_\ell)\in R$,
$$\overline{\big\{g_0^{\xi_0 n_0}\cdots g_\ell^{\xi_\ell n_\ell}\Gamma:n_0,\dots,n_\ell\in\Z\big\}}
=
\overline{\big\{g_0^{t_0}\cdots g_\ell^{t_\ell}\Gamma:t_0,\dots,t_\ell\in\R\big\}}.$$
\end{Lemma}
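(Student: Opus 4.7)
The plan is to reduce the equality of the two closures to a density question about the discrete orbit inside the continuous orbit closure, and then to apply \cref{thm:Leibman-thmB} together with an elementary measure-theoretic argument. The inclusion of the discrete orbit closure in the continuous one is immediate from continuity, so only the reverse inclusion is at stake. I would set $Y \coloneqq \overline{\{g_0^{t_0}\cdots g_\ell^{t_\ell}\Gamma:t_0,\dots,t_\ell\in\R\}}$ and first invoke Shah's theorem on orbit closures of polynomial maps into nilmanifolds (see \cite{Shah98}, cf.\ also \cite{Leibman05a}) to represent $Y$ as a sub-nilmanifold of the form $Y = L\Gamma/\Gamma$, where $L\subset G$ is a closed connected subgroup. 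Since $G$ is connected, simply connected and nilpotent, so is $L$, and $\Gamma_L\coloneqq L\cap\Gamma$ is a uniform discrete subgroup of $L$; moreover $g_i^t\in L$ for every $t\in\R$ and every $i$, so the discrete orbit also lies entirely in $Y$.

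Next, viewing $\vec n\mapsto g_0^{\xi_0 n_0}\cdots g_\ell^{\xi_\ell n_\ell}$ as a polynomial $\Z^{\ell+1}$-sequence in $L$, I would apply \cref{thm:Leibman-thmB} inside $L/\Gamma_L$: the discrete orbit will be dense in $Y = L/\Gamma_L$ provided that
$$
\otherlim{C}{n_\ell\to\infty}\cdots\otherlim{C}{n_0\to\infty}\eta\big(g_0^{\xi_0 n_0}\cdots g_\ell^{\xi_\ell n_\ell}\Gamma_L\big)=0
$$
for every non-trivial horizontal character $\eta$ of $L/\Gamma_L$. Because $\eta$ lands in the abelian group $\R/\Z$, it factors through $L/[L,L]$, so setting $c_i\coloneqq\frac{d}{dt}\eta(g_i^t)\big|_{t=0}\in\R$ one gets the identity $\eta(g_0^{t_0}\cdots g_\ell^{t_\ell}) = \sum_i c_i t_i \bmod 1$. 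Since the continuous orbit is dense in $Y$ by construction and $\eta$ is non-trivial, this expression cannot be identically zero in $\vec t$, so at least one $c_{i_0}\neq 0$.

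Substituting $t_i=\xi_i n_i$ reduces the iterated Ces\`aro average to $\otherlim{C}{n_\ell}\cdots\otherlim{C}{n_0}e\big(\sum_i c_i\xi_i n_i\big)$, which vanishes unless $c_i\xi_i\in\Z$ for every $i$; this in particular forces $\xi_{i_0}\in c_{i_0}^{-1}\Z$, a Lebesgue-null subset of $\R^{\ell+1}$. Thus for each fixed non-trivial $\eta$ the set of ``bad'' $\vec\xi$ is null, and because the horizontal characters of $L/\Gamma_L$ form a countable lattice (the character group of the compact torus $L/([L,L]\Gamma_L)$), the union of bad sets over all non-trivial $\eta$ remains null; its complement will be the desired full-measure set $R$. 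I anticipate that the main obstacle will be justifying the homogeneous-orbit-closure representation $Y=L\Gamma/\Gamma$ with its accompanying structure on $L$; once this is in hand, the rest of the proof is a direct application of \cref{thm:Leibman-thmB} combined with a routine null-set count.
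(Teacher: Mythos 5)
Your proposal is correct and follows essentially the same route as the paper: realize the continuous orbit closure as a sub-nilmanifold $L/\Gamma_L$ (you cite Shah; the paper cites Ratner's Theorem A, which also applies since $t\mapsto g^t$ is a one-parameter subgroup in a nilpotent Lie group), reduce the density of the discrete orbit to an abelian statement on the horizontal torus via Leibman (you use \cref{thm:Leibman-thmB}; the paper uses Leibman's Theorem C, which is an equivalent reformulation), and then observe that the exceptional set of $\vec\xi$ is a countable union of hyperplanes and hence Lebesgue-null. You spell out the paper's ``routine argument'' in detail, but there is no genuine divergence of method.
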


\begin{proof}
Ratner's theorem (\cite[Theorem A]{Ratner91a}) implies that $\overline{\big\{g_0^{t_0}\cdots g_\ell^{t_\ell}\Gamma:t_0,\dots,t_\ell\in\R\big\}}=\tilde G/\tilde \Gamma$, where $\tilde G$ is a connected and simply connected closed subgroup of $G$ containing $g_0,\dots,g_\ell$ and $\tilde\Gamma=\Gamma\cap\tilde G$.
Let $T=\tilde G/(\tilde \Gamma[\tilde G,\tilde G])$ and let $\vartheta\colon \tilde G/\tilde \Gamma\to T$ be the canonical projection map.
In view of \cite[Theorem C]{Leibman05b} we have $\overline{\big\{g_0^{\xi_0 n_0}\cdots g_\ell^{\xi_\ell n_\ell}\Gamma:n_0,\dots,n_\ell\in\Z\big\}}=\tilde G/\tilde \Gamma$ if and only if
\begin{equation}\label{eq_proof_lemma_orbitdiscretecontinuous}
\overline{\{\vartheta(g_0^{\xi_0 n_0}\cdots g_\ell^{\xi_\ell n_\ell}\Gamma):n_0,\dots,n_\ell\in\Z\}}=T.
\end{equation}
However $T$ is a torus and $\vartheta$ is a homomorphism, so a routine argument shows that the set of $\xi=(\xi_0,\dots,\xi_\ell)\in\R^{\ell+1}$ for which \eqref{eq_proof_lemma_orbitdiscretecontinuous} holds has full measure.
\end{proof}


\begin{Lemma}
\label{lem:orbit-closure-is-subnilmanifold}
Let $G$ be a connected simply connected nilpotent Lie group, $\Gamma$ a uniform and discrete subgroup of $G$ and define $X\coloneqq G/\Gamma$.
Let $\ell,m\in \N$, let $g_0,g_1,\ldots,g_\ell\colon \Z^m\to G$ be polynomial sequences and define
$$
Y\coloneqq\overline{\{g_0(h)^{t_0}g_1(h)^{t_1}\cdot\ldots\cdot g_{\ell}(h)^{t_\ell}\Gamma: h\in\Z^m, (t_0,t_1,\ldots,t_\ell)\in\R^{\ell+1}\}}.
$$
Then $Y$ is a connected subnilmanifold of $X$.
\end{Lemma}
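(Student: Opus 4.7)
I would prove connectedness and the subnilmanifold property separately, then combine them.

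For connectedness, fix $h\in\Z^m$ and consider
$$Y_h\coloneqq\{g_0(h)^{t_0}g_1(h)^{t_1}\cdots g_\ell(h)^{t_\ell}\Gamma:(t_0,\ldots,t_\ell)\in\R^{\ell+1}\}.$$
As the image of the connected space $\R^{\ell+1}$ under a continuous map into $X$, the set $Y_h$ is connected. Evaluating at $\vec t=\vec 0$ shows that the identity coset $1_G\Gamma$ lies in every $Y_h$, so $\bigcup_{h\in\Z^m}Y_h$ is a union of connected sets sharing a common point and is therefore connected. Taking closures preserves connectedness, so $Y$ is connected.

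For the subnilmanifold property, the plan is to replace the continuous parameters $\vec t$ by integer ones and then invoke Leibman's theorem on orbit closures of polynomial sequences on nilmanifolds. By \cref{lemma_orbitdiscretecontinuous}, applied to $g_0(h),\ldots,g_\ell(h)$ for each fixed $h\in\Z^m$, there exists a full-measure set $R_h\subseteq\R^{\ell+1}$ such that whenever $\xi=(\xi_0,\ldots,\xi_\ell)\in R_h$,
$$Y_h=\overline{\{g_0(h)^{\xi_0 n_0}g_1(h)^{\xi_1 n_1}\cdots g_\ell(h)^{\xi_\ell n_\ell}\Gamma:(n_0,\ldots,n_\ell)\in\Z^{\ell+1}\}}.$$
Since $\Z^m$ is countable, the intersection $R\coloneqq\bigcap_{h\in\Z^m}R_h$ still has full Lebesgue measure, so I can fix a single $\xi\in R$ that works for every $h$ simultaneously. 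Taking the union over $h$ and then closing then yields
$$Y=\overline{\{g_0(h)^{\xi_0 n_0}\cdots g_\ell(h)^{\xi_\ell n_\ell}\Gamma:h\in\Z^m,\ (n_0,\ldots,n_\ell)\in\Z^{\ell+1}\}}.$$

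The right-hand side is the $\Gamma$-orbit closure of the map $\Phi\colon\Z^{m+\ell+1}\to G$ defined by $\Phi(h,\vec n)\coloneqq g_0(h)^{\xi_0 n_0}\cdots g_\ell(h)^{\xi_\ell n_\ell}$. This $\Phi$ is a polynomial sequence in the sense of \cref{def:polynomial-sequneces}: each factor $g_i(h)^{\xi_i n_i}$ expands (since the generators $a_{i,j}$ within a single $g_i$ commute) as a product of terms $a_{i,j}^{\xi_i n_i p_{i,j}(h)}$ whose exponents lie in $\R[h_1,\ldots,h_m,n_i]$, and concatenating such products keeps us within the class of polynomial sequences. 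By Leibman's theorem on orbit closures of polynomial sequences on nilmanifolds (\cite{Leibman05a}), $Y$ is a subnilmanifold of $X$; combined with the connectedness established above, $Y$ is a connected subnilmanifold. The main subtle point will be the simultaneous discretization — the measure-theoretic argument crucially uses the countability of $\Z^m$ — and correctly identifying the mixed continuous-integer expression $\Phi$ as a polynomial sequence in Leibman's sense, despite the non-commutativity of the different $g_i(h)$ among themselves.
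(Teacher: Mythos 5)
Your proof follows essentially the same route as the paper's: reduce the continuous parameters $(t_0,\ldots,t_\ell)$ to integer parameters via \cref{lemma_orbitdiscretecontinuous}, observe that the resulting map $\Z^{m+\ell+1}\to G$ is a polynomial sequence, invoke Leibman's orbit-closure theorem, and finish with connectedness. Two remarks. First, your handling of the simultaneous discretization (intersecting the countably many full-measure sets $R_h$ to get a single $\xi$ working for every $h$) is the correct way to make rigorous a point the paper only states in passing; good. Second, there is a slight misstatement of Leibman's theorem. As stated in the relevant reference (the paper cites \cite{Leibman05b}, not \cite{Leibman05a}), the orbit closure of a polynomial sequence is a \emph{finite union} of subnilmanifolds, not a single subnilmanifold. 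This is not a mere formality: without the connectedness you established, you could not conclude. The correct logic is therefore: Leibman gives that $Y$ is a finite disjoint union of connected subnilmanifolds; $Y$ is connected; hence $Y$ is a single connected subnilmanifold. With this small correction your argument matches the paper's.
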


\begin{proof}
For each $h\in\Z^m$, let
$$Y_h\coloneqq \overline{\{g_0(h)^{t_0}\cdots g_{\ell}(h)^{t_\ell}\Gamma:(t_0,t_1,\ldots,t_\ell)\in\R^{\ell+1}\}}.$$
Using \cref{lemma_orbitdiscretecontinuous} we can find $\xi_0,\dots,\xi_\ell\in\R$ such that for every $h\in\Z^m$,
$$
Y_h=
\overline{\big\{g_0(h)^{\xi_0 n_0}\cdots g_\ell(h)^{\xi_\ell n_\ell}\Gamma:n_0,\dots,n_\ell\in\Z\big\}}.$$
Thus
$$Y=\overline{\bigcup_{h\in\Z^m}Y_h}=
\overline{\{g_0(h)^{\xi_0n_0}\cdots g_{\ell}(h)^{\xi_\ell n_\ell}\Gamma: h\in\Z^m, (n_0,\ldots,n_\ell)\in\Z^{\ell+1}\}}.$$
In particular, $Y$ is the closure of the set $\tilde g(\Z^m\times\Z^{\ell+1})$ for some polynomial map $\tilde g\colon \Z^m\times\Z^{\ell+1}\to X$.
Therefore Leibman's theorem \cite{Leibman05b} implies that $Y$ is a finite union of subnilmanifolds.
However, from the definition, it is clear that $Y$ is connected and therefore it must be a connected subnilmanifold as claimed.
\end{proof}

Next, we derive some useful Lemmas needed for the proof of \cref{thm:qualitative-green-leibman-theorem-for-discrete-tempered}.

\begin{Lemma}[Van der Corput lemma]
\label{lem:C_lim-W_lim-vdC}
Let $m\in\N\cup\{0\}$, let $W\in\F_1$ and suppose that $x\colon \N^{m+1}\to\C$ is a bounded complex-valued sequence.
If
$$
\otherlimsup{C}{h_{m+1}\to\infty}\otherlimsup{C}{h_{m}\to\infty}\cdots \otherlimsup{C}{h_1\to\infty}\otherlimsup{UW}{n\to\infty} x(n+h_{m+1},h_1,\ldots,h_m)\overline{x(n,h_1,\ldots,h_m)} = 0
$$
then
$$
\otherlimsup{C}{h_{m}\to\infty}\cdots \otherlimsup{C}{h_1\to\infty}\otherlimsup{UW}{n\to\infty}x(n,h_1,\ldots,h_m)= 0.
$$
\end{Lemma}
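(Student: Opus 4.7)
The plan is to combine a finite-shift version of the van der Corput inequality for Riesz means with an iterated Cauchy--Schwarz argument on the outer Ces{\`a}ro averages. The delicate point is the \emph{order} in which the limits are taken: the finite-shift estimate must be applied pointwise in $(h_1,\ldots,h_m)$, the $C$-averaging in those variables done first, and only afterwards should the shift-truncation parameter $H$ be sent to infinity, so that the new $C$-limit in $h_{m+1}$ emerges on the outside of the chain of $C$-limits, matching the order in the hypothesis.

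First I would record the finite-$H$ variant of the van der Corput inequality for Riesz means, which is the quantitative estimate underlying \cref{thm_vdCforRiesz} (compare \eqref{eq_vdCinequality} and \cite[Theorem~2.2]{Bergelson96}): there exist absolute constants $C_1, C_2 > 0$ such that, for every bounded $u\colon\N\to\C$ with $\|u\|_\infty\leq 1$, every $H\in\N$, and every interval $[M,N]$,
$$
\left|\frac{1}{W(N)-W(M)}\sum_{n=M}^{N}\Delta W(n)\,u(n)\right|^2 \leq \frac{C_1}{H} + \frac{C_2}{H}\sum_{j=1}^{H}\left|\frac{1}{W(N)-W(M)}\sum_{n=M}^{N-j}\Delta W(n)\,u(n+j)\overline{u(n)}\right| + \epsilon_H(M,N),
$$
with $\epsilon_H(M,N)\to 0$ as $W(N)-W(M)\to\infty$ for $H$ fixed. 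Applying this pointwise with $u(n) = x(n,h_1,\ldots,h_m)$ (after harmless rescaling), taking $\limsup_i$ along an arbitrary sequence $(M_i,N_i)$ with $W(N_i)-W(M_i)\to\infty$, and then the supremum over all such sequences, yields, for every $H\in\N$ and every $(h_1,\ldots,h_m)$,
$$
\bigl(\otherlimsup{UW}{n\to\infty} x(n,h_1,\ldots,h_m)\bigr)^2 \leq \frac{C_1}{H} + \frac{C_2}{H}\sum_{h_{m+1}=1}^{H}\otherlimsup{UW}{n\to\infty} x(n+h_{m+1},h_1,\ldots,h_m)\overline{x(n,h_1,\ldots,h_m)}.
$$

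Next I would apply the operator $\otherlimsup{C}{h_m\to\infty}\cdots\otherlimsup{C}{h_1\to\infty}$ to both sides of this inequality. On the right, subadditivity of $\limsup$ permits passing these $C$-limits through the finite sum in $h_{m+1}$; on the left, the elementary Cauchy--Schwarz bound $(\otherlimsup{C}{h\to\infty}F)^2 \leq \otherlimsup{C}{h\to\infty}F^2$ for non-negative $F$, iterated $m$ times, lower-bounds the resulting quantity by the square of $\otherlimsup{C}{h_m\to\infty}\cdots\otherlimsup{C}{h_1\to\infty}\otherlimsup{UW}{n\to\infty} x(n,h_1,\ldots,h_m)$. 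Finally, sending $H\to\infty$ and using that $\limsup_{H\to\infty}\tfrac{1}{H}\sum_{h_{m+1}=1}^{H}G(h_{m+1}) = \otherlimsup{C}{h_{m+1}\to\infty}G(h_{m+1})$ for non-negative $G$ converts the finite $h_{m+1}$-average into $\otherlimsup{C}{h_{m+1}\to\infty}$ acting on the \emph{outside} of the other $C$-limits. The right-hand side becomes exactly a constant multiple of the quantity the hypothesis asserts to vanish, so the left-hand side vanishes, which is the conclusion.

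The main obstacle throughout is preserving the order of the iterated $C$-limits. In general the quantities $\otherlimsup{C}{h_{m+1}\to\infty}\otherlimsup{C}{h_m\to\infty}\cdots\otherlimsup{C}{h_1\to\infty}F$ and $\otherlimsup{C}{h_m\to\infty}\cdots\otherlimsup{C}{h_1\to\infty}\otherlimsup{C}{h_{m+1}\to\infty}F$ are distinct for non-negative $F$, so it is essential to keep $H$ finite throughout the outer averaging and pass to $H\to\infty$ only at the very end; a naive scheme that first converts the finite average over $h_{m+1}$ into $\otherlimsup{C}{h_{m+1}\to\infty}$ and then averages over $h_1,\ldots,h_m$ would end up requiring a hypothesis of a different, generally incomparable, order of limits.
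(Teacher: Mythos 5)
Your proposal is correct and follows essentially the same strategy as the paper's own proof: establish a finite-$H$ van der Corput estimate for the uniform Riesz averages, apply it pointwise in $(h_1,\ldots,h_m)$, push the outer Ces\`aro $\limsup$s through via a Cauchy--Schwarz bound, and only then send $H\to\infty$ to produce the external $\otherlimsup{C}{h_{m+1}\to\infty}$. The paper keeps the intermediate estimate under a square root while you keep it squared, but this is a cosmetic difference; your explicit remark on why the order of limits must be preserved is exactly the point the paper's proof handles implicitly.
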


\begin{proof}
Notice that for any bounded sequence $a\colon \N\to\C$ and $H\in\N$ we have
\begin{eqnarray*}
\frac1{H^2}\sum_{h_1,h_2=1}^Ha(h_2-h_1)
&=&
\frac1{H^2}\sum_{h_1=1}^H\sum_{d=1}^{H-h_1}a(d)+a(-d)+\Oh(1/H)
\\&=&
\frac1{H^2}\sum_{d=1}^{H-1}(H-d)\big(a(d)+a(-d)\big)+\Oh(1/H)
\end{eqnarray*}
where the $O(1/H)$ term accounts for the case $h_1=h_2$. Hence,
\begin{equation}\label{eq_proof_vanderCorput1}
\frac1{H^2}\sum_{h_1,h_2=1}^Ha(h_2-h_1)
=
\E_{d\in[H]}\left(1-\frac{d}H\right)\big(a(d)+a(-d)\big)+\Oh(1/H).
\end{equation}
Next, let $(u_n)_{n\in\N}$ be a bounded sequence of complex numbers.
Notice that, since $\Delta W$ is eventually decreasing, for every $H\leq N-M$
$$\E_{n\in[M,N]}^Wu_n=\E_{n\in[M,N]}^W\E_{h\in[H]} u_{n+h}+\Oh\left(\frac{H}{W(N)-W(M)}\right)$$
and so the Cauchy-Schwarz inequality implies that
\begin{eqnarray*}
\left|\E_{n\in[M,N]}^W u_n\right|^2
&\leq&
\E_{n\in[M,N]}^W\frac1{H^2}\sum_{h_1,h_2=1}^H u_{n+h_1}\overline{u_{n+h_2}}+\Oh\left(\frac{H}{W(N)-W(M)}\right)
\\&=&
\frac1{H^2}\sum_{h_1,h_2=1}^H\E_{n\in[M,N]}^Wu_{n}\overline{u_{n+h_2-h_1}}+\Oh\left(\frac{H}{W(N)-W(M)}\right)
\end{eqnarray*}
where the last equality follows from the general fact that $\E_{n\in[M,N]}^Wa(n+h)=\E_{n\in[M,N]}^Wa(n)+\Oh(h/(W(N)-W(M)))$ for any bounded sequence $a:\N\to\C$.
Combining the above with \eqref{eq_proof_vanderCorput1} for $a(d)=\E_{n\in[M,N]}^Wu_{n}\overline{u_{n+d}}$ and observing that $a(-d)=\overline{a(d)}+\Oh(d/(W(N)-W(M)))$ we get
\begin{eqnarray*}
\left|\E_{n\in[M,N]}^W u_n\right|^2
&\leq&
\E_{d\in[H]}\left(1-\frac{d}H\right)2\text{Re} \E_{n\in[M,N]}^Wu_{n}\overline{u_{n+d}}+\Oh\left(\frac H{W(N)-W(M)}+\frac1H\right)
\\&\leq&
2\E_{d\in[H]}\Big|\E_{n\in[M,N]}^Wu_{n}\overline{u_{n+d}}\Big|+\Oh\left(\frac H{W(N)-W(M)}+\frac1H\right).
\end{eqnarray*}

Now, denote by $h=(h_1,\dots,h_m)$ and let $x(n,h;h_{m+1})\coloneqq x(n+h_{m+1},h)\overline{x(n,h)}$.
We get

$$
\otherlimsup{UW}{n\to\infty}x(n,h)
\leq
\sqrt{2\E_{d\in[H]}\otherlimsup{UW}{n\to\infty}x(n,h;d)+\Oh\left(\frac1H\right)}
$$
and hence applying again the Cauchy-Schwarz inequality we obtain
$$
\otherlimsup{C}{h_m\to\infty}\cdots\otherlimsup{C}{h_1\to\infty}\otherlimsup{UW}{n\to\infty}x(n,h)
\leq
\sqrt{2\E_{d\in[H]}\otherlimsup{C}{h_m\to\infty}\cdots\otherlimsup{C}{h_1\to\infty}\otherlimsup{UW}{n\to\infty}x(n,h;d)+\Oh\left(\frac1H\right)}.
$$
Since the left hand side does not depend on $H$, we can let $H\to\infty$ in the right hand side and use the hypothesis to conclude that
$$
\otherlimsup{C}{h_m\to\infty}\cdots\otherlimsup{C}{h_1\to\infty}\otherlimsup{UW}{n\to\infty}x(n,h)
\leq
\sqrt{2\lim_{H\to\infty}\E_{d\in[H]}\otherlimsup{C}{h_m\to\infty}\cdots\otherlimsup{C}{h_1\to\infty}\otherlimsup{UW}{n\to\infty}x(n,h;d)}=0.
$$
\end{proof}

\begin{Remark}
  We observe that the ``uniform Ces\`aro'' version of \cref{lem:C_lim-W_lim-vdC} (where every instance of $\otherlim{C}{}$ is replaced with $\otherlim{UC}{}$) is also true and can be proved in a very similar way.
\end{Remark}

\begin{Lemma}
\label{lem:equidistribution-on-product}
Suppose $d_1,d_2,\ell,m\in \N$, $T_1\coloneqq \R^{d_1}/\Z^{d_1}$ and $T_2\coloneqq \R^{d_2}/\Z^{d_2}$. Let $\pi_1\colon \R^{d_1}\to T_1$ and $\pi_2\colon \R^{d_2}\to T_2$ denote the corresponding natural factor maps and let $p_0,p_1,\ldots,p_\ell\colon\Z^m\to\R^{d_1}$ and $q_0,q_1,\ldots,q_{\ell-1}\colon\Z^m\to\R^{d_2}$ be polynomial sequences such that
$$
\pi_1\big(\big\{p_0(h){t_0}+p_1(h){t_1}+\ldots+ p_{\ell}(h){t_\ell}: h\in\Z^m, (t_0,t_1,\ldots,t_\ell)\in\R^{\ell+1}\big\}\big)~\text{is dense in $T_1$},
$$
and
$$
\pi_2\big(\big\{q_0(h){t_0}+q_1(h){t_1}+\ldots+ q_{\ell-1}(h){t_{\ell-1}}): h\in\Z^m, (t_0,t_1,\ldots,t_{\ell-1})\in\R^{\ell}\big\}\big)
~\text{is dense in $T_2$}.
$$
Define new polynomial sequences
$q_1',\ldots,q_{\ell}'\colon\Z^{m+1}\to\R^{d_2}$ as
\begin{equation}
\label{eq:equidistribution-on-product-1}
q_i'(h_1,\ldots,h_{m+1})\coloneqq \sum_{j=1}^i \binom{h_{m+1}}{j} q_{i-j}(h_1,\ldots,h_{m}).
\end{equation}
Set $\tilde{p}_0(h_1,\ldots,h_{m+1})\coloneqq(p_0(h_1,\ldots,h_m),0)$ and, for $i=1,\ldots,\ell$,
$$
\tilde{p}_i(h_1,\ldots,h_{m+1})\coloneqq(p_i(h_1,\ldots,h_m),q_i'(h_1,\ldots,h_{m+1})).
$$
Then
$$
(\pi_1\otimes \pi_2)\big(\big\{\tilde{p}_0(h'){t_0}+\tilde{p}_1(h'){t_1}+\ldots+ \tilde{p}_{\ell}(h'){t_\ell}: h'\in\Z^{m+1}, (t_0,t_1,\ldots,t_\ell)\in\R^{\ell+1}\big\}\big).
$$
is dense in $T_1\times T_2$
\end{Lemma}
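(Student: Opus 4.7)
The plan is to reduce the density claim to the torus case of \cref{thm:Leibman-thmB} (multivariate Weyl equidistribution for polynomial sequences) by discretizing the real parameters $t_i$. I will choose a generic tuple $\xi = (\xi_0, \ldots, \xi_\ell) \in \mathbb{R}^{\ell+1}$ such that each $\xi_i$ is irrational and, more strongly, $\xi_i c \notin \mathbb{Q}$ for every non-zero real number $c$ that arises as a Taylor coefficient (in $h'$) of $\langle \tilde p_i(h'), (\tau_1, \tau_2) \rangle$ for some $(\tau_1, \tau_2) \in \mathbb{Z}^{d_1} \times \mathbb{Z}^{d_2}$; only countably many constraints are involved, so such $\xi$ exists. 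Setting $t_i = n_i \xi_i$ with $n_i \in \mathbb{Z}$, the discrete orbit
\[
S_\xi \coloneqq \Bigl\{(\pi_1 \otimes \pi_2)\Bigl(\sum_{i=0}^\ell n_i\, \xi_i\, \tilde p_i(h')\Bigr) : h' \in \mathbb{Z}^{m+1},\ n \in \mathbb{Z}^{\ell+1}\Bigr\}
\]
is contained in the set whose density we want to prove, so it suffices to show that $S_\xi$ itself is dense in $T_1 \times T_2$.

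Since $(h', n) \mapsto \sum_i n_i \xi_i \tilde p_i(h')$ is a polynomial map from $\mathbb{Z}^{m+\ell+2}$ to $\mathbb{R}^{d_1 + d_2}$, \cref{thm:Leibman-thmB} in the torus setting says $S_\xi$ is dense iff for every non-trivial character $(\tau_1, \tau_2) \in (\mathbb{Z}^{d_1} \times \mathbb{Z}^{d_2}) \setminus \{0\}$, the real polynomial
\[
p_\tau(h', n) \coloneqq \sum_{i=0}^\ell n_i\, \xi_i\, \langle \tilde p_i(h'), (\tau_1, \tau_2) \rangle
\]
is equidistributed modulo $1$, i.e., has an irrational coefficient on some non-constant monomial (classical Weyl criterion). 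The genericity of $\xi$ ensures that every coefficient of $p_\tau$ of the form $\xi_i c$ with $c \neq 0$ is irrational; so equidistribution can fail only if $\langle \tilde p_i(h'), (\tau_1, \tau_2) \rangle \equiv 0$ identically in $h'$ for every $i = 0, \ldots, \ell$.

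The core of the argument is to show that this identity forces $(\tau_1, \tau_2) = (0,0)$. For $i = 0$, we get $\langle p_0(h), \tau_1 \rangle = 0$ for all $h$. For $i \geq 1$, substituting the definition of $q_i'$ yields
\[
\langle p_i(h), \tau_1 \rangle + \sum_{j=1}^i \binom{h_{m+1}}{j} \langle q_{i-j}(h), \tau_2 \rangle = 0 \qquad \text{for all } h \in \mathbb{Z}^m,\ h_{m+1} \in \mathbb{Z}.
\]
Since $\{1, \binom{h_{m+1}}{1}, \ldots, \binom{h_{m+1}}{\ell}\}$ are linearly independent polynomials in $h_{m+1}$, each coefficient must vanish. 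This produces $\langle p_i(h), \tau_1 \rangle = 0$ for all $i$ and $h$, and $\langle q_k(h), \tau_2 \rangle = 0$ for all $k \in \{0, \ldots, \ell-1\}$ and $h$. The $p$-density hypothesis then forces $\tau_1 = 0$ (the character $\chi_{\tau_1}$ vanishes on a dense subset of $T_1$, hence on all of $T_1$), and similarly the $q$-density hypothesis forces $\tau_2 = 0$, contradicting non-triviality.

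The main technical obstacle will be the setup of the generic $\xi$: one must handle all countably many Taylor coefficients of all $\langle \tilde p_i(h'), (\tau_1, \tau_2) \rangle$ simultaneously and verify that Weyl's criterion is applicable in the exact form needed. Once this framework is in place, the binomial unpacking in the final step is routine linear algebra.
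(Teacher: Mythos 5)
Your proposal is correct, and the heart of the argument — unpacking $q'_i$ via the binomial identity and comparing coefficients of $h_{m+1}$ to force $\langle p_i(h),\tau^{(1)}\rangle=0$ and $\langle q_k(h),\tau^{(2)}\rangle=0$ — is exactly what the paper does. The organizational difference is in how the real parameters $t_0,\dots,t_\ell$ are handled. You discretize them to $t_i=\xi_i n_i$ with a generic $\xi\in\R^{\ell+1}$, then apply the equidistribution criterion of \cref{thm:Leibman-thmB} to the resulting polynomial map on $\Z^{m+\ell+2}$; this obliges you to arrange a single $\xi$ good for all $\tau\in(\Z^{d_1}\times\Z^{d_2})\setminus\{0\}$ at once, which is the bookkeeping you correctly flag as the delicate part. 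The paper instead appeals to \cref{lem:orbit-closure-is-subnilmanifold} to conclude that the orbit closure $T_3$ is a subtorus, and then observes that a character $\chi$ that is identically $1$ on $T_3$ gives a real-analytic $\Z$-valued (hence constant) function of $(t_0,\dots,t_\ell)$ for each fixed $h'$, which kills each $\langle\tilde p_i(h'),\tau\rangle$ directly with no genericity argument. Since \cref{lem:orbit-closure-is-subnilmanifold} itself uses \cref{lemma_orbitdiscretecontinuous} to discretize and then cites Leibman, you are essentially unfolding what the paper packages into a lemma; the paper's route buys you a cleaner extraction of the vanishing constraints, while your route makes the Leibman mechanism explicit inside the proof. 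Both are valid.
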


\begin{proof}
Let
$$T_3\coloneqq \overline{(\pi_1\otimes \pi_2)\big(\big\{\tilde{p}_0(h'){t_0}+\ldots+ \tilde{p}_{\ell}(h'){t_\ell}: h'\in\Z^{m+1}, (t_0,\ldots,t_\ell)\in\R^{\ell+1}\big\}\big)}.$$
According to \cref{lem:orbit-closure-is-subnilmanifold}, $T_3$ is a subtorus of $T_1\times T_2$. If $T_3$ is a proper subtorus of $T_1\times T_2$ then there exists a non-trivial continuous group character $\chi\colon T_1\times T_2\to\{w\in\C: |w|=1\}$ whose kernel contains $T_3$. So in order to show that $T_3=T_1\times T_2$ it suffices to show that the only continuous group characters $\chi$ of $T_1\times T_2$ whose kernel contains $T_3$ is the trivial character. Equivalently, it suffices to show that if the function
$
\psi_\chi\colon \Z^{m+1}\times \R^{\ell+1} \to \C
$
given by
$$
\psi_\chi(h',t_0,\ldots,t_{\ell})=\chi \circ (\pi_1\otimes \pi_2)\big(\tilde{p}_0(h'){t_0}+\tilde{p}_1(h'){t_1}+\ldots+ \tilde{p}_{\ell}(h'){t_\ell}\big)
$$
is constant equal to $1$, then $\chi$ is the trivial character.

Any character $\chi$ of $T_1\times T_2$ can be written as $\chi=\chi_1\otimes\chi_2$, where $\chi_1$ is a continuous group character of $T_1$ and $\chi_2$ is a continuous group character of $T_2$. Hence
\begin{eqnarray*}
\psi_\chi(h',t_0,\ldots,t_{\ell})
&=&
\big((\chi_1\circ \pi_1)\otimes (\chi_2\circ\pi_2)\big)\big(\tilde{p}_0(h'){t_0}+\tilde{p}_1(h'){t_1}+\ldots+ \tilde{p}_{\ell}(h'){t_\ell}\big)
\\
&=&
(\chi_1\circ \pi_1)\big(p_0(h){t_0}+\ldots+ p_{\ell}(h){t_\ell}\big)
(\chi_2\circ \pi_2)\big(q_1'(h'){t_1}+\ldots+ q_{\ell}'(h'){t_\ell}\big),
\end{eqnarray*}
where $h'=(h_1,\ldots,h_{m+1})$ and $h=(h_1,\ldots,h_m)$.
Pick $\tau^{(1)}\in\Z^{d_1}$ such that $\chi_1\circ\pi_1(x)=e^{2\pi i \langle x,\tau^{(1)}\rangle}$ for all $x\in \R^{d_1}$ and, similarly, pick $\tau^{(2)}\in\Z^{d_2}$ such that $\chi_2\circ\pi_2(x)=e^{2\pi i \langle x,\tau^{(2)}\rangle}$ for all $x\in \R^{d_2}$. Since $\psi_\chi(h',t_0,\ldots,t_{\ell})=1$ for all $h'=(h_1,\ldots,h_{m+1})\in\Z^{m+1}$ and $(t_0,\ldots,t_{\ell})\in\R^{\ell+1}$, we have that
$$
\big\langle p_0(h){t_0}+p_1(h){t_1}+\ldots+ p_{\ell}(h){t_\ell},\tau^{(1)} \big\rangle + \big\langle q_1'(h'){t_1}+\ldots+ q_{\ell}'(h'){t_\ell},\tau^{(2)} \big\rangle \in\Z
$$
for all $h'=(h_1,\ldots,h_{m+1})\in\Z^{m+1}$ and all $(t_0,\ldots,t_{\ell})\in\R^{\ell+1}$.
It follows that
\begin{equation}
\label{eqn:char-is-trivial-1}
\langle p_0(h),\tau^{(1)}\rangle =0
\end{equation}
and
\begin{equation}
\label{eqn:char-is-trivial-2}
\langle p_i(h),\tau^{(1)}\rangle + \langle q_i'(h'),\tau^{(2)}\rangle=0,\qquad\forall i\in\{1,\ldots,\ell\}.
\end{equation}
Note that \eqref{eq:equidistribution-on-product-1} and \eqref{eqn:char-is-trivial-2} together imply
\begin{equation}
\label{eqn:char-is-trivial-3}
\langle p_{i}(h),\tau^{(1)}\rangle =  \sum_{j=1}^{i} \binom{h_{m+1}}{j} \langle - q_{i-j}(h),\tau^{(2)}\rangle,\qquad \forall h\in\Z^m,~ \forall h_{m+1}\in \Z.
\end{equation}
By comparing coefficients of the variable $h_{m+1}$ we see that \eqref{eqn:char-is-trivial-3} can only hold if
\begin{equation}
\label{eqn:char-is-trivial-4}
\langle p_i(h),\tau^{(1)}\rangle =0 \qquad\text{and}\qquad \langle q_{i-j}(h),\tau^{(2)}\rangle=0,\qquad\forall i\in\{1,\ldots,\ell\},\forall j\in\{1,\ldots,i\}.
\end{equation}
It follows that $\langle p_i(h),\tau^{(1)}\rangle =0 $ for all $i=0,1,\ldots,\ell$ and $\langle q_{i}(h),\tau^{(2)}\rangle=0$ for all $i=0,1,\ldots,\ell-1$. Since $\pi_1\big(\big\{p_0(h){t_0}+p_1(h){t_1}+\ldots+ p_{\ell}(h){t_\ell}: h\in\Z^m, (t_0,t_1,\ldots,t_\ell)\in\R^{\ell+1}\big\}\big)$ is dense in $T_1$, we conclude that $\tau^{(1)}=0$ and, likewise, since $\pi_2\big(\big\{q_0(h){t_0}+q_1(h){t_1}+\ldots+ q_{\ell-1}(h){t_{\ell-1}}): h\in\Z^m, (t_0,t_1,\ldots,t_{\ell-1})\in\R^{\ell}\big\}\big)$ is dense in $T_2$, we have that $\tau^{(2)}=0$. This proves that $\chi=\chi_1\otimes\chi_2$ is the trivial character.
\end{proof}

\bibliography{refs-joel}
\bibliographystyle{plain}


\bigskip
\footnotesize

\noindent
Vitaly\ Bergelson\\
\textsc{The Ohio State University}\par\nopagebreak
\noindent
\href{mailto:bergelson.1@osu.edu}
{\texttt{bergelson.1@osu.edu}}
\\

\noindent
Joel\ Moreira\\
\textsc{University of Warwick}\par\nopagebreak
\noindent
\href{mailto:joel.moreira@warwick.ac.uk}
{\texttt{joel.moreira@warwick.ac.uk}}
\\

\noindent
Florian K.\ Richter\\
\textsc{Northwestern University}\par\nopagebreak
\noindent
\href{mailto:fkr@northwestern.edu}
{\texttt{fkr@northwestern.edu}}

\end{document}